\theoremstyle{plain}
\newtheorem{theorem}{Theorem}[section]
\newtheorem{lemma}[theorem]{Lemma}
\newtheorem{proposition}[theorem]{Proposition}
\newtheorem{corollary}[theorem]{Corollary}
\theoremstyle{remark}
\newtheorem{remark}[theorem]{Remark}
\newtheorem{question}[theorem]{Question}
\theoremstyle{definition}
\newtheorem{definition}[theorem]{Definition}
\newtheorem{example}[theorem]{Example}
\DeclareMathOperator{\mmon}{\mu mon}
\DeclareMathOperator{\cl}{Cl}
\DeclareMathOperator{\Sh}{\mathscr{S}\mathsf{h}}
\DeclareMathOperator{\Loc}{\mathscr{L}\mathsf{oc}}
\newcommand\R{\mathbb{R}}
\newcommand\C{\mathbb{C}}
\newcommand\Z{\mathbb{Z}}
\newcommand\bfx{\mathbf{x}}
\newcommand\cA{\mathcal{A}}
\newcommand\cF{\mathcal{F}}
\newcommand\cM{\mathcal{M}}
\newcommand\sfI{\mathsf{I}}
\newcommand\sfT{\mathsf{T}}
\newcommand\sfY{\mathsf{Y}}
\newcommand{\clusterfont}{\mathcal}
\newcommand{\dynkinfont}{\mathsf}
\newcommand{\ngraphfont}{\mathscr}
\newcommand{\quiver}{\clusterfont{Q}}
\newcommand{\qbasis}{\clusterfont{B}}
\newcommand{\qcoxeter}{\mutation_{\quiver}}
\newcommand{\ngraph}{\ngraphfont{G}}
\newcommand{\nbasis}{\ngraphfont{B}}
\newcommand{\ncoxeter}{\mutation_\ngraph}
\newcommand{\coxeterpadding}{\ngraphfont{C}}
\newcommand{\dynA}{\dynkinfont{A}}
\newcommand{\dynB}{\dynkinfont{B}}
\newcommand{\dynC}{\dynkinfont{C}}
\newcommand{\dynD}{\dynkinfont{D}}
\newcommand{\dynE}{\dynkinfont{E}}
\newcommand{\dynF}{\dynkinfont{F}}
\newcommand{\dynG}{\dynkinfont{G}}
\newcommand{\dynX}{\dynkinfont{X}}
\newcommand{\dynY}{\dynkinfont{Y}}
\newcommand{\dynADE}{\dynkinfont{ADE}}
\newcommand{\dynBCFG}{\dynkinfont{BCFG}}
\newcommand{\facet}{\mathscr{F}}
\newcommand{\seed}{\Sigma}
\newcommand{\initialseed}{\Sigma_{t_0}}
\newcommand{\flags}{\mathcal{F}_\legendrian}
\newcommand{\mutation}{\mu}
\newcommand{\annulus}{\mathbb{A}}
\newcommand{\disk}{\mathbb{D}}
\newcommand{\sphere}{\mathbb{S}}
\newcommand{\legendrian}{\lambda}
\newcommand{\Legendrian}{\Lambda}
\newcommand{\boundary}{\partial}
\newcommand{\cycle}{\gamma}
\newcommand{\field}{\mathbb{F}}
\newcommand{\Roots}{\Phi}
\newcommand{\SRoots}{\Pi}
\newcommand{\Move}[1]{{\rm{(#1)}}}
\newcommand{\wavefront}{\Gamma}
\newcommand{\boundellipse}[3]
{(#1) ellipse (#2 and #3)}
\numberwithin{equation}{section}
\tikzset{ynode/.style = {circle, fill = yellow, inner sep = 2pt, opacity = 0.5}}
\tikzset{gnode/.style = {circle, fill=green, inner sep = 2pt, opacity = 0.5}}
\tikzstyle{Dnode}=[draw, circle, inner sep = 0.07cm]
\tikzstyle{double line} = [
\newlength{\starsize}
\newlength{\starspread}
\tikzset{starsize/.code={\setlength{\starsize}{#1}},
	starspread/.code={\setlength{\starspread}{#1}}}
\tikzset{starsize=1mm,
	starspread=3mm}
\pgfqpoint{\starspread}{\starspread}}
\pgfqpoint{\starspread}{\starspread}}
\pgfqpoint{\starspread}{\starspread}}
\pgfqpoint{\starspread}{\starspread}}
\pgfqpoint{\starspread}{\starspread}}
\pgfqpoint{\starspread}{\starspread}}
\tikzset{curlybrace/.style={rounded corners=2pt,line cap=round}}%
\def\cb@angle{#1},
\def\curlybrace{\pgfutil@ifnextchar[{\curly@brace}{\curly@brace[]}}%
\def\curly@brace[#1]#2#3#4{%
\pgfkeys{/curlybrace/.cd,
tip angle = 0.75}%
\pgfqkeys{/curlybrace}{#1}%
\ifnum 1>#4 \def\cbrd{0.05} \else \def\cbrd{0.075} \fi
\draw[/curlybrace/.cd,curlybrace,#1]  (#2:#4-\cbrd) -- (#2:#4) arc (#2:{(#2+#3)/2-\cb@angle}:#4) --({(#2+#3)/2}:#4+\cbrd) coordinate (curlybracetipn);
\draw[/curlybrace/.cd,curlybrace,#1] ({(#2+#3)/2}:#4+\cbrd) -- ({(#2+#3)/2+\cb@angle}:#4) arc ({(#2+#3)/2+\cb@angle} :#3:#4) --(#3:#4-\cbrd);
}
\title{Lagrangian fillings for Legendrian links of finite type}
\author{Byung Hee An}
\email{anbyhee@knu.ac.kr}
\address{Department of Mathematics Education, Kyungpook National University, Republic of Korea}
\author{Youngjin Bae}
\email{yjbae@inu.ac.kr}
\address{Department of Mathematics, Incheon National University, Republic of Korea}
\author{Eunjeong Lee}
\email{eunjeong.lee@ibs.ac.kr}
\address{Center for Geometry and Physics, Institute for Basic Science, Republic of Korea}
\keywords{Legendrian link, Lagrangian filling, Cluster algebra}
\subjclass[2010]{Primary: 53D10, 13F60. Secondary: 57R17.}
\begin{document}

\begin{abstract}
We prove that there are at least seeds many exact embedded Lagrangian
fillings for Legendrian links of type $\dynADE$. We also provide seeds many Lagrangian fillings with certain symmetries for type $\dynBCFG$. Our main tools are $N$-graphs
and the combinatorics of seed patterns of finite type.
\end{abstract}

\maketitle

\tableofcontents

\section{Introduction}
\subsection{Backgrounds}
Legendrian knots are central object in the study of contact 3-dimensional
contact manifolds. Classification of Legendrian knots are important as its
own right, and also play a prominent role in constructing 4-dimensional
Weinstein manifold.

Classical Legendrian knot invariants are Thurston--Bennequin number and rotation
number~\cite{Gei2008} which distinguish the pair of Legendrian knots with the
same knot type. There are non-classical invariants including the Legendrian
contact algebra via the method of Floer theory~\cite{EGH2000, Che2002}, and
the space of constructible sheaves using microlocal
analysis~\cite{GKS2012,STZ2017}. These non-classical invariants distinguish
the Chekanov pair, a pair of Legendrian knots of type $m5_2$ having the same
classical invariants.

Recently, the study of exact Lagrangian fillings for Legendrian links has
been extremely plentiful. In the context of Legendrian contact algebra, an exact
Lagrangian filling gives an augmentation through the functorial view 
point~\cite{EHK2016}. There are several level of equivalence between 
augmentations
and the constructible sheaves for Legendrian links from counting to
categorical equivalence~\cite{NRSSZ2015}. By using these idea of
augmentations and constructible sheaves, people construct infinitely many fillings for certain Legendrian links~\cite{CG2020, GSW2020b,
CZ2020}. Here is the summarized list of methods of constructing Lagrangian
fillings for Legendrian links:
\begin{enumerate}
\item Decomposable Lagrangian fillings via pinching sequences \cite{EHK2016}.
\item Alternating Legendrians and its conjugate Lagrangian fillings
\cite{STWZ2019}. 
\item Legendrian weaves via $N$-graphs \cite{TZ2018,
CZ2020}. 
\item Donaldson--Thomas transformation on augmentation varieties
\cite{SW2019, GSW2020a, GSW2020b}.
\end{enumerate}

The cluster structure introduced by \cite{FZ1_2002} plays a crucial role in
the above constructions and applications. More precisely, the space of
augmentations, or equally the moduli of constructible sheaves adapted to
Legendrian links, admits a structure of cluster algebra~\cite{STWZ2019}. Note
that a seed of cluster algebra consists of a quiver whose vertices are
decorated with cluster variables. An involutory operation at each vertex,
called \emph{mutation}, generates all seeds of the cluster pattern.
The main point is to identify the
mutation in the cluster pattern and an operation in the space of Lagrangian
fillings. This geometric operation is deeply related to the Lagrangian surgery \cite{Pol1991} and
the wall-crossing phenomenon \cite{Aur2007}.

Indeed, a Legendrian torus link of type $(2,n)$ admits Catalan number many
Lagrangian fillings up to exact Lagrangian isotopy \cite{Pan2017, STWZ2019,
TZ2018}. Interestingly enough, the Catalan number is the number of seeds in a
cluster pattern of Dynkin type $\dynA_{n-1}$. There are also Legendrian links
corresponding to Dynkin type $\dynD_n, \dynE_6, \dynE_7,$ and $\dynE_8$
\cite{GSW2020b}. A conjecture in \cite[Conjecture~5.1]{Cas2020} says that the number of
distinct exact embedded Lagrangian fillings (up to exact Lagrangian isotopy) for
Legendrian links of type $\dynADE$ is exactly the same as the number of seeds
of the corresponding cluster algebras.

Furthermore, it is also conjectured in \cite[Conjecture~5.4]{Cas2020} that for Legendrian links of type $\dynA_{2n-1}, \dynD_{n+1}, \dynE_6$ and $\dynD_4$, Lagrangian fillings having certain $\Z/2\Z$ or $\Z/3\Z$-symmetry form the cluster patterns of type $\dynB_n, \dynC_n, \dynF_4$ and $\dynG_2$, which are Dynkin diagrams obtained by \emph{folding} introduced in \cite{FZ_Ysystem03}.

\subsection{The results}
Our main result is that there are seeds many Lagrangian fillings for
Legendrian links of finite type. We deal with $N$-graphs in \cite{CZ2020} to
construct the Lagrangian fillings. An $N$-graph $\ngraph$ on $\disk^2$ gives
a Legendrian surface $\Legendrian(\ngraph)$ in $J^1\disk^2$ while the
boundary $\boundary \ngraph$ on $\sphere^1$ induces a Legendrian link
$\legendrian(\boundary \ngraph)$. Then projection of $\Legendrian(\ngraph)$
along the Reeb direction becomes a Lagrangian filling of
$\legendrian(\boundary \ngraph)$.

As mentioned above, we interpret an $N$-graph as a seed in the corresponding
cluster pattern. A one-cycle in the Legendrian surface
$\Legendrian(\ngraph)$ corresponds to a vertex of the quiver, and a signed
intersection between one-cycles gives an arrow between corresponding
vertices. From constructible sheaves adapted to $\Legendrian(\ngraph)$, one
can assign a monodromy to each one-cycle which becomes the cluster
variable at each vertex.

There is an operation so called a \emph{Legendrian mutation}
$\mutation_\cycle$ on an $N$-graph $\ngraph$ along one-cycle $[\cycle]\in
H_1(\Legendrian(\ngraph))$ which is the counterpart of the mutation on the
cluster pattern, see Proposition~\ref{proposition:equivariance of mutations}. The
delicate and challenging part is that we do not know whether Legendrian
mutations are always possible or not. Simply put, this is because the
mutation in cluster side is algebraic, whereas the Legendrian mutation is
rather geometric.

The main idea of our construction is to consider the following bichromatic
({\color{blue} blue} and {\color{red} red}) graph $\ngraph(a,b,c)$, i.e.
$N$-graph with $N=3$, bounding a Legendrian link $\legendrian(a,b,c)$, which is the closure of the braid $\beta(a,b,c)$ as follows:
\begin{align*}
\legendrian(a,b,c)&=\cl(\beta(a,b,c))\subset J^1\sphere^1,&
\beta(a,b,c)&=\sigma_2\sigma_1^{a+1}\sigma_2\sigma_1^{b+1}\sigma_2\sigma_1^{c+1}.
\end{align*}
Then $\beta(a,b,c)$ corresponds to the rainbow closure of the braid $\beta_0(a,b,c)=\sigma_1\sigma_2^a\sigma_1^{b-1}\sigma_2^c$.
\begin{figure}[ht]
\begin{align*}
&\begin{tikzpicture}[yshift=0.75cm,baseline=-.5ex,xscale=0.8]
\draw[thick] (-3,-1)--(-1.5,-1) (-0.5,-1) -- (1.5, -1) (2.5,-1) -- (3,-1);
\draw[thick, rounded corners] (-3,-1.5) -- (-2.5,-1.5) -- (-2,-2) -- (0,-2) (-0.5,-1.5) -- (0, -1.5) (1, -1.5) -- (1.5,-1.5) (2.5, -1.5) -- (3,-1.5);
\draw[thick, rounded corners] (-3,-2) -- (-2.5,-2) -- (-2,-1.5) -- (-1.5,-1.5) (1,-2) -- (3,-2);
\draw[thick] (-1.5,-0.9) rectangle node {$a$} (-0.5, -1.6) (0,-2.1) rectangle node {$b-1$} (1,-1.4) (1.5,-1.6) rectangle node {$c$} (2.5,-0.9);
\draw[thick] (-3,-1) to[out=180,in=0] (-3.5,-0.75) to[out=0,in=180] (-3,-0.5) -- (3,-0.5) to[out=0,in=180] (3.5,-0.75) to[out=180,in=0] (3,-1);
\draw[thick] (-3,-1.5) to[out=180,in=0] (-4,-0.75) to[out=0,in=180] (-3,0) -- (3,0) to[out=0,in=180] (4,-0.75) to[out=180,in=0] (3,-1.5);
\draw[thick] (-3,-2) to[out=180,in=0] (-4.5,-0.75) to[out=0,in=180] (-3,0.5) -- (3,0.5) to[out=0,in=180] (4.5,-0.75) to[out=180,in=0] (3,-2);
\draw[dashed] (-3,-2.2) rectangle (3,-0.8) (0,-2.2) node[below] {$\beta_0(a,b,c)$};
\end{tikzpicture}&
\begin{tikzpicture}[yshift=0.25cm,baseline=-.5ex,xscale=0.8]
\draw[thick] (-1.5,-0.5) -- (-1,-0.5) (-1.5,0) -- (-1,0) (0.5,-0.5) -- (0,-0.5) (0.5,0) -- (0,0) (-1,-0.6) rectangle node {$r$} (0,0.1);
\end{tikzpicture}&=
\begin{tikzpicture}[yshift=0.75cm,baseline=-.5ex,xscale=0.8]
\draw[thick, rounded corners] (-1.5,-0.5) -- (-1,-0.5) -- (-0.5, -1) -- (-0.3, -1) (-1.5,-1) -- (-1,-1) -- (-0.5, -0.5) -- (-0.3, -0.5) (1.5,-0.5) -- (1,-0.5) -- (0.5, -1) -- (0.3, -1) (1.5,-1) -- (1,-1) -- (0.5, -0.5) -- (0.3, -0.5);
\draw[line cap=round, dotted] (-0.3,-0.5) -- (0.3,-0.5) (-0.3,-1) -- node[below] {$\underbrace{\hphantom{\hspace{2cm}}}_r$} (0.3,-1);
\end{tikzpicture}
\end{align*}
\caption{The rainbow closure of the braid $\beta_0(a,b,c)$ in $\R^3$}
\end{figure}
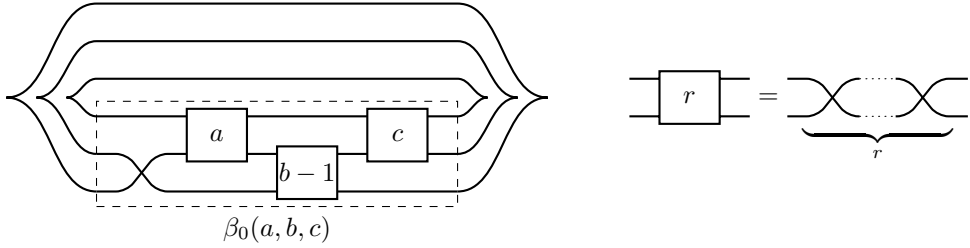

One-cycles $\nbasis(a,b,c)$ of the Legendrian surface
$\Legendrian(\ngraph(a,b,c))$ are given by the {\color{orange!90!yellow} yellow}- and {\color{green!60!black} green}-shaded
edges as depicted in Figure~\ref{figure:intro_tripod}.
See \S\ref{sec:1-cycles in Legendrian weaves} for the detail.
\begin{figure}[ht]
\[
(\ngraph(a,b,c),\nbasis(a,b,c))=\begin{tikzpicture}[baseline=-.5ex,scale=0.6]
\draw[thick] (0,0) circle (3cm);
\draw[green, line cap=round, line width=5, opacity=0.5] (60:1) -- (50:1.5) (70:1.75) -- (50:2) (180:1) -- (170:1.5) (190:1.75) -- (170:2) (300:1) -- (290:1.5) (310:1.75) -- (290:2);
\draw[yellow, line cap=round, line width=5, opacity=0.5] (0,0) -- (60:1) (0,0) -- (180:1) (0,0) -- (300:1) (50:1.5) -- (70:1.75) (170:1.5) -- (190:1.75) (290:1.5) -- (310:1.75);
\draw[red, thick] (0,0) -- (0:3) (0,0) -- (120:3) (0,0) -- (240:3);
\draw[blue, thick, fill] (0,0) -- (60:1) circle (2pt) -- (100:3) (60:1) -- (50:1.5) circle (2pt) -- (20:3) (50:1.5) -- (70:1.75) circle (2pt) -- (80:3) (70:1.75) -- (50:2) circle (2pt) -- (40:3);
\draw[blue, thick, dashed] (50:2) -- (60:3);
\draw[blue, thick, fill] (0,0) -- (180:1) circle (2pt) -- (220:3) (180:1) -- (170:1.5) circle (2pt) -- (140:3) (170:1.5) -- (190:1.75) circle (2pt) -- (200:3) (190:1.75) -- (170:2) circle (2pt) -- (160:3);
\draw[blue, thick, dashed] (170:2) -- (180:3);
\draw[blue, thick, fill] (0,0) -- (300:1) circle (2pt) -- (340:3) (300:1) -- (290:1.5) circle (2pt) -- (260:3) (290:1.5) -- (310:1.75) circle (2pt) -- (320:3) (310:1.75) -- (290:2) circle (2pt) -- (280:3);
\draw[blue, thick, dashed] (290:2) -- (300:3);
\draw[thick, fill=white] (0,0) circle (2pt);
\curlybrace[]{10}{110}{3.2};
\draw (60:3.5) node[rotate=-30] {\small ${ a+1}$};
\curlybrace[]{130}{230}{3.2};
\draw (180:3.5) node[rotate=90] {\small $b+1$};
\curlybrace[]{250}{350}{3.2};
\draw (300:3.5) node[rotate=30] {\small $c+1$};
\end{tikzpicture}
\]
\caption{The tripod $N$-graph $\ngraph(a,b,c)$ and the good tuple $\nbasis(a,b,c)$ of cycles}
\label{figure:intro_tripod}
\end{figure}

There are several good properties of $\ngraph(a,b,c)$ as follows:
\begin{enumerate}
\item The geometric- and algebraic intersection numbers of the one-cycles in
$\nbasis(a,b,c)$ coincide. 
\item The corresponding quiver $\quiver(a,b,c)$ is
bipartite, see \S\ref{sec:N-graphs and seeds} for the details. 
\item It covers Legendrian links of type $\dynADE$. More precisely,
the underlying graphs of $\quiver(1,b,c)$ for $b+c-1=n$, $\quiver(n-2,2,2)$, and $\quiver(2,3,n-3)$ are the same as Dynkin diagrams of type $\dynA_{n}$,
$\dynD_{n}$, and $\dynE_{n}$, respectively.
\end{enumerate}

Let us consider the finite type $N$-graph $\ngraph(a,b,c)$, that is,
$\frac{1}{a}+\frac{1}{b}+\frac{1}{c} > 1$. Denote the corresponding rank
$n$-root system by $\Roots=\Roots(a,b,c)$, where $n=a+b+c-2$. Let us consider
an \emph{exchange graph} $E(\Roots)$ of the corresponding cluster
pattern whose vertices are the seeds and whose edges connect the vertices
are given by a single mutation. Note from \cite{CFZ02_polytopal} that the
exchange graph $E(\Roots)$ can be realized as vertices and edges of a
polytope $P(\Roots)\subset \R^n$ called a \emph{generalized associahedron}.

The combinatorics of the exchange graph $E(\Roots)$ is the key ingredient in
investigating the Legendrian mutability. 
All facets of the polytope $P(\Roots)$ can be recovered
from a sequence of mutations obtained by a Coxeter element 
together with a subset of facets of $P(\Roots)$ corresponding to $P(\Roots([n] \setminus \{i\}))$, see
\cite{FZ_Ysystem03} or Proposition~\ref{prop_FZ_finite_type_Coxeter_element}. 
We call this specific sequence of mutations a
\emph{Coxeter mutation}. In order to interpret a Coxeter mutation in terms of
$N$-graphs, let us consider a partition $\nbasis_+, \nbasis_-$ of the
one-cycles $\nbasis$, consisting of {\color{orange!90!yellow} yellow}- and {\color{green!60!black} green}-shaded edges,
respectively. Then the $N$-graph realization of the Coxeter mutation is called the \emph{Legendrian Coxeter mutation} and given
by the sequence of Legendrian mutations:
\[
\ncoxeter=\prod_{\cycle \in \nbasis_-} \mutation_{\cycle}\cdot\prod_{\cycle\in \nbasis_+} \mutation_{\cycle}.
\]

Then the resulting $N$-graph $\ncoxeter(\ngraph(a,b,c),\nbasis(a,b,c))$ becomes the
$N$-graph shown in Figure~\ref{figure:intro_Legendrian Coxeter mutation} up to a sequence of Move~\Move{II} in Figure~\ref{fig:move1-6}.
\begin{figure}[ht]
\[
\begin{tikzpicture}[baseline=-.5ex,scale=0.5]
\draw[thick] (0,0) circle (5cm);
\draw[dashed]  (0,0) circle (3cm);
\fill[opacity=0.1, even odd rule] (0,0) circle (3) (0,0) circle (5);
\foreach \i in {1,2,3} {
\begin{scope}[rotate=\i*120]
\draw[green, line cap=round, line width=5, opacity=0.5] (60:1) -- (50:1.5) (70:1.75) -- (50:2);
\draw[yellow, line cap=round, line width=5, opacity=0.5] (0,0) -- (60:1) (50:1.5) -- (70:1.75);
\draw[blue, thick, rounded corners] (0,0) -- (0:3.4) to[out=-75,in=80] (-40:4);
\draw[red, thick, fill] (0,0) -- (60:1) circle (2pt) (60:1) -- (50:1.5) circle (2pt) -- (70:1.75) circle (2pt) -- (50:2) circle (2pt);
\draw[red, thick, dashed, rounded corners] (50:2) -- (60:2.8) -- (60:3.3) to[out=0,in=220] (40:4);
\draw[red, thick, rounded corners] (50:2) -- (40:2.8) -- (40:3.3) to[out=-20,in=200] (20:4) (70:1.75) -- (80:2.8) -- (80:3.3) to[out=20,in=240] (60:4) (60:1) -- (100:2.8) -- (100:3.3) to[out=40,in=260] (80:4);
\draw[red, thick, rounded corners] (50:1.5) -- (20:3) -- (20:3.5) to[out=-70,in=50] (-40:4) (20:4) to[out=-50,in=120] (0:4.5) -- (0:5);
\draw[red, thick] (20:4) to[out=100,in=-40] (40:4) (60:4) to[out=140,in=0] (80:4);
\draw[blue, thick] (20:5) -- (20:4) to[out=140,in=-80] (40:4) (60:5) -- (60:4) to[out=180,in=-40] (80:4) -- (80:5);
\draw[thick, dotted] (40:4) arc (40:60:4);
\draw[blue, thick, rounded corners] (20:4) to[out=-70,in=100] (-20:4.5) -- (-20:5);
\draw[blue, thick, dashed] (40:4) -- (40:5);
\draw[fill=white, thick] (20:4) circle (2pt) (40:4) circle (2pt) (60:4) circle (2pt) (80:4) circle (2pt) (-40:4) circle (2pt);
\end{scope}
\draw[fill=white, thick] (0,0) circle (2pt);
}
\end{tikzpicture}
\]
\caption{Legendrian Coxeter mutation on $(\ngraph(a,b,c),\nbasis(a,b,c))$}
\label{figure:intro_Legendrian Coxeter mutation}
\end{figure}

Removing the gray-shaded annulus region, the only difference between
$(\ngraph(a,b,c),\nbasis(a,b,c))$ and $\ncoxeter(\ngraph(a,b,c),\nbasis(a,b,c))$ is the reversing of the color. Note that the intersection pattern between one-cycles and the
Legendrian mutability are preserved under the action of the Legendrian Coxeter mutation
$\ncoxeter$. Moreover, the operation $\qcoxeter$ also acts on
the face poset of the generalized associahedron of the root system $\Roots$.
By the induction argument on the rank of root system, we conclude that there
in no (geometric) obstruction to realize each seed via the $N$-graph,
especially for finite type case. This guarantees that there are at least
seeds many Lagrangian fillings for $\legendrian(a,b,c)$.

For the infinite type, i.e. $\frac{1}{a}+\frac{1}{b}+\frac{1}{c} \leq 1$, the operation $\qcoxeter$ is of infinite order and so is $\ncoxeter$, hence Legendrian weaves 
\[\Legendrian(\ncoxeter^r(\ngraph(a,b,c),\nbasis(a,b,c)))\]
produce infinitely many distinct Lagrangian fillings.
Indeed, the quiver $\quiver(a,b,c)$ is also bipartite and the one can perform 
the Legendrian Coxeter mutation $\mutation_{\ngraph}$ on the $N$-graph $\ngraph(a,b,c)$ by stacking
the gray-shaded annulus like as before. Therefore, there is no obstruction to
realize seeds obtained by mutations $\ncoxeter^r$ via the $N$-graphs.
Since the order of the Legendrian Coxeter mutation is infinite (see Lemma~\ref{lemma:order of coxeter mutation}), we obtain infinitely many $N$-graphs
and hence infinitely many exact embedded Lagrangian fillings for the Legendrian
link $\legendrian(a,b,c)$ with $\frac{1}{a}+\frac{1}{b}+\frac{1}{c} \leq 1$.

\begin{theorem}[Theorem~\ref{theorem:infinite fillings}]
For each $a,b,c\ge 1$, the Legendrian knot or link $\legendrian(a,b,c)$ has distinct
infinitely many Lagrangian fillings if
\[
\frac1a+\frac1b+\frac1c\le 1,
\]
or equivalently, the tripod $\quiver(a,b,c)$ is of infinite type. 
\end{theorem}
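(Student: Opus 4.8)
The plan is to produce, for every $r\ge 0$, the $N$-graph obtained by applying the Legendrian Coxeter mutation $r$ times to the tripod,
\[
(\ngraph_r,\nbasis_r):=\ncoxeter^{\,r}(\ngraph(a,b,c),\nbasis(a,b,c)),
\]
to check that each such $\ngraph_r$ is a legitimate $N$-graph bounding $\legendrian(a,b,c)$ — so that its Legendrian weave $\Legendrian(\ngraph_r)$ projects to an exact embedded Lagrangian filling $L_r$ — and finally to separate the $L_r$ from one another. The bipartiteness of $\quiver(a,b,c)$ and the self-similarity of $\ncoxeter$ displayed in Figure~\ref{figure:intro_Legendrian Coxeter mutation} drive the first two points, and Proposition~\ref{proposition:equivariance of mutations} together with Lemma~\ref{lemma:order of coxeter mutation} gives the last one.

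\textbf{Step 1 (the Legendrian Coxeter mutation never stalls).} Since $\quiver(a,b,c)$ is bipartite with parts $\nbasis_+$ and $\nbasis_-$, the one-cycles inside a single part are pairwise disjoint, so the Legendrian mutations $\{\mutation_\cycle:\cycle\in\nbasis_+\}$ can be performed simultaneously, and then $\{\mutation_\cycle:\cycle\in\nbasis_-\}$; a direct inspection of the local model around each $\mutation_\cycle$ shows that none of them meets a geometric obstruction on $\ngraph(a,b,c)$. Up to a sequence of Moves~\Move{II}, the result is the $N$-graph of Figure~\ref{figure:intro_Legendrian Coxeter mutation}: the original pair with the two colours of the good tuple interchanged, plus one extra grey-shaded annulus glued along $\sphere^1$. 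In particular $(\ngraph_1,\nbasis_1)$ again has a bipartite quiver, still has equal geometric and algebraic intersection numbers among its one-cycles, and still carries a fully Legendrian-mutable good tuple — exactly the list of hypotheses we began with. Hence $\ncoxeter$ may be applied again, geometrically by stacking one more grey annulus, and by induction on $r$ the pair $(\ngraph_r,\nbasis_r)$ is defined and bounds $\legendrian(a,b,c)$ for every $r\ge 0$; the construction of \cite{CZ2020} then turns $\Legendrian(\ngraph_r)$ into an exact embedded Lagrangian filling $L_r$ of $\legendrian(a,b,c)$.

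\textbf{Step 2 (the fillings are pairwise distinct).} Decorate $\ngraph_r$ with the microlocal monodromies along the cycles of $\nbasis_r$. By Proposition~\ref{proposition:equivariance of mutations} each Legendrian mutation in the construction realises the corresponding cluster mutation, so the resulting seed is precisely $\qcoxeter^{\,r}(\initialseed)$ in the cluster pattern of the root system $\Roots(a,b,c)$. The hypothesis $\frac1a+\frac1b+\frac1c\le 1$ says that the tripod $\quiver(a,b,c)$ is of infinite type, and then Lemma~\ref{lemma:order of coxeter mutation} tells us that $\qcoxeter$ — equivalently the Legendrian Coxeter mutation $\ncoxeter$ — has infinite order, i.e.\ the seeds $\qcoxeter^{\,r}(\initialseed)$ for $r\ge 0$ are pairwise distinct. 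Because the cluster seed attached to a Legendrian weave is an invariant of the underlying filling up to exact Lagrangian isotopy, distinct seeds can only come from fillings that are not exact Lagrangian isotopic; hence the $L_r$ are pairwise non-isotopic and $\legendrian(a,b,c)$ carries infinitely many exact embedded Lagrangian fillings.

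\textbf{Main obstacle.} The real work is Step 1: Legendrian mutability is a geometric condition which, unlike algebraic mutation, can genuinely fail, so one must verify by hand that applying $\ncoxeter$ to $\ngraph(a,b,c)$ creates no forbidden local configuration and returns the original picture up to Moves~\Move{II}, a colour swap, and an added annulus. Bipartiteness is what makes this feasible: it decouples the mutations within each colour class and keeps every local model in a short, checkable list, after which the self-similar form of the output propagates mutability to all $\ngraph_r$. On the cluster side, the subtle case of Lemma~\ref{lemma:order of coxeter mutation} is the affine boundary $\frac1a+\frac1b+\frac1c=1$, where the Coxeter transformation has spectral radius $1$ and yet is still of infinite order; this is precisely why the argument must be routed through the order of $\qcoxeter$ rather than through a growth estimate for cluster variables.
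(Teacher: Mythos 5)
Your proposal is correct and follows essentially the same route as the paper: the well-definedness of $\ncoxeter^r$ via the self-similar ``colour swap plus Coxeter padding'' structure (the paper's Proposition~\ref{proposition:effect of Legendrian Coxeter mutation}), the identification of the resulting seeds with $\qcoxeter^r(\initialseed)$ via Proposition~\ref{proposition:equivariance of mutations}/Corollary~\ref{corollary:Coxeter mutations}, infinitude of the orbit from Lemma~\ref{lemma:order of coxeter mutation} in the infinite-type case, and distinctness of the fillings from Corollary~\ref{corollary:distinct seeds imples distinct fillings}. The only cosmetic difference is that you verify the geometric mutability of the Coxeter step by direct local inspection using bipartiteness, whereas the paper packages this as the explicit Coxeter-padding computation; the content is the same.
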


\begin{theorem}[Theorem~\ref{theorem:ADE type}]
There are at least seeds many distinct exact embedded Lagrangian fillings for
Legendrian links of type $\dynADE$. 
\end{theorem}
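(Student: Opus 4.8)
The plan is to reduce the theorem to two statements: (1) each seed of a cluster pattern of type $\dynADE$ is realized by an $N$-graph obtained from $(\ngraph(a,b,c),\nbasis(a,b,c))$ by a sequence of Legendrian mutations, and (2) distinct seeds give rise to exact embedded Lagrangian fillings that are not exact Lagrangian isotopic. Granting these, the count follows since the number of seeds is finite for type $\dynADE$, and the three families $\quiver(1,b,c)$ with $b+c-1=n$, $\quiver(n-2,2,2)$, and $\quiver(2,3,n-3)$ realize Dynkin types $\dynA_n$, $\dynD_n$, $\dynE_n$ respectively, as recorded in the list of good properties of $\ngraph(a,b,c)$.

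The heart of the argument is (1), which I would prove by induction on the rank $n=a+b+c-2$ of the root system $\Roots(a,b,c)$, using the facet structure of the generalized associahedron $P(\Roots)$. By Proposition~\ref{prop_FZ_finite_type_Coxeter_element}, every facet of $P(\Roots)$ is obtained by applying some power of the Coxeter mutation to one of the facets corresponding to $P(\Roots([n]\setminus\{i\}))$; dually, every seed lies in the orbit under the Coxeter mutation $\qcoxeter$ of a seed belonging to a lower-rank sub-pattern. On the $N$-graph side, the key input is that the Legendrian Coxeter mutation $\ncoxeter$ on $(\ngraph(a,b,c),\nbasis(a,b,c))$ produces, up to Move~\Move{II}, the $N$-graph in Figure~\ref{figure:intro_Legendrian Coxeter mutation}, which differs from the original only by a color swap in the inner disk together with a stacked gray annulus; crucially, both the intersection pattern of the one-cycles in $\nbasis$ and the Legendrian mutability of each cycle are preserved. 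Therefore $\ncoxeter$ can be iterated indefinitely without geometric obstruction, and the bijection between the iterated $N$-graphs and the $\qcoxeter$-orbit of seeds is compatible with the equivariance of mutations in Proposition~\ref{proposition:equivariance of mutations}. For the base of the induction and for the lower-rank facets, I would observe that a facet $P(\Roots([n]\setminus\{i\}))$ corresponds to deleting a vertex from $\nbasis(a,b,c)$, which again yields a tripod $N$-graph of smaller rank (possibly after Move~\Move{II}), so the inductive hypothesis applies to it. Assembling these, every seed is reached from $(\ngraph(a,b,c),\nbasis(a,b,c))$ by an explicit, always-performable sequence of Legendrian mutations.

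For (2), I would use the standard fact that the moduli of constructible sheaves (equivalently, augmentations) adapted to $\legendrian(\boundary\ngraph)$ carries a cluster structure, and that an exact Lagrangian filling determines a cluster seed via the monodromies assigned to the one-cycles of $\Legendrian(\ngraph)$; two Lagrangian fillings that are exact Lagrangian isotopic induce the same point, hence the same seed, so distinct seeds force distinct fillings. The embeddedness and exactness of the fillings is automatic from the Legendrian weave construction of \cite{CZ2020}, since every $N$-graph in our orbit is planar and trivalent with the prescribed boundary.

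The main obstacle is making precise and verifying the claim that \emph{all} Legendrian mutations in the chosen sequences are geometrically realizable — i.e.\ that no one-cycle along which we must mutate is obstructed. This is exactly the ``delicate and challenging part'' flagged in the introduction: mutation in the cluster pattern is purely algebraic, whereas a Legendrian mutation requires the relevant one-cycle to sit in the weave in a form to which the local surgery model applies. The resolution is structural: the stacked-annulus picture of Figure~\ref{figure:intro_Legendrian Coxeter mutation} shows that after a Coxeter mutation the configuration $(\ngraph,\nbasis)$ is combinatorially the same as before up to color reversal, so mutability is an invariant of the orbit; combined with the inductive reduction to lower-rank tripods, this closes the only gap. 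I expect the routine-but-nontrivial bookkeeping to be the sequence of Move~\Move{II} (and possibly other moves in Figure~\ref{fig:move1-6}) needed to identify deleted-vertex subgraphs with genuine tripod $N$-graphs, and the verification that the quiver $\quiver(a,b,c)$ being bipartite makes the partition $\nbasis=\nbasis_+\sqcup\nbasis_-$ well-defined and the product defining $\ncoxeter$ independent of the order within each part.
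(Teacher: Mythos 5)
Your proposal is correct and follows essentially the same route as the paper: write every seed as $\mutation'(\qcoxeter^r(\initialseed))$ with $\mutation'$ inside a facet (Proposition~\ref{prop_FZ_finite_type_Coxeter_element} and Lemma~\ref{lemma:normal form}), realize $\ncoxeter^r$ on the standard tripod as Coxeter-padding concatenation on the color-reversed graph, reduce the remaining mutations by splitting along the untouched cycle $\cycle_i$ into lower-rank tripods/type-$\dynA$ pieces and induct, and distinguish fillings via $\Psi$ and Corollary~\ref{corollary:distinct seeds imples distinct fillings}. One small caution: as the paper remarks, $\mutation'$ and $\ncoxeter$ do not commute as operations on arbitrary $N$-graphs; what commutes is $\mutation'$ with the attachment of Coxeter paddings to the standard $(\ngraph(a,b,c),\nbasis(a,b,c))$, which your stated order of operations already respects.
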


There are several way of constructing exact embedded Lagrangian fillings as mentioned above.
Especially in $\dynD_4$ case, there are 34 distinct Lagrangian fillings constructed by the method of the alternating Legendrians in \cite{BFFH2018,STWZ2019},
while the above $N$-graphs give seeds many 50 Lagrangian fillings.

The remaining finite type Dynkin diagrams, which are non-simply
laced, are of type $\dynBCFG$, obtain by the folding procedure from type $\dynADE$, see
\S\ref{sec:folding}. By keep tracking the folding process, seeds and
mutations in $\dynB_n$, $\dynC_n$, $\dynF_4$, and $\dynG_2$ cluster patterns
can be regarded as certain subsets of seeds and sequences of mutations in
$\dynA_{2n-1}$, $\dynD_{n-1}$, $\dynE_6$, and $\dynD_4$, respectively. Those
specified seeds of type $\dynADE$ admit $N$-graphs with certain symmetries 
given by an action of a finite group $G$, and we call such seeds and $N$-graphs
\emph{$G$-admissible}. If a seed (or an $N$-graph) is again $G$-admissible 
after performing a sequence of mutations indexed by vertices in the same 
$G$-orbit, then we call it \emph{globally foldable} with respect to $G$.  

The following four $N$-graphs are examples of type $\dynBCFG$. 
Indeed, they are $\ngraph(1,3,3)$, $\ngraph(3,2,2)$, $\ngraph(2,3,3)$, and $\ngraph(2,2,2)$, respectively.
\[
\begin{tikzcd}[column sep=small, row sep=small]
\begin{tikzpicture}[baseline=-.5ex,scale=0.5]
\draw[orange, opacity=0.2, fill] (-90:3) arc(-90:90:3) (90:3) -- (0.5,0.5) -- (-0.5,-0.5) -- (-90:3);
\draw[violet, opacity=0.1, fill] (90:3) arc(90:270:3) (270:3) -- (-0.5,-0.5) -- (0.5,0.5) -- (90:3);
\draw[thick] (0,0) circle (3);
\draw[green,line cap=round, line width=5, opacity=0.5] (-1.5,0.5) -- (-0.5, -0.5) 
(0.5, 0.5) -- (1.5, -0.5);
\draw[yellow,line cap=round, line width=5, opacity=0.5] (-2.5,-0.5) -- (-1.5, 0.5) (-0.5, -0.5) -- (0.5, 0.5) (1.5, -0.5) -- (2.5, 0.5);
\draw[blue, thick, fill] (0:3) -- (2.5,0.5) circle (2pt) -- (45:3) (2.5,0.5) -- (1.5,-0.5) circle (2pt) -- (-45:3) (1.5,-0.5) -- (0.5,0.5) circle (2pt) -- (90:3) (0.5,0.5) -- (-0.5, -0.5) circle (2pt) -- (-90:3) (-0.5, -0.5) -- (-1.5, 0.5) circle (2pt) -- (135:3) (-1.5, 0.5) -- (-2.5, -0.5) circle (2pt) -- (-135:3);
\draw[blue, thick] (-2.5,-0.5) -- (-180:3);
\end{tikzpicture}
&
\begin{tikzpicture}[baseline=-.5ex,scale=0.5]
\draw[orange, opacity=0.2, fill] (0:3) arc(0:120:3) (120:3) -- (0,0) -- (0:3);
\draw[violet, opacity=0.1, fill] (0:3) -- (0,0) -- (-120:3) arc(-120:0:3) (0:3);
\draw[thick] (0,0) circle (3cm);
\draw[green, line cap=round, line width=5, opacity=0.5] (60:1) -- (45:2)  (180:1) -- (160:1.7) (300:1) -- (285:2);
\draw[yellow, line cap=round, line width=5, opacity=0.5] (0,0) -- (60:1) (0,0) -- (180:1) (0,0) -- (300:1)  (160:1.7) -- (180:2.4) ;
\draw[red, thick] (0,0) -- (0:3) (0,0) -- (120:3) (0,0) -- (240:3);
\draw[blue, thick, fill] 
(0,0) -- (60:1) circle (2pt) -- (90:3) 
(60:1) -- (45:2) circle (2pt) -- (30:3) 
(45:2) -- (60:3);
\draw[blue, thick, fill] 
(0,0) -- (180:1) circle (2pt) -- (216:3) 
(180:1) -- (160:1.7) circle (2pt) -- (144:3) 
(160:1.7) -- (180:2.4) circle (2pt) -- (192:3) 
(180:2.4) -- (168:3);
\draw[blue, thick, fill] 
(0,0) -- (300:1) circle (2pt) -- (330:3) 
(300:1) -- (285:2) circle (2pt) -- (270:3) 
(285:2) -- (300:3);
\draw[thick, fill=white] (0,0) circle (2pt);
\end{tikzpicture}
&
\begin{tikzpicture}[baseline=-.5ex,scale=0.5]
\draw[orange, opacity=0.2, fill] (0:3) arc(0:120:3) (120:3) -- (0,0) -- (0:3);
\draw[violet, opacity=0.1, fill] (0:3) -- (0,0) -- (-120:3) arc(-120:0:3) (0:3);
\draw[thick] (0,0) circle (3cm);
\draw[green, line cap=round, line width=5, opacity=0.5] (60:1) -- (40:1.7)  (180:1) -- (165:2) (280:1.7) -- (300:1);
\draw[yellow, line cap=round, line width=5, opacity=0.5] (0,0) -- (60:1) (0,0) -- (180:1) (0,0) -- (300:1) (40:1.7)--(60:2.4) (280:1.7)--(300:2.4);
\draw[red, thick] (0,0) -- (0:3) (0,0) -- (120:3) (0,0) -- (240:3);
\draw[blue, thick, fill] 
(0,0) -- (60:1) circle (2pt) -- (96:3) 
(60:1) -- (40:1.7) circle (2pt) -- (24:3) 
(40:1.7) -- (60:2.4) circle (2pt) -- (72:3)
(60:2.4) -- (48:3);
\draw[blue, thick, fill] 
(0,0) -- (180:1) circle (2pt) -- (210:3) 
(180:1) -- (165:2) circle (2pt) -- (150:3) 
(165:2) -- (180:3);
\draw[blue, thick, fill] 
(0,0) -- (300:1) circle (2pt) -- (336:3) 
(300:1) -- (280:1.7) circle (2pt) -- (264:3) 
(280:1.7) -- (300:2.4) circle (2pt) -- (312:3)
(300:2.4) -- (288:3);
\draw[thick, fill=white] (0,0) circle (2pt);
\end{tikzpicture}
&
\begin{tikzpicture}[baseline=-.5ex,scale=0.5]
\draw[orange, opacity=0.2, fill] (0:3) arc(0:120:3) (120:3) -- (0,0) -- (0:3);
\draw[violet, opacity=0.1, fill] (0:3) -- (0,0) -- (-120:3) arc(-120:0:3) (0:3);
\draw[blue, opacity=0.1, fill] (120:3) arc(120:240:3) (240:3) -- (0,0) -- (120:3);
\draw[thick] (0,0) circle (3cm);
\draw[green, line cap=round, line width=5, opacity=0.5] (60:1) -- (45:2)  (180:1) -- (165:2) (300:1) -- (285:2);
\draw[yellow, line cap=round, line width=5, opacity=0.5] (0,0) -- (60:1) (0,0) -- (180:1) (0,0) -- (300:1);
\draw[red, thick] (0,0) -- (0:3) (0,0) -- (120:3) (0,0) -- (240:3);
\draw[blue, thick, fill] 
(0,0) -- (60:1) circle (2pt) -- (90:3) 
(60:1) -- (45:2) circle (2pt) -- (30:3) 
(45:2) -- (60:3);
\draw[blue, thick, fill] 
(0,0) -- (180:1) circle (2pt) -- (210:3) 
(180:1) -- (165:2) circle (2pt) -- (150:3) 
(165:2) -- (180:3);
\draw[blue, thick, fill] 
(0,0) -- (300:1) circle (2pt) -- (330:3) 
(300:1) -- (285:2) circle (2pt) -- (270:3) 
(285:2) -- (300:3);
\draw[thick, fill=white] (0,0) circle (2pt);
\end{tikzpicture}\\
\begin{tikzpicture}[baseline=-.5ex,scale=0.7]
    
\node[Dnode] (a1) at (-2,0.5) {};
\node[Dnode] (a2) at (-1,0.5) {};
\node[Dnode] (a3) at (0,0) {};
\node[Dnode] (a4) at (-1,-0.5) {};
\node[Dnode] (a5) at (-2,-0.5) {};

\node[ynode] at (a1) {};
\node[gnode] at (a2) {};
\node[ynode] at (a3) {};
\node[gnode] at (a4) {};
\node[ynode] at (a5) {};

\draw (a1)--(a2)--(a3)--(a4)--(a5);

\node at (-3,0) {$\dynA_{5}$};
\node at (-1,-1.5) {\rotatebox[origin=c]{-90}{$\rightsquigarrow$}};

\node at (-3,-2) {$\dynB_{3}$};

\coordinate[Dnode] (1) at (-2,-2) {};
\coordinate[Dnode] (2) at (-1,-2) {};
\coordinate[Dnode] (3) at (0,-2) {};

\node[ynode] at (1) {};
\node[ynode] at (3) {};
\node[gnode] at (2) {};

\draw (1)--(2);
\draw[double line] (2)-- ++ (3) node[midway,yshift=-0.1cm ] {$>$};

\end{tikzpicture}
&
\begin{tikzpicture}[baseline=-.5ex,scale=0.7]

\coordinate[Dnode] (d1) at (-1,0) {};
\coordinate[Dnode] (d2) at (0,0) {};
\coordinate[Dnode] (d3) at (1,0) {};
\coordinate[Dnode] (d4) at (2,0.5) {};
\coordinate[Dnode] (d5) at (2,-0.5) {};

\node[ynode] at (d1) {};
\node[ynode] at (d3) {};
\node[gnode] at (d2) {};
\node[gnode] at (d4) {};
\node[gnode] at (d5) {};

\draw  (d1)--(d2)--(d3)--(d4)
(d3)--(d5);

\node at (-2,0) {$\dynD_{5}$};
\node at (0.5,-1.5) {\rotatebox[origin=c]{-90}{$\rightsquigarrow$}};

\node at (-2,-2) {$\dynC_{4}$};
\coordinate[Dnode] (1) at (-1,-2) {};
\coordinate[Dnode] (2) at (0,-2) {};
\coordinate[Dnode] (3) at (1,-2) {};
\coordinate[Dnode] (4) at (2,-2) {};

\node[ynode] at (1) {};
\node[gnode] at (2) {};
\node[ynode] at (3) {};
\node[gnode] at (4) {};

\draw (1)--(2)--(3) ;
\draw[double line] (3)-- ++ (4) node[midway,yshift=-0.1cm ] {$<$};

\end{tikzpicture}
&
\begin{tikzpicture}[baseline=-.5ex,scale=0.7]

\coordinate[Dnode] (e1) at (-2,0) {};
\coordinate[Dnode] (e2) at (-1,0) {};
\coordinate[Dnode] (e3) at (0,0.5) {};
\coordinate[Dnode] (e4) at (1,0.5) {};
\coordinate[Dnode] (e5) at  (0,-0.5) {};
\coordinate[Dnode] (e6) at  (1,-0.5) {};

\foreach \y in {e2,e4,e6} {
	\node[ynode] at (\y) {};
}
\foreach \g in {e1,e3,e5}{
	\node[gnode] at (\g) {};
}
\draw (e1)--(e2)--(e3)--(e4)
(e2)--(e5)--(e6);

\node at (-3,0) {$\dynE_{6}$};
\node at (-0.5,-1.5) {\rotatebox[origin=c]{-90}{$\rightsquigarrow$}};

\node at (-3,-2) {$\dynF_{4}$};
\coordinate[Dnode] (1) at (-2,-2) {};
\coordinate[Dnode] (2) at (-1,-2) {};
\coordinate[Dnode] (3) at (0,-2) {};
\coordinate[Dnode] (4) at (1,-2) {};

\foreach \y in {2,4} {
	\node[ynode] at (\y) {};
}
\foreach \g in {1,3}{
	\node[gnode] at (\g) {};
}

\draw (1)--(2)
(3)--(4);
\draw[double line] (2)-- ++ (3) node[midway,yshift=-0.1cm ] {$<$};
\end{tikzpicture}
&
\begin{tikzpicture}[baseline=-.5ex,scale=0.7]
\tikzstyle{triple line} = [
double distance = 2pt, 
double=\pgfkeysvalueof{/tikz/commutative diagrams/background color}
]
\coordinate[Dnode] (d1) at (-2,0) {};
\coordinate[Dnode] (d2) at  (-1,0)  {};
\coordinate[Dnode] (d3) at (-1,0.5) {};
\coordinate[Dnode] (d4) at (-1,-0.5) {};

\foreach \y in {d1} {
	\node[ynode] at (\y) {};
}
\foreach \g in {d2,d3,d4}{
	\node[gnode] at (\g) {};
}

\draw (d1)--(d2)
(d1)--(d3)
(d1)--(d4);

\node at (-3,0) {$\dynD_{4}$};
\node at (-1.5,-1.5) {\rotatebox[origin=c]{-90}{$\rightsquigarrow$}};

\node at (-3,-2) {$\dynG_{2}$};
\coordinate[Dnode] (1) at (-2,-2) {};
\coordinate[Dnode] (2) at (-1,-2) {};

\node[ynode] at (1) {};
\node[gnode] at (2) {};

\draw[triple line] (1)-- ++ (2) node[midway,yshift=-0.1cm ] {$<$};
\draw (1)--(2);

\end{tikzpicture}
\end{tikzcd}
\]

The colored regions represent how the group $G$ acts on the $N$-graphs and
the induced Lagrangian fillings. The first three $N$-graphs are globally
foldable with respect to $\Z/2\Z$ by folding {\color{orange!70!red} orange}- and
{\color{violet} violet}-colored regions in an orientation preserving way.
Similarly, the $\Z/3\Z$-rotational symmetry of the last one implies that it is
globally foldable with respect to $\Z/3\Z$ by folding three colored regions.

\begin{theorem}[Theorem~\ref{theorem:BCFG type}]
The following holds:
\begin{enumerate}
\item The Legendrian link $\lambda(\dynA_{2n-1})$ has $\binom{2n}{n}$ $\Z/2\Z$-admissible $N$-graphs which admits the cluster pattern of type $\dynB_n$.
\item The Legendrian link $\lambda(\dynD_{n+1})$ has $\binom{2n}{n}$ $\Z/2\Z$-admissible $N$-graphs which admits the cluster pattern of type $\dynC_n$.
\item The Legendrian link $\lambda(\dynE_{6})$ has $105$ $\Z/2\Z$-admissible $N$-graphs which admits the cluster pattern of type $\dynF_4$.
\item The Legendrian link $\lambda(\dynD_{4})$ has $8$ $\Z/3\Z$-admissible $N$-graphs which admits the cluster pattern of type $\dynG_2$.
\end{enumerate}
\end{theorem}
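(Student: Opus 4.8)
The plan is to reduce to the simply-laced case (Theorem~\ref{theorem:ADE type}) by running the $N$-graph construction $G$-equivariantly and matching the resulting $N$-graphs with the seeds of the folded cluster pattern via the folding procedure of \S\ref{sec:folding}; here $G=\Z/2\Z$ in cases (1)--(3) and $G=\Z/3\Z$ in case (4). First I would fix the initial globally foldable $N$-graphs: $\ngraph(1,n,n)$, $\ngraph(n-1,2,2)$, $\ngraph(2,3,3)$, and $\ngraph(2,2,2)$, of Dynkin types $\dynA_{2n-1}$, $\dynD_{n+1}$, $\dynE_6$, and $\dynD_4$ respectively (for $n=3,4$ these are the four $N$-graphs displayed above). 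In each case $G$ acts on $\disk^2$ by permuting the arms of equal length; this action preserves $\ngraph(a,b,c)$ and permutes the good tuple $\nbasis(a,b,c)$, so the quiver $\quiver(a,b,c)$ is $G$-symmetric, and one reads off from Figure~\ref{figure:intro_tripod} that collapsing $G$-orbits produces the Dynkin quiver of the claimed non-simply-laced type. Hence the initial $N$-graph is $G$-admissible.

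Next I would show that the Legendrian Coxeter mutation $\ncoxeter=\prod_{\cycle\in\nbasis_-}\mutation_{\cycle}\cdot\prod_{\cycle\in\nbasis_+}\mutation_{\cycle}$ is $G$-equivariant on globally foldable $N$-graphs. Since $\quiver(a,b,c)$ is bipartite and $G$ acts by quiver automorphisms, the bipartition $\nbasis=\nbasis_+\sqcup\nbasis_-$ is $G$-invariant, so $\ncoxeter$ is a composite of Legendrian mutations along a $G$-invariant collection of one-cycles. Using the equivariance of Legendrian mutations (Proposition~\ref{proposition:equivariance of mutations}) together with the explicit local model of $\ncoxeter(\ngraph(a,b,c),\nbasis(a,b,c))$ in Figure~\ref{figure:intro_Legendrian Coxeter mutation}, whose appended gray annulus carries the same $G$-symmetry, I would conclude that $\ncoxeter$ preserves $G$-admissibility; moreover, after collapsing $G$-orbits it descends precisely to the Coxeter mutation of the folded cluster pattern, since the latter is represented by the sequence of mutations along $\nbasis_+$ then $\nbasis_-$ modulo $G$-orbits.

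Then I would induct on the rank of the folded root system, as in the $\dynADE$ argument. By Proposition~\ref{prop_FZ_finite_type_Coxeter_element} applied to the folded root system, every facet of the folded generalized associahedron is reached either by iterating the folded Coxeter mutation from the initial seed, or from a facet corresponding to a lower-rank folded subdiagram; the former are realized by $\ncoxeter^r$ on the initial $G$-admissible $N$-graph by the previous step, and the latter by the $G$-equivariant version of the $N$-graph reduction used for lower-rank facets in the $\dynADE$ case, which yields a lower-rank globally foldable $N$-graph to which the inductive hypothesis applies. Thus every seed of the folded cluster pattern is realized by a $G$-admissible $N$-graph, and these $N$-graphs are pairwise distinct since their images in the $\dynADE$ exchange graph are distinct seeds (Theorem~\ref{theorem:ADE type}). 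Finally, the standard enumeration of clusters in finite type gives $|E(\dynB_n)|=|E(\dynC_n)|=\binom{2n}{n}$, $|E(\dynF_4)|=105$, and $|E(\dynG_2)|=8$, which yields the stated counts.

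The main obstacle, exactly as in the simply-laced case, is the \emph{geometric} realizability of these mutation sequences: one must verify that no Legendrian mutation appearing in the folded Coxeter mutation or in the lower-rank reductions is obstructed and --- the genuinely new difficulty here --- that all of them can be performed $G$-equivariantly, so that global foldability is not lost anywhere along the induction. Checking that the appended annuli and the intermediate weave moves (Move~\Move{II}) can always be chosen $G$-symmetric is where most of the work will lie.
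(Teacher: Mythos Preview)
Your overall strategy coincides with the paper's: fix the $G$-symmetric tripod $N$-graph, show the Legendrian Coxeter mutation respects the $G$-symmetry (via the $G$-invariance of the Coxeter padding annulus), and induct on the rank of the folded root system using Proposition~\ref{prop_FZ_finite_type_Coxeter_element}. The paper organizes this as a separate global-foldability theorem for $N$-graphs which then feeds into the conclusion via Proposition~\ref{proposition:folded cluster pattern}; your proposal folds these two steps together, but the skeleton is the same.

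There is, however, one genuine piece of work you have not accounted for. You implicitly assume that a $G$-admissible $N$-graph (one with the rotational or partial-rotational symmetry) automatically produces a $G$-admissible \emph{quiver} in the sense of Definition~\ref{definition:admissible quiver}. Conditions (a)--(c) do follow from the symmetry of the $N$-graph, but condition~(d), namely $b_{i,j}b_{i',j}\ge 0$ whenever $i\sim i'$, does not: it asserts that two vertices in the same $G$-orbit never sit on opposite sides of a third vertex, and this could in principle fail after mutation even while the $N$-graph remains perfectly $G$-symmetric. Without~(d) the orbit mutation $\mu_I=\prod_{i\in I}\mu_i$ is not guaranteed to be well-defined or to commute with folding, so you cannot invoke Proposition~\ref{proposition:folded cluster pattern} to conclude that the $G$-admissible $N$-graphs carry a cluster pattern of type~$\dynY$. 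The paper handles this in separate propositions (one for each of $\dynA_{2n-1}$, $\dynD_4$, $\dynD_{n+1}$, $\dynE_6$), reducing~(d) to the nonexistence of oriented $4$-cycles of the form $j\to i\to\tau(j)\to\tau(i)\to j$ in the mutated quiver; this in turn is ruled out using structural facts about quivers mutation-equivalent to a finite-type Dynkin quiver (e.g.\ minimal cycles in type~$\dynA$ have length~$3$, and for $\dynE_6$ an exhaustive case check is carried out). You should either supply this argument or cite the known algebraic global foldability of the relevant $\dynADE$ quivers and explain why that, combined with your $N$-graph symmetry, is enough.
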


\subsection*{Acknowledgement}
B. An was supported by Kyungpook National University Research Fund, 2020.
Y. Bae was supported by the National Research Foundation of Korea (NRF) grant funded by the Korea government (MSIT) (No. 2020R1A2C1A0100320).
E. Lee was supported by IBS-R003-D1.

\section{Legendrians and \texorpdfstring{$N$}{N}-graphs}

We recall from \cite{CZ2020} the notion of $N$-graphs and their combinatorial moves which encode the Legendrian isotopy data of corresponding Legendrian surfaces. As an application, we review how $N$-graphs can be use to find and to distinguish Lagrangian fillings for Legendrian links.

\subsection{Geometric setup}\label{sec:geometric setup}
Let us start with the standard contact structure on $\R^3$ whose contact structure is given by $\xi_{\rm st}^3=\ker(dz-ydx)$. Consider the symplectization $(\R^4, d(e^s(dz-ydx)))$ and its contactization $(\R^5,\alpha_{\rm st}^5=e^s(dz-ydx)-dt)$.

Now consider a contact 3-dimensional space 
\[
(\R^3_{\ell},\xi_{\rm st}^3):=\{(x,y,z,s,t)\in\R^5 \mid (s,t)=(\ell,1)\}
\]
for each symplectization level $s=\ell$. Take Legendrians $\legendrian_1\subset (\R^3_1,\xi_{\rm st}^3)$ and $\legendrian_2\subset (\R^3_2,\xi_{\rm st}^3)$
and consider a Legendrian surface 
\[
\Legendrian \subset (\R^5,\xi_{\rm st}^5=\ker\alpha_{\rm st}^5) \cap \{1 \leq s\leq2\}
\] whose boundary is $\legendrian_1\cup \legendrian_2$, i.e., 
\begin{align*}
\Legendrian \cap (\R_1^3,\xi_{\rm st}^3)&= \legendrian_1,&
\Legendrian \cap (\R_2^3,\xi_{\rm st}^3)&= \legendrian_2.
\end{align*}

Let $\pi:\R^5\to \R^4$ be the projection along the contactization coordinate $t$,
then $\pi(\Legendrian)$ becomes an exact Lagrangian (possibly immersed) cobordism from $\pi(\legendrian_2)$ to $\pi(\legendrian_1)$.
Note that $\partial_t$ is the Reeb vector field of $(\R^5,\alpha_{\rm st}^5)$ and the above immersed points on $\pi(\Legendrian)$ correspond to Reeb chords in $\Legendrian$.

Relating the construction of the current article, any Legendrian link in $(\R^3,\xi_{\rm st}^3)$ can be seen as a satellite link of the standard Legendrian unknot $\legendrian_{\rm unknot}\subset (\R^3,\xi_{\rm st}^3)$. Note that a neighborhood of $\legendrian_{\rm unknot}$ is contactomorphic to $(J^1\sphere^1, \ker(dz-p_\theta d\theta))$. Denote the corresponding contact embedding by $\iota:J^1\sphere^1\hookrightarrow\R^3$. 

Let us denote the corresponding satellite links of $\legendrian_1,\legendrian_2$ by 
\begin{align*}
\legendrian_{\beta_i}=\cl(\beta_i)\subset (J^1(\sphere^1\times \{i\}),\ker(dz-p_\theta d\theta)), \quad i=1,2.
\end{align*}
Here $\beta_i$ is a positive braid word for the satellite link of $\legendrian_i$, and $\cl(\beta)$ denotes the closure of a braid word $\beta$. Extending the contact embedding $\iota:J^1\sphere^1\hookrightarrow\R^3$, by abuse of notation, we have $\iota:J^1(\sphere^1 \times [1,2]) \hookrightarrow\R^5\cap \{1 \leq s\leq2\}$. Denote the corresponding Legendrian surface by 
\[
\Legendrian\subset(J^1(\sphere^1 \times [1,2]),\ker(dz-p_\theta d\theta -p_\sigma d\sigma)),
\]
where $\sigma$ is the coordinate for the interval $[1,2]$ which corresponds to the $e^s$-coordinate.
By a strict contactomorphism, we can regard
\[
\Legendrian\subset (J^1\sphere^1 \times \R_s\times \R_t, \ker(e^s(dz- p_\theta d\theta)-dt)).
\]
Then its Lagrangian projection $\pi\circ \iota(\Legendrian)$ gives an exact Lagrangian cobordism from $\legendrian_2$ to $\legendrian_1$, where $\pi$ is the projection along the $t$-coordinate.
Especially when $\legendrian_1=\emptyset$, the boundary $J^1(\sphere^1\times \{1\})$ of $J^1(\sphere^1 \times [1,2])$ can be compactified by $J^1\disk^2$.
Under the Lagrangian projection, this corresponds to a exact symplectic filling $(\disk^4,\omega_{\rm st})$ of $(\sphere^3,\xi_{\rm st})=(\R^3_{1},\xi_{\rm st}^3)\cup\{\infty\}$.
We end this section by stating the relation between Legendrian- and Lagrangian fillings.

\begin{lemma}\label{lem:legendrian and lagrangian}
As in the above setup, let $\legendrian_\beta \subset J^1\sphere^1$ be a Legendrian link, and let $\iota(\legendrian_\beta)$ be an induced Legendrian link in $(\sphere^3,\xi_{\rm st})$. Let $\Legendrian,\Legendrian'\subset J^1\disk^2$ be two Legendrian surfaces, without Reeb chords, bounding $\legendrian_\beta$. If the corresponding exact Lagrangian fillings $\pi\circ\iota(\Legendrian), \pi\circ\iota(\Legendrian')\subset (\disk^4,\omega_{\rm st})$ of $\iota(\legendrian_\beta)$ are exact Lagrangian isotopic relative to the boundary, then $\Legendrian, \Legendrian'$ are Legendrian isotopic relative to the boundary.
\end{lemma}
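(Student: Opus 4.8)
The plan is to run the argument in reverse through the satellite/contactization picture set up above, turning an exact Lagrangian isotopy downstairs into a Legendrian isotopy upstairs. First I would observe that the hypothesis gives a smooth family of exact Lagrangian embeddings $L_\tau \subset (\disk^4,\omega_{\rm st})$, $\tau \in [0,1]$, with $L_0 = \pi\circ\iota(\Legendrian)$, $L_1 = \pi\circ\iota(\Legendrian')$, all sharing the fixed Legendrian boundary $\iota(\legendrian_\beta) \subset (\sphere^3,\xi_{\rm st})$. Exactness means each primitive $df_\tau = \theta_{\rm st}|_{L_\tau}$ admits a function $f_\tau$, and since $L_\tau$ has a single boundary component the $f_\tau$ may be chosen to vanish on $\partial L_\tau$ and to depend smoothly on $\tau$. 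The key point is that the contactization coordinate $t$ is recovered precisely from such a primitive: the Legendrian lift of an exact Lagrangian $L$ into $(\R^5, \alpha_{\rm st}^5 = e^s(dz-ydx)-dt)$ — equivalently into $J^1\disk^2$ — is the graph $\{t = f\}$ over $L$. Thus the family $f_\tau$ produces a canonical family of Legendrian lifts $\widetilde{L}_\tau \subset J^1\disk^2$ with $\widetilde{L}_0 = \Legendrian$, $\widetilde{L}_1 = \Legendrian'$.

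Next I would check that this family is actually a Legendrian \emph{isotopy}, i.e. that each $\widetilde L_\tau$ is embedded and that the family varies smoothly and preserves the contact condition. The contact condition is automatic: a graph over an exact Lagrangian with primitive $f_\tau$ is Legendrian for $e^s(dz-ydx)-dt$ by construction. Embeddedness is where the no-Reeb-chords hypothesis on $\Legendrian,\Legendrian'$ enters — but in fact it must be maintained \emph{throughout} the isotopy, not just at the endpoints. Since $\partial_t$ is the Reeb field, a self-intersection of $\widetilde L_\tau$ occurs exactly when two points of $L_\tau$ with the same $\R^4$-coordinates have the same value of $f_\tau$; as $L_\tau$ is an \emph{embedded} Lagrangian for every $\tau$, there are no such coincident pairs at all, so each $\widetilde L_\tau$ is automatically embedded. (This is the reason the lemma can be stated without any genericity or Reeb-chord hypothesis on the intermediate surfaces.) Finally, because the isotopy $L_\tau$ is relative to the boundary and $f_\tau \equiv 0$ on $\partial L_\tau$, the lift restricts to the fixed Legendrian $\iota(\legendrian_\beta)$ on the boundary throughout, so $\widetilde L_\tau$ is a Legendrian isotopy relative to the boundary. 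Pulling back along the contact embedding $\iota$ (and using the strict contactomorphism identifications of \S\ref{sec:geometric setup}) transports this to the desired Legendrian isotopy between $\Legendrian$ and $\Legendrian'$ in $J^1\disk^2$.

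The main obstacle is the smooth and boundary-controlled choice of primitives $f_\tau$: one must verify that $H^1(L_\tau;\R)$ does not obstruct choosing $f_\tau$ coherently in $\tau$, and that the normalization on the boundary can be imposed uniformly. Here the topology of the filling is used — $L_\tau$ is a once-punctured surface, so $H^1$ is free and the primitive is unique up to an additive constant on each component; with one boundary circle the constant is pinned down by the normalization $f_\tau|_{\partial L_\tau}=0$, and smoothness in $\tau$ follows from smoothness of the isotopy together with a standard integration argument (integrate $\theta_{\rm st}|_{L_\tau}$ along paths from a boundary basepoint). A secondary point to be careful about is the passage between the three models in \S\ref{sec:geometric setup} — $J^1(\sphere^1\times[1,2])$, the slice $\{1\le s\le 2\}$ in $\R^5$, and $J^1\sphere^1\times\R_s\times\R_t$ — and the compactification $J^1(\sphere^1\times\{1\})\rightsquigarrow J^1\disk^2$; but these are all strict contactomorphisms already recorded above, so no new geometric input is needed, only bookkeeping.
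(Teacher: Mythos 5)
The paper actually states this lemma without giving a proof, so there is no internal argument to compare against; your lifting strategy (recover the Reeb/contactization coordinate as a primitive of the Liouville form and lift the exact isotopy graphically) is certainly the intended folklore route, and most of your steps—the graph over an embedded exact Lagrangian is an embedded Legendrian, the contact condition is automatic, Reeb chords of $\Legendrian$ correspond to double points of the projection—are correct.

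However, there is a genuine gap at the boundary-normalization step. You assert that ``$L_\tau$ has a single boundary component,'' but $\legendrian_\beta$ is a Legendrian \emph{link} in this paper (e.g.\ $\legendrian(2,2,2)$, of type $\dynD_4$, has three components), so the fillings generically have several boundary circles. On a connected filling the primitive $f_\tau$ is determined by $L_\tau$ up to a \emph{single} additive constant; its values on the various boundary components are then forced by $L_\tau$ itself, and you cannot simply ``choose $f_\tau$ to vanish on $\partial L_\tau$.'' Consequently the lifted family need not fix the boundary at intermediate times: writing $f_\tau(p)-f_\tau(q)=\int_{\gamma_\tau}\theta_{\rm st}$ for a path between two boundary components, one computes $\frac{d}{d\tau}\bigl(f_\tau(p)-f_\tau(q)\bigr)=h_\tau(p)-h_\tau(q)$, where $dh_\tau=(\iota_{X_\tau}\omega)|_{L_\tau}$, and since $X_\tau=0$ on the boundary only forces $h_\tau$ to be \emph{locally} constant there, this difference can drift even for an isotopy rel boundary (it is pinned only at $\tau=0,1$, where the lifts are $\Legendrian$ and $\Legendrian'$). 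So your construction yields a Legendrian isotopy in which the boundary components translate in the Reeb direction and return, not an isotopy rel boundary. To close the gap you need an extra argument, for instance: correct the Lagrangian isotopy by composing with flows of ambient Hamiltonians that are locally constant (with distinct constants) near the different boundary components—these fix $\partial L_\tau$ pointwise while shifting the relative values of the primitive—so as to cancel the drift; or show directly that a Legendrian isotopy whose boundary motion is a null-homotopic loop of Reeb translations can be improved, via an extension/contact-isotopy argument, to one fixing the boundary. In the knot case your argument is essentially complete, but as stated it does not prove the lemma for the links the paper is actually about.
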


\subsection{\texorpdfstring{$N$}{N}-graphs and Legendrian weaves}

\begin{definition}\cite[Definition~2.2]{CZ2020}
An  $N$-graph $\ngraph$ on a smooth surface $S$ is an $(N-1)$-tuple of graphs $(\ngraph_1,\dots, \ngraph_{N-1})$ satisfying the following conditions:
\begin{enumerate}
\item Each graph $\ngraph_i$ is embedded, trivalent, possibly empty and non necessarily connected.
\item Any consecutive pair of graphs $(\ngraph_i,\ngraph_{i+1})$, $1\leq i \leq N-2$, intersects only at hexagonal points depicted as in Figure~\ref{fig:hexagonal_point}.
\item Any pair of graphs $(\ngraph_i, \ngraph_j)$ with $1\leq i,j\leq N-1$ and $|i-j|>1$  intersects transversely at edges.
\end{enumerate}
\end{definition}

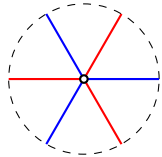
\begin{figure}[ht]
\begin{tikzpicture}
\begin{scope}
\draw[dashed] (0,0) circle (1cm);
\draw[red, thick] (60:1)--(0,0) (180:1)--(0,0) (-60:1)--(0,0);
\draw[blue, thick] (0:1)--(0,0) (120:1)--(0,0) (240:1)--(0,0);
\draw[thick,black,fill=white] (0,0) circle (0.05);
\end{scope}
\end{tikzpicture}
\caption{A hexagonal point}
\label{fig:hexagonal_point}
\end{figure}

\begin{remark}
For the result of the current article, we mainly consider the case $N=3$ and $S=\disk^2$.
In other words, we focus bicolored graphs with monochromatic trivalent vertices and bichromatic hexagonal points as in Figure~\ref{fig:hexagonal_point}.
\end{remark}

For any $N$-graph $\ngraph$ on a surface $S$, we associate a Legendrian surface $\Legendrian(\ngraph)\subset J^1S $. Basically, we construct the Legendrian surface by weaving the wavefronts in $S \times \R$ constructed from a local chart of $S$. 

Let $\ngraph\subset S$ be an $N$-graph.
A finite cover $\{U_i\}_{i\in I}$ is called {\em $\ngraph$-compatible} if
\begin{enumerate}
\item each $U_i$ is diffeomorphic to the open disk $\mathring{\disk}^2$,
\item $U_i \cap \ngraph$ is connected, and
\item $U_i \cap \ngraph$ contains at most one vertex.
\end{enumerate}

For each $U_i$, we associate a wavefront $\wavefront(U_i)\subset U_i\times \R \subset S\times \R$.
Note that there are only four types of local charts for any $N$-graph $\ngraph$ as follows:
\begin{enumerate}[Type 1.]
\item A chart without any graph component whose corresponding wavefront becomes
\[
\bigcup_{i=1,\dots,N}\mathring{\disk}^2\times\{i\}\subset \mathring{\disk}^2\times \R.
\]

\item A chart with single edge. The corresponding wavefront is the union of the $\dynA_1^2$-germ along the two sheets $\mathring\disk^2\times \{i\}$ and $\mathring\disk^2\times\{i+1\}$, and trivial disks $\disk^2\times\{i\}$, $i\in \{1,\dots,N\}\setminus\{i,i+1\}$.
The local model of $\dynA_1^2$ comes from the origin of the singular surface
\[
\wavefront(\dynA_1^2)=\{(x,y,z)\in \R^3 \mid x^2-z^2=0\}
\]
See Figure~\ref{fig:A_1^2 germ}.

\item A chart with a monochromatic trivalent vertex whose wavefront is the union of the $\dynD_4^-$-germ, see \cite[\S2.4]{Arn1990}, and trivial disks $\disk^2\times\{i\}$, $i\in \{1,\dots,N\}\setminus\{i,i+1\}$.
The local model for Legendrian singularity of type $\dynD_4^-$ is given by the image at the origin of 
\begin{align*}
\delta_4^-:\R^2\to \R^3:(x,y)\mapsto \left( x^2-y^2, 2xy, \frac{2}{3}(x^3-3xy^2) \right).
\end{align*}
See Figure~\ref{fig:D_4^- germ}.

\item A chart with a bichromatic hexagonal point. The induced wavefront is the union of the $\dynA_1^3$-germ along the three sheets $\mathring\disk^2\times \{*\}$, $*=i,i+1,i+2$, and the trivial disks $\disk^2\times\{i\}$, $i\in \{1,\dots,N\}\setminus\{i,i+1,i+2\}$. The local model of $\dynA_1^3$ is given by the origin of the singular surface
\[
\{(x,y,z)\in \R^3 \mid (x^2-z^2)(y-z)=0\}.
\]
See Figure~\ref{fig:A_1^3 germ}.
\end{enumerate}

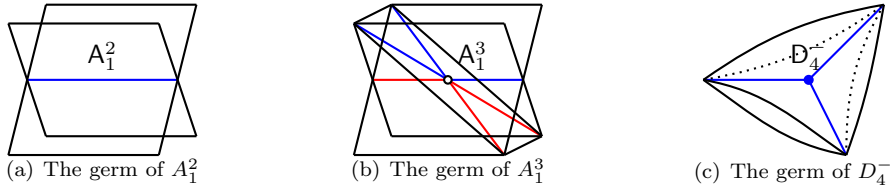
\begin{figure}[ht]
\subfigure[The germ of $A_1^2$\label{fig:A_1^2 germ}]{\makebox[0.3\textwidth]{$
\begin{tikzpicture}[baseline=-.5ex,scale=1]
\begin{scope}
\draw[blue, thick] (-1,0)--(1,0) node[black,above, midway] {$\dynA_1^2$};
\draw[thick] (-3/4,1)--(5/4,1);
\draw[thick] (5/4,1)--(3/4,-1);
\draw[thick] (3/4,-1)--(-5/4,-1);
\draw[thick] (-5/4,-1)--(-3/4,1);
\draw[thick] (-5/4,3/4)--(3/4,3/4);
\draw[thick] (3/4,3/4)--(5/4,-3/4);
\draw[thick] (5/4,-3/4)--(-3/4,-3/4);
\draw[thick] (-3/4,-3/4)--(-5/4,3/4);
\end{scope}
\end{tikzpicture}
$}}
\subfigure[The germ of $A_1^3$\label{fig:A_1^3 germ}]{\makebox[0.3\textwidth]{$
\begin{tikzpicture}[baseline=-.5ex,scale=1]
\begin{scope}
\draw[blue, thick] (0,0)--(1,0);
\draw[blue, thick] (-5/4,3/4)--(0,0);
\draw[blue, thick] (-3/4,1)--(0,0);

\draw[red, thick] (-1,0)--(0,0);
\draw[red, thick] (0,0)--(5/4,-3/4);
\draw[red, thick] (0,0)--(3/4,-1);

\draw[thick] (-3/4,1)--(5/4,1);
\draw[thick] (5/4,1)--(3/4,-1);
\draw[thick] (3/4,-1)--(-5/4,-1);
\draw[thick] (-5/4,-1)--(-3/4,1);
\draw[thick,black,fill=white] (0,0) circle (0.05) node[above right] {$\dynA_1^3$};
\draw[thick] (-5/4,3/4)--(3/4,3/4);
\draw[thick] (3/4,3/4)--(5/4,-3/4);
\draw[thick] (5/4,-3/4)--(-3/4,-3/4);
\draw[thick] (-3/4,-3/4)--(-5/4,3/4);

\draw[thick] (-5/4,3/4)--(-3/4,1);
\draw[thick] (-3/4,1)--(5/4,-3/4);
\draw[thick] (5/4,-3/4)--(3/4,-1);
\draw[thick] (3/4,-1)--(-5/4,3/4);
\end{scope}
\end{tikzpicture}
$}}
\subfigure[The germ of $D_4^-$\label{fig:D_4^- germ}]{\makebox[0.3\textwidth]{$
\begin{tikzpicture}[baseline=-.5ex,scale=1]
\begin{scope}
\draw[blue, thick, fill] (-1.4,0)--(0,0) circle (1.5pt);
\draw[blue, thick] (0,0)--(1,1);
\draw[blue, thick] (0,0)--(1/2,-1);
\node[above] at (0,0) {$\dynD_4^-$};

\draw[thick] (-1.4,0) to[out=35,in=190] (1,1);
\draw[dotted, thick] (-1.4,0) to[out=10,in=215] (1,1);

\draw[thick] (-1.4,0) to[out=-35,in=170] (1/2,-1);
\draw[thick] (-1.4,0) to[out=-10,in=145] (1/2,-1);

\draw[dotted,thick] (1/2,-1) to[out=90,in=240] (1,1);
\draw[thick] (1/2,-1) to[out=70,in=260] (1,1);
\end{scope}
\end{tikzpicture}
$}}

\caption{Three-types of wavefronts of Legendrian singularities.}
\label{fig:legendrian_singularities}
\end{figure}

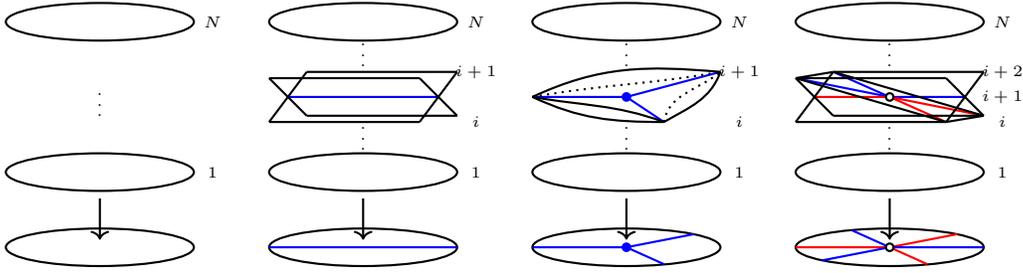
\begin{figure}[ht]
\begin{tikzpicture}

\begin{scope}[xshift=-3.5cm]
\draw[thick] \boundellipse{0,1}{1.25}{0.25};

\draw[thick] \boundellipse{0,-1}{1.25}{0.25};
\draw[thick] \boundellipse{0,-2}{1.25}{0.25};
\draw[thick, ->] (0,-1.35)--(0,-1.9);

\node at (1.5,1){\tiny $N$};
\node at (0,0){\tiny $\vdots$};
\node at (1.5,-1){\tiny $1$};

\end{scope}

\begin{scope}
\draw[thick] \boundellipse{0,1}{1.25}{0.25};

\draw[blue, thick] (-1,0)--(1,0);
\draw[thick] (-3/4,1/3)--(5/4,1/3);
\draw[thick] (5/4,1/3)--(3/4,-1/3);
\draw[thick] (3/4,-1/3)--(-5/4,-1/3);
\draw[thick] (-5/4,-1/3)--(-3/4,1/3);
\draw[thick] (-5/4,3/12)--(3/4,3/12);
\draw[thick] (3/4,3/12)--(5/4,-3/12);
\draw[thick] (5/4,-3/12)--(-3/4,-3/12);
\draw[thick] (-3/4,-3/12)--(-5/4,3/12);

\draw[thick] \boundellipse{0,-1}{1.25}{0.25};
\draw[thick] \boundellipse{0,-2}{1.25}{0.25};
\draw[blue, thick] (-5/4,-2)--(5/4,-2);
\draw[thick, ->] (0,-1.35)--(0,-1.9);

\node at (1.5,1){\tiny $N$};
\node at (0,2/3){\tiny $\vdots$};
\node at (1.5,1/3){\tiny $i+1$};
\node at (1.5,-1/3){\tiny $i$};
\node at (0,-0.45){\tiny $\vdots$};
\node at (1.5,-1){\tiny $1$};

\end{scope}

\begin{scope}[xshift=3.5cm]
\draw[thick] \boundellipse{0,1}{1.25}{0.25};

\draw[blue, thick] (-1.25,0)--(0,0);
\draw[blue, thick] (0,0)--(1.25,1/3);
\draw[blue, thick] (0,0)--(1/2,-1/3);
\draw[thick,blue,fill=blue] (0,0) circle (0.05);

\draw[thick] (-1.25,0) to[out=25,in=175] (1.25,1/3);
\draw[dotted, thick] (-1.25,0) to[out=10,in=190] (1.25,1/3);

\draw[thick] (-1.25,0) to[out=-25,in=180] (1/2,-1/3);
\draw[thick] (-1.25,0) to[out=-10,in=160] (1/2,-1/3);

\draw[dotted,thick] (1/2,-1/3) to[out=80,in=220] (1.25,1/3);
\draw[thick] (1/2,-1/3) to[out=30,in=250] (1.25,1/3);

\draw[thick] \boundellipse{0,-1}{1.25}{0.25};
\draw[thick] \boundellipse{0,-2}{1.25}{0.25};
\draw[blue, thick] (-5/4,-2)--(0,-2);
\draw[blue, thick] (0,-2)--(0.90,-1.825);
\draw[blue, thick] (0,-2)--(1/2,-2.225);

\draw[thick,blue,fill=blue] (0,-2) circle (0.05);
\draw[thick, ->] (0,-1.35)--(0,-1.9);

\node at (1.5,1){\tiny $N$};
\node at (0,2/3){\tiny $\vdots$};
\node at (1.5,1/3){\tiny $i+1$};
\node at (1.5,-1/3){\tiny $i$};
\node at (0,-0.45){\tiny $\vdots$};
\node at (1.5,-1){\tiny $1$};

\end{scope}

\begin{scope}[xshift=7cm]
\draw[thick] \boundellipse{0,1}{1.25}{0.25};

\draw[blue, thick] (0,0)--(1,0);
\draw[blue, thick] (-5/4,3/12)--(0,0);
\draw[blue, thick] (-3/4,1/3)--(0,0);

\draw[red, thick] (-1,0)--(0,0);
\draw[red, thick] (0,0)--(5/4,-3/12);
\draw[red, thick] (0,0)--(3/4,-1/3);

\draw[thick] (-3/4,1/3)--(5/4,1/3);
\draw[thick] (5/4,1/3)--(3/4,-1/3);
\draw[thick] (3/4,-1/3)--(-5/4,-1/3);
\draw[thick] (-5/4,-1/3)--(-3/4,1/3);
\draw[thick,black,fill=white] (0,0) circle (0.05);
\draw[thick] (-5/4,3/12)--(3/4,3/12);
\draw[thick] (3/4,3/12)--(5/4,-3/12);
\draw[thick] (5/4,-3/12)--(-3/4,-3/12);
\draw[thick] (-3/4,-3/12)--(-5/4,3/12);

\draw[thick] (-5/4,3/12)--(-3/4,1/3);
\draw[thick] (-3/4,1/3)--(5/4,-3/12);
\draw[thick] (5/4,-3/12)--(3/4,-1/3);
\draw[thick] (3/4,-1/3)--(-5/4,3/12);

\draw[thick] \boundellipse{0,-1}{1.25}{0.25};
\draw[thick] \boundellipse{0,-2}{1.25}{0.25};
\draw[red, thick] (-5/4,-2)--(0,-2);
\draw[red, thick] (0,-2)--(0.90,-1.825);
\draw[red, thick] (0,-2)--(1/2,-2.225);

\draw[blue, thick] (5/4,-2)--(0,-2);
\draw[blue, thick] (0,-2)--(-0.90,-2.175);
\draw[blue, thick] (0,-2)--(-1/2,-1.775);
\draw[thick,black,fill=white] (0,-2) circle (0.05);

\draw[thick, ->] (0,-1.35)--(0,-1.9);

\node at (1.5,1){\tiny $N$};
\node at (0,2/3){\tiny $\vdots$};
\node at (1.5,1/3){\tiny $i+2$};
\node at (1.5,0){\tiny $i+1$};
\node at (1.5,-1/3){\tiny $i$};
\node at (0,-0.45){\tiny $\vdots$};
\node at (1.5,-1){\tiny $1$};

\end{scope}

\end{tikzpicture}
\caption{Four-types of local charts for $N$-graphs.}
\label{fig:local_chart_3-graphs}
\end{figure}

\begin{definition}\cite[Definition~2.7]{CZ2020}
Let $\ngraph$ be an $N$-graph on a surface $S$. 
The {\em Legendrian weave} $\Legendrian(\ngraph)\subset J^1 S$ is an embedded Legendrian surface whose wavefront $\wavefront(\ngraph)\subset S\times \R$ is constructed by weaving the wavefronts $\{\wavefront(U_i)\}_{i\in I}$ from a $\ngraph$-compatible cover $\{U_i\}_{i\in I}$ with respect to the gluing data given by $\ngraph$.
\end{definition}

\begin{remark}
Note that $\Legendrian(\ngraph)$ is well-defined up to the choice of cover and up to planar isotopies.
Let $\{\varphi_t\}_{t\in[0,1]}$ be a compactly supported isotopy of $S$.
Then this induces a Legendrian isotopy of Legendrian surface $\Legendrian(\varphi_t(\ngraph))\subset J^1 S$ relative to the boundary.
\end{remark}

\subsection{Legendrian isotopies and moves on \texorpdfstring{$N$}{N}-graphs}

The idea of $N$-graph is useful in the study of Legendrian surface, because the Legendrian isotopy of the Legendrian weave $\Legendrian(\ngraph)$ can be encoded in combinatorial moves of $N$-graphs.

\begin{theorem}\cite[Theorem~1.1]{CZ2020}\label{thm:N-graph moves and legendrian isotopy}
Let $\ngraph$ be a local $N$-graph. The combinatorial moves in Figure~\ref{fig:move1-6} and Figure~\ref{fig:move7-12} are Legendrian isotopies for $\Legendrian(\ngraph)$.
\end{theorem}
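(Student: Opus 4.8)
The plan is to prove the statement by reducing it to a move-by-move analysis of wavefronts and then invoking the theory of generic bifurcations of fronts. The key input is the dictionary between Legendrian surfaces and their fronts: the Legendrian weave $\Legendrian(\ngraph)\subset J^1 S$ is completely determined by its wavefront $\wavefront(\ngraph)\subset S\times\R$, and a Legendrian isotopy of embedded surfaces in $J^1 S$ is the same datum as a smooth one-parameter family of fronts $\{\wavefront_t\}\subset S\times\R$ whose Legendrian lifts $\{\Legendrian_t\}$ remain embedded, allowed to cross the discriminant of fronts only along its codimension-one strata (see \cite{Arn1990}). Thus for each move it suffices to exhibit such an interpolating family between the front of the left-hand $N$-graph and the front of the right-hand $N$-graph.

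Since the statement concerns local $N$-graphs, each move is supported inside a chart $\disk^2\subset S$ and leaves $\ngraph$ unchanged near the boundary; correspondingly the front is altered only over $\disk^2$ and agrees with the original over a collar of $\partial\disk^2$. A Legendrian isotopy that is constant near the boundary of its support extends by the identity, so it is enough to construct, for each move, a Legendrian isotopy of $\Legendrian(\ngraph)$ relative to the front over $\partial\disk^2$. This is exactly the setting in which the Remark following the definition of the Legendrian weave applies, and I will use it freely to normalize the local picture before and after each move.

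Next I would treat the moves one at a time using the three local models of Figure~\ref{fig:legendrian_singularities}: the $\dynA_1^2$-germ along edges, the $\dynA_1^3$-germ at hexagonal points, and the $\dynD_4^-$-germ at trivalent vertices, recalling that $\wavefront(\ngraph)$ is a generic front whose only singularities are of these three types. The moves split into two kinds. Some of the moves in Figure~\ref{fig:move1-6} and Figure~\ref{fig:move7-12} are realized by an honest planar isotopy of $\ngraph$ within the chart — sliding edges, rotating a local configuration, or pushing a trivalent vertex along an edge — and for these the Remark applies verbatim: a compactly supported isotopy $\{\varphi_t\}$ of $S$ carrying one graph to the other induces the Legendrian isotopy $\Legendrian(\varphi_t(\ngraph))$. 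The remaining moves genuinely change the combinatorial type of the front: a pair of cusp edges ($\dynA_1^2$-folds) is born or cancelled, a swallowtail opens along a sheet, or a hexagonal point ($\dynA_1^3$-triple point) passes through a fold or through a $\dynD_4^-$-vertex. For each of these I would write the fronts before and after explicitly in the local model and produce a one-parameter family $\{\wavefront_t\}$ that crosses exactly one codimension-one bifurcation stratum, so that by the classification of generic perestroikas of wavefronts of surfaces the family lifts to a Legendrian isotopy.

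The main obstacle will be precisely these singularity-changing moves. For each I must verify two things: that the interpolating path meets the discriminant transversally at a single instant (so the family is generic and its lift is smooth), and that the lift $\Legendrian_t$ stays embedded throughout — equivalently, that no new Reeb chords, i.e. vertical self-intersections, appear along the way. The most delicate case is the hexagonal point, where three sheets interact simultaneously; there I would check transversality and embeddedness directly from the defining equation $(x^2-z^2)(y-z)=0$ of the $\dynA_1^3$-germ, tracking the triple point and the three fold curves as they move through the configuration. Once each move has been checked in its local model relative to the front over $\partial\disk^2$, extending by the identity outside the chart assembles the local isotopies into the desired Legendrian isotopy of $\Legendrian(\ngraph)$ and completes the proof.
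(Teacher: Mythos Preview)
The paper does not prove this theorem at all: it is stated as a citation of \cite[Theorem~1.1]{CZ2020} and no proof is given. There is therefore nothing in the paper to compare your proposal against.

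As for the proposal itself, your outline is the correct strategy and is essentially how Casals--Zaslow argue in \cite{CZ2020}: interpret each move as a one-parameter family of fronts, use planar isotopy for the purely combinatorial moves, and for the rest identify the relevant codimension-one perestroika in Arnold's list of generic bifurcations of fronts of surfaces. That said, what you have written is a plan rather than a proof. The actual content lies in the move-by-move verification you defer with phrases like ``I would write the fronts explicitly'' and ``I would check transversality and embeddedness directly''; none of that checking is carried out. In particular, for the moves involving the interaction of a $\dynD_4^-$-vertex with an $\dynA_1^3$-hexagonal point (Move~\Move{II}) or two hexagonal points (Moves~\Move{III}, \Move{IV}), the identification of the correct bifurcation and the verification that no Reeb chord is created require genuine work that your outline does not supply. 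So the approach is right, but as written it is an outline with the substantive case analysis missing.
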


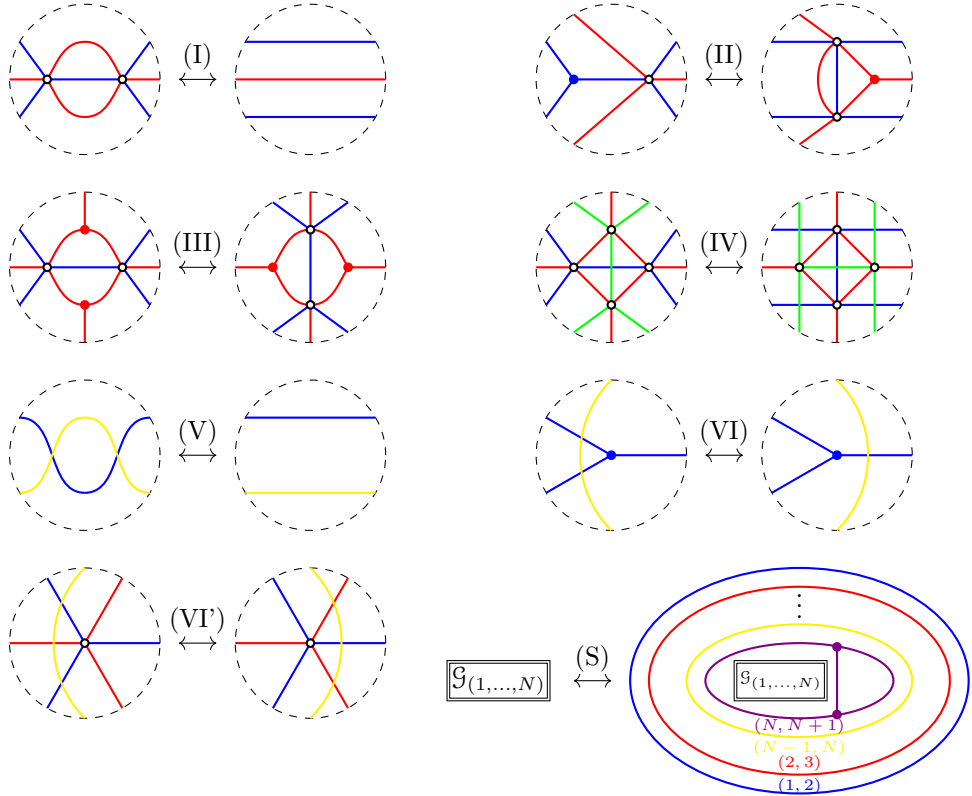
\begin{figure}[ht]
\begin{tikzpicture}
\begin{scope}
\draw [dashed] (0,0) circle [radius=1];
\draw [dashed] (3,0) circle [radius=1];
\draw [<->] (1.25,0) -- (1.75,0) node[midway, above] {\Move{I}};

\draw [blue, thick] ({-sqrt(3)/2},1/2)--(-1/2,0);
\draw [blue, thick] ({-sqrt(3)/2},-1/2)--(-1/2,0);
\draw [blue, thick] ({sqrt(3)/2},1/2)--(1/2,0);
\draw [blue, thick] ({sqrt(3)/2},-1/2)--(1/2,0);
\draw [blue, thick] (-1/2,0)--(1/2,0);

\draw [red, thick] (-1,0)--(-1/2,0) to[out=60,in=180] (0,1/2) to[out=0,in=120] (1/2,0)--(1,0);
\draw [red, thick] (-1/2,0) to[out=-60,in=180] (0, -1/2) to[out=0, in=-120] (1/2,0); 

\draw[thick,black,fill=white] (-1/2,0) circle (0.05);
\draw[thick,black,fill=white] (1/2,0) circle (0.05);

\draw [blue, thick] ({3-sqrt(3)/2},1/2)--({3+sqrt(3)/2},1/2);
\draw [blue, thick] ({3-sqrt(3)/2},-1/2)--({3+sqrt(3)/2},-1/2);
\draw [red, thick] (2,0)--(4,0);

\end{scope}

\begin{scope}[xshift=7cm]
\draw [dashed] (0,0) circle [radius=1];
\draw [dashed] (3,0) circle [radius=1];
\draw [<->] (1.25,0) -- (1.75,0) node[midway, above] {\Move{II}};

\draw [blue, thick] ({-sqrt(3)/2},1/2)--(-1/2,0);
\draw [blue, thick] ({-sqrt(3)/2},-1/2)--(-1/2,0);
\draw [blue, thick] ({sqrt(3)/2},1/2)--(1/2,0);
\draw [blue, thick] ({sqrt(3)/2},-1/2)--(1/2,0);
\draw [blue, thick] (-1/2,0)--(1/2,0);

\draw [red, thick] (-1/2,{sqrt(3)/2}) -- (1/2,0)--(1,0);
\draw [red, thick] (-1/2,{-sqrt(3)/2}) -- (1/2,0);

\draw[thick,blue,fill=blue] (-1/2,0) circle (0.05);
\draw[thick,black,fill=white] (1/2,0) circle (0.05);

\draw [blue, thick] ({3-sqrt(3)/2},1/2)--({3+sqrt(3)/2},1/2);
\draw [blue, thick] ({3-sqrt(3)/2},-1/2)--({3+sqrt(3)/2},-1/2);
\draw [blue, thick] (3,1/2)--(3,-1/2);

\draw [red, thick] (5/2,{sqrt(3)/2})--(3,1/2) to[out=-150,in=150] (3,-1/2)--(5/2,{-sqrt(3)/2});
\draw [red, thick] (3,1/2)--(7/2,0) -- (4,0);
\draw [red, thick] (3,-1/2)--(7/2,0);

\draw[thick,black,fill=white] (3,1/2) circle (0.05);
\draw[thick,black,fill=white] (3,-1/2) circle (0.05);
\draw[thick,red,fill=red] (7/2,0) circle (0.05);

\end{scope}

\begin{scope}[yshift=-2.5cm]
\draw [dashed] (0,0) circle [radius=1];
\draw [<->] (1.25,0) -- (1.75,0) node[midway, above] {\Move{III}};

\draw [blue, thick] ({-sqrt(3)/2},1/2)--(-1/2,0);
\draw [blue, thick] ({-sqrt(3)/2},-1/2)--(-1/2,0);
\draw [blue, thick] ({sqrt(3)/2},1/2)--(1/2,0);
\draw [blue, thick] ({sqrt(3)/2},-1/2)--(1/2,0);
\draw [blue, thick] (-1/2,0)--(1/2,0);

\draw [red, thick] (-1,0)--(-1/2,0) to[out=60,in=180] (0,1/2) to[out=0,in=120] (1/2,0)--(1,0);
\draw [red, thick] (-1/2,0) to[out=-60,in=180] (0, -1/2) to[out=0, in=-120] (1/2,0); 
\draw [red, thick] (0,1) to (0,1/2);
\draw [red, thick] (0,-1) to (0,-1/2);

\draw[thick,black,fill=white] (-1/2,0) circle (0.05);
\draw[thick,black,fill=white] (1/2,0) circle (0.05);

\draw[thick,red,fill=red] (0,1/2) circle (0.05);
\draw[thick,red,fill=red] (0,-1/2) circle (0.05);

\end{scope}

\begin{scope}[yshift=-2.5cm, xshift=3cm]

\draw [dashed] (0,0) circle [radius=1];

\draw [blue, thick] (-1/2,{sqrt(3)/2}) to (0,1/2) to (0,-1/2) to (-1/2,-{sqrt(3)/2});
\draw [blue, thick] (1/2,{sqrt(3)/2})--(0,1/2);
\draw [blue, thick] (1/2,-{sqrt(3)/2})--(0,-1/2);

\draw [red, thick] (-1,0)--(-1/2,0) to[out=60,in=180] (0,1/2) to[out=0,in=120] (1/2,0)--(1,0);
\draw [red, thick] (-1/2,0) to[out=-60,in=180] (0, -1/2) to[out=0, in=-120] (1/2,0); 
\draw [red, thick] (0,1) to (0,1/2);
\draw [red, thick] (0,-1) to (0,-1/2);

\draw[thick,red,fill=red] (-1/2,0) circle (0.05);
\draw[thick,red,fill=red] (1/2,0) circle (0.05);

\draw[thick,black,fill=white] (0,1/2) circle (0.05);
\draw[thick,black,fill=white] (0,-1/2) circle (0.05);
\end{scope}

\begin{scope}[xshift=7cm, yshift=-2.5cm]
\draw [dashed] (0,0) circle [radius=1];
\draw [<->] (1.25,0) -- (1.75,0) node[midway, above] {\Move{IV}};

\draw [blue, thick] ({-sqrt(3)/2},1/2)--(-1/2,0);
\draw [blue, thick] ({-sqrt(3)/2},-1/2)--(-1/2,0);
\draw [blue, thick] ({sqrt(3)/2},1/2)--(1/2,0);
\draw [blue, thick] ({sqrt(3)/2},-1/2)--(1/2,0);
\draw [blue, thick] (-1/2,0)--(1/2,0);

\draw [red, thick] (-1,0)--(-1/2,0) to (0,1/2) to (1/2,0)--(1,0);
\draw [red, thick] (-1/2,0) to (0, -1/2) to (1/2,0); 
\draw [red, thick] (0,1) to (0,1/2);
\draw [red, thick] (0,-1) to (0,-1/2);

\draw [green, thick] (-1/2,{sqrt(3)/2}) to (0,1/2) to (0,-1/2) to (-1/2,-{sqrt(3)/2});
\draw [green, thick] (1/2,{sqrt(3)/2})--(0,1/2);
\draw [green, thick] (1/2,-{sqrt(3)/2})--(0,-1/2);

\draw[thick,black,fill=white] (-1/2,0) circle (0.05);
\draw[thick,black,fill=white] (1/2,0) circle (0.05);

\draw[thick,black,fill=white] (0,1/2) circle (0.05);
\draw[thick,black,fill=white] (0,-1/2) circle (0.05);

\end{scope}

\begin{scope}[xshift=10cm, yshift=-2.5cm]
\draw [dashed] (0,0) circle [radius=1];
\draw [blue, thick] ({-sqrt(3)/2},1/2)--({+sqrt(3)/2},1/2);
\draw [blue, thick] ({-sqrt(3)/2},-1/2)--({+sqrt(3)/2},-1/2);
\draw [blue, thick] (0,1/2)--(0,-1/2);

\draw [red, thick] (-1,0)--(-1/2,0) to (0,1/2) to (1/2,0)--(1,0);
\draw [red, thick] (-1/2,0) to (0, -1/2) to (1/2,0); 
\draw [red, thick] (0,1) to (0,1/2);
\draw [red, thick] (0,-1) to (0,-1/2);

\draw [green, thick] (-1/2,{sqrt(3)/2}) to (-1/2,{-sqrt(3)/2});
\draw [green, thick] (1/2,{sqrt(3)/2}) to (1/2,{-sqrt(3)/2});
\draw [green, thick] (-1/2,0) to (1/2,0);

\draw[thick,black,fill=white] (-1/2,0) circle (0.05);
\draw[thick,black,fill=white] (1/2,0) circle (0.05);

\draw[thick,black,fill=white] (0,1/2) circle (0.05);
\draw[thick,black,fill=white] (0,-1/2) circle (0.05);

\end{scope}

\begin{scope}[xshift=0cm, yshift=-5cm]
\draw [dashed] (0,0) circle [radius=1];
\draw [<->] (1.25,0) -- (1.75,0) node[midway, above] {\Move{V}};

\draw [blue, thick] ({-sqrt(3)/2},1/2)to[out=0,in=180](0,-1/2);
\draw [blue, thick] ({sqrt(3)/2},1/2)to[out=180,in=0](0,-1/2);

\draw [yellow, thick] ({-sqrt(3)/2},-1/2)to[out=0,in=180](0,1/2);
\draw [yellow, thick] ({sqrt(3)/2},-1/2)to[out=180,in=0](0,1/2);

\end{scope}

\begin{scope}[xshift=3cm, yshift=-5cm]
\draw [dashed] (0,0) circle [radius=1];

\draw [blue, thick] ({-sqrt(3)/2},1/2) to ({sqrt(3)/2},1/2);

\draw [yellow, thick] ({-sqrt(3)/2},-1/2) to ({sqrt(3)/2},-1/2);
\end{scope}

\begin{scope}[xshift=7cm, yshift=-5cm]
\draw [dashed] (0,0) circle [radius=1];
\draw [<->] (1.25,0) -- (1.75,0) node[midway, above] {\Move{VI}};
\draw [blue, thick] ({-sqrt(3)/2},1/2) to (0,0) to(1,0);
\draw [blue, thick] ({-sqrt(3)/2},-1/2) to (0,0);

\draw[thick,blue,fill=blue] (0,0) circle (0.05);

\draw[yellow, thick] (0,1) to[out=-135,in=135] (0,-1);
\end{scope}

\begin{scope}[xshift=10cm, yshift=-5cm]
\draw [dashed] (0,0) circle [radius=1];
\draw [blue, thick] ({-sqrt(3)/2},1/2) to (0,0) to(1,0);
\draw [blue, thick] ({-sqrt(3)/2},-1/2) to (0,0);

\draw[thick,blue,fill=blue] (0,0) circle (0.05);

\draw[yellow, thick] (0,1) to[out=-45,in=45] (0,-1);
\end{scope}

\begin{scope}[xshift=0cm, yshift=-7.5cm]
\draw [dashed] (0,0) circle [radius=1];
\draw [<->] (1.25,0) -- (1.75,0) node[midway, above] {\Move{VI'}};
\draw [blue, thick] ({-1/2},{sqrt(3)/2}) to (0,0) to(1,0);
\draw [blue, thick] ({-1/2},{-sqrt(3)/2}) to (0,0);

\draw [red, thick] (-1,0) to (0,0) to(1/2,{sqrt(3)/2});
\draw [red, thick] (0,0) to(1/2,{-sqrt(3)/2});

\draw[thick,black,fill=white] (0,0) circle (0.05);

\draw[yellow, thick] (0,1) to[out=-135,in=135] (0,-1);
\end{scope}

\begin{scope}[xshift=3cm, yshift=-7.5cm]
\draw [dashed] (0,0) circle [radius=1];
\draw [blue, thick] ({-1/2},{sqrt(3)/2}) to (0,0) to(1,0);
\draw [blue, thick] ({-1/2},{-sqrt(3)/2}) to (0,0);

\draw [red, thick] (-1,0) to (0,0) to(1/2,{sqrt(3)/2});
\draw [red, thick] (0,0) to(1/2,{-sqrt(3)/2});

\draw[thick,black,fill=white] (0,0) circle (0.05);

\draw[yellow, thick] (0,1) to[out=-45,in=45] (0,-1);
\end{scope}

\begin{scope}[xshift=5.5cm, yshift=-8cm]
\draw [double] (2/3,1/4) -- (-2/3,1/4) -- (-2/3,-1/4) -- (2/3,-1/4)-- cycle;
\node at (0,0) {$\ngraph_{(1,\dots,N)}$};
\draw [<->] (1,0) -- (1.5,0) node[midway, above] {\Move{S}};
\end{scope}

\begin{scope}[xshift=9.25cm, yshift=-8cm]
\draw [double] (0.6,1/4) -- (-0.6,1/4) -- (-0.6,-1/4) -- (0.6,-1/4)-- cycle;
\node at (0,0) {\tiny$\ngraph_{(1,\dots,N)}$};
\draw[thick, violet] \boundellipse{0.25,0}{1.25}{0.5};
\draw[thick, violet] (0.75,0.45) to (0.75,-0.45) ;
\draw[thick,violet,fill=violet] (0.75,0.45) circle (0.05);
\draw[thick,violet,fill=violet] (0.75,-0.45) circle (0.05);

\draw[thick, yellow] \boundellipse{0.25,0}{1.5}{0.75};
\draw[thick, red] \boundellipse{0.25,0}{2}{1.25};
\draw[thick, blue] \boundellipse{0.25,0}{2.25}{1.5};

\node at (0.25,1.1) {$\vdots$};
\node at (0.25,-0.6) {\tiny\color{violet}$(N,N+1)$};
\node at (0.25,-0.9) {\tiny\color{yellow}$(N-1,N)$};
\node at (0.25,-1.1) {\tiny\color{red}$(2,3)$};
\node at (0.25,-1.4) {\tiny\color{blue}$(1,2)$};

\end{scope}

\end{tikzpicture}
\caption{Combinatorial moves for Legendrian isotopies of surface $\Legendrian(\ngraph)$.
Here the pairs ({\color{blue} blue}, {\color{red} red}) and ({\color{red} red}, {\color{green} green}) are consecutive. Other pairs are not.}
\label{fig:move1-6}
\end{figure}

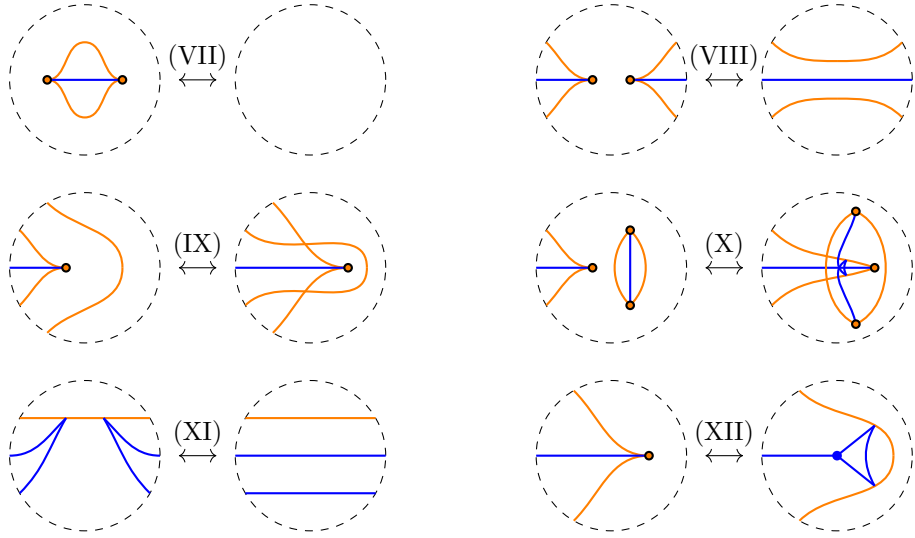
\begin{figure}[ht]
\begin{tikzpicture}
\begin{scope}
\draw [dashed] (0,0) circle [radius=1];
\draw [dashed] (3,0) circle [radius=1];
\draw [<->] (1.25,0) -- (1.75,0) node[midway, above] {\Move{VII}};
\draw [orange, thick] (-1/2,0) to[out=0,in=180] (0,1/2) to[out=0,in=180] (1/2,0);
\draw [orange, thick] (-1/2,0) to[out=0,in=180] (0,-1/2) to[out=0,in=180] (1/2,0);
\draw [blue, thick] (-1/2,0) to (1/2,0);
\draw[thick,black,fill=orange] (-1/2,0) circle (0.05);
\draw[thick,black,fill=orange] (1/2,0) circle (0.05);

\end{scope}

\begin{scope}[xshift=7cm, yshift=0cm]
\draw [dashed] (0,0) circle [radius=1];
\draw [dashed] (3,0) circle [radius=1];
\draw [<->] (1.25,0) -- (1.75,0) node[midway, above] {\Move{VIII}};
\draw [orange, thick] ({-sqrt(3)/2},1/2) to[out=-45,in=180] (-1/4,0);
\draw [orange, thick] ({-sqrt(3)/2},-1/2) to[out=45,in=180] (-1/4,0);
\draw [orange, thick] ({sqrt(3)/2},1/2) to[out=-135,in=0] (1/4,0);
\draw [orange, thick] ({sqrt(3)/2},-1/2) to[out=135,in=0] (1/4,0);
\draw [blue,thick] (-1,0)--(-1/4,0);
\draw [blue,thick] (1,0)--(1/4,0);
\draw[thick,black,fill=orange] (-1/4,0) circle (0.05);
\draw[thick,black,fill=orange] (1/4,0) circle (0.05);
\end{scope}

\begin{scope}[xshift=10cm, yshift=0cm]
\draw [orange, thick] ({-sqrt(3)/2},1/2) to[out=-45,in=180] (0,1/4);
\draw [orange, thick] ({sqrt(3)/2},1/2) to[out=-135,in=0] (0,1/4);
\draw [orange, thick] ({-sqrt(3)/2},-1/2) to[out=45,in=180] (0,-1/4);
\draw [orange, thick] ({sqrt(3)/2},-1/2) to[out=135,in=0] (0,-1/4);
\draw [blue, thick] (-1,0)--(1,0);

\end{scope}

\begin{scope}[xshift=0cm, yshift=-2.5cm]
\draw [dashed] (0,0) circle [radius=1];
\draw [dashed] (3,0) circle [radius=1];
\draw [<->] (1.25,0) -- (1.75,0) node[midway, above] {\Move{IX}};
\draw [orange, thick] ({-sqrt(3)/2},1/2) to[out=-45,in=180] (-1/4,0);
\draw [orange, thick] ({-sqrt(3)/2},-1/2) to[out=45,in=180] (-1/4,0);
\draw [blue,thick] (-1,0)--(-1/4,0);
\draw[thick,black,fill=orange] (-1/4,0) circle (0.05);
\draw [thick, orange] (-1/2,{sqrt(3)/2}) to[out=-45,in=90] (1/2,0);
\draw [thick, orange] (-1/2,{-sqrt(3)/2}) to[out=45,in=-90] (1/2,0);
\end{scope}

\begin{scope}[xshift=3cm, yshift=-2.5cm]
\draw [thick, orange] (-1/2,{sqrt(3)/2}) to[out=-45,in=180] (1/2,0);
\draw [thick, orange] (-1/2,{-sqrt(3)/2}) to[out=45,in=180] (1/2,0);
\draw [blue,thick] (-1,0)--(1/2,0);
\draw[thick,black,fill=orange] (1/2,0) circle (0.05);
\draw [orange, thick] ({-sqrt(3)/2},1/2) to[out=-45,in=90] (3/4,0);
\draw [orange, thick] ({-sqrt(3)/2},-1/2) to[out=45,in=-90] (3/4,0);

\end{scope}

\begin{scope}[xshift=7cm, yshift=-2.5cm]
\draw [dashed] (0,0) circle [radius=1];
\draw [dashed] (3,0) circle [radius=1];
\draw [<->] (1.25,0) -- (1.75,0) node[midway, above] {\Move{X}};
\draw [orange, thick] ({-sqrt(3)/2},1/2) to[out=-45,in=180] (-1/4,0);
\draw [orange, thick] ({-sqrt(3)/2},-1/2) to[out=45,in=180] (-1/4,0);
\draw [blue,thick] (-1,0)--(-1/4,0);
\draw [thick,black,fill=orange] (-1/4,0) circle (0.05);

\draw [orange, thick] (1/4,1/2) to[out=-135,in=135] (1/4,-1/2);
\draw [orange, thick] (1/4,1/2) to[out=-45,in=45] (1/4,-1/2);
\draw [blue, thick] (1/4,1/2) to (1/4,-1/2);
\draw [thick,black,fill=orange] (1/4,1/2) circle (0.05);
\draw [thick,black,fill=orange] (1/4,-1/2) circle (0.05);

\end{scope}

\begin{scope}[xshift=10cm, yshift=-2.5cm]
\draw [orange, thick] ({-sqrt(3)/2},1/2) to[out=-45,in=160] (1/2,0);
\draw [orange, thick] ({-sqrt(3)/2},-1/2) to[out=45,in=200] (1/2,0);
\draw [blue,thick] (-1,0)--(1/2,0);
\draw [thick,black,fill=orange] (1/2,0) circle (0.05);

\draw [orange, thick] (1/4,0.75) to[out=-155,in=155] (1/4,-0.75);
\draw [orange, thick] (1/4,0.75) to[out=-25,in=25] (1/4,-0.75);

\draw [blue, thick] (1/4,0.75) to[out=-100,in=150] (1/8,-0.1);
\draw [blue, thick] (1/4,-0.75) to[out=100,in=-150] (1/8,0.1);
\draw [blue, thick] (1/8,0.1) to[out=-120,in=120] (1/8,-0.1);
\draw [thick,black,fill=orange] (1/4,0.75) circle (0.05);
\draw [thick,black,fill=orange] (1/4,-0.75) circle (0.05);

\end{scope}

\begin{scope}[xshift=0cm, yshift=-5cm]
\draw [dashed] (0,0) circle [radius=1];
\draw [dashed] (3,0) circle [radius=1];
\draw [<->] (1.25,0) -- (1.75,0) node[midway, above] {\Move{XI}};

\draw [orange, thick] ({-sqrt(3)/2},1/2) to ({sqrt(3)/2},1/2);
\draw [blue, thick] (-1,0) to[out=0,in=-135] (-1/4,1/2);
\draw [blue, thick] ({-sqrt(3)/2},-1/2) to[out=45,in=-120] (-1/4,1/2);

\draw [blue, thick] (1,0) to[out=180,in=-45] (1/4,1/2);
\draw [blue, thick] ({sqrt(3)/2},-1/2) to[out=135,in=-60] (1/4,1/2);

\end{scope}

\begin{scope}[xshift=3cm, yshift=-5cm]

\draw [orange, thick] ({-sqrt(3)/2},1/2) to ({sqrt(3)/2},1/2);
\draw [blue, thick] (-1,0) to (1,0);
\draw [blue, thick] ({-sqrt(3)/2},-1/2) to ({sqrt(3)/2},-1/2);

\end{scope}

\begin{scope}[xshift=7cm, yshift=-5cm]
\draw [dashed] (0,0) circle [radius=1];
\draw [dashed] (3,0) circle [radius=1];
\draw [<->] (1.25,0) -- (1.75,0) node[midway, above] {\Move{XII}};
\draw [thick, orange] (-1/2,{sqrt(3)/2}) to[out=-45,in=180] (1/2,0);
\draw [thick, orange] (-1/2,{-sqrt(3)/2}) to[out=45,in=180] (1/2,0);
\draw [blue,thick] (-1,0)--(1/2,0);
\draw[thick,black,fill=orange] (1/2,0) circle (0.05);

\end{scope}

\begin{scope}[xshift=10cm, yshift=-5cm]

\draw [thick, orange] (-1/2,{sqrt(3)/2}) to[out=-45,in=90] (3/4,0);
\draw [thick, orange] (-1/2,{-sqrt(3)/2}) to[out=45,in=-90] (3/4,0);
\draw [blue,thick] (-1,0)--(0,0);
\draw [blue,thick] (0,0) to (1/2,0.4);
\draw [blue,thick] (0,0) to (1/2,-0.4);
\draw [blue,thick] (1/2,0.4) to[out=-120,in=120] (1/2,-0.4);
\draw[thick,blue,fill=blue] (0,0) circle (0.05);
\end{scope}

\end{tikzpicture}
\caption{Combinatorial moves for Legendrian isotopies of surface $\Legendrian(\ngraph)$: These are moves involving $A_3$-swallowtail singularities, the {\color{orange} orange} vertex. Here the {\color{orange} orange} lines are locus of cusp singularities.}
\label{fig:move7-12}
\end{figure}

\subsubsection{$N$-graphs on $\disk^2$}

Let $\legendrian_\beta \subset J^1\sphere^1$ be a Legendrian link obtained from a Legendrian line in $\R^3$ by satelliting the Legendrian unknot. Here $\legendrian_\beta$ is the closure of a positive $N$-strand braid $\beta$.
The braid word $\beta$ consist of alphabets $\sigma_1,\dots, \sigma_{N-1}$, and these give an $(N-1)$-tuple of sets of points in $\sphere^1$ which can be regarded as a boundary data of $N$-graphs on $\disk^2$.
By the setup in \S\ref{sec:geometric setup}, $\pi\circ\iota(\Legendrian(\ngraph))$ induces an exact (possibly immersed) Lagrangian filling in $(\R^4,\omega_{\rm st})$ of $\iota(\legendrian_\beta)$.
Let us denote the equivalence class of a $N$-graph $\ngraph\subset \disk^2$ up to the moves $\Move{I},\dots,\Move{XII}$ by $[\ngraph]$.

\begin{remark}\label{remark:stabilization}
For $N$-graphs $\ngraph$ on $\disk^2$ the stabilization, becomes Move $\Move{S}$ in Figure~\ref{fig:stab. on disk}:
\begin{figure}[ht]
\begin{tikzpicture}
\begin{scope}
\draw [thick] (0,0) circle [radius=1];
\node at (0,0) {$\ngraph_{(1,\dots,N)}$};
\draw [<->] (1.5,0) -- (2,0) node[midway, above] {\Move{S}};
\end{scope}

\begin{scope}[xshift=4.5cm]
\draw [thick] (0,0) circle [radius=1.5];
\draw [double] (0,1.5) to (0,-1.5) arc [radius=1.5, start angle=-90, end angle= 90] -- cycle;
\node at (0.75,0) {$\ngraph_{(1,\dots,N)}$};
\draw [thick, blue] ({1.5*cos(165)},{1.5*sin(165)}) to[out=-45,in=90] (-1.25,0); 
\draw [thick, blue] ({1.5*cos(195)},{1.5*sin(195)}) node[left] {\tiny$(1,2)$} to[out=45,in=-90] (-1.25,0); 
\draw [thick, red] ({1.5*cos(150)},{1.5*sin(150)}) to[out=-45,in=90] (-0.95,0); 
\draw [thick, red] ({1.5*cos(210)},{1.5*sin(210)}) node[left] {\tiny$(2,3)$} to[out=45,in=-90] (-0.95,0); 
\draw [thick, yellow] ({1.5*cos(135)},{1.5*sin(135)}) to[out=-45,in=90] (-0.5,0); 
\draw [thick, yellow] ({1.5*cos(225)},{1.5*sin(225)}) node[left] {\tiny$(N-1,N)$} to[out=45,in=-90] (-0.5,0); 
\draw [thick, violet] ({1.5*cos(100)},{1.5*sin(100)}) to ({1.5*cos(260)},{1.5*sin(260)});
\draw [thick, violet] ({1.5*cos(115)},{1.5*sin(115)}) node[left] {\tiny$(N,N+1)$} to ({0.6*cos(115)},{0.6*sin(115)}); 
\node at (-0.7,0) {$\cdots$};
\draw[thick,violet,fill=violet] ({0.6*cos(115)},{0.6*sin(115)}) circle (0.05);
\end{scope}

\end{tikzpicture}
\caption{A stabilization of an $N$-graph on $\disk^2$.}
\label{fig:stab. on disk}
\end{figure}
\end{remark}

\begin{definition}
An $N$-graph $\ngraph\subset \disk^2$ is called {\em free} if the induced Legendrian weave $\Legendrian(\ngraph)\subset J^1\disk^2$ can be woven without interior Reeb chord. 
\end{definition}

\begin{example}\cite[Example 7.3]{CZ2020}\label{ex:free N-graph}
Let $\ngraph\subset \disk^2$ be a $2$-graph such that $\disk^2\setminus \ngraph$ is simply connected relative to the boundary $\boundary\disk^2\cap(\disk^2\setminus \ngraph)$. Then $\ngraph$ is free if and only if $\ngraph$ has no faces contained in $\mathring\disk^2$.
Note that each of such faces admits at least one Reeb chord, see Figure~\ref{fig:N-graphs with Reeb chords}.
\end{example}

\begin{figure}[ht]
\begin{tikzcd}
\begin{tikzpicture}
\draw [dashed] (0,0) circle [radius=1.5];
\draw [thick,blue] (-1.5,0)-- (-1,0) to[out=90,in=90] (1,0)--(1.5,0) (-1,0) to[out=-90,in=-90] (1,0);
\draw [thick, blue, fill] (-1,0) circle (1.5pt)  (1,0) circle (1.5pt);
\end{tikzpicture}
&
\begin{tikzpicture}
\draw [dashed] (0,0) circle [radius=1.5];
\draw [thick,blue] (90:1.5) -- (90:1) (210:1.5) -- (210:1) (-30:1.5) -- (-30:1)
(90:1) -- (210:1) -- (-30:1)-- (90:1);
\draw [thick, blue, fill] (90:1) circle (1.5pt) (210:1) circle (1.5pt) (-30:1) circle (1.5pt);
\end{tikzpicture}
\end{tikzcd}
\caption{$N$-graphs with Reeb chords}
\label{fig:N-graphs with Reeb chords}
\end{figure}
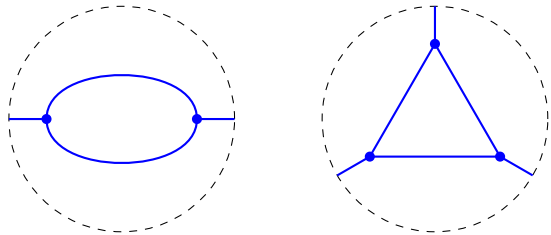

To investigate the Reeb chords of $\Legendrian(\ngraph)$ in $J^1\disk^2$,
let us consider the wavefront $\wavefront(\ngraph)$ in $\disk^2\times \R$.
Label the sheets of the wavefront 
\begin{equation}\label{equation:wavefront decomposition}
\wavefront(\ngraph)=\bigcup_{i=1}^{N}\wavefront_i
\end{equation}
by the $z$-coordinate from the bottom to top. Let $f_i:\disk^2\to \R$ be a function whose graph becomes $\wavefront_i$, and let $h_{ij}:\disk^2\to \R$ be a difference function given by $f_j-f_i$ for any $i,j\in [N]$ with $i<j$.
By the construction $h_{i\, i+1}^{-1}(0)$ gives $\ngraph_i\subset \ngraph$. 
The critical points of $h_{ij}$ on $\mathring{\disk}^2\setminus \ngraph$ are the possible candidates for the Reeb chords. In other words, to guarantee that $\ngraph$ is free, it suffices to show that $h_{ij}$ has no critical point on $\mathring{\disk}^2\setminus \ngraph$.

\begin{figure}[ht]
\begin{tikzcd}
\begin{tikzpicture}[baseline=-.5ex,scale=0.6]
\draw[thick] (0,0) circle (3cm);
\draw[red, thick] (0,0) -- (0:3) (0,0) -- (120:3) (0,0) -- (240:3);
\draw[blue, thick, fill] 
(0,0) -- (60:1) circle (2pt) -- (90:3) 
(60:1) -- (45:2) circle (2pt) -- (30:3) 
(45:2) -- (60:3);
\draw[blue, thick, fill] 
(0,0) -- (180:1) circle (2pt) -- (210:3) 
(180:1) -- (165:2) circle (2pt) -- (150:3) 
(165:2) -- (180:3);
\draw[blue, thick, fill] 
(0,0) -- (300:1) circle (2pt) -- (330:3) 
(300:1) -- (285:2) circle (2pt) -- (270:3) 
(285:2) -- (300:3);
\draw[thick, fill=white] (0,0) circle (2pt);
\end{tikzpicture}
&
\begin{tikzpicture}[baseline=-.5ex,scale=0.6]
\draw[thick] (0,0) circle (3cm);
\draw[blue, thick, fill] 
(0,0) circle (2pt) -- (60:1) circle (2pt) -- (90:3) 
(60:1) -- (45:2) circle (2pt) -- (30:3) 
(45:2) -- (60:3);
\draw[blue, thick, fill] 
(0,0) -- (180:1) circle (2pt) -- (210:3) 
(180:1) -- (165:2) circle (2pt) -- (150:3) 
(165:2) -- (180:3);
\draw[blue, thick, fill] 
(0,0) -- (300:1) circle (2pt) -- (330:3) 
(300:1) -- (285:2) circle (2pt) -- (270:3) 
(285:2) -- (300:3);
\node at (0:1.5) {$F_{1;1}$};
\node at (120:1.5) {$F_{1;4}$};
\node at (240:1.5) {$F_{1;7}$};
\node at (41:2.55) {$F_{1;2}$};
\node at (167:2.55) {$F_{1;5}$};
\node at (286:2.7) {$F_{1;8}$};
\node at (67:2.2) {$F_{1;3}$};
\node at (192:2.2) {$F_{1;6}$};
\node at (307:2.2) {$F_{1;9}$};
\end{tikzpicture}
&
\begin{tikzpicture}[baseline=-.5ex,scale=0.6]
\draw[thick] (0,0) circle (3cm);
\draw[red, thick, fill] (0,0) circle (2pt) -- (0:3) (0,0) -- (120:3) (0,0) -- (240:3);
\node at (60:1.5) {$F_{2;1}$};
\node at (180:1.5) {$F_{2;2}$};
\node at (-60:1.5) {$F_{2;3}$};
\end{tikzpicture}
\end{tikzcd}
\caption{$\ngraph(2,2,2)$, $\ngraph_1$, and $\ngraph_2$}
\end{figure}

Now apply this idea to the $3$-graph $\ngraph(a,b,c)$ in the introduction.
In order to construct $h_{1\, 2}$ and $h_{2\,3}$, consider the following graph complements $\mathring\disk^2\setminus \ngraph_i$ for $i=1,2$.
Let us denote the closure of  connected components of $\mathring\disk^2\setminus \ngraph_i$ by $\{F_{i;k}\}_{k\in K_i}$. Each $F_{i;k}$ is a polygon and exactly one edge comes from the boundary $\boundary \disk^2$.

\begin{figure}[ht]
\begin{tikzpicture}
\draw[thick] (-1.5,0) -- (1.5,0) ;
\draw[thick,blue] (1.5,0) -- (1,1) -- (0.5,1.5) (-1.5,0) -- (-1,1) -- (-0.5,1.5);
\draw[thick,blue,dotted] (0.5,1.5) -- (-0.5,1.5);
\foreach \x in {0,1,2,3,4}
{
\draw[opacity=0.5] 
(1+0.1*\x,1-0.2*\x)--(1+0.1*\x,0)
(-1-0.1*\x,1-0.2*\x)--(-1-0.1*\x,0)
(0.9-0.1*\x,1.1+0.1*\x)--(0.9-0.1*\x,0)
(-0.9+0.1*\x,1.1+0.1*\x)--(-0.9+0.1*\x,0)
(0.4-0.1*\x,1.5) --(0.4-0.1*\x,0)
(-0.4+0.1*\x,1.5) --(-0.4+0.1*\x,0)
;
}
\end{tikzpicture}
\caption{The vertical gray lines present gradient flow lines of $h_{1\,2}|_{F_{1;k}}$. Here the bottom (black) line comes from $\boundary\disk^2$.}
\label{fig:gradient}
\end{figure}
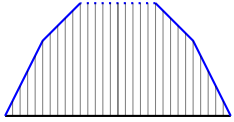

We then consider functions $h_{i\, i+1}$, $i=1,2$ satisfying the followings:
\begin{itemize}
\item $h_{i\, i+1}$ is smooth and nonnegative.
\item $h_{i\, i+1}(x)=0$ if and only if $x\in \ngraph_i$.
\item $h_{i\, i+1}$ has no critical point on $\mathring\disk^2\setminus \ngraph_i$.
\item For any $k\in K_i$, positive gradient flow lines of $h_{i\, i+1}|_{F_{i;k}}$ head for the edge from $\boundary\disk^2$, see Figure~\ref{fig:gradient}.
\end{itemize}
By the construction of $h_{i\, i+1}$ and the definition of Reeb chord, there is no Reeb chord connecting $\wavefront_i$ and $\wavefront_{i+1}$ for $i=1,2$. Now consider the the gradient flow lines of $h_{1\,2}+h_{2\,3}$ to see the Reeb chords from $\wavefront_1$ to $\wavefront_3$.
Without loss of generality, we may assume that $\|\nabla h_{1\,2}\|<\|\nabla h_{2\,3}\|$ except a small neighborhood of $\ngraph_2$. Then by the configuration of $\ngraph(a,b,c)=\ngraph_1\cup \ngraph_2$ the gradient flow lines of $h_{1\,2}+h_{2\,3}$ never vanish except the hexagonal point. In conclusion, we can construct the wavefront $\wavefront(\ngraph(a,b,c))$ without interior Reeb chords.

\begin{lemma}
The $3$-graph $\ngraph(a,b,c)$ is free.
\end{lemma}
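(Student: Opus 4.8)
The statement is essentially established by the construction carried out in the paragraphs above; the plan is to organize that construction into a proof. Recall from the discussion around~\eqref{equation:wavefront decomposition} that, after labelling the three sheets of the wavefront $\wavefront(\ngraph(a,b,c))$ by functions $f_1<f_2<f_3$ on $\disk^2$ and setting $h_{ij}=f_j-f_i$, an interior Reeb chord of $\Legendrian(\ngraph(a,b,c))$ over a point $x\in\mathring\disk^2\setminus\ngraph(a,b,c)$ can only occur at a critical point of one of $h_{12},h_{23},h_{13}$ at $x$. So it suffices to produce a wavefront, i.e.\ a choice of $h_{12}$ and $h_{23}$ with $h_{13}=h_{12}+h_{23}$, such that none of $h_{12},h_{23},h_{13}$ has a critical point on $\mathring\disk^2\setminus\ngraph(a,b,c)$.

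First I would build $h_{12}$ and $h_{23}$ face by face. Using that each closed component $F_{i;k}$ of $\mathring\disk^2\setminus\ngraph_i$ is a polygon with exactly one edge on $\boundary\disk^2$, one may choose $h_{i\,i+1}|_{F_{i;k}}$ smooth and nonnegative, vanishing exactly on the edges of $F_{i;k}$ lying in $\ngraph_i$, with no interior critical point and with every positive gradient flow line exiting through the unique $\boundary\disk^2$-edge, as in Figure~\ref{fig:gradient}; patching these over the faces of $\ngraph_i$ and smoothing across $\ngraph_i$ yields functions $h_{12},h_{23}$ with the four bulleted properties listed above. Since $h_{i\,i+1}$ has no critical point on $\mathring\disk^2\setminus\ngraph_i$ and $\ngraph_i\subset\ngraph(a,b,c)$, there is no Reeb chord between the consecutive sheets $\wavefront_i$ and $\wavefront_{i+1}$.

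It remains to rule out critical points of $h_{13}=h_{12}+h_{23}$ on $\mathring\disk^2\setminus\ngraph(a,b,c)$. After rescaling $h_{12}$ I may assume $\|\nabla h_{12}\|<\|\nabla h_{23}\|$ outside an arbitrarily thin neighborhood $U$ of $\ngraph_2$; on $\mathring\disk^2\setminus(\ngraph(a,b,c)\cup U)$ the gradient $\nabla h_{13}$ cannot vanish because $\nabla h_{23}$ does not. On $U\setminus\ngraph(a,b,c)$ --- which meets $\ngraph_1$ only near the single hexagonal point, since $\ngraph_1\cap\ngraph_2$ consists of that one point --- the component of $\ngraph_2$ splits $U$ into half-strips on which $\nabla h_{23}$ points transversally away from $\ngraph_2$; from the tripod configuration of $\ngraph(a,b,c)$ one checks that on each such half-strip $\nabla h_{12}$ is co-oriented with $\nabla h_{23}$ (equivalently, the chosen flow lines of $h_{12}$ cross $\ngraph_2$ monotonically in the direction of increasing $h_{23}$), so $\langle\nabla h_{13},\nabla h_{23}\rangle>0$ there and again $\nabla h_{13}\ne 0$. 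The hexagonal point lies on $\ngraph(a,b,c)$ and is excluded. Hence $\wavefront(\ngraph(a,b,c))$ can be woven with no interior Reeb chord, so $\ngraph(a,b,c)$ is free.

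The main obstacle is the last step: the cancellation analysis of $\nabla h_{12}$ and $\nabla h_{23}$ in a neighborhood of $\ngraph_2$. This is the only place where the \emph{specific} geometry of the tripod enters, and turning the heuristic ``$\|\nabla h_{12}\|<\|\nabla h_{23}\|$ plus the configuration of $\ngraph_1\cup\ngraph_2$'' into a rigorous argument requires pinning down $h_{12}$ near $\ngraph_2$ explicitly --- for instance as a suitable function of the signed distance to $\ngraph_2$ within each face $F_{1;k}$, arranged so that its gradient flow is transverse to $\ngraph_2$ and co-oriented with $\nabla h_{23}$ --- and then verifying, face by face against the explicit pictures of $\ngraph_1$ and $\ngraph_2$, that such a choice is globally consistent along $\ngraph_2$.
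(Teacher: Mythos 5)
Your proposal is correct and follows essentially the same route as the paper: the paper's argument is exactly the construction preceding the lemma, namely building $h_{12},h_{23}$ face by face on the polygons $F_{i;k}$ with gradient flow exiting through the unique boundary edge (ruling out chords between consecutive sheets), and then comparing $\|\nabla h_{12}\|$ with $\|\nabla h_{23}\|$ away from a thin neighborhood of $\ngraph_2$ to rule out critical points of $h_{13}=h_{12}+h_{23}$. Your extra co-orientation analysis near $\ngraph_2$ is just a more explicit rendering of the paper's appeal to ``the configuration of $\ngraph_1\cup\ngraph_2$,'' so no new idea is needed beyond what the paper already uses.
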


\subsubsection{$N$-graphs on $\annulus$}
Let $\annulus$ be the oriented annulus with two boundary components $\boundary_+\annulus$ and $\boundary_-\annulus$, and let $\ngraph$ be a $N$-graph on $\annulus$.
We say that $\ngraph$ is of type $(\legendrian_+, \legendrian_-)$ if $\ngraph$ on $\boundary_+\annulus$ and $\boundary_-\annulus$ are given by Legendrian links $\legendrian_+$ and $\legendrian_-$, respectively.
We may regard the $N$-graph $\ngraph$ of type $(\legendrian_+, \legendrian_-)$ as a cobordism between $\legendrian_+$ and $\legendrian_-$.

Suppose that two annular $N$-graphs $\ngraph_1$ and $\ngraph_2$ are of type $(\legendrian_1,\legendrian_2)$ and $(\legendrian_2, \legendrian_3)$.
Then two $N$-graphs can be merged or piled in a natural way to obtain the annular $N$-graph, denoted by $\ngraph_1 \cdot \ngraph_2$ of type $(\legendrian_1,\legendrian_3)$.
Let $\ngraph\subset\disk^2$ be an $N$-graph with $\boundary\ngraph=\legendrian_1$, then the {\em padding} operation $\ngraph_1\ngraph$ is defined by gluing along the boundary $\legendrian_1$.
Note that if there is a rotational symmetry on $\legendrian_1$, then the operation $\ngraph_1\ngraph$ is well-defined only up to that symmetry.

\begin{figure}[ht]
\begin{tikzpicture}
\begin{scope}
\draw[thick] (-1,-1) to (1,1);
\draw[thick] (-1,1) to (1,-1);
\draw[thick] (-1,0) to[out=0, in=180] (0,-1) to[out=0,in=180] (1,0);

\draw[thick,blue,fill=blue] (-0.5,-0.5) circle (0.05);
\draw[thick,blue,fill=blue] (0.5,-0.5) circle (0.05);
\draw[thick,red,fill=red] (0,0) circle (0.05);

\draw [<->] (1.5,0) -- (2,0) node[midway, above] {\Move{RIII}};
\end{scope}

\begin{scope}[xshift=3.5cm]
\draw[thick] (-1,-1) to (1,1);
\draw[thick] (-1,1) to (1,-1);
\draw[thick] (-1,0) to[out=0, in=180] (0,1) to[out=0,in=180] (1,0);

\draw[thick,red,fill=red] (-0.5,0.5) circle (0.05);
\draw[thick,red,fill=red] (0.5,0.5) circle (0.05);
\draw[thick,blue,fill=blue] (0,0) circle (0.05);

\end{scope}

\begin{scope}[xshift=7cm]
\draw[thick] (1,1) to[out=180,in=0] (0.2,0.2) to (-1,0.2);
\draw[thick] (1,0.2) to[out=180,in=0] (0.2,1) to (-1,1);

\node at (0,0.1) {$\vdots$};

\draw[thick] (1,-0.2) to (-0.2,-0.2) to[out=180,in=0] (-1,-1);
\draw[thick] (1,-1) to (-0.2,-1) to[out=180,in=0] (-1,-0.2);

\draw[thick,yellow,fill=yellow] (0.6,0.6) circle (0.05);
\draw[thick,blue,fill=blue] (-0.6,-0.6) circle (0.05);

\draw [<->] (1.5,0) -- (2,0) node[midway, above] {\Move{R0}};
\end{scope}

\begin{scope}[xshift=10.5cm]
\draw[thick] (-1,1) to[out=0,in=180] (-0.2,0.2) to (1,0.2);
\draw[thick] (-1,0.2) to[out=0,in=180] (-0.2,1) to (1,1);

\node at (0,0.1) {$\vdots$};

\draw[thick] (-1,-0.2) to (0.2,-0.2) to[out=0,in=180] (1,-1);
\draw[thick] (-1,-1) to (0.2,-1) to[out=0,in=180] (1,-0.2);

\draw[thick,yellow,fill=yellow] (-0.6,0.6) circle (0.05);
\draw[thick,blue,fill=blue] (0.6,-0.6) circle (0.05);
\end{scope}

\end{tikzpicture}
\caption{Reidemeister moves in $J^1\sphere^1$ or $\R^3$ avoiding cusp singularities.}
\end{figure}

Let us illustrate \emph{elementary annulus $N$-graph}s coming from the Legendrian isotopies in $J^1\sphere^1$. 
The following two Legendrian Reidemeister moves \Move{RIII} and \Move{R0} can be interpreted as $N$-graphs $\ngraph_{\Move{RIII}}$ and $\ngraph_{\Move{R0}}$ on the annulus $\annulus$, respectively, as depicted in Figure~\ref{fig:elementary annulus N-graph}.
The Move \Move{I} and \Move{V} of $N$-graphs in Figure~\ref{fig:move1-6} imply that the inverses $\ngraph_{\Move{RIII}}^{-1}$ and $\ngraph_{\Move{R0}}^{-1}$ can be obtained by reversing the role of the inner- and outer boundaries.

Suppose that there are certain rotational symmetry on $N$-graphs.
Let us consider a \emph{rotational annulus $N$-graph} which is trivial as an $N$-graph but rotated respecting the symmetry. A typical example comes from Legendrian torus link $\legendrian(n,m)$ of maximal Thurston-Bennequin number. The right one in Figure~\ref{fig:elementary annulus N-graph} is a rotational annulus $N$-graph for $\legendrian(3,3)$.
This type of annular $N$-graphs play a crucial role in producing a sequence of distinct exact Lagrangian fillings of positive braid Legendrian links, see \cite{Kal2006, CG2020, GSW2020b}.

\begin{figure}[ht]
\begin{tikzcd}[row sep=0.25cm]
\begin{tikzpicture}[baseline=-.5ex,scale=1]
\draw [thick] (0,0) circle [radius=0.7];
\draw [thick] (0,0) circle [radius=1.5];
\draw [thick, dotted] (260:1) arc (260:280:1);
\draw [thick, blue] (60:0.7) to[out=60, in=-30] (90:1.1) (120:0.7) to[out=120, in=-150] (90:1.1) -- (90:1.5);
\draw [thick, red] (90:0.7) -- (90:1.1) to[out=150,in=-60] (120:1.5) (90:1.1) to[out=30,in=-120] (60:1.5);
\draw [thick, black]
(30:0.7) -- (30:1.5) 
(150:0.7) -- (150:1.5)
(0:0.7) -- (0:1.5) 
(180:0.7) -- (180:1.5);
\node at (0,0) {$\ngraph_{\Move{RIII}}$};
\end{tikzpicture}
\cdot
\begin{tikzpicture}[baseline=-.5ex,scale=1]
\draw [thick] (0,0) circle [radius=0.7];
\draw [thick, black]
(30:0.5) -- (30:0.7) 
(150:0.5) -- (150:0.7)
(0:0.5) -- (0:0.7) 
(180:0.5) -- (180:0.7);
\draw [thick, red] (90:0.5) -- (90:0.7);
\draw [thick, blue] (60:0.5) -- (60:0.7) (120:0.5) -- (120:0.7) ;
\draw [thick, dotted] (250:0.6) arc (250:290:0.6);
\draw [double] (0,0) circle [radius=0.5];
\node at (0,0) {$\ngraph$};
\end{tikzpicture}
=
\begin{tikzpicture}[baseline=-.5ex,scale=1]
\draw [thick] (0,0) circle [radius=1.5];
\draw [thick, dotted] (260:1) arc (260:280:1);
\draw [thick, blue] (60:0.7) to[out=60, in=-30] (90:1.1) (120:0.7) to[out=120, in=-150] (90:1.1) -- (90:1.5);
\draw [thick, red] (90:0.7) -- (90:1.1) to[out=150,in=-60] (120:1.5) (90:1.1) to[out=30,in=-120] (60:1.5);
\draw [thick, black]
(30:0.7) -- (30:1.5) 
(150:0.7) -- (150:1.5)
(0:0.7) -- (0:1.5) 
(180:0.7) -- (180:1.5);
\draw [thick, black]
(30:0.5) -- (30:0.7) 
(150:0.5) -- (150:0.7)
(0:0.5) -- (0:0.7) 
(180:0.5) -- (180:0.7);
\draw [thick, red] (90:0.5) -- (90:0.7);
\draw [thick, blue] (60:0.5) -- (60:0.7) (120:0.5) -- (120:0.7) ;
\draw [double] (0,0) circle [radius=0.5];
\node at (0,0) {$\ngraph$};
\end{tikzpicture}
\\
\begin{tikzpicture}[baseline=-.5ex,scale=1]
\draw [thick] (0,0) circle [radius=0.7];
\draw [thick] (0,0) circle [radius=1.5];
\draw [thick, dotted] (260:1) arc (260:280:1);
\draw [thick, blue] (120:0.7) to[out=120, in=-120] (60:1.5);
\draw [thick, yellow] (60:0.7) to[out=60,in=-60] (120:1.5);
\draw [thick, black]
(30:0.7) -- (30:1.5) 
(150:0.7) -- (150:1.5)
(0:0.7) -- (0:1.5) 
(180:0.7) -- (180:1.5);
\node at (0,0) {$\ngraph_{\Move{R0}}$};
\end{tikzpicture}
\cdot
\begin{tikzpicture}[baseline=-.5ex,scale=1]
\draw [thick] (0,0) circle [radius=0.7];
\draw [thick, black]
(30:0.5) -- (30:0.7) 
(150:0.5) -- (150:0.7)
(0:0.5) -- (0:0.7) 
(180:0.5) -- (180:0.7);
\draw [thick, yellow] (60:0.5) -- (60:0.7);
\draw [thick, blue] (120:0.5) -- (120:0.7) ;
\draw [thick, dotted] (250:0.6) arc (250:290:0.6);
\draw [double] (0,0) circle [radius=0.5];
\node at (0,0) {$\ngraph$};
\end{tikzpicture}
=
\begin{tikzpicture}[baseline=-.5ex,scale=1]
\draw [thick] (0,0) circle [radius=1.5];
\draw [thick, dotted] (260:1) arc (260:280:1);
\draw [thick, blue] (120:0.7) to[out=120, in=-120] (60:1.5);
\draw [thick, yellow] (60:0.7) to[out=60,in=-60] (120:1.5);
\draw [thick, black]
(30:0.7) -- (30:1.5) 
(150:0.7) -- (150:1.5)
(0:0.7) -- (0:1.5) 
(180:0.7) -- (180:1.5);
\draw [thick, black]
(30:0.5) -- (30:0.7) 
(150:0.5) -- (150:0.7)
(0:0.5) -- (0:0.7) 
(180:0.5) -- (180:0.7);
\draw [thick, yellow] (60:0.5) -- (60:0.7);
\draw [thick, blue] (120:0.5) -- (120:0.7) ;
\draw [double] (0,0) circle [radius=0.5];
\node at (0,0) {$\ngraph$};
\end{tikzpicture}
\\
\begin{tikzpicture}
\begin{scope}

\draw [thick, red] (0.5*1/2,{1/2*sqrt(3)/2}) to[out=60,in=180] (1.5,0);
\draw [rotate around={60:(0,0)},thick, red] (0.5*1/2,{1/2*sqrt(3)/2}) to[out=60,in=180] (1.5,0);
\draw [rotate around={120:(0,0)},thick, red] (0.5*1/2,{1/2*sqrt(3)/2}) to[out=60,in=180] (1.5,0);
\draw [rotate around={180:(0,0)},thick, red] (0.5*1/2,{1/2*sqrt(3)/2}) to[out=60,in=180] (1.5,0);
\draw [rotate around={240:(0,0)},thick, red] (0.5*1/2,{1/2*sqrt(3)/2}) to[out=60,in=180] (1.5,0);
\draw [rotate around={300:(0,0)},thick, red] (0.5*1/2,{1/2*sqrt(3)/2}) to[out=60,in=180] (1.5,0);

\draw [thick, blue] ({1.5*sqrt(3)/2},{1.5*1/2}) to[out=-120, in=90] (0,0.5);
\draw [rotate around={60:(0,0)}, thick, blue] ({1.5*sqrt(3)/2},{1.5*1/2}) to[out=-120, in=90] (0,0.5);
\draw [rotate around={120:(0,0)}, thick, blue] ({1.5*sqrt(3)/2},{1.5*1/2}) to[out=-120, in=90] (0,0.5);
\draw [rotate around={180:(0,0)}, thick, blue] ({1.5*sqrt(3)/2},{1.5*1/2}) to[out=-120, in=90] (0,0.5);
\draw [rotate around={240:(0,0)}, thick, blue] ({1.5*sqrt(3)/2},{1.5*1/2}) to[out=-120, in=90] (0,0.5);
\draw [rotate around={300:(0,0)}, thick, blue] ({1.5*sqrt(3)/2},{1.5*1/2}) to[out=-120, in=90] (0,0.5);

\draw [thick] (0,0) circle [radius=1.5];
\draw [thick] (0,0) circle [radius=0.5];

\end{scope}
\end{tikzpicture}
\end{tikzcd}
\caption{Elementary annulus operations on $N$-graphs on $\disk^2$, and a rotational annulus $N$-graph.}
\label{fig:elementary annulus N-graph}
\end{figure}
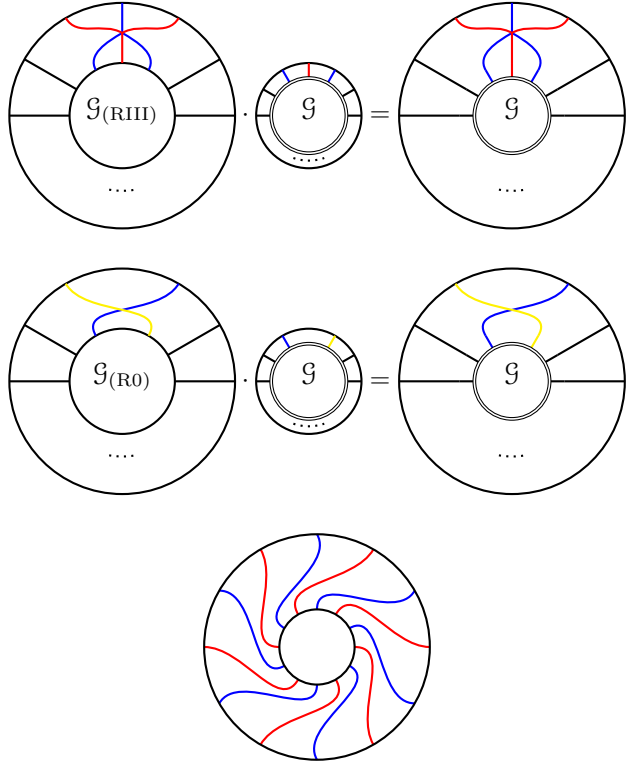

\section{Cluster algebras}
Cluster algebras, introduced by Fomin and Zelevinsky~\cite{FZ1_2002}, are
commutative algebras with specific generators, called \emph{cluster
	variables}, defined recursively.
In this section, we recall basic notions in the theory of cluster algebras.
For more details, we refer the reader to~\cite{FZ1_2002, FZ2_2003}.

Throughout this section, we fix $m, n \in \Z_{>0}$ such that $n \leq m$, and
we let $\field$ be the rational function field with $m$ independent
variables over $\C$.

\subsection{Basics on cluster algebras}

\begin{definition}[{cf. \cite{FZ1_2002, FZ2_2003}}]
	A \emph{seed} $\seed = (\bfx, \qbasis)$ is a pair of 
	\begin{itemize}
		\item a tuple $\mathbf x = (x_1,\dots,x_m)$ of algebraically
		independent generators of $\field$, that is, 
		$\field = \C(x_1,\dots,x_m)$;
		\item an $n \times m$ integer matrix $\qbasis = (b_{i,j})_{i,j}$ such
		that the \emph{principal part} $\qbasis^{\textrm{pr}} \coloneqq
		(b_{i,j})_{1\leq i,j\leq n}$ is skew-symmetrizable, that is, there
		exist positive integers $d_1,\dots,d_n$ such that
		$$\textrm{diag}(d_1,\dots,d_n) \cdot \qbasis^{\textrm{pr}}$$ is a
		skew-symmetric matrix.
	\end{itemize}
	We call elements $x_1,\dots,x_m$ \emph{cluster variables} and call
	$\qbasis$ \emph{exchange matrix}. Moreover, we call $x_1,\dots,x_n$
	\emph{unfrozen} (or, \emph{mutable}) variables and $x_{n+1},\dots,x_m$
	\emph{frozen} variables.
\end{definition}

To define cluster algebras, we introduce mutations on seeds, exchange matrices, and quivers as follows. 
\begin{enumerate}
\item (Mutation on seeds)	For a seed $\seed = (\bfx, \qbasis)$ and an integer $1 \leq k \leq n$, the \emph{mutation} $\mutation_k(\seed) = (\bfx', \qbasis')$ is defined as follows:
\begin{equation*}
\begin{split}
x_i' &= \begin{cases}
x_i &\text{ if } i \neq k,\\
\displaystyle 
x_k^{-1}\left( \prod_{b_{k,j} > 0} x_j^{b_{k,j}} + \prod_{b_{k,j} < 0}x_j^{-b_{k,j}}
\right) & \text{ otherwise}.
\end{cases}\\[1em]
b_{i,j}' &= \begin{cases}
-b_{i,j} & \text{ if } i = k \text{ or } j = k, \\
\displaystyle b_{i,j} + \frac{|b_{i,k}| b_{k,j} + b_{i,k} | b_{k,j}|} {2} & \text{ otherwise}.
\end{cases}
\end{split}
\end{equation*}
\item (Mutation on exchange matrices)
We define $\mu_k(\qbasis) = (b_{i,j}')$, and say that \emph{$\qbasis' =(b_{i,j}')$ is the mutation of $\qbasis$ at $k$}.
\item (Mutation on quivers)
We call a finite directed multigraph $\quiver$ a \emph{quiver} if it does not have
oriented cycles of length at most $2$. The adjacency matrix $\qbasis(\quiver)$ of a
quiver is always skew-symmetric. Moreover, $\mu_k(\qbasis(\quiver))$ is again 
the adjacency matrix of a quiver $\quiver'$. We define $\mu_k(\quiver)$ to be 
the quiver satisfying 
\[
 \qbasis(\mu_k(\quiver)) = \mu_k(\qbasis(\quiver)),
\]
and say that \emph{$\mu_k(\quiver)$ is the mutation of $\quiver$ at $k$}.
\end{enumerate}

We say a quiver $\quiver'$ is \emph{mutation equivalent} to another quiver $\quiver$ if 
there exists a sequence  of mutations $\mutation_{j_1},\dots,\mutation_{j_{\ell}}$ 
which connects $\quiver'$ and $\quiver$, that is,
\[
\quiver' = (\mutation_{j_{\ell}} \cdots \mutation_{j_1})(\quiver).
\]
Also, we say a quiver $\quiver$ is \emph{acyclic} if there is no directed cycle.
\begin{remark}\label{rmk_acyclic_quivers}
	It is proved in~\cite[Corollary~4]{CalderoKeller06} that 
	if two acyclic quivers are mutation equivalent, then
	there exists a sequence of mutations from one to other
	such that intermediate quivers are all acyclic.
	Indeed, two mutation
	equivalent acyclic quivers have the same underlying 
	(undirected) graph.
\end{remark}

An immediate check shows that $\mutation_k(\seed)$ is again a seed, and a mutation is an involution, that is, its square is the identity.
Also, note that the mutation on seeds does not change frozen variables $x_{n+1},\dots,x_m$.
Let $\mathbb{T}_n$ denote the $n$-regular tree whose edges are labeled by $1,\dots,n$. Except for $n = 1$, there are infinitely many vertices on the tree $\mathbb{T}_n$. For example, we present regular trees $\mathbb{T}_2$ and $\mathbb{T}_3$ in Figure~\ref{figure_regular_trees_2_and_3}.
\begin{figure}
	\begin{tabular}{cc}
		\begin{tikzpicture}
		\tikzset{every node/.style={scale=0.8}}
		\tikzset{cnode/.style = {circle, fill,inner sep=0pt, minimum size= 1.5mm}}
		\node[cnode] (1) {};
		\node[cnode, right of=1 ] (2) {};
		\node[cnode, right of=2 ] (3) {};
		\node[cnode, right of=3 ] (4) {};	
		\node[cnode, right of=4 ] (5) {};	
		\node[cnode, right of=5 ] (6) {};	
		
		\node[left of=1] {$\cdots$};
		\node[right of=6] {$\cdots$};
		
		\draw (1)--(2) node[above, midway] {$1$};
		\draw (2)--(3) node[above, midway] {$2$};
		\draw (3)--(4) node[above, midway] {$1$};
		\draw (4)--(5) node[above, midway] {$2$};
		\draw (5)--(6) node[above, midway] {$1$};
		\end{tikzpicture} &
		\begin{tikzpicture}
			\tikzset{every node/.style={scale=0.8}}
		\tikzset{cnode/.style = {circle, fill,inner sep=0pt, minimum size= 1.5mm}}
		\node[cnode] (1) {};
		\node[cnode, below right of =1] (2) {};
		\node[cnode, below of =2] (3) {};
		\node[cnode, above right of=2] (4){};
		\node[cnode, above of =4] (5) {};
		\node[cnode, below right of = 4] (6) {};
		\node[cnode, below of= 6] (7) {};
		\node[cnode, above right of = 6] (8) {};
		\node[cnode, above of = 8] (9) {};
		\node[cnode, below right of = 8] (10) {};
		\node[cnode, below of = 10] (11) {};
		\node[cnode, above right of = 10] (12) {};
		
		\node[left of = 1] {$\cdots$};
		\node[right of = 12] {$\cdots$};
		
		\draw (1)--(2) node[above, midway, sloped] {$1$};
		\draw (4)--(6) node[above, midway, sloped] {$1$};
		\draw (8)--(10) node[above, midway, sloped] {$1$};

		\foreach \x [evaluate ={ \x as \y using int(\x +2)} ] in {2, 6, 10}{
			\draw (\x)--(\y)  node[below, midway, sloped] {$2$}; 
		}
		\foreach \x [evaluate = {\x as \y using int(\x +1)}] in {2, 4, 6, 8, 10}{
			\draw (\x)--(\y) node[above, midway, sloped] {$3$};
		}
		\end{tikzpicture}\\[2ex]
		$\mathbb{T}_2$ & $\mathbb{T}_3$
	\end{tabular}
	\caption{The $n$-regular trees for $n=2$ and $n = 3$.}
	\label{figure_regular_trees_2_and_3}	
\end{figure}
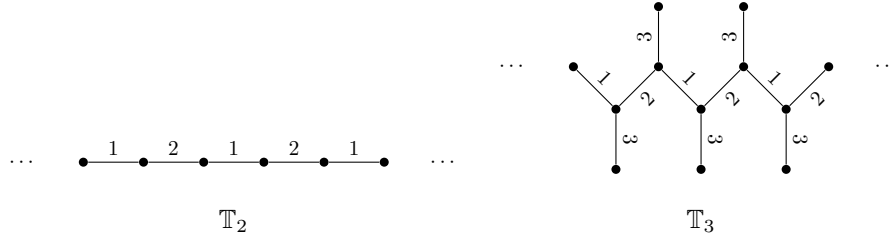
A \emph{cluster pattern} (or \emph{seed pattern}) is an assignment
\[
\mathbb{T}_n \to \{\text{seeds in } \field\}, \quad t \mapsto \seed_t = (\bfx_t, \qbasis_t)
\]
such that if $\begin{tikzcd} t \arrow[r,dash, "k"] & t' \end{tikzcd}$ in $\mathbb{T}_n$, then $\mutation_k(\seed_t) = \seed_{t'}$.
Let $\{ \seed_t = (\bfx_t, \qbasis_t)\}_{t \in \mathbb{T}_n}$ be a cluster pattern with $\bfx_t = (x_{1;t},\dots,x_{m;t})$. Since the mutation does not change frozen variables, we may let $x_{n+1} = x_{n+1;t},\dots,x_m = x_{m;t}$.
\begin{definition}[{cf. \cite{FZ2_2003}}]
	Let $\{ \seed_t = (\bfx_t, \qbasis_t)\}_{t \in \mathbb{T}_n}$ be a cluster pattern with $\bfx_t = (x_{1;t},\dots,x_{m;t})$.
	The \emph{cluster algebra} $\cA(\{\seed_t\}_{t \in \mathbb{T}_n})$ is defined to be the $\C[x_{n+1},\dots,x_m]$-subalgebra 
	of $\field$ generated by all the cluster variables 
	 $\bigcup_{t \in \mathbb{T}_n} \{x_{1;t},\dots,x_{n;t}\}$.
\end{definition}
If we fix a vertex $t_0 \in \mathbb{T}_n$, then a cluster pattern $\{ \seed_t \}_{t \in \mathbb{T}_n}$ is 
constructed from the seed $\seed_{t_0}$. In this case, we call $\seed_{t_0}$ an \emph{initial seed}. 
Because of this reason, we simply denote by $\cA(\seed_{t_0})$ the cluster algebra given by the cluster 
pattern constructed from the initial seed $\seed_{t_0}$.
\begin{example}\label{example_A2_example}
	Let $n = m = 2$. Suppose that an initial seed is given by 
	\[
		\seed_{t_0} = \left(
			(x_1,x_2), \begin{pmatrix}
				0 & 1  \\ -1 & 0
			\end{pmatrix}
		\right).
	\]
	We present a part of the cluster pattern obtained by the initial seed $\seed_{t_0}$.
	\begin{center}
	\begin{tikzcd}
		\left( (x_2,x_1), 
		\begin{pmatrix}
			0 & -1 \\ 1 & 0
		\end{pmatrix}
		\right)
		\arrow[r,equal,"\text{up to}", "\text{relabelling}"']
		& \seed_{t_0} 
			= \left(
			(x_1,x_2), 
			\begin{pmatrix}
				0 & 1 \\ -1 & 0
			\end{pmatrix}
		\right) \arrow[d,<->, "\mutation_1"]
		\\
		\left(
			(\frac{1+x_1}{x_2}, x_1), \begin{pmatrix}
				0 & 1 \\ -1 & 0
			\end{pmatrix}
		\right) \arrow[u,<->, "\mutation_1"]
		& 
		\left(
			\left(\frac{1+x_2}{x_1}, x_2\right), \begin{pmatrix}
				0 & -1 \\ 1 & 0
			\end{pmatrix}
		\right) \arrow[d, <->,"\mutation_2"]\\
		\left(
			\left(\frac{1+x_1}{x_2}, \frac{1+x_1+x_2}{x_1x_2}\right),
			\begin{pmatrix}
				0 & -1 \\ 1 & 0
			\end{pmatrix}
		\right) \arrow[u,<->, "\mutation_2"]
		&
		\left(
			\left(\frac{1+x_2}{x_1}, \frac{1+x_1+x_2}{x_1x_2}\right),
			\begin{pmatrix}
				0 & 1  \\ -1 & 0
			\end{pmatrix}
		\right) \arrow[l,<->, "\mutation_1"]
    \end{tikzcd}
\end{center}
\end{example}

\begin{remark}\label{rmk_x_cluster_mutation}
There is another mutation operation called the \emph{cluster
$\mathcal{X}$-mutation}.
Let $\{ \seed_t = (\bfx_t, \qbasis_t)\}_{t \in \mathbb{T}_n}$ be a cluster
pattern with $\bfx_t = (x_{1;t},\dots,x_{m;t})$. For $t \in \mathbb{T}_n$ and
$i \in [n]$, we set $\mathbf{y}_t = (y_{1;t},\dots,y_{n;t})$ by
\[
    y_{i;t} = \prod_{j \in [m]} x_{j;t}^{b^{(t)}_{i,j}}
\]
where $\qbasis_t = (b^{(t)}_{i,j})$.
Then the assignment $t \mapsto (\mathbf{y}_t, \qbasis_t)$ is called a cluster
\emph{$Y$-pattern} and for $\begin{tikzcd} t \arrow[r,dash, "k"] & t'
\end{tikzcd}$ in $\mathbb{T}_n$, we have
\[
y_{i;t'} = \begin{cases}
    \displaystyle y_{i;t} y_{k;t}^{\max\{b_{i,k}^{(t)},0\}}(1+y_{k;t})^{-b_{i,k}^{(t)}} & \text{ if }i \neq k, \\
   y_{k;t}^{-1} &\text{ otherwise;}
\end{cases}
\]
see~\cite[Proposition~3.9]{FZ4_2007}. For $\begin{tikzcd} t \arrow[r,dash,
"k"] & t' \end{tikzcd}$ in $\mathbb{T}_n$, the operation sends $(\mathbf y_t,
\qbasis_t)$ to $(\mathbf y_{t'}, \qbasis_{t'})$ is called the cluster
$\mathcal{X}$-mutation (or, $\mathcal{X}$-cluster mutation).
For exchange matrices and quivers, the cluster $\mathcal{X}$-mutation is defined
the same as before. 
\end{remark}

\subsection{Cluster algebras of finite type}
The number of cluster variables in Example~\ref{example_A2_example} is finite
even though the number of vertices in the graph $\mathbb{T}_2$ is infinite.
We call such cluster algebras \emph{of finite type}. More precisely, we
recall the following definition.
\begin{definition}[{\cite{FZ2_2003}}]
	A cluster algebra is said to be \emph{of finite type} if it has finitely
	many cluster variables.
\end{definition}
It has been realized that classifying finite type cluster algebras is related
to studying exchange matrices. The \emph{Cartan counterpart}
$C(\qbasis^{\textrm{pr}}_{t_0}) = (c_{i,j})$ of the principal part
$\qbasis^{\textrm{pr}}_{t_0}$ of an exchange matrix is defined by
\[
c_{i,j} = \begin{cases}
2 & \text{ if } i = j, \\
-|b_{i,j}| & \text{ otherwise}.
\end{cases}
\]
Since $\qbasis^{\text{pr}}_{t_0}$ is skew-symmetrizable, its Cartan
counterpart $C(\qbasis^{\textrm{pr}}_{t_0})$ is symmetrizable. 
Note that two mutation equivalent acyclic quivers produce the same 
Cartan counterpart (cf. Remark~\ref{rmk_acyclic_quivers}).
The following
theorem presents a classification of cluster algebras of finite type.
\begin{theorem}[{\cite{FZ2_2003}}] \label{thm_FZ_finite_type}
	Let $\{ \seed_t = (\bfx_t, \qbasis_t)\}_{t \in \mathbb{T}_n}$ be a
	cluster pattern with an initial seed $\seed_{t_0} = (\bfx_{t_0},
	\qbasis_{t_0})$. Let $\mathcal{A}(\qbasis_{t_0})$ be the corresponding
	cluster algebra. Then we have the following.
	\begin{enumerate}
		\item The cluster algebra $\mathcal{A}(\qbasis_{t_0})$ is of finite
		type if and only if $C(\qbasis^{\textrm{pr}}_{t_0})$ is a Cartan
		matrix of finite type.
		\item If the cluster algebra $\mathcal{A}(\qbasis_{t_0})$ is of
		finite type, then there is a bijective correspondence between the set
		of positive roots for $C(\qbasis^{\textrm{pr}}_{t_0})$ and the set of
		noninitial cluster variables. More precisely, for the set $\SRoots=\{\alpha_1,\dots,\alpha_n\}$
		of simple roots, a positive root $\sum_{i=1}^n d_i \alpha_i$ is associated to a cluster variable of the
		form
		\begin{align*}
		&\frac{f(\bfx_{t_0})}{x_{1;t_0}^{d_1} \cdots x_{n;t_0}^{d_n}},&
		f(\bfx_{t_0})&\in\C[x_{1;t_0},\dots, x_{m;t_0}].
		\end{align*}
		Here, $x_{1;t_0},\dots,x_{m;t_0}$ are cluster variables in the
		initial seed $\bfx_{t_0}$.
		Accordingly, there is a bijective correspondence between the set of
		cluster variables and the set $\Roots_{\geq -1}$ of \emph{almost
		positive roots} $\Roots_{\geq -1} \coloneqq \Roots^+ \cup (-
		\SRoots)$, where $\Roots$ it the root system whose Cartan matrix is~$C(\qbasis^{\textrm{pr}}_{t_0})$.
	\end{enumerate}
\end{theorem}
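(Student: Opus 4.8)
The plan is to reprove the Fomin--Zelevinsky classification~\cite{FZ2_2003} in three stages. First I would reduce to the principal part: a mutation at an index $k\in[n]$ never touches the frozen rows of the exchange matrix, so the set of cluster variables --- and hence the finite-type property --- depends only on the mutation class of $\qbasis^{\textrm{pr}}_{t_0}$, in fact only on its \emph{diagram} (the weighted oriented graph with edge weights $|b_{i,j}b_{j,i}|$ and orientation recorded by the sign of $b_{i,j}$). So it suffices to prove (1) and (2) under the assumption $\qbasis_{t_0}=\qbasis^{\textrm{pr}}_{t_0}$, which I assume henceforth.

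For the ``if'' direction of (1) and for (2), suppose $C(\qbasis^{\textrm{pr}}_{t_0})$ is a Cartan matrix of finite type. Every Dynkin diagram carries a bipartite orientation, so after replacing $\initialseed$ by another seed in its mutation class I may assume its quiver is bipartite, with vertex set $[n]=I_+\sqcup I_-$. By the Laurent phenomenon~\cite{FZ1_2002} every cluster variable $X$ is a Laurent polynomial in $\bfx_{t_0}$, hence has a well-defined \emph{denominator vector} $\bfd(X)=(d_1,\dots,d_n)\in\Z^n$, with $X=N_X\,x_{1;t_0}^{-d_1}\cdots x_{n;t_0}^{-d_n}$ for a polynomial $N_X\in\C[x_{1;t_0},\dots,x_{m;t_0}]$ divisible by no $x_{i;t_0}$. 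The core of this stage is to follow the $\bfd$-vectors along the \emph{Coxeter mutations} $\mutation_{\pm}=\prod_{i\in I_{\pm}}\mutation_i$, starting from $\bfd(x_{i;t_0})=-\alpha_i$, and to check that they transform by the piecewise-linear involutions $\tau_{\pm}$ that Fomin and Zelevinsky introduce on the root lattice~\cite{FZ_Ysystem03}. Finite type enters precisely here: the tropical Coxeter transformation $\tau_-\tau_+$ has finite order exactly in finite type (mirroring the finite order of the corresponding Coxeter element of the Weyl group), and over one period of Coxeter mutations the $\bfd$-vectors that occur are exactly the elements of $\Roots_{\geq -1}$, each once, with $-\alpha_i$ matching the initial variable $x_{i;t_0}$ and $\Roots^+$ matching the noninitial cluster variables. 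This gives finiteness (the ``if'' part of (1)) together with the root--denominator correspondence of (2), the numerator $N_X$ lying in $\C[\bfx_{t_0}]$ since it is a genuine polynomial.

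For the ``only if'' direction of (1), suppose there are finitely many cluster variables. For any exchange matrix $\qbasis$ occurring in the pattern and any $i\neq j$, the $\{i,j\}$-restriction (freezing all other indices) is a rank-$2$ cluster algebra all of whose cluster variables are cluster variables of $\cA(\qbasis_{t_0})$; it is therefore of finite type, which forces $|b_{i,j}b_{j,i}|\leq 3$. Thus every diagram mutation-equivalent to that of $\initialseed$ has all edge weights at most $3$ --- such diagrams are \emph{$2$-finite}. I would then invoke the combinatorial classification of connected $2$-finite diagrams: by induction on the rank, excluding an explicit short list of minimal forbidden diagrams in low rank and propagating, every such diagram is mutation equivalent to an orientation of a Dynkin diagram of type $\dynADE$ or $\dynBCFG$. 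Since mutation-equivalent acyclic diagrams have the same Cartan counterpart (Remark~\ref{rmk_acyclic_quivers}), $C(\qbasis^{\textrm{pr}}_{t_0})$ is a Cartan matrix of finite type.

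I expect the real obstacle to be this last step --- the classification of $2$-finite diagrams. Showing that $2$-finiteness alone forces Dynkin type, i.e.\ that there are no ``exotic'' finite-type cluster algebras, is an intricate finite case analysis for which I know no shortcut. The other ingredients --- the reduction to the principal part, the Laurent phenomenon, and the bipartite Coxeter-mutation bookkeeping --- are comparatively mechanical, once one grants the finite order of $\tau_-\tau_+$ in finite type, which is a standard fact about Coxeter elements.
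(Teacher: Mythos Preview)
The paper does not prove this theorem: it is quoted from Fomin--Zelevinsky~\cite{FZ2_2003} as background, with no argument supplied. There is therefore no in-paper proof to compare your proposal against.

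For what it is worth, your outline is essentially the original Fomin--Zelevinsky strategy --- the ``only if'' direction via $2$-finiteness and the diagram classification is exactly the argument of~\cite{FZ2_2003}, while your ``if'' direction via bipartite Coxeter mutations and the piecewise-linear involutions $\tau_\pm$ is closer to the companion paper~\cite{FZ_Ysystem03}. One small slip worth fixing: in this paper's convention the exchange matrix is $n\times m$ (rows indexed by mutable indices only), so there are no ``frozen rows'' for mutation to leave untouched; the frozen \emph{columns} do change under mutation. The correct statement underlying your reduction is that mutation of the principal $n\times n$ block depends only on that block, so the mutation class of $\qbasis^{\textrm{pr}}_{t_0}$ --- and hence the finite-type property --- is intrinsic.
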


We provide a list of finite type root systems and their Dynkin diagram in
Table~\ref{Dynkin}.
In what follows, we fix an ordering on the simple roots as in Table~\ref{Dynkin}; 
our conventions agree with that in the standard textbook of Humphreys~\cite{Humphreys}.
\begin{table}[t]
\begin{center}
\begin{tabular}{c|l  }
\toprule
$\Roots$ & Dynkin diagram \\
\midrule
\raisebox{1em}{$\dynA_n$ $(n \geq 1)$} &
\begin{tikzpicture}[scale=.5]
	\tikzstyle{state}=[draw, circle, inner sep = 0.07cm]
	\tikzset{every node/.style={scale=0.7}}
	\node[state, label=below:{$1$}] (1) {};
	\node[state, label=below:{$2$}] (2) [right = of 1] {};
	\node[state, label=below:{$3$}] (3) [right = of 2] {};
	\node[state, label=below:{$n-1$}] (4) [right =of 3] {};
	\node[state, label=below:{$n$}] (5) [right =of 4] {};			

	\draw (1)--(2)--(3)
		(4)--(5);
	\draw[dotted] (3)--(4);
\end{tikzpicture}  \\[0.5em]
			
\raisebox{1em}{$\dynB_n$ $(n \geq 2)$} &
\begin{tikzpicture}[scale=.5]
	\tikzstyle{state}=[draw, circle, inner sep = 0.07cm]
	\tikzstyle{double line} = [
		double distance = 1.5pt, 
		double=\pgfkeysvalueof{/tikz/commutative diagrams/background color}
	]
	\tikzset{every node/.style={scale=0.7}}

	\node[state, label=below:{$1$}] (1) {};
	\node[state, label=below:{$2$}] (2) [right = of 1] {};
	\node[state, label=below:{$n-2$}] (3) [right = of 2] {};
	\node[state, label=below:{$n-1$}] (4) [right =of 3] {};
	\node[state, label=below:{$n$}] (5) [right =of 4] {};

	\draw (1)--(2)
		(3)--(4);
	\draw [dotted] (2)--(3);
	\draw[double line] (4)-- node{\scalebox{1.3}{ $>$}} (5);
\end{tikzpicture}  \\[0.5em]			
\raisebox{1.5em}{$\dynC_n$ $(n \geq 3)$} & 
\begin{tikzpicture}[scale=.5]
	\tikzstyle{state}=[draw, circle, inner sep = 0.07cm]
	\tikzstyle{double line} = [
		double distance = 1.5pt, 
		double=\pgfkeysvalueof{/tikz/commutative diagrams/background color}
	]
	\tikzset{every node/.style={scale=0.7}}

	\node[state, label=below:{$1$}] (1) {};
	\node[state, label=below:{$2$}] (2) [right = of 1] {};
	\node[state, label=below:{$n-2$}] (3) [right = of 2] {};
	\node[state, label=below:{$n-1$}] (4) [right =of 3] {};
	\node[state, label=below:{$n$}] (5) [right =of 4] {};

	\draw (1)--(2)
		(3)--(4);
	\draw [dotted] (2)--(3);
	\draw[double line] (4)-- node{\scalebox{1.3}{ $<$}} (5);
\end{tikzpicture}  \\[0.5em]	
			
\raisebox{1.5em}{$\dynD_n$ $(n \geq 4)$} & 
\begin{tikzpicture}[scale=.5]
	\tikzstyle{state}=[draw, circle, inner sep = 0.07cm]
	\tikzstyle{double line} = [
		double distance = 1.5pt, 
		double=\pgfkeysvalueof{/tikz/commutative diagrams/background color}
	]
	\tikzset{every node/.style={scale=0.7}}
			
	\node[state, label=below:{$1$}] (1) {};
	\node[state, label=below:{$2$}] (2) [right = of 1] {};
	\node[state, label=below:{$n-3$}] (3) [right = of 2] {};
	\node[state, label=below:{$n-2$}] (4) [right =of 3] {};			
			
	\node[state, label=right:{$n-1$}] (5) [above right= 0.3cm and 1cm of 4] {};
	\node[state, label=right:{$n$}] (6) [below right= 0.3cm and 1cm of 4] {};

	\draw(1)--(2)
		(3)--(4)--(5)
		(4)--(6);
	\draw[dotted] (2)--(3);
\end{tikzpicture}
\\[0.5em]
\raisebox{1.5em}{$\dynE_6$} & 
	
\begin{tikzpicture}[scale=.5]
	\tikzstyle{state}=[draw, circle, inner sep = 0.07cm]
	\tikzstyle{double line} = [
		double distance = 1.5pt, 
		double=\pgfkeysvalueof{/tikz/commutative diagrams/background color}
	]
	\tikzset{every node/.style={scale=0.7}}

	\node[state, label=below:{$1$}] (1) {};
	\node[state, label=below:{$3$}] (3) [right=of 1] {};
	\node[state, label=below:{$4$}] (4) [right=of 3] {};
	\node[state, label=right:{$2$}] (2) [above=of 4] {};
	\node[state, label=below:{$5$}] (5) [right=of 4] {};
	\node[state, label=below:{$6$}] (6) [right=of 5]{};

	\draw(1)--(3)--(4)--(5)--(6)
		(2)--(4);
\end{tikzpicture}			
\\
\raisebox{1.5em}{$\dynE_7$} & 
\begin{tikzpicture}[scale=.5]
	\tikzstyle{state}=[draw, circle, inner sep = 0.07cm]
	\tikzstyle{double line} = [
		double distance = 1.5pt, 
		double=\pgfkeysvalueof{/tikz/commutative diagrams/background color}
	]
	\tikzset{every node/.style={scale=0.7}}

	\node[state, label=below:{$1$}] (1) {};
	\node[state, label=below:{$3$}] (3) [right=of 1] {};
	\node[state, label=below:{$4$}] (4) [right=of 3] {};
	\node[state, label=right:{$2$}] (2) [above=of 4] {};
	\node[state, label=below:{$5$}] (5) [right=of 4] {};
	\node[state, label=below:{$6$}] (6) [right=of 5]{};
	\node[state, label=below:{$7$}] (7) [right=of 6]{};

	\draw(1)--(3)--(4)--(5)--(6)--(7)
		(2)--(4);
\end{tikzpicture}	
\\
\raisebox{1.5em}{$\dynE_8$} & 
\begin{tikzpicture}[scale=.5]
	\tikzstyle{state}=[draw, circle, inner sep = 0.07cm]
	\tikzstyle{double line} = [
		double distance = 1.5pt, 
		double=\pgfkeysvalueof{/tikz/commutative diagrams/background color}
	]
	\tikzset{every node/.style={scale=0.7}}

	\node[state, label=below:{$1$}] (1) {};
	\node[state, label=below:{$3$}] (3) [right=of 1] {};
	\node[state, label=below:{$4$}] (4) [right=of 3] {};
	\node[state, label=right:{$2$}] (2) [above=of 4] {};
	\node[state, label=below:{$5$}] (5) [right=of 4] {};
	\node[state, label=below:{$6$}] (6) [right=of 5]{};
	\node[state, label=below:{$7$}] (7) [right=of 6]{};
	\node[state, label=below:{$8$}] (8) [right=of 7]{};

	\draw(1)--(3)--(4)--(5)--(6)--(7)--(8)
		(2)--(4);
\end{tikzpicture}
\\
\raisebox{1em}{$\dynF_4$} 
&
\begin{tikzpicture}[scale = .5]
	\tikzstyle{state}=[draw, circle, inner sep = 0.07cm]
	\tikzstyle{point} = [
		draw=black,
		cross out,
		inner sep=0pt,
		minimum width=4pt,
		minimum height=4pt]		
	\tikzstyle{double line} = [
		double distance = 1.5pt, 
		double=\pgfkeysvalueof{/tikz/commutative diagrams/background color}
	]
	\tikzset{every node/.style={scale=0.7}}
			
	\node[state, label=below:{1}] (1) {};
	\node[state, label=below:{2}] (2) [right = of 1] {};
	\node[state, label=below:{3}] (3) [right = of 2] {};
	\node[state, label=below:{4}] (4) [right =of 3] {};
			
	\draw (1)--(2)
		(3)--(4);
	\draw[double line] (2)-- node{\scalebox{1.3}{$>$}} (3);
\end{tikzpicture}  \\
\raisebox{1em}{$\dynG_2$} 
&
\begin{tikzpicture}[scale =.5]
	\tikzstyle{state}=[draw, circle, inner sep = 0.07cm]
	\tikzstyle{point} = [
		draw=black,
		cross out,
		inner sep=0pt,
		minimum width=4pt,
		minimum height=4pt]
	\tikzstyle{triple line} = [
		double distance = 2pt, 
		double=\pgfkeysvalueof{/tikz/commutative diagrams/background color}
	]
	\tikzset{every node/.style={scale=0.7}}
			
	\node[state, label=below:{1}] (1) {};
	\node[state, label=below:{2}] (2) [right = of 1] {};
			
	\draw[triple line] (1)-- node{\scalebox{1.3}{$<$}} (2);
	\draw (1)--(2);
			
\end{tikzpicture}\\
\bottomrule
\end{tabular}
\end{center}
\caption{Dynkin diagrams of finite type}\label{Dynkin}
\end{table}
In Table~\ref{table_seeds_and_cluster_variables}, we provide enumeration on
the number of cluster variables and clusters in each cluster algebra of
finite (irreducible) type (cf.~\cite[Figure~5.17]{FWZ_chapter45}). 
\begin{definition}\label{def_quiver_of_type_X}
For a quiver $\quiver$, we say that \emph{$\quiver$ is of type $\dynX$} if 
it is mutation equivalent to a quiver~$\quiver'$ whose underlying 
unoriented graph on mutable vertices is the Dynkin diagram of type~$\dynX$. 
Equivalently, $\quiver$ is of type $\dynX$ if it is mutation equivalent to 
a quiver $\quiver'$ whose  Cartan counterpart 
$C(\qbasis^{\text{pr}}(\quiver'))$ of the principal part of the adjacency matrix 
is the Cartan matrix of type $\dynX$. 
\end{definition}
\begin{table}[b]
	\begin{tabular}{c|ccccccccc}
		\toprule
		$\Roots$ & $\dynA_n$ & $\dynB_n$ & $\dynC_n$ & $\dynD_n$ & $\dynE_6$ & $\dynE_7$ & $\dynE_8$ & $\dynF_4$ & $\dynG_2$ \\
        \midrule
        $\#$seeds &  $\displaystyle \frac{1}{n+2}{\binom{2n+2}{n+1}}$ & $\displaystyle  \binom{2n}{n}$
        & $\displaystyle  \binom{2n}{n}$ & $\displaystyle  \frac{3n-2}{n} \binom{2n-2}{n-1}$ & $833$ & $4160$ & $25080$ & $105$ & $8$ \\[1.5em]
        $\#$clvar & $\displaystyle  \frac{n(n+3)}{2}$ & $n(n+1)$ & $n(n+1)$ & $n^2$ & $42$ & $70$ & $128$ & $28$ & $8$ \\
		\bottomrule 
	\end{tabular}
\caption{Enumeration of seeds and cluster variables}\label{table_seeds_and_cluster_variables}
\end{table}
\begin{example}
	Continuing Example~\ref{example_A2_example}, the Cartan
	counterpart of the principal part $\qbasis_{t_0}^{\text{pr}}$ is given by
	\[
		C(\qbasis_{t_0}^{\text{pr}}) = \begin{pmatrix}
			2 & -1  \\ -1 & 2
		\end{pmatrix},
	\]
	which is the Cartan matrix of Lie type $\dynA_2$. 
	Accordingly, by Theorem~\ref{thm_FZ_finite_type}, the cluster algebra
	$\cA(\seed_{t_0})$ is of finite type. Indeed, there are five cluster
	variables and we present the bijective correspondence between them and
	the set of almost positive roots as described in
	Theorem~\ref{thm_FZ_finite_type}(2).
	\[
		\begin{tikzcd}[row sep = 0.2cm]
			x_1 
			& x_2 
			& \displaystyle \frac{1+x_2}{x_1} 
			& \displaystyle \frac{1+x_1}{x_2} 
			& \displaystyle \frac{1+x_1+x_2}{x_1x_2} 
			\\
			-\alpha_1 & -\alpha_2 & \alpha_1 & \alpha_2 & \alpha_1 + \alpha_2
		\end{tikzcd}
	\]
	Here, $\alpha_1$ and $\alpha_2$ are simple roots of the Lie algebra of type $\dynA_2$. 
\end{example}

\subsection{Folding}\label{sec:folding}
Under certain conditions, one can \textit{fold} seed patterns to produce new
ones. This procedure is used to study cluster algebras of type
$\dynBCFG$ from those of simply-laced type~$\dynADE$. As before, we fix $m,
n \in \Z_{>0}$ such that $n \leq m$.

Let $\quiver$ be a labeled quiver having $m$ vertices labeled $1,\dots,m$. 
Let $G$ be a finite group acting on the set $[m]$. 
The notation $i \sim i'$ will mean that $i$ and $i'$ lie in the same
$G$-orbit. To study folding of cluster algebras, we prepare some
terminologies.
\begin{definition}[{cf.~\cite[\S4.4]{FWZ_chapter45}}]\label{definition:admissible quiver}
    Let $\quiver$ be a labeled quiver having $m$ vertices and $G$ a finite
    group acting on the set $[m]$.
    \begin{enumerate}
        \item The quiver $\quiver$ (or the corresponding $m \times n$
        exchanged matrix $\qbasis=\qbasis(\quiver)$) is \emph{$G$-admissible}
        if
        \begin{enumerate}
            \item for any $i \sim i'$, index $i$ is mutable if and only if so is $i'$;
            \item for any indices $i$ and $j$, and any $g \in G$, we have $b_{i,j} = b_{g(i),g(j)}$;
            \item for mutable indices $i \sim i'$, we have $b_{i,i'} = 0$;
            \item for any $i \sim i'$, and any mutable $j$, we have $b_{i,j} b_{i',j} \geq 0$.
        \end{enumerate}
        \item For a $G$-admissible quiver $\quiver$, we call a $G$-orbit \emph{mutable} 
        (respectively, \emph{frozen}) if it consists of mutable (respectively, frozen) vertices. 
    \end{enumerate}        
\end{definition}
For a $G$-admissible quiver $\quiver$, we define the matrix $\qbasis^G =
\qbasis(\quiver)^G = (b_{I,J}^G)$ whose rows (respectively, columns) are
labeled by the $G$-orbits (respectively, mutable $G$-orbits) by
\[
    b_{I,J}^G = \sum_{i \in I} b_{i,j}
\]
where $j$ is an arbitrary index in $J$. We then say $\qbasis^G$ is obtained
from $\qbasis$ (or from the quiver $\quiver$) by \textit{folding} with
respect to the given $G$-action.
\begin{example}\label{example_D4_to_G2}
    Let $\quiver$ be a quiver of type $\dynD_4$ given as follows.
\[
    \begin{tikzpicture}[node distance=0.7cm]
        \tikzstyle{state}=[draw, circle, inner sep = 0.07cm]
        \tikzset{every node/.style={scale=0.7}}    
        \tikzstyle{double line} = [
            double distance = 1.5pt, 
            double=\pgfkeysvalueof{/tikz/commutative diagrams/background color}
        ]
        \tikzstyle{triple line} = [
            double distance = 2pt, 
            double=\pgfkeysvalueof{/tikz/commutative diagrams/background color}
        ]
        \node[state, label=left:{$2$}] (1) {};
        \node[state, label =right:{$1$}] (3) [above right = 0.4cm and 0.7cm of 1] {};
        \node[state, label=right:{$3$}] (2) [right = 0.7cm of 1] {};
        \node[state, label= right:{$4$}] (4) [below right = 0.4cm and 0.7cm of 1] {};
        
        \draw (3)--(1)--(4)
        (1)--(2);
    
        \node[label={below:\normalsize{$\rightsquigarrow$}}] [above right = 0.1cm and 1.5cm of 2] {};

        \node[ynode] at (1) {};
        \node[gnode] at (2) {};
        \node[gnode] at (3) {};
        \node[gnode] at (4) {};

    \draw[->] (1)--(2);
    \draw[->] (1)--(3);
    \draw[->] (1)--(4);
    \end{tikzpicture}
    \qquad
    \text{\raisebox{1.5em}{$\qbasis(\quiver) = \begin{pmatrix}
        0 & -1 & 0 & 0 \\
        1 & 0 & 1 & 1 \\
        0 & -1 & 0 & 0 \\
        0 & -1 & 0 & 0
    \end{pmatrix}$}}
\]
The finite group $G = \Z / 3 \Z$ acts on $[4]$ by sending $1 \mapsto 3
\mapsto 4 \mapsto 1$ and $2 \mapsto 2$. 
Here, we decorate vertices of the quiver $\quiver$ with green and yellow colors
for presenting sources and sinks, respectively. 
One may check that the quiver
$\quiver$ is $G$-admissible. By setting $I_1 = \{2\}$ and $I_2 = \{ 1,3,4\}$,
we obtain
\[
    \begin{split}
    b_{I_1,I_2}^G &= \sum_{i \in I_1} b_{i,1} = b_{2,1} = 1, \\
    b_{I_2,I_1}^G &= \sum_{i \in I_2} b_{i,2} = b_{1,2} + b_{3,2} + b_{4,2} = -3.
    \end{split}
\]
Accordingly, we obtain the matrix $\qbasis^G = \begin{pmatrix} 0 & 1 \\ -3 &
0 \end{pmatrix}$ whose Cartan counterpart is the Cartan matrix of type
$\dynG_2$.
\end{example}

For a $G$-admissible quiver $\quiver$ and a mutable $G$-orbit $I$, we
consider a composition of mutations given by
\[
    \mutation_I = \prod_{i \in I} \mutation_i
\]
which is well-defined because of the definition of admissible quivers.
If $\mutation_I(\quiver)$ is again $G$-admissible, then we have that
\begin{equation*}
    (\mutation_I(\qbasis))^G = \mutation_I(\qbasis^G).
\end{equation*}
We notice that the quiver $\mutation_I(\quiver)$ is \textit{not}
$G$-admissible in general. Therefore, we present the following definition.
\begin{definition}
    Let $G$ be a group acting on the vertex set of a quiver $\quiver$.
    We say that $\quiver$ is \emph{globally foldable} with respect to $G$ if
    $\quiver$ is $G$-admissible and moreover for any sequence of mutable
    $G$-orbits $I_1,\dots,I_\ell$, the quiver $(\mutation_{I_\ell}  \dots
     \mutation_{I_1})(\quiver)$ is $G$-admissible.
\end{definition}
For a globally foldable quiver, we can fold all the seeds in the
corresponding seed pattern.
Let $m^G$ denote the number of orbits of the action of $G$ on $[m]$. Let
$\field^G$ be the field of rational functions in $m^G$ independent variables.
Let $\psi \colon \field \to \field^G$ be a surjective 
homomorphism.
A seed $\seed = (\mathbf{x}, \qbasis(\quiver))$ is called \emph{$(G, \psi)$-admissible} if 
\begin{itemize}
    \item $\quiver$ is a $G$-admissible quiver;
    \item for any $i \sim i'$, we have $\psi(x_i) = \psi(x_{i'})$.
\end{itemize}
In this situation, we define a new ``folded'' seed $\seed^G = (\bfx^G,
\qbasis^G)$ in $\field^G$ whose exchange matrix is given as before
and cluster variables $\bfx^G = (x_I)$ are indexed by the $G$-orbits and
given by $x_I = \psi(x_i)$.

\begin{proposition}[{cf.~\cite[Corollary~4.4.11]{FWZ_chapter45}}]\label{proposition:folded cluster pattern}
    Let $\quiver$ be a quiver which is globally foldable with respect to a
    group $G$ acting on the set of its vertices. Let $\seed = (\mathbf{x},
    \qbasis(\quiver))$ be a seed in the field $\field$ of rational functions
    freely generated by a cluster $\mathbf{x} = (x_1,\dots,x_m)$.
    Define $\psi \colon \field  \to \field^G$ so that
    $\seed$ is a $(G, \psi)$-admissible seed. Then, for any mutable
    $G$-orbits $I_1,\dots,I_\ell$, the seed $(\mutation_{I_\ell}  \dots 
    \mutation_{I_1})(\seed)$ is $(G, \psi)$-admissible, and moreover the
    folded seeds $((\mutation_{I_\ell}  \dots 
    \mutation_{I_1})(\seed))^G$ form a seed pattern in $\field^G$ 
    with the initial seed $\seed^G=(\bfx^G, (\qbasis(\quiver))^G)$.
\end{proposition}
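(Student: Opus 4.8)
The plan is to prove the statement by induction on the length $\ell$ of the sequence $I_1,\dots,I_\ell$ of mutable $G$-orbits, using the two standing inputs: the identity $(\mutation_I(\qbasis))^G = \mutation_I(\qbasis^G)$ for exchange matrices recorded above, and the global foldability hypothesis, which ensures every intermediate quiver stays $G$-admissible. The base case $\ell = 0$ is exactly the assumption that $\seed$ is $(G,\psi)$-admissible, so that $\seed^G = (\bfx^G, \qbasis^G)$ is defined. For the inductive step write $\seed' = (\mutation_{I_{\ell-1}}\cdots\mutation_{I_1})(\seed) = (\bfx', \qbasis(\quiver'))$, assumed $(G,\psi)$-admissible, and set $I = I_\ell$; we must show $\mutation_I(\seed')$ is again $(G,\psi)$-admissible and then compute its fold.

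First I would unwind the orbit mutation $\mutation_I = \prod_{i\in I}\mutation_i$ at the level of clusters. By Definition~\ref{definition:admissible quiver}(c) the entries $b'_{i,i'}$ vanish for $i,i'\in I$, so the individual mutations $\mutation_i$ ($i\in I$) commute, and the mutation rule for exchange matrices shows that performing $\mutation_{i'}$ with $i'\in I$ leaves the $i$-th row unchanged; hence neither the exchange relation eventually imposed on the variable at $i$, nor the cluster variables $x_j'$ with $j\notin I$ that appear in it, are altered by the other mutations of the orbit. Thus for $i\in I$ the new cluster variable $x_i''$ of $\mutation_I(\seed')$ satisfies
\[
x_i'\,x_i'' = \prod_{\substack{j\notin I\\ b'_{i,j}>0}}(x_j')^{b'_{i,j}} \;+\; \prod_{\substack{j\notin I\\ b'_{i,j}<0}}(x_j')^{-b'_{i,j}}.
\]
Applying $\psi$ and invoking the equivariance $b'_{i,j} = b'_{g(i),g(j)}$ of Definition~\ref{definition:admissible quiver}(b), the identity $\psi(x_j') = \psi(x_{g(j)}')$ for $g\in G$ (from $(G,\psi)$-admissibility of $\seed'$), and the fact that $G$ preserves both $I$ and its complement, one gets $\psi(x_i'') = \psi(x_{i'}'')$ for all $i\sim i'$. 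Together with $\mutation_I(\quiver')$ being $G$-admissible by global foldability, this shows $\mutation_I(\seed')$ is $(G,\psi)$-admissible, closing the induction at the level of admissibility.

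It remains to identify $(\mutation_I(\seed'))^G$ with the mutation of $(\seed')^G$ at the orbit $I$. For the exchange matrix this is the recorded identity $(\mutation_I(\qbasis(\quiver')))^G = \mutation_I((\qbasis(\quiver'))^G)$. For the cluster variables I would group the monomials in the displayed relation by the $G$-orbit $J$ of the index $j$: all $x_j'$ with $j$ in a fixed orbit $J$ have the same image $x_J = \psi(x_j')$, and sign-coherence, Definition~\ref{definition:admissible quiver}(d) (together with its evident analogue for frozen orbits), guarantees that the integers $b'_{i'',j}$ for $i''\in I$ share a sign, so that collecting exponents reproduces exactly $b^G_{I,J} = \sum_{i''\in I} b'_{i'',j}$ with the correct sign. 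Matching this against the exchange relation attached to $\qbasis^G$ at the vertex $I$ then yields $(\mutation_I(\seed'))^G = \mutation_I((\seed')^G)$. Since the unfolded assignment $\mathbb{T}_n \to \{\text{seeds}\}$ is a seed pattern, composing with folding produces a well-defined seed pattern over $\mathbb{T}_{n^G}$ with initial seed $\seed^G$, where $n^G$ is the number of mutable $G$-orbits.

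I expect the main obstacle to be exactly this last matching step: transporting, under $\psi$, the unfolded exchange relations — which only see the individual rows $b'_{i,j}$ — to the folded relation governed by the summed matrix $\qbasis^G$. This is where all of the admissibility conditions are genuinely used, and it is delicate because the multiplicities $\geq 2$ introduced by folding force one to be careful about the precise formalism of $\psi$ (equivalently, about whether one works with the cluster variables or with the $\mathcal{X}$-variables of Remark~\ref{rmk_x_cluster_mutation}, and about the bookkeeping of frozen orbits), rather than merely identifying variables within an orbit. The remaining steps — commutation of intra-orbit mutations, stability of the relevant rows, and verification of the admissibility conditions — are routine transcriptions of the definitions.
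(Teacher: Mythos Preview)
The paper does not prove this proposition; it is cited directly from Fomin--Williams--Zelevinsky. Your overall approach---induction on $\ell$, commutation of the intra-orbit mutations via condition~(c), and preservation of $(G,\psi)$-admissibility via the equivariance~(b)---is the standard argument, and those parts are sound.

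The step you correctly flagged as the crux, however, contains a real confusion. Applying $\psi$ to the exchange relation at a \emph{single fixed} $i\in I$ and grouping the monomials by the orbit $J$ of $j$ produces the exponent $\sum_{j\in J} b'_{i,j}$ on $x_J$, not $\sum_{i''\in I} b'_{i'',j}$ as you wrote; these two sums are genuinely different (in the paper's own $\dynD_4\rightsquigarrow\dynG_2$ example, with $I=\{2\}$ and $J=\{1,3,4\}$, they are $3$ and $1$ respectively). You invoked condition~(d) in the wrong direction: (d) asserts sign-coherence of $b'_{i'',j}$ as $i''$ ranges over $I$ with $j$ fixed, whereas what you actually need in order to collapse the product is sign-coherence of $b'_{i,j}$ as $j$ ranges over $J$ with $i$ fixed. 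That does hold---for $j,j'\in J$ pick $g\in G$ with $g(j')=j$, use~(b) to get $b'_{i,j'}=b'_{g(i),j}$, and then apply~(d) to $g(i)\sim i$---but it requires~(b) and~(d) together and yields a sum over $J$, not over $I$. Having obtained the correct exponent $\sum_{j\in J} b'_{i,j}$, you must then match it against the folded exchange relation; be warned that the paper's row/column convention for $\qbasis^G$ is transposed relative to its convention for $\qbasis$, so this final bookkeeping also needs care.
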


\begin{example}\label{example_folding_ADE}
The quiver in Example~\ref{example_D4_to_G2} is globally foldable, and
moreover the corresponding seed pattern is of type $\dynG_2$. In fact, 
seed patterns of type~$\dynBCFG$  are obtained by 
folding quivers of type~$\dynADE$  in general.
In Figure~\ref{fig_folding}, we present the corresponding quivers of
type~$\dynADE$. We decorate vertices of quivers with yellow and green 
colors for presenting source and sink, respectively. 
As one may see, we have to put arrows on the Dynkin diagram alternatingly.
For each case, the finite group action that makes each quiver globally foldable is given as follows. 
\begin{enumerate}
	\item $\dynA_{2n-1} \rightsquigarrow \dynB_n$: The finite group $G = \Z/2\Z$ acts on the set  $[2n-1]$ of vertices of the quiver of type $\dynA_{2n-1}$ by 
	\[
	i \mapsto 2n-i \quad \text{ for } i \in [2n-1].
	\]
	There are $n$ orbits: $I_i = \{ i, 2n-i\}$ for $i \in [n]$. 
	\item $\dynD_{n+1} \rightsquigarrow \dynC_n$: The finite group $G = \Z/2\Z$ acts on the set $[n+1]$ of vertices of the quiver of type $\dynD_{n+1}$ by 
	\[
	\begin{split}
	i &\mapsto i \quad \text{ for } i \in [n-1], \\
	n  & \mapsto n+1, 	\quad n+1 \mapsto n.
	\end{split}
	\]
	There are $n$ orbits: $I_i = \{i\}$ for $i \in [n-1]$, and $I_n = \{n,n+1\}$. 
	\item $\dynE_6 \rightsquigarrow \dynF_4$: The finite group $G = \Z/2\Z$ acts on the set $[6]$ of vertices of the quiver of type $\dynE_6$ by 
	\[
	\begin{split}
	i &\mapsto i \quad \text{ for } i = 2, 4, \\
	1 & \mapsto 6, \quad 6 \mapsto 1, \\
	3 &\mapsto 5, \quad 5 \mapsto 3.
	\end{split}
	\]
	There are $4$ orbits: $I_1 = \{1,6\}$, $I_2=\{3,5\}$, $I_3 = \{4\}$, and $I_4 = \{2\}$.
	\item $\dynD_4 \rightsquigarrow \dynG_2$: The finite group $G = \Z/3\Z$ acts on the set $[4]$ of vertices of the quiver of type $\dynD_4$ by 
	\[
	\begin{split}
	2 &\mapsto 2, \\
	1 & \mapsto 3, \quad 3 \mapsto 4,\quad 4 \mapsto 1.
	\end{split}
	\]
	There are $2$ orbits: $I_1 = \{2\}$ and $I_2 = \{1,3,4\}$.
\end{enumerate}
The alternating colorings on quivers of type $\dynADE$ provide 
that on quivers of type $\dynBCFG$ as displayed in the right column of Figure~\ref{fig_folding}.
\end{example}
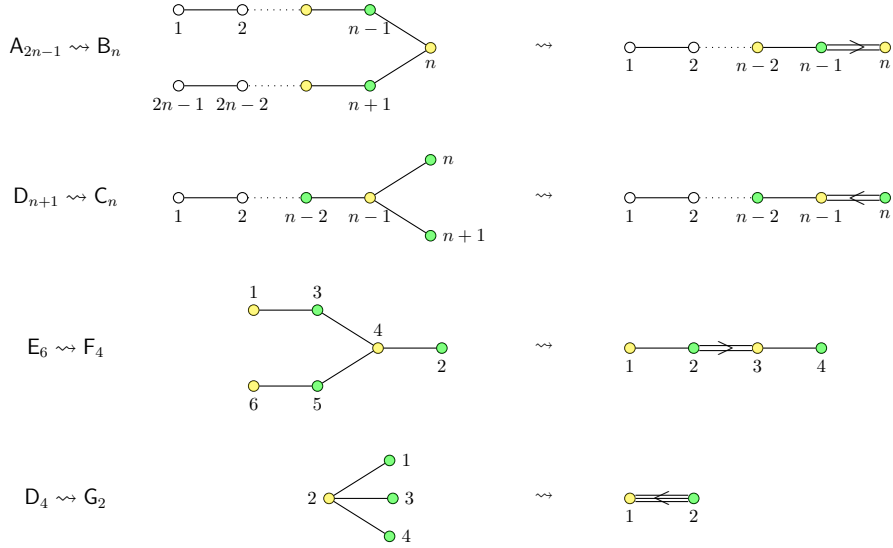
\begin{figure}
    \begin{tikzpicture}[node distance=0.7cm]
        \tikzstyle{state}=[draw, circle, inner sep = 0.07cm]
        \tikzset{every node/.style={scale=0.7}}    
        \tikzstyle{double line} = [
            double distance = 1.5pt, 
            double=\pgfkeysvalueof{/tikz/commutative diagrams/background color}
        ]
        \tikzstyle{triple line} = [
            double distance = 2pt, 
            double=\pgfkeysvalueof{/tikz/commutative diagrams/background color}
        ]
        \begin{scope}[xshift=-1.5cm, yshift=-0.5cm]
            \node[color=white] {\textcolor{black}{{\Large $\dynA_{2n-1} \rightsquigarrow \dynB_n$}}};
        \end{scope}
        \begin{scope}[xshift=-1.5cm, yshift=-2.5cm]
            \node[color=white] {\textcolor{black}{{\Large $\dynD_{n+1} \rightsquigarrow \dynC_n$}}};
        \end{scope}
        \begin{scope}[xshift=-1.5cm, yshift=-4.5cm]
            \node[color=white] {\textcolor{black}{{\Large $\dynE_6 \rightsquigarrow \dynF_4$}}};
        \end{scope}
        \begin{scope}[xshift=-1.5cm, yshift=-6.5cm]
            \node[color=white] {\textcolor{black}{{\Large $\dynD_{4} \rightsquigarrow \dynG_2$}}};
        \end{scope}
\begin{scope}

	\node[state, label=below:{$1$}] (1) {};
	\node[state, label=below:{$2$}] (2) [right = of 1] {};
	\node[state, label=below:{}] (3) [right = of 2] {};
	\node[state, label=below:{$n-1$}] (4) [right =of 3] {};
    \node[state, label=below:{$n$}] (5) [below right= 0.4cm and 0.7cm of 4] {};
    \node[state, label=below:{$n+1$}] (6) [below left = 0.4cm and 0.7cm of 5] {};
    \node[state] (7) [left = of 6] {};
    \node[state, label = below:{$2n-2$}] (8) [left = of 7] {};
    \node[state, label=below:{$2n-1$}] (9) [left = of 8] {};

    \foreach \y in {3, 5, 7} {
        \node[ynode] at (\y) {};
    }

    \foreach \g in {4,6} {
        \node[gnode] at (\g) {};
    }

	\draw (1)--(2)
        (3)--(4)--(5)--(6)--(7)
        (8)--(9);
    \draw[dotted] (2)--(3)
    (7)--(8);

\end{scope}
\begin{scope}[xshift = 6cm, yshift = -0.5cm]

	\node[state, label=below:{$1$}] (1) {};
	\node[state, label=below:{$2$}] (2) [right = of 1] {};
	\node[state, label=below:{$n-2$}] (3) [right = of 2] {};
	\node[state, label=below:{$n-1$}] (4) [right =of 3] {};
	\node[state, label=below:{$n$}] (5) [right =of 4] {};

    \foreach \y in {3, 5} {
        \node[ynode] at (\y) {};
    }

    \foreach \g in {4} {
        \node[gnode] at (\g) {};
    }

	\draw (1)--(2)
		(3)--(4);
	\draw [dotted] (2)--(3);
	\draw[double line] (4)-- node{\scalebox{1.3}{ $>$}} (5);
	\node[label={below:\normalsize{$\rightsquigarrow$}}] [above left = 0.1cm and 1cm of 1] {};
\end{scope}

\begin{scope}[yshift= -2.5cm]

	\node[state, label=below:{$1$}] (1) {};
	\node[state, label=below:{$2$}] (2) [right = of 1] {};
	\node[state, label=below:{$n-2$}] (3) [right = of 2] {};
	\node[state, label=below:{$n-1$}] (4) [right =of 3] {};			
			
	\node[state, label=right:{$n$}] (5) [above right= 0.4cm and .7cm of 4] {};
	\node[state, label=right:{$n+1$}] (6) [below right= 0.4cm and 0.7cm of 4] {};
    \foreach \y in {4} {
        \node[ynode] at (\y) {};
    }

    \foreach \g in {3,5,6} {
        \node[gnode] at (\g) {};
    }
	\draw(1)--(2)
		(3)--(4)--(5)
		(4)--(6);
	\draw[dotted] (2)--(3);

\end{scope}
\begin{scope}[xshift = 6cm, yshift=-2.5cm]
    \node[state, label=below:{$1$}] (1) {};
	\node[state, label=below:{$2$}] (2) [right = of 1] {};
	\node[state, label=below:{$n-2$}] (3) [right = of 2] {};
	\node[state, label=below:{$n-1$}] (4) [right =of 3] {};
	\node[state, label=below:{$n$}] (5) [right =of 4] {};

    \foreach \y in {4} {
        \node[ynode] at (\y) {};
    }

    \foreach \g in {3,5} {
        \node[gnode] at (\g) {};
    }

	\draw (1)--(2)
		(3)--(4);
	\draw [dotted] (2)--(3);
    \draw[double line] (4)-- node{\scalebox{1.3}{ $<$}} (5);
    	\node[label={below:\normalsize{$\rightsquigarrow$}}] [above left = 0.1cm and 1cm of 1] {};
\end{scope}

\begin{scope}[xshift=3.5cm, yshift=-4.5cm]

    \node[state, label=below:{$2$}] (2) {};
    \node[state, label=above:{$4$}] (4) [left = of 2] {};
    \node[state, label=above:{$3$}] (3) [above left = 0.4cm and 0.7cm of 4] {};
    \node[state, label=above:{$1$}] (1) [left = of 3] {};
    \node[state, label=below:{$5$}] (5) [below left = 0.4cm and 0.7cm of 4] {};
    \node[state, label=below:{$6$}] (6) [left = of 5] {};
    \foreach \y in {1,4,6} {
        \node[ynode] at (\y) {};
    }

    \foreach \g in {2,3,5} {
        \node[gnode] at (\g) {};
    }	
	\draw(1)--(3)--(4)--(5)--(6)
		(2)--(4);

\end{scope}
\begin{scope}[xshift = 6cm, yshift = -4.5cm]
    \node[state, label=below:{1}] (1) {};
	\node[state, label=below:{2}] (2) [right = of 1] {};
	\node[state, label=below:{3}] (3) [right = of 2] {};
	\node[state, label=below:{4}] (4) [right =of 3] {};
    \foreach \y in {1,3} {
        \node[ynode] at (\y) {};
    }

    \foreach \g in {2,4} {
        \node[gnode] at (\g) {};
    }		
	\draw (1)--(2)
		(3)--(4);
	\draw[double line] (2)-- node{\scalebox{1.3}{$>$}} (3);
	\node[label={below:\normalsize{$\rightsquigarrow$}}] [above left = 0.1cm and 1cm of 1] {};	
\end{scope}

\begin{scope}[xshift=2cm, yshift=-6.5cm]

    \node[state, label=left:{$2$}] (1) {};
    \node[state, label =right:{$1$}] (3) [above right = 0.4cm and 0.7cm of 1] {};
    \node[state, label=right:{$3$}] (2) [right = 0.7cm of 1] {};
    \node[state, label= right:{$4$}] (4) [below right = 0.4cm and 0.7cm of 1] {};

    \foreach \y in {1} {
        \node[ynode] at (\y) {};
    }

    \foreach \g in {2,3,4} {
        \node[gnode] at (\g) {};
    }
    
    \draw (3)--(1)--(4)
    (1)--(2);

\end{scope}
\begin{scope}[xshift = 6cm, yshift = -6.5cm]
	\node[state, label=below:{1}] (1) {};
	\node[state, label=below:{2}] (2) [right = of 1] {};
			
	\draw[triple line] (1)-- node{\scalebox{1.3}{$<$}} (2);
    \draw (1)--(2);
    \foreach \y in {1} {
        \node[ynode] at (\y) {};
    }

    \foreach \g in {2} {
        \node[gnode] at (\g) {};
    }
	\node[label={below:\normalsize{$\rightsquigarrow$}}] [above left = 0.1cm and 1cm of 1] {};	
\end{scope}
    \end{tikzpicture}
    \caption{Foldings in Dynkin diagrams of finite type (for seed patterns)}\label{fig_folding}
\end{figure}

\subsection{Combinatorics of exchange graphs}
\label{sec_comb_of_exchange_graphs}
The \emph{exchange graph} of a cluster pattern is the $n$-regular (finite or
infinite) connected graph whose vertices are the seeds of the cluster pattern
and whose edges connect the seeds related by a single mutation. For example,
the exchange graph in Example~\ref{example_A2_example} is a cycle graph
with~$5$~vertices. In this section, we recall the combinatorics of exchange
graphs which will be used later. For more details, we refer the reader
to~\cite{FZ2_2003, FZ_Ysystem03, FZ4_2007}.

As we already have seen in Theorem~\ref{thm_FZ_finite_type}, cluster algebras
of finite type are classified by Cartan matrices of finite type. Moreover,
for a cluster algebra of finite type, the exchange graph depends only on the
exchange matrix (see~\cite{FZ2_2003}). Because of this reason, we denote by
$E(\Roots)$ the exchange graph of a cluster pattern corresponding to the root
system $\Roots$.

To study the combinatorics of exchange graphs of cluster algebras, we prepare
some terminologies.
We call a graph over $[m]$ \emph{bipartite} if there is a function $\varepsilon \colon [m] \to \{+,-\}$, called a \emph{coloring}, such that for all $i$ and $j$ in $[m]$, 
\[
b_{i,j} \neq 0  \implies \begin{cases}
\varepsilon(i) = +, \\
\varepsilon(j) = -.
\end{cases}
\]
Here, $b_{i,j}$ is the adjacency matrix of the graph. 
For example, every tree is bipartite, but cycle graphs with an odd number of
vertices are not bipartite.
Dynkin diagrams of finite type are bipartite since they are trees.

Let $\Roots$ be a rank $n$ root system of finite type with the set of simple
roots $\SRoots = \{\alpha_i \mid i \in [n]\}$ and the set of positive roots
$\Roots^+$. 
For every subset $J \subset [n]$, let $\Roots(J)$ denote the root subsystem
of $\Roots$ spanned by the set of simple roots $\{ \alpha_i \mid i \in J \}$.
Note that $\Phi(J)$ may not be irreducible even if $\Phi$ is.
Let $W$ be the Weyl group of $\Roots$ which is
generated by the simple reflections $s_i = s_{\alpha_i}$.
Since the Dynkin diagram of $\Roots$ is a bipartite
graph, let $I_+$ and $I_-$ be two parts of the set~$[n]$; they are determined
uniquely up to renaming. 
Recall that a \emph{Coxeter element} is the product of all simple
reflections. 
The order $h$ of a Coxeter element in $W$ is called the \emph{Coxeter number}
of $\Roots$. We present the known formula of Coxeter numbers $h$ in
Table~\ref{table_Coxeter_number} (see~\cite[Appendix]{Bourbaki02}).
\begin{table}[b]
	\begin{tabular}{c|ccccccccc}
		\toprule
		$\Roots$ & $\dynA_n$ & $\dynB_n$ & $\dynC_n$ & $\dynD_n$ & $\dynE_6$ & $\dynE_7$ & $\dynE_8$ & $\dynF_4$ & $\dynG_2$ \\
		\midrule
		$h$ & $n+1$ & $2n$ & $2n$ & $2n-2$ & $12$ & $18$ & $30$ & $12$ & $6$ 		\\
		\bottomrule 
	\end{tabular}
\caption{Coxeter numbers}\label{table_Coxeter_number}
\end{table}

Let $\Delta(\Roots)$ be a simplicial complex whose ground set is
$\Roots_{\geq -1}$ and maximal simplices are called \emph{clusters}. The dual
graph of $\Delta(\Roots)$ is known to be the exchange graph $E(\Roots)$. 
Recall from~\cite{CFZ02_polytopal, FZ_Ysystem03} that there is 
a polytopal realization of the simplicial complex~$\Delta(\Roots)$, that is, 
there is a simple convex polytope $P(\Roots) \subset \R^n$ such that 
the dual complex of~$P(\Roots)$ agrees with $\Delta(\Roots)$.
The polytope $P(\Roots)$ is called the \emph{generalized associahedron}. We
denote by $F_{\beta}$ the facet of the polytope $P(\Roots)$ corresponding to
a root $\beta \in \Roots_{\geq -1}$. Here, a facet of a polytope of dimension
$n$ is a face of dimension $n-1$.

Consider the composition
$\qcoxeter = \mutation_- \mutation_+$ of a sequence of mutations where
\[
\mutation_{\varepsilon} = \prod_{i \in I_{\varepsilon}} \mutation_i \qquad \text{ for } \varepsilon \in \{ +, -\}.
\]
We call $\qcoxeter$ a \emph{Coxeter mutation}.
 It is known from~\cite[Proposition~3.2]{FZ_Ysystem03} that both $\mutation_+$ and
 $\mutation_-$ act on the face poset of the polytope $P(\Roots)$. Moreover, we
 have the following properties.
\begin{proposition}[{cf.~\cite[Propositions~2.5, 3.2, and~3.7]{FZ_Ysystem03}}]
    \label{prop_FZ_finite_type_Coxeter_element}
    The following holds.
\begin{enumerate} 
	\item Both $\mutation_+$ and $\mutation_-$ act on the face poset of the
	polytope $P(\Roots)$.
    \item 	Suppose that $h = 2e$ is even.  The map 
    $(r,i) \mapsto \qcoxeter^r(F_{-\alpha_i})$ induces a bijection 
\[
\{0,1,\dots,e\} \times I \to \facet(P(\Roots))
\]
where $\facet(P(\Roots))$ is the set of facets, which are codimension
one faces, of the polytope $P(\Roots)$.
\item The face poset of a facet  $\mutation_{\quiver}^r(F_{-\alpha_i})$  in $P(\Roots)$
is the same as that of the generalized
associahedron $P(\Roots ([n] \setminus \{i\}))$ of dimension $n-1$.
\item The facets corresponding to negative simple roots
$-\alpha_1,\dots,-\alpha_n$ intersect at a vertex.
\end{enumerate}
\end{proposition}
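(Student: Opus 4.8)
The plan is to translate the four assertions into statements about the cluster complex $\Delta(\Roots)$ and its dual polytope $P(\Roots)$, and then to run the piecewise-linear combinatorics of almost positive roots from~\cite{FZ_Ysystem03}. Recall from Theorem~\ref{thm_FZ_finite_type} that the vertices of $\Delta(\Roots)$ — equivalently, by~\cite{CFZ02_polytopal}, the facets of $P(\Roots)$ — are indexed by the almost positive roots $\Roots_{\geq -1}$, with the facet $F_\beta$ attached to $\beta$; the maximal simplices of $\Delta(\Roots)$ are the clusters, and its dual graph is the exchange graph $E(\Roots)$. Under this dictionary the sequence of commuting mutations $\mutation_\varepsilon = \prod_{i\in I_\varepsilon}\mutation_i$ corresponds to the Fomin--Zelevinsky involution $\tau_\varepsilon\colon \Roots_{\geq -1}\to\Roots_{\geq -1}$: it is built from the partial Coxeter element $t_\varepsilon=\prod_{i\in I_\varepsilon}s_i$ (well defined and an involution in $W$ because $I_\varepsilon$ is an independent set of the Dynkin diagram), adjusted so that $-\alpha_i\mapsto\alpha_i$ for $i\in I_\varepsilon$ while the roots $-\alpha_j$, $j\in I_{-\varepsilon}$, are fixed. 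With this setup, the proposition becomes a matter of reproducing the arguments of~\cite{FZ_Ysystem03}; I indicate the main points.

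For part (1), the two inputs I would quote from~\cite{FZ_Ysystem03} are that each $\tau_\varepsilon$ is an involution and that it preserves the compatibility degree on $\Roots_{\geq -1}$, hence restricts to a simplicial automorphism of $\Delta(\Roots)$ and therefore acts on the face poset of $P(\Roots)$. To see that this automorphism is exactly the operation induced by the mutation sequence $\mutation_\varepsilon$, I would first check on the bipartite initial seed $\initialseed$ with cluster $-\SRoots=\{-\alpha_1,\dots,-\alpha_n\}$, where $\mutation_\varepsilon$ manifestly replaces $-\alpha_i$ by $\alpha_i$ for $i\in I_\varepsilon$ and fixes the rest, i.e. agrees with $\tau_\varepsilon$ there; then use the fact (again from~\cite{FZ_Ysystem03}) that for every cluster $C$ the elements moved by $\tau_\varepsilon$ are pairwise non-adjacent in the quiver of $C$, so the corresponding mutations commute and realize $\tau_\varepsilon|_C$. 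Connectedness of $E(\Roots)$ propagates the identification to all seeds, giving (1).

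For part (2), the composition $\qcoxeter=\mutation_-\mutation_+$ acts on $\Roots_{\geq -1}$ as $\tau\coloneqq\tau_-\tau_+$, so the assertion is the orbit statement: when $h=2e$, the map $\{0,1,\dots,e\}\times[n]\to\Roots_{\geq -1}$, $(r,i)\mapsto\tau^r(-\alpha_i)$, is a bijection. I would argue as in~\cite[Prop.~2.5]{FZ_Ysystem03}: on each $\tau$-orbit the piecewise-linear map $\tau$ agrees, after the initial passage through $-\SRoots$, with the linear Coxeter element $c=t_-t_+$, which has order $h$; hence the "first return" permutation $i\mapsto i^\vee$ determined by $\tau^{e+1}(-\alpha_i)=-\alpha_{i^\vee}$ organizes $\Roots_{\geq -1}$ into blocks of length $e+1$ beginning at negative simple roots, and a count using $|\Roots_{\geq -1}|=|\Roots^+|+n=\tfrac{nh}{2}+n=n(e+1)$ forces the map to be a bijection. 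Translating through the facet--root dictionary yields the bijection $\{0,\dots,e\}\times I\to\facet(P(\Roots))$. The main obstacle of the whole proof is here, in making the comparison between the piecewise-linear $\tau$ and the linear $c$ precise enough to pin down the orbit lengths; everything else is bookkeeping in $\Delta(\Roots)$.

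Finally, parts (3) and (4) are short. For (4), $-\SRoots=\{-\alpha_1,\dots,-\alpha_n\}$ is the initial cluster, hence a maximal simplex of $\Delta(\Roots)$; dually the facets $F_{-\alpha_1},\dots,F_{-\alpha_n}$ meet in a common vertex of $P(\Roots)$. For (3), the face poset of the facet $F_{-\alpha_i}$, viewed as an $(n-1)$-polytope, is (anti-isomorphic to) the star of the vertex $-\alpha_i$ in $\Delta(\Roots)$, i.e. the face poset of its link; the compatibility-degree computation of~\cite{FZ_Ysystem03} identifies the roots compatible with $-\alpha_i$ with $\Roots([n]\setminus\{i\})_{\geq -1}$ and the induced subcomplex with $\Delta(\Roots([n]\setminus\{i\}))$, so $F_{-\alpha_i}$ is combinatorially the generalized associahedron $P(\Roots([n]\setminus\{i\}))$. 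Since by part (1) the operator $\qcoxeter$ acts by automorphisms of $P(\Roots)$ carrying facets to facets, the facet $\qcoxeter^r(F_{-\alpha_i})$ has the same face poset, which completes the plan.
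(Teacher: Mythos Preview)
The paper does not actually supply a proof of this proposition: it is stated with the attribution ``cf.~\cite[Propositions~2.5, 3.2, and~3.7]{FZ_Ysystem03}'' and used as a black box. Your write-up is therefore not competing with any argument in the paper; rather, you have sketched the proof from the cited source, which is the only sensible thing to do here.

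Your outline tracks the Fomin--Zelevinsky argument faithfully: identifying $\mutation_\varepsilon$ with the piecewise-linear involution $\tau_\varepsilon$ on $\Roots_{\ge -1}$, the orbit count via the Coxeter element for part~(2), and the link/star description for part~(3). One point to tighten in part~(1): the passage from ``$\mutation_\varepsilon$ agrees with $\tau_\varepsilon$ on the initial cluster'' to ``they agree on every cluster'' via connectedness of $E(\Roots)$ really rests on the denominator-vector interpretation of cluster variables from Theorem~\ref{thm_FZ_finite_type}(2); without that, there is no a priori labelling of an arbitrary cluster by almost positive roots on which to compare the two maps. If you make that dependence explicit, the sketch is complete and matches the source the paper cites.
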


As a direct consequence of Proposition~\ref{prop_FZ_finite_type_Coxeter_element}, we 
have the following lemma which will be used later. 
\begin{lemma}\label{lemma:normal form}
	Let $\seed$ be a seed in a cluster pattern of finite type with even Coxeter number $h = 2e$.
	Suppose that $\seed \in \mutation_{\quiver}^r(F_{-\alpha_i})$ for  
	$r \in \{0,1,\dots,e\}$ and $\alpha_i \in \SRoots$. 
Then, there exists a sequence $j_1,\dots,j_{\ell} \in [n] \setminus \{i\}$ 
which gives a sequence $\mutation_{j_1},\dots,\mutation_{j_\ell}$ of mutations 
from $\qcoxeter^r(\initialseed)$ to $\seed$ inside a facet~$\mutation_{\quiver}^r(F_{-\alpha_i})$, that is, 
\begin{align*}
\qcoxeter^r(\initialseed), \mutation_{j_1}(\qcoxeter^r(\initialseed)),
(\mutation_{j_2} \mutation_{j_1})(\qcoxeter^r(\initialseed)), \dots,
(\mutation_{j_\ell} \cdots \mutation_{j_1})(\qcoxeter^r(\initialseed)) \in \mutation_{\quiver}^r(F_{-\alpha_i})
\end{align*}
and 
\[
\seed = (\mutation_{j_\ell} \cdots \mutation_{j_1})(\qcoxeter^r(\initialseed)).
\]
\end{lemma}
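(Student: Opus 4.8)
The plan is to recognize the facet $\qcoxeter^r(F_{-\alpha_i})$ as a generalized associahedron of rank $n-1$, to use that the edge graph of a polytope is connected, and to read off the mutation sequence from an edge-path on this facet; the only delicate point is controlling the \emph{positions} at which the mutations occur. As a first step I would check that $\qcoxeter^r(\initialseed)$ is a vertex of $F:=\qcoxeter^r(F_{-\alpha_i})$. The initial seed $\initialseed$ corresponds to the cluster $-\SRoots=\{-\alpha_1,\dots,-\alpha_n\}$, which lies on all of the negative facets $F_{-\alpha_1},\dots,F_{-\alpha_n}$, in particular on $F_{-\alpha_i}$ (Proposition~\ref{prop_FZ_finite_type_Coxeter_element}(4)). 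Since $\qcoxeter$ acts as an automorphism of the face poset of $P(\Roots)$ (Proposition~\ref{prop_FZ_finite_type_Coxeter_element}(1)) and this automorphism is given by the mutation sequence $\qcoxeter=\mutation_-\mutation_+$, iterating it $r$ times sends $\initialseed\in F_{-\alpha_i}$ to $\qcoxeter^r(\initialseed)\in F$. Thus both $\seed$ and $\qcoxeter^r(\initialseed)$ are vertices of the polytope $F$.

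By Proposition~\ref{prop_FZ_finite_type_Coxeter_element}(3) the face poset of $F$ is that of the generalized associahedron $P(\Roots([n]\setminus\{i\}))$, a simple polytope of dimension $n-1$; hence the edge graph of $F$ is the connected graph $E(\Roots([n]\setminus\{i\}))$. A face of a polytope inherits its edges from the ambient polytope, so every edge of $F$ is an edge of $P(\Roots)$, i.e.\ corresponds to a single mutation. Picking an edge-path in $F$ from $\qcoxeter^r(\initialseed)$ to $\seed$ therefore produces positions $j_1,\dots,j_\ell\in[n]$ and a sequence of mutations with
\[
\qcoxeter^r(\initialseed),\ \mutation_{j_1}(\qcoxeter^r(\initialseed)),\ \dots,\ (\mutation_{j_\ell}\cdots\mutation_{j_1})(\qcoxeter^r(\initialseed))\ \in\ F,\qquad \seed=(\mutation_{j_\ell}\cdots\mutation_{j_1})(\qcoxeter^r(\initialseed)).
\]

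It remains to see that $j_1,\dots,j_\ell\in[n]\setminus\{i\}$. The facet $F$ equals $F_\beta$ for the almost positive root $\beta=\qcoxeter^r(-\alpha_i)$, and, under the duality between $P(\Roots)$ and $\Delta(\Roots)$, a seed lies on $F_\beta$ exactly when its cluster contains the cluster variable $x_\beta$ associated to $\beta$ by Theorem~\ref{thm_FZ_finite_type}(2). Since a mutation alters exactly one cluster variable, $\mutation_k$ applied to a seed on $F_\beta$ stays on $F_\beta$ if and only if $k$ is not the position of $x_\beta$ in that seed. Along our path no step removes $x_\beta$, so the position of $x_\beta$ is constant and equal to its value in $\qcoxeter^r(\initialseed)$; hence it suffices to show $x_\beta$ occupies position $i$ in $\qcoxeter^r(\initialseed)$.

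This last point is the step I expect to be the main obstacle, as it requires tracking the labelling of cluster variables through the Coxeter mutation. The two facts I would invoke are: $\qcoxeter$ fixes the (bipartite) initial exchange matrix $\qbasis_{t_0}$, so every seed $\qcoxeter^r(\initialseed)$ of the bipartite belt carries the exchange matrix $\qbasis_{t_0}$; and, along this belt, the half-step $\mutation_\varepsilon$ moves the root occupying a position $k\in I_\varepsilon$ forward by the piecewise-linear Coxeter action. Starting from $x_{i;t_0}\leftrightarrow-\alpha_i$ at position $i$ in $\initialseed$, these force $x_\beta=x_{\qcoxeter^r(-\alpha_i)}$ to remain at position $i$ in $\qcoxeter^r(\initialseed)$; both are exactly what is packaged in Propositions~2.5 and~3.2 of~\cite{FZ_Ysystem03}. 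Granting this, $i$ is the position of $x_\beta$ throughout the path, so $j_m\neq i$ for every $m$, and the lemma follows.
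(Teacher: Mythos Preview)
Your argument is correct and follows the same route as the paper's proof: place both $\qcoxeter^r(\initialseed)$ and $\seed$ on the facet $\qcoxeter^r(F_{-\alpha_i})$ via Proposition~\ref{prop_FZ_finite_type_Coxeter_element}, then use connectedness of the facet's edge graph to extract a mutation path. The paper's proof is just those two sentences and stops there, treating the constraint $j_m\in[n]\setminus\{i\}$ as part of what ``inside the facet'' means (implicitly via Proposition~\ref{prop_FZ_finite_type_Coxeter_element}(3) and the cited results of \cite{FZ_Ysystem03}).

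Your extra two paragraphs make explicit what the paper leaves to the reader: that the cluster variable $x_\beta$ defining the facet sits at position~$i$ in every seed along the path, so no step is a mutation at~$i$. Your justification for this---that $\qcoxeter$ preserves the bipartite exchange matrix and that the half-steps $\mutation_\pm$ act on the denominator vectors positionwise by the piecewise-linear Coxeter operators $\tau_\pm$---is exactly what is packaged in Propositions~2.5 and~3.2 of \cite{FZ_Ysystem03}, the same references underlying Proposition~\ref{prop_FZ_finite_type_Coxeter_element}. So you have not taken a different route; you have filled in a detail the paper elides. The point you flag as ``the main obstacle'' is genuine but standard once those FZ results are in hand, and you identify them correctly.
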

\begin{proof}
	Since $\initialseed\in F_{-\alpha_i}$, we have $\qcoxeter^r(\initialseed)\in \mutation_{\quiver}^r(F_{-\alpha_i})$. 
	Accordingly, both seeds $\qcoxeter^r(\initialseed)$ and $\seed$ are contained in the same facet $\mutation_{\quiver}^r(F_{-\alpha_i})$, so there exists a sequence $\mutation_{j_1},\dots,\mutation_{j_\ell}$ of mutations from $\qcoxeter^r(\initialseed)$ to $\seed$ inside $\mutation_{\quiver}^r(F_{-\alpha_i})$ as desired.
\end{proof}

\begin{example}
	Consider the root system $\Roots$ of type $\dynA_3$. In this case, the
	Coxeter number is $4$, which is even (cf.
	Table~\ref{table_Coxeter_number}). In Table~\ref{table_A3_tau_action}, we
	present how $\qcoxeter$ acts on the set of facets. Here, we use the convention that
	$I_+ = \{1,3\}$ and $I_- = \{2\}$.
	\begin{table}[b]
		\begin{tabular}{c|ccc}
			\toprule
			$r$ & $\qcoxeter^r(F_{-\alpha_1})$ 
			& $ \qcoxeter^r(F_{-\alpha_2})$ 
			& $ \qcoxeter^r(F_{-\alpha_3})$\\ 
			\midrule
			$0$ & $F_{-\alpha_1}$ & $F_{-\alpha_2}$ & $F_{-\alpha_3}$ \\
			$1$ & $F_{\alpha_1 + \alpha_2}$ & $F_{\alpha_2}$ & $F_{\alpha_2 + \alpha_3}$ \\
			$2$ & $F_{\alpha_3}$ & $F_{\alpha_1 + \alpha_2 + \alpha_3}$ & $F_{\alpha_1}$\\
			\bottomrule
		\end{tabular}
		\caption{Computation $\qcoxeter^r(F_{-\alpha_i})$ for type $\dynA_3$}\label{table_A3_tau_action}
	\end{table}
	The corresponding generalized associahedron is presented in
	Figure~\ref{fig_asso_A3}. We label each facet the corresponding almost
	positive root. The back-side facets are associated with the set of
	negative simple roots. As one may see that the face posets of $\qcoxeter^r(F_{-\alpha_i})$ 
	are the same as that of the
	generalized associahedron $P(\Roots ([n]\setminus \{i\}))$. Indeed, the facets
	$\qcoxeter^r(F_{-\alpha_1})$ and
	$\qcoxeter^r(F_{-\alpha_3})$ are pentagons, and the facets 
	$\qcoxeter^r(F_{-\alpha_2})$ are squares.
For $\seed = F_{-\alpha_1} \cap F_{-\alpha_2} \cap F_{-\alpha_3}$, 
we decorate the vertices $\{ \qcoxeter^r(\seed) \mid k = 0,1,2 \}$ with green. 
\end{example}

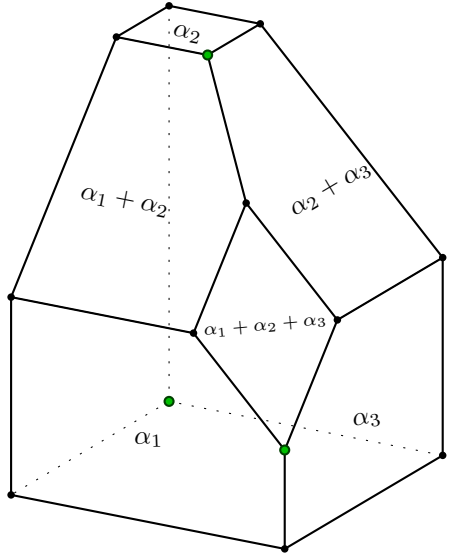
\begin{figure}
\tdplotsetmaincoords{110}{-30}
\begin{tikzpicture}%
[tdplot_main_coords,
scale= 2,
scale=0.700000,
back/.style={loosely dotted, thin},
edge/.style={color=black, thick},
facet/.style={fill=blue!95!black,fill opacity=0.100000},
vertex/.style={inner sep=1pt,circle,fill=black,thick,anchor=base},
gvertex/.style={inner sep=1.2pt,circle,draw=green!25!black,fill=green!75!black,thick,anchor=base}]
\coordinate (1) at (-0.50000, -1.50000, 2.00000);
\coordinate (2) at (1.50000, 1.50000, -2.00000);
\coordinate (3) at (0.50000, 0.50000, 1.00000);
\coordinate (4) at (0.50000, 1.50000, 0.00000);
\coordinate (5) at (1.50000, 1.50000, -1.00000);
\coordinate (6) at (-0.50000, -0.50000, 2.00000);
\coordinate (7) at (1.50000, 0.50000, 0.00000);
\coordinate (8) at (1.50000, -1.50000, 0.00000);
\coordinate (9) at (1.50000, -1.50000, -2.00000);
\coordinate (10) at (-1.50000, -1.50000, -2.00000);
\coordinate (11) at (-1.50000, -1.50000, 2.00000);
\coordinate (12) at (-1.50000, 1.50000, -2.00000);
\coordinate (13) at (-1.50000, 1.50000, 0.00000);
\coordinate (14) at (-1.50000, -0.50000, 2.00000);


\draw[edge,back] (9) -- (10);
\draw[edge,back] (10) -- (11);
\draw[edge,back] (10) -- (12);
\node[vertex] at (10)     {};

\draw (7)--(8) node[midway, sloped, above, yshift=1.3cm] {$\alpha_2 + \alpha_3$};
\draw (13)--(4) node[midway, sloped, above, yshift=1.3cm] {$\alpha_1 + \alpha_2$};
\draw[color=white] (13)--(2) node[midway, sloped, below, rotate = 45] {\textcolor{black}{$\alpha_1$}};
\draw[color=white] (2)--(8) node[midway, sloped, below, rotate = - 50] {\textcolor{black}{$\alpha_3$}};

\draw (14)--(6) node[midway, sloped, above, xshift=0.3cm] {$\alpha_2$};
\draw[color=white] (4)--(7) node[midway, sloped, above, yshift=-0.2cm] {\textcolor{black}{\scriptsize $\alpha_1 + \alpha_2 + \alpha_3$}};

\draw[edge] (1) -- (6);
\draw[edge] (1) -- (8);
\draw[edge] (1) -- (11);
\draw[edge] (2) -- (5);
\draw[edge] (2) -- (9);
\draw[edge] (2) -- (12);
\draw[edge] (3) -- (4);
\draw[edge] (3) -- (6);
\draw[edge] (3) -- (7);
\draw[edge] (4) -- (5);
\draw[edge] (4) -- (13);
\draw[edge] (5) -- (7);
\draw[edge] (6) -- (14);
\draw[edge] (7) -- (8);
\draw[edge] (8) -- (9);
\draw[edge] (11) -- (14);
\draw[edge] (12) -- (13);
\draw[edge] (13) -- (14);
\node[vertex] at (1)     {};
\node[vertex] at (2)     {};
\node[vertex] at (3)     {};
\node[vertex] at (4)     {};
\node[vertex] at (5)     {};
\node[vertex] at (6)     {};
\node[vertex] at (7)     {};
\node[vertex] at (8)     {};
\node[vertex] at (9)     {};
\node[vertex] at (11)     {};
\node[vertex] at (12)     {};
\node[vertex] at (13)     {};
\node[vertex] at (14)     {};

\foreach \g in {10, 6, 5} {
\node[gvertex] at (\g) {};
}
\end{tikzpicture}
\caption{The type $\dynA_3$ generalized associahedron}\label{fig_asso_A3}
\end{figure}

\begin{example}
	We consider the generalized associahedron of type $\dynD_4$ and present
	four facets corresponding to the negative simple roots in
	Figure~\ref{fig_asso_D4}. The facet corresponding to $-\alpha_2$ is
    combinatorially equivalent to $P(\Roots(\{1\})) \times P(\Roots(\{3\})) 
    \times P(\Roots(\{4\}))$, which is a $3$-cube presented in the boundary. The
	intersection of these four facets is a vertex sits in the bottom colored
	in green. The Coxeter mutation $\qcoxeter$ acts on the face poset
	of the permutohedron, especially, four green vertices are in the same
	orbit.
\end{example}

\begin{remark}\label{remark:folding and Coxeter mutation}
As we have seen in Example~\ref{example_folding_ADE}, bipartite coloring 
on quivers of type~$\dynADE$ induce that on quivers of type~$\dynBCFG$.
Accordingly, if a seed pattern of simply-laced type $\dynX$ 
gives a seed pattern of type $\dynY$ via the folding procedure, then
the Coxeter mutation of type~$\dynY$ is the same as
that of type~$\dynX$. 
More precisely, for a globally foldable seed $\seed$ with respect to $G$ 
defining a cluster algebra
of type $\dynX$ and its Coxeter mutation $\qcoxeter^{\dynX}$, we have
\[
 \qcoxeter^{\dynY}(\seed^G) = (\qcoxeter^{\dynX}(\seed))^G.
\]
Here,  $\qcoxeter^{\dynY}$ is the Coxeter mutation on 
the seed pattern determined by $\seed^G$.

Moreover, Coxeter numbers of $\dynX$
and $\dynY$ are the same. Indeed, 
\[
\begin{split}
& h(\dynA_{2n-1}) = h (\dynB_n) = 2n, \\
& h(\dynD_{n+1}) = h(\dynC_n) = 2n, \\
& h(\dynE_6) = h(\dynF_4) = 12, \\
& h(\dynD_4) = h(\dynG_2) = 6.
\end{split}
\]
\end{remark}
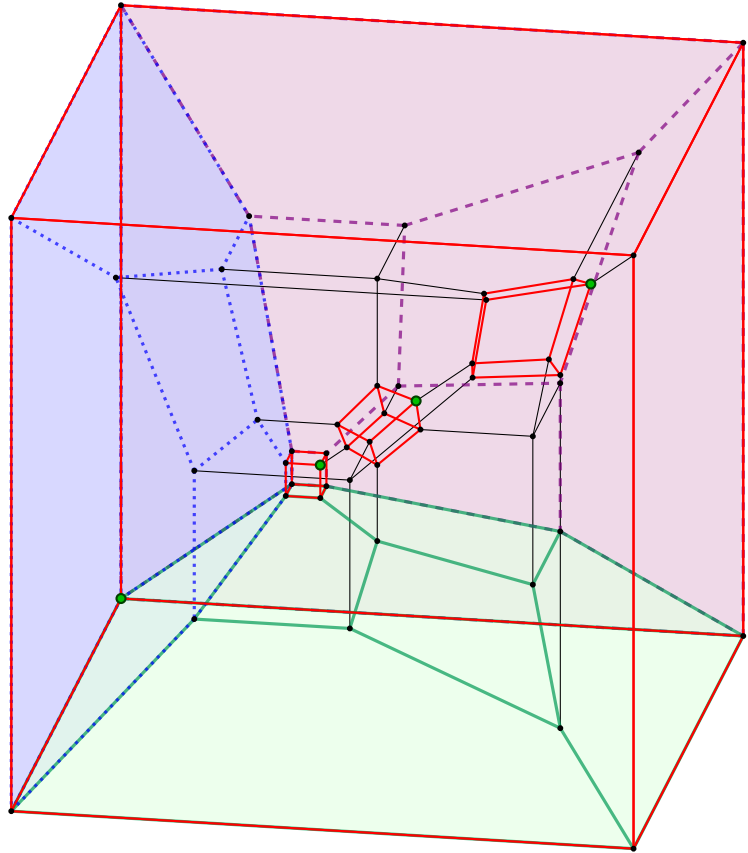
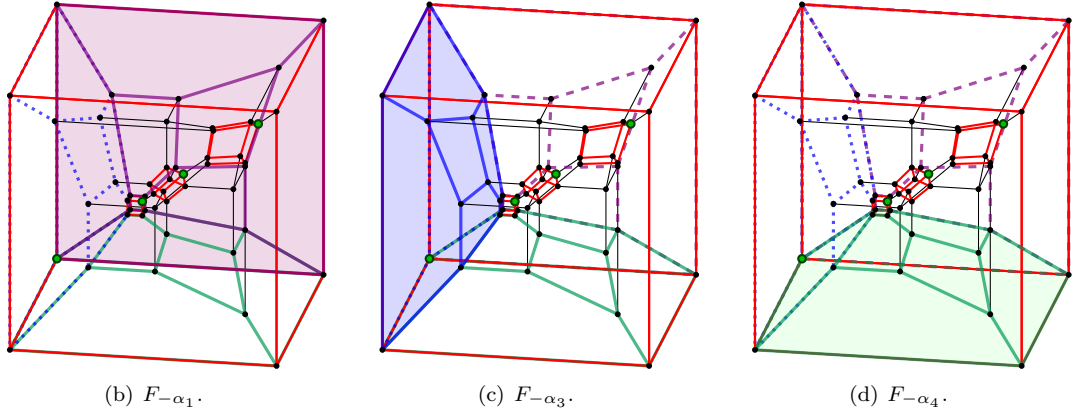
\begin{figure}
	\subfigure[The generalized associahedron of type $\dynD_4$.]{
        \centering
\tdplotsetmaincoords{110}{260}
\begin{tikzpicture}%
    [tdplot_main_coords,scale = 7,
    back/.style={loosely dotted, thin},
    edge/.style={color=black},
    cube/.style={color=red, thick},
    facet/.style={fill=blue!95!black,fill opacity=0.100000},
    vertex/.style={inner sep=1.2pt,circle,draw=green!25!black,fill=green!75!black,thick,anchor=base},
    vertex_normal/.style = {inner sep=0.5pt,circle,draw=black,fill=black,thick,anchor=base},
    edge0/.style = {color=blue!50!red, very thick,  dashed, opacity=0.7},
    edge8/.style= {color=ForestGreen, very thick, opacity=0.7},
    edge12/.style= {blue,dotted, very thick, opacity=0.7},
    face1/.style = {fill=blue!20!white, fill opacity = 0.5}]

    \coordinate (0) at (0.6, 0.6, 0.6);
    \coordinate (1) at (0.42857142857142855, -0.42857142857142855, -0.42857142857142855);
    \coordinate (2) at (-0.13333333333333333, 0.2, -0.13333333333333333);
    \coordinate (3) at (0.375, -0.25, 0.25);
    \coordinate (4) at (0.3, 0.0, 0.0);
    \coordinate (5) at (0.09090909090909091, -0.09090909090909091, 0.09090909090909091);
    \coordinate (6) at (0.2222222222222222, -0.2222222222222222, 0.2222222222222222);
    \coordinate (7) at (0.42857142857142855, -0.2857142857142857, 0.42857142857142855);
    \coordinate (8) at (0.08333333333333333, 0.0, 0.0);
    \coordinate (9) at (0.0, 0.07692307692307693, 0.0);
    \coordinate (10) at (-0.07142857142857142, 0.07142857142857142, -0.07142857142857142);
    \coordinate (11) at (0.0, 0.0, -0.07692307692307693);
    \coordinate (12) at (-0.07692307692307693, 0.0, 0.0);
    \coordinate (13) at (0.0, -0.08333333333333333, 0.0);
    \coordinate (14) at (0.0, 0.0, 0.08333333333333333);
    \coordinate (15) at (-0.13333333333333333, 0.13333333333333333, -0.13333333333333333);
    \coordinate (16) at (0.0, 0.0, 0.3);
    \coordinate (17) at (0.25, -0.25, 0.375);
    \coordinate (18) at (0.2857142857142857, -0.42857142857142855, 0.42857142857142855);
    \coordinate (19) at (0.0, -0.3, 0.0);
    \coordinate (20) at (0.5, -0.5, 0.5);
    \coordinate (21) at (0.42857142857142855, -0.42857142857142855, 0.2857142857142857);
    \coordinate (22) at (0.25, -0.375, 0.25);
    \coordinate (23) at (0.0, 0.3, 0.3);
    \coordinate (24) at (0.3, 0.3, 0.0);
    \coordinate (25) at (0.42857142857142855, 0.42857142857142855, 0.42857142857142855);
    \coordinate (26) at (0.0, 0.23076923076923078, 0.0);
    \coordinate (27) at (0.3, 0.3, -0.3);
    \coordinate (28) at (-0.13333333333333333, 0.2, -0.2);
    \coordinate (29) at (0.3, 0.0, -0.3);
    \coordinate (30) at (0.0, 0.0, -0.23076923076923078);
    \coordinate (31) at (-0.13333333333333333, 0.13333333333333333, -0.2);
    \coordinate (32) at (0.0, -0.3, -0.3);
    \coordinate (33) at (0.6, 0.6, -0.6);
    \coordinate (34) at (0.6, -0.6, -0.6);
    \coordinate (35) at (0.6, -0.6, 0.6);
    \coordinate (36) at (-0.6, -0.6, 0.6);
    \coordinate (37) at (-0.6, 0.6, 0.6);
    \coordinate (38) at (-0.2, 0.2, -0.13333333333333333);
    \coordinate (39) at (-0.23076923076923078, 0.0, 0.0);
    \coordinate (40) at (-0.2, 0.13333333333333333, -0.13333333333333333);
    \coordinate (41) at (-0.3, 0.0, 0.3);
    \coordinate (42) at (-0.42857142857142855, -0.42857142857142855, 0.42857142857142855);
    \coordinate (43) at (-0.3, -0.3, 0.0);
    \coordinate (44) at (-0.3, 0.3, 0.3);
    \coordinate (45) at (-0.2, 0.2, -0.2);
    \coordinate (46) at (-0.2, 0.13333333333333333, -0.2);
    \coordinate (47) at (-0.3, -0.3, -0.3);
    \coordinate (48) at (-0.6, 0.6, -0.6);
    \coordinate (49) at (-0.6, -0.6, -0.6);

    \fill[red!60!blue, opacity=0.15] (37)--(48)--(49)--(36)--cycle {};
\fill[face1] (37)--(48)--(45)--(38)--(44)--cycle {};
\fill[face1] (37)--(48)--(33)--(0)--cycle{};
\fill[face1] (37)--(44)--(23)--(25)--(0)--cycle {};
\fill[face1] (44)--(23)--(26)--(2)--(38)--cycle {};
\fill[face1] (2)--(28)--(45)--(38)--cycle {};
\fill[face1] (45)--(28)--(27)--(33)--(48)--cycle {};
\fill[face1] (2)--(26)--(24)--(27)--(28)--cycle {};
\fill[face1] (25)--(23)--(26)--(24)--cycle;
\fill[face1] (0)--(25)--(24)--(27)--(33)--cycle;

\fill[green!10!white, opacity=0.7] (48)--(45)--(46)--(47)--(49)--(34)--(33)--cycle{};

    \draw[edge] (3)--(4);
    \draw[edge] (5)--(6);
    \draw[edge] (4)--(8);
    \draw[edge] (10)--(15);
    \draw[edge] (14)--(16);
    \draw[edge] (16)--(17);
    \draw[edge] (13)--(19);
    \draw[edge] (1)--(21);
    \draw[edge] (19)--(22);
    \draw[edge] (16)--(23);
    \draw[edge] (4)--(24);
    \draw[edge] (7)--(25);
    \draw[edge] (9)--(26);
    \draw[edge] (4)--(29);
    \draw[edge] (11)--(30);
    \draw[edge] (19)--(32);
    \draw[edge] (20)--(35);
    \draw[edge] (12)--(39);
    \draw[edge] (16)--(41);
    \draw[edge] (18)--(42);
    \draw[edge] (19)--(43);

    \draw[cube] (0)--(33);
    \draw[cube] (33)--(34);
    \draw[cube] (34)--(35);
    \draw[cube] (0)--(35);
    \draw[cube] (35)--(36);
    \draw[cube] (0)--(37);
    \draw[cube] (36)--(37);
    \draw[cube] (37)--(48);
    \draw[cube] (33)--(48);
    \draw[cube] (48)--(49);
    \draw[cube] (34)--(49);
    \draw[cube] (36)--(49);

    \draw[edge0] (36)--(37);
    \draw[edge0] (39)--(40);
    \draw[edge0] (38)--(40);
    \draw[edge0] (39)--(41);
    \draw[edge0] (41)--(42);
    \draw[edge0] (36)--(42);
    \draw[edge0] (39)--(43);
    \draw[edge0] (42)--(43);
    \draw[edge0] (37)--(44);
    \draw[edge0] (38)--(44);
    \draw[edge0] (41)--(44);
    \draw[edge0] (38)--(45);
    \draw[edge0] (45)--(46);
    \draw[edge0] (40)--(46);
    \draw[edge0] (43)--(47);
    \draw[edge0] (46)--(47);
    \draw[edge0] (37)--(48);
    \draw[edge0] (45)--(48);
    \draw[edge0] (48)--(49);
    \draw[edge0] (47)--(49);
    \draw[edge0] (36)--(49);

    \draw[edge8] (27)--(28);
    \draw[edge8] (1)--(29);
    \draw[edge8] (27)--(29);
    \draw[edge8] (29)--(30);
    \draw[edge8] (28)--(31);
    \draw[edge8] (30)--(31);
    \draw[edge8] (30)--(32);
    \draw[edge8] (1)--(32);
    \draw[edge8] (27)--(33);
    \draw[edge8] (33)--(34);
    \draw[edge8] (1)--(34);
    \draw[edge8] (28)--(45);
    \draw[edge8] (45)--(46);
    \draw[edge8] (31)--(46);
    \draw[edge8] (32)--(47);
    \draw[edge8] (46)--(47);
    \draw[edge8] (45)--(48);
    \draw[edge8] (33)--(48);
    \draw[edge8] (48)--(49);
    \draw[edge8] (47)--(49);
    \draw[edge8] (34)--(49);

    \draw[edge12] (24)--(25);
    \draw[edge12] (23)--(25);
    \draw[edge12] (0)--(25);
    \draw[edge12] (2)--(26);
    \draw[edge12] (23)--(26);
    \draw[edge12] (24)--(26);
    \draw[edge12] (24)--(27);
    \draw[edge12] (2)--(28);
    \draw[edge12] (27)--(28);
    \draw[edge12] (0)--(33);
    \draw[edge12] (27)--(33);
    \draw[edge12] (0)--(37);
    \draw[edge12] (2)--(38);
    \draw[edge12] (37)--(44);
    \draw[edge12] (38)--(44);
    \draw[edge12] (23)--(44);
    \draw[edge12] (38)--(45);
    \draw[edge12] (28)--(45);
    \draw[edge12] (37)--(48);
    \draw[edge12] (45)--(48);
    \draw[edge12] (33)--(48);

        \draw[cube] (0)--(33);
        \draw[cube] (33)--(34);
        \draw[cube] (34)--(35);
        \draw[cube] (0)--(35);
        \draw[cube] (35)--(36);
        \draw[cube] (0)--(37);
        \draw[cube] (36)--(37);
        \draw[cube] (37)--(48);
        \draw[cube] (33)--(48);
        \draw[cube] (48)--(49);
        \draw[cube] (34)--(49);
        \draw[cube] (36)--(49);

        \draw[cube] (2)--(15);
        \draw[cube] (2)--(28);
        \draw[cube] (15)--(31);
        \draw[cube] (28)--(31);
        \draw[cube] (2)--(38);
        \draw[cube] (38)--(40);
        \draw[cube] (15)--(40);
        \draw[cube] (38)--(45);
        \draw[cube] (28)--(45);
        \draw[cube] (45)--(46);
        \draw[cube] (31)--(46);
        \draw[cube] (40)--(46);

        \draw[cube] (3)--(6);
        \draw[cube] (3)--(7);
        \draw[cube] (6)--(17);
        \draw[cube] (7)--(17);
        \draw[cube] (17)--(18);
        \draw[cube] (18)--(20);
        \draw[cube] (7)--(20);
        \draw[cube] (3)--(21);
        \draw[cube] (20)--(21);
        \draw[cube] (18)--(22);
        \draw[cube] (21)--(22);
        \draw[cube] (6)--(22);

        \draw[cube] (5)--(8);
        \draw[cube] (8)--(9);
        \draw[cube] (9)--(10);
        \draw[cube] (8)--(11);
        \draw[cube] (10)--(11);
        \draw[cube] (10)--(12);
        \draw[cube] (11)--(13);
        \draw[cube] (12)--(13);
        \draw[cube] (5)--(13);
        \draw[cube] (9)--(14);
        \draw[cube] (12)--(14);
        \draw[cube] (5)--(14);

        \foreach \x in {0,...,49}{
            \node[vertex_normal] at (\x) {};
    
        }
    
        \foreach \x in {48, 20, 15, 5}{
            \node[vertex] at (\x) {};
        }
	\end{tikzpicture}
	}
	\vspace{1em}
\subfigure[$F_{-\alpha_1}$.]{
    \centering
    \tdplotsetmaincoords{110}{260}
\begin{tikzpicture}%
	[tdplot_main_coords, scale = 3,
    back/.style={loosely dotted, thin},
    edge/.style={color=black},
    cube/.style={color=red, thick},
    facet/.style={fill=blue!95!black,fill opacity=0.100000},
    vertex/.style={inner sep=1pt,circle,draw=green!25!black,fill=green!75!black,thick,anchor=base},
    vertex_normal/.style = {inner sep=0.5pt,circle,draw=black,fill=black,thick,anchor=base},
    edge0/.style = {color=blue!50!red, very thick, opacity=0.7},
    edge8/.style= {color=ForestGreen, very thick, opacity=0.7},
    edge12/.style= {blue,dotted, very thick, opacity=0.7},
    face1/.style = {fill=blue!20!white, fill opacity = 0.5}]

    \coordinate (0) at (0.6, 0.6, 0.6);
    \coordinate (1) at (0.42857142857142855, -0.42857142857142855, -0.42857142857142855);
    \coordinate (2) at (-0.13333333333333333, 0.2, -0.13333333333333333);
    \coordinate (3) at (0.375, -0.25, 0.25);
    \coordinate (4) at (0.3, 0.0, 0.0);
    \coordinate (5) at (0.09090909090909091, -0.09090909090909091, 0.09090909090909091);
    \coordinate (6) at (0.2222222222222222, -0.2222222222222222, 0.2222222222222222);
    \coordinate (7) at (0.42857142857142855, -0.2857142857142857, 0.42857142857142855);
    \coordinate (8) at (0.08333333333333333, 0.0, 0.0);
    \coordinate (9) at (0.0, 0.07692307692307693, 0.0);
    \coordinate (10) at (-0.07142857142857142, 0.07142857142857142, -0.07142857142857142);
    \coordinate (11) at (0.0, 0.0, -0.07692307692307693);
    \coordinate (12) at (-0.07692307692307693, 0.0, 0.0);
    \coordinate (13) at (0.0, -0.08333333333333333, 0.0);
    \coordinate (14) at (0.0, 0.0, 0.08333333333333333);
    \coordinate (15) at (-0.13333333333333333, 0.13333333333333333, -0.13333333333333333);
    \coordinate (16) at (0.0, 0.0, 0.3);
    \coordinate (17) at (0.25, -0.25, 0.375);
    \coordinate (18) at (0.2857142857142857, -0.42857142857142855, 0.42857142857142855);
    \coordinate (19) at (0.0, -0.3, 0.0);
    \coordinate (20) at (0.5, -0.5, 0.5);
    \coordinate (21) at (0.42857142857142855, -0.42857142857142855, 0.2857142857142857);
    \coordinate (22) at (0.25, -0.375, 0.25);
    \coordinate (23) at (0.0, 0.3, 0.3);
    \coordinate (24) at (0.3, 0.3, 0.0);
    \coordinate (25) at (0.42857142857142855, 0.42857142857142855, 0.42857142857142855);
    \coordinate (26) at (0.0, 0.23076923076923078, 0.0);
    \coordinate (27) at (0.3, 0.3, -0.3);
    \coordinate (28) at (-0.13333333333333333, 0.2, -0.2);
    \coordinate (29) at (0.3, 0.0, -0.3);
    \coordinate (30) at (0.0, 0.0, -0.23076923076923078);
    \coordinate (31) at (-0.13333333333333333, 0.13333333333333333, -0.2);
    \coordinate (32) at (0.0, -0.3, -0.3);
    \coordinate (33) at (0.6, 0.6, -0.6);
    \coordinate (34) at (0.6, -0.6, -0.6);
    \coordinate (35) at (0.6, -0.6, 0.6);
    \coordinate (36) at (-0.6, -0.6, 0.6);
    \coordinate (37) at (-0.6, 0.6, 0.6);
    \coordinate (38) at (-0.2, 0.2, -0.13333333333333333);
    \coordinate (39) at (-0.23076923076923078, 0.0, 0.0);
    \coordinate (40) at (-0.2, 0.13333333333333333, -0.13333333333333333);
    \coordinate (41) at (-0.3, 0.0, 0.3);
    \coordinate (42) at (-0.42857142857142855, -0.42857142857142855, 0.42857142857142855);
    \coordinate (43) at (-0.3, -0.3, 0.0);
    \coordinate (44) at (-0.3, 0.3, 0.3);
    \coordinate (45) at (-0.2, 0.2, -0.2);
    \coordinate (46) at (-0.2, 0.13333333333333333, -0.2);
    \coordinate (47) at (-0.3, -0.3, -0.3);
    \coordinate (48) at (-0.6, 0.6, -0.6);
    \coordinate (49) at (-0.6, -0.6, -0.6);

    \fill[red!60!blue, opacity=0.15] (37)--(48)--(49)--(36)--cycle {};


    \draw[edge] (3)--(4);
    \draw[edge] (5)--(6);
    \draw[edge] (4)--(8);
    \draw[edge] (10)--(15);
    \draw[edge] (14)--(16);
    \draw[edge] (16)--(17);
    \draw[edge] (13)--(19);
    \draw[edge] (1)--(21);
    \draw[edge] (19)--(22);
    \draw[edge] (16)--(23);
    \draw[edge] (4)--(24);
    \draw[edge] (7)--(25);
    \draw[edge] (9)--(26);
    \draw[edge] (4)--(29);
    \draw[edge] (11)--(30);
    \draw[edge] (19)--(32);
    \draw[edge] (20)--(35);
    \draw[edge] (12)--(39);
    \draw[edge] (16)--(41);
    \draw[edge] (18)--(42);
    \draw[edge] (19)--(43);

    \draw[cube] (0)--(33);
    \draw[cube] (33)--(34);
    \draw[cube] (34)--(35);
    \draw[cube] (0)--(35);
    \draw[cube] (35)--(36);
    \draw[cube] (0)--(37);
    \draw[cube] (36)--(37);
    \draw[cube] (37)--(48);
    \draw[cube] (33)--(48);
    \draw[cube] (48)--(49);
    \draw[cube] (34)--(49);
    \draw[cube] (36)--(49);

    \draw[edge8] (27)--(28);
    \draw[edge8] (1)--(29);
    \draw[edge8] (27)--(29);
    \draw[edge8] (29)--(30);
    \draw[edge8] (28)--(31);
    \draw[edge8] (30)--(31);
    \draw[edge8] (30)--(32);
    \draw[edge8] (1)--(32);
    \draw[edge8] (27)--(33);
    \draw[edge8] (33)--(34);
    \draw[edge8] (1)--(34);
    \draw[edge8] (28)--(45);
    \draw[edge8] (45)--(46);
    \draw[edge8] (31)--(46);
    \draw[edge8] (32)--(47);
    \draw[edge8] (46)--(47);
    \draw[edge8] (45)--(48);
    \draw[edge8] (33)--(48);
    \draw[edge8] (48)--(49);
    \draw[edge8] (47)--(49);
    \draw[edge8] (34)--(49);

    \draw[edge12] (24)--(25);
    \draw[edge12] (23)--(25);
    \draw[edge12] (0)--(25);
    \draw[edge12] (2)--(26);
    \draw[edge12] (23)--(26);
    \draw[edge12] (24)--(26);
    \draw[edge12] (24)--(27);
    \draw[edge12] (2)--(28);
    \draw[edge12] (27)--(28);
    \draw[edge12] (0)--(33);
    \draw[edge12] (27)--(33);
    \draw[edge12] (0)--(37);
    \draw[edge12] (2)--(38);
    \draw[edge12] (37)--(44);
    \draw[edge12] (38)--(44);
    \draw[edge12] (23)--(44);
    \draw[edge12] (38)--(45);
    \draw[edge12] (28)--(45);
    \draw[edge12] (37)--(48);
    \draw[edge12] (45)--(48);
    \draw[edge12] (33)--(48);

        \draw[cube] (0)--(33);
        \draw[cube] (33)--(34);
        \draw[cube] (34)--(35);
        \draw[cube] (0)--(35);
        \draw[cube] (35)--(36);
        \draw[cube] (0)--(37);
        \draw[cube] (36)--(37);
        \draw[cube] (37)--(48);
        \draw[cube] (33)--(48);
        \draw[cube] (48)--(49);
        \draw[cube] (34)--(49);
        \draw[cube] (36)--(49);

        \draw[cube] (2)--(15);
        \draw[cube] (2)--(28);
        \draw[cube] (15)--(31);
        \draw[cube] (28)--(31);
        \draw[cube] (2)--(38);
        \draw[cube] (38)--(40);
        \draw[cube] (15)--(40);
        \draw[cube] (38)--(45);
        \draw[cube] (28)--(45);
        \draw[cube] (45)--(46);
        \draw[cube] (31)--(46);
        \draw[cube] (40)--(46);

        \draw[cube] (3)--(6);
        \draw[cube] (3)--(7);
        \draw[cube] (6)--(17);
        \draw[cube] (7)--(17);
        \draw[cube] (17)--(18);
        \draw[cube] (18)--(20);
        \draw[cube] (7)--(20);
        \draw[cube] (3)--(21);
        \draw[cube] (20)--(21);
        \draw[cube] (18)--(22);
        \draw[cube] (21)--(22);
        \draw[cube] (6)--(22);

        \draw[cube] (5)--(8);
        \draw[cube] (8)--(9);
        \draw[cube] (9)--(10);
        \draw[cube] (8)--(11);
        \draw[cube] (10)--(11);
        \draw[cube] (10)--(12);
        \draw[cube] (11)--(13);
        \draw[cube] (12)--(13);
        \draw[cube] (5)--(13);
        \draw[cube] (9)--(14);
        \draw[cube] (12)--(14);
        \draw[cube] (5)--(14);

        \draw[edge0] (36)--(37);
        \draw[edge0] (39)--(40);
        \draw[edge0] (38)--(40);
        \draw[edge0] (39)--(41);
        \draw[edge0] (41)--(42);
        \draw[edge0] (36)--(42);
        \draw[edge0] (39)--(43);
        \draw[edge0] (42)--(43);
        \draw[edge0] (37)--(44);
        \draw[edge0] (38)--(44);
        \draw[edge0] (41)--(44);
        \draw[edge0] (38)--(45);
        \draw[edge0] (45)--(46);
        \draw[edge0] (40)--(46);
        \draw[edge0] (43)--(47);
        \draw[edge0] (46)--(47);
        \draw[edge0] (37)--(48);
        \draw[edge0] (45)--(48);
        \draw[edge0] (48)--(49);
        \draw[edge0] (47)--(49);
        \draw[edge0] (36)--(49);
    
        \foreach \x in {0,...,49}{
            \node[vertex_normal] at (\x) {};
    
        }
    
        \foreach \x in {48, 20, 15, 5}{
            \node[vertex] at (\x) {};
        }
\end{tikzpicture}
}
\subfigure[$F_{-\alpha_3}$.]{
    \centering
    \tdplotsetmaincoords{110}{260}
\begin{tikzpicture}%
    [tdplot_main_coords,scale = 3,
    back/.style={loosely dotted, thin},
    edge/.style={color=black},
    cube/.style={color=red, thick},
    facet/.style={fill=blue!95!black,fill opacity=0.100000},
    vertex/.style={inner sep=1pt,circle,draw=green!25!black,fill=green!75!black,thick,anchor=base},
    vertex_normal/.style = {inner sep=0.5pt,circle,draw=black,fill=black,thick,anchor=base},
    edge0/.style = {color=blue!50!red, very thick,  dashed, opacity=0.7},
    edge8/.style= {color=ForestGreen, very thick, opacity=0.7},
    edge12/.style= {blue,dotted, very thick, opacity=0.7},
    edge12_/.style={blue, very thick, opacity = 0.7},
    face1/.style = {fill=blue!20!white, fill opacity = 0.5}]

    \coordinate (0) at (0.6, 0.6, 0.6);
    \coordinate (1) at (0.42857142857142855, -0.42857142857142855, -0.42857142857142855);
    \coordinate (2) at (-0.13333333333333333, 0.2, -0.13333333333333333);
    \coordinate (3) at (0.375, -0.25, 0.25);
    \coordinate (4) at (0.3, 0.0, 0.0);
    \coordinate (5) at (0.09090909090909091, -0.09090909090909091, 0.09090909090909091);
    \coordinate (6) at (0.2222222222222222, -0.2222222222222222, 0.2222222222222222);
    \coordinate (7) at (0.42857142857142855, -0.2857142857142857, 0.42857142857142855);
    \coordinate (8) at (0.08333333333333333, 0.0, 0.0);
    \coordinate (9) at (0.0, 0.07692307692307693, 0.0);
    \coordinate (10) at (-0.07142857142857142, 0.07142857142857142, -0.07142857142857142);
    \coordinate (11) at (0.0, 0.0, -0.07692307692307693);
    \coordinate (12) at (-0.07692307692307693, 0.0, 0.0);
    \coordinate (13) at (0.0, -0.08333333333333333, 0.0);
    \coordinate (14) at (0.0, 0.0, 0.08333333333333333);
    \coordinate (15) at (-0.13333333333333333, 0.13333333333333333, -0.13333333333333333);
    \coordinate (16) at (0.0, 0.0, 0.3);
    \coordinate (17) at (0.25, -0.25, 0.375);
    \coordinate (18) at (0.2857142857142857, -0.42857142857142855, 0.42857142857142855);
    \coordinate (19) at (0.0, -0.3, 0.0);
    \coordinate (20) at (0.5, -0.5, 0.5);
    \coordinate (21) at (0.42857142857142855, -0.42857142857142855, 0.2857142857142857);
    \coordinate (22) at (0.25, -0.375, 0.25);
    \coordinate (23) at (0.0, 0.3, 0.3);
    \coordinate (24) at (0.3, 0.3, 0.0);
    \coordinate (25) at (0.42857142857142855, 0.42857142857142855, 0.42857142857142855);
    \coordinate (26) at (0.0, 0.23076923076923078, 0.0);
    \coordinate (27) at (0.3, 0.3, -0.3);
    \coordinate (28) at (-0.13333333333333333, 0.2, -0.2);
    \coordinate (29) at (0.3, 0.0, -0.3);
    \coordinate (30) at (0.0, 0.0, -0.23076923076923078);
    \coordinate (31) at (-0.13333333333333333, 0.13333333333333333, -0.2);
    \coordinate (32) at (0.0, -0.3, -0.3);
    \coordinate (33) at (0.6, 0.6, -0.6);
    \coordinate (34) at (0.6, -0.6, -0.6);
    \coordinate (35) at (0.6, -0.6, 0.6);
    \coordinate (36) at (-0.6, -0.6, 0.6);
    \coordinate (37) at (-0.6, 0.6, 0.6);
    \coordinate (38) at (-0.2, 0.2, -0.13333333333333333);
    \coordinate (39) at (-0.23076923076923078, 0.0, 0.0);
    \coordinate (40) at (-0.2, 0.13333333333333333, -0.13333333333333333);
    \coordinate (41) at (-0.3, 0.0, 0.3);
    \coordinate (42) at (-0.42857142857142855, -0.42857142857142855, 0.42857142857142855);
    \coordinate (43) at (-0.3, -0.3, 0.0);
    \coordinate (44) at (-0.3, 0.3, 0.3);
    \coordinate (45) at (-0.2, 0.2, -0.2);
    \coordinate (46) at (-0.2, 0.13333333333333333, -0.2);
    \coordinate (47) at (-0.3, -0.3, -0.3);
    \coordinate (48) at (-0.6, 0.6, -0.6);
    \coordinate (49) at (-0.6, -0.6, -0.6);

\fill[face1] (37)--(48)--(45)--(38)--(44)--cycle {};
\fill[face1] (37)--(48)--(33)--(0)--cycle{};
\fill[face1] (37)--(44)--(23)--(25)--(0)--cycle {};
\fill[face1] (44)--(23)--(26)--(2)--(38)--cycle {};
\fill[face1] (2)--(28)--(45)--(38)--cycle {};
\fill[face1] (45)--(28)--(27)--(33)--(48)--cycle {};
\fill[face1] (2)--(26)--(24)--(27)--(28)--cycle {};
\fill[face1] (25)--(23)--(26)--(24)--cycle;
\fill[face1] (0)--(25)--(24)--(27)--(33)--cycle;


    \draw[edge] (3)--(4);
    \draw[edge] (5)--(6);
    \draw[edge] (4)--(8);
    \draw[edge] (10)--(15);
    \draw[edge] (14)--(16);
    \draw[edge] (16)--(17);
    \draw[edge] (13)--(19);
    \draw[edge] (1)--(21);
    \draw[edge] (19)--(22);
    \draw[edge] (16)--(23);
    \draw[edge] (4)--(24);
    \draw[edge] (7)--(25);
    \draw[edge] (9)--(26);
    \draw[edge] (4)--(29);
    \draw[edge] (11)--(30);
    \draw[edge] (19)--(32);
    \draw[edge] (20)--(35);
    \draw[edge] (12)--(39);
    \draw[edge] (16)--(41);
    \draw[edge] (18)--(42);
    \draw[edge] (19)--(43);

    \draw[cube] (0)--(33);
    \draw[cube] (33)--(34);
    \draw[cube] (34)--(35);
    \draw[cube] (0)--(35);
    \draw[cube] (35)--(36);
    \draw[cube] (0)--(37);
    \draw[cube] (36)--(37);
    \draw[cube] (37)--(48);
    \draw[cube] (33)--(48);
    \draw[cube] (48)--(49);
    \draw[cube] (34)--(49);
    \draw[cube] (36)--(49);

    \draw[edge0] (36)--(37);
    \draw[edge0] (39)--(40);
    \draw[edge0] (38)--(40);
    \draw[edge0] (39)--(41);
    \draw[edge0] (41)--(42);
    \draw[edge0] (36)--(42);
    \draw[edge0] (39)--(43);
    \draw[edge0] (42)--(43);
    \draw[edge0] (37)--(44);
    \draw[edge0] (38)--(44);
    \draw[edge0] (41)--(44);
    \draw[edge0] (38)--(45);
    \draw[edge0] (45)--(46);
    \draw[edge0] (40)--(46);
    \draw[edge0] (43)--(47);
    \draw[edge0] (46)--(47);
    \draw[edge0] (37)--(48);
    \draw[edge0] (45)--(48);
    \draw[edge0] (48)--(49);
    \draw[edge0] (47)--(49);
    \draw[edge0] (36)--(49);

    \draw[edge8] (27)--(28);
    \draw[edge8] (1)--(29);
    \draw[edge8] (27)--(29);
    \draw[edge8] (29)--(30);
    \draw[edge8] (28)--(31);
    \draw[edge8] (30)--(31);
    \draw[edge8] (30)--(32);
    \draw[edge8] (1)--(32);
    \draw[edge8] (27)--(33);
    \draw[edge8] (33)--(34);
    \draw[edge8] (1)--(34);
    \draw[edge8] (28)--(45);
    \draw[edge8] (45)--(46);
    \draw[edge8] (31)--(46);
    \draw[edge8] (32)--(47);
    \draw[edge8] (46)--(47);
    \draw[edge8] (45)--(48);
    \draw[edge8] (33)--(48);
    \draw[edge8] (48)--(49);
    \draw[edge8] (47)--(49);
    \draw[edge8] (34)--(49);

        \draw[cube] (0)--(33);
        \draw[cube] (33)--(34);
        \draw[cube] (34)--(35);
        \draw[cube] (0)--(35);
        \draw[cube] (35)--(36);
        \draw[cube] (0)--(37);
        \draw[cube] (36)--(37);
        \draw[cube] (37)--(48);
        \draw[cube] (33)--(48);
        \draw[cube] (48)--(49);
        \draw[cube] (34)--(49);
        \draw[cube] (36)--(49);

        \draw[cube] (2)--(15);
        \draw[cube] (2)--(28);
        \draw[cube] (15)--(31);
        \draw[cube] (28)--(31);
        \draw[cube] (2)--(38);
        \draw[cube] (38)--(40);
        \draw[cube] (15)--(40);
        \draw[cube] (38)--(45);
        \draw[cube] (28)--(45);
        \draw[cube] (45)--(46);
        \draw[cube] (31)--(46);
        \draw[cube] (40)--(46);

        \draw[cube] (3)--(6);
        \draw[cube] (3)--(7);
        \draw[cube] (6)--(17);
        \draw[cube] (7)--(17);
        \draw[cube] (17)--(18);
        \draw[cube] (18)--(20);
        \draw[cube] (7)--(20);
        \draw[cube] (3)--(21);
        \draw[cube] (20)--(21);
        \draw[cube] (18)--(22);
        \draw[cube] (21)--(22);
        \draw[cube] (6)--(22);

        \draw[cube] (5)--(8);
        \draw[cube] (8)--(9);
        \draw[cube] (9)--(10);
        \draw[cube] (8)--(11);
        \draw[cube] (10)--(11);
        \draw[cube] (10)--(12);
        \draw[cube] (11)--(13);
        \draw[cube] (12)--(13);
        \draw[cube] (5)--(13);
        \draw[cube] (9)--(14);
        \draw[cube] (12)--(14);
        \draw[cube] (5)--(14);

    \draw[edge12_] (24)--(25);
    \draw[edge12_] (23)--(25);
    \draw[edge12_] (0)--(25);
    \draw[edge12_] (2)--(26);
    \draw[edge12_] (23)--(26);
    \draw[edge12_] (24)--(26);
    \draw[edge12_] (24)--(27);
    \draw[edge12_] (2)--(28);
    \draw[edge12_] (27)--(28);
    \draw[edge12_] (0)--(33);
    \draw[edge12_] (27)--(33);
    \draw[edge12_] (0)--(37);
    \draw[edge12_] (2)--(38);
    \draw[edge12_] (37)--(44);
    \draw[edge12_] (38)--(44);
    \draw[edge12_] (23)--(44);
    \draw[edge12_] (38)--(45);
    \draw[edge12_] (28)--(45);
    \draw[edge12] (37)--(48);
    \draw[edge12] (45)--(48);
    \draw[edge12] (33)--(48);
    
        \foreach \x in {0,...,49}{
            \node[vertex_normal] at (\x) {};
    
        }
    
        \foreach \x in {48, 20, 15, 5}{
            \node[vertex] at (\x) {};
        }
\end{tikzpicture}
}
\subfigure[$F_{-\alpha_4}$.]{
    \centering
    \tdplotsetmaincoords{110}{260}
\begin{tikzpicture}%
    [tdplot_main_coords,scale = 3,
    back/.style={loosely dotted, thin},
    edge/.style={color=black},
    cube/.style={color=red, thick},
    facet/.style={fill=blue!95!black,fill opacity=0.100000},
    vertex/.style={inner sep=1pt,circle,draw=green!25!black,fill=green!75!black,thick,anchor=base},
    vertex_normal/.style = {inner sep=0.5pt,circle,draw=black,fill=black,thick,anchor=base},
    edge0/.style = {color=blue!50!red, very thick,  dashed, opacity=0.7},
    edge8/.style= {color=ForestGreen, very thick, opacity=0.7},
    edge12/.style= {blue,dotted, very thick, opacity=0.7},
    face1/.style = {fill=blue!20!white, fill opacity = 0.5}]

    \coordinate (0) at (0.6, 0.6, 0.6);
    \coordinate (1) at (0.42857142857142855, -0.42857142857142855, -0.42857142857142855);
    \coordinate (2) at (-0.13333333333333333, 0.2, -0.13333333333333333);
    \coordinate (3) at (0.375, -0.25, 0.25);
    \coordinate (4) at (0.3, 0.0, 0.0);
    \coordinate (5) at (0.09090909090909091, -0.09090909090909091, 0.09090909090909091);
    \coordinate (6) at (0.2222222222222222, -0.2222222222222222, 0.2222222222222222);
    \coordinate (7) at (0.42857142857142855, -0.2857142857142857, 0.42857142857142855);
    \coordinate (8) at (0.08333333333333333, 0.0, 0.0);
    \coordinate (9) at (0.0, 0.07692307692307693, 0.0);
    \coordinate (10) at (-0.07142857142857142, 0.07142857142857142, -0.07142857142857142);
    \coordinate (11) at (0.0, 0.0, -0.07692307692307693);
    \coordinate (12) at (-0.07692307692307693, 0.0, 0.0);
    \coordinate (13) at (0.0, -0.08333333333333333, 0.0);
    \coordinate (14) at (0.0, 0.0, 0.08333333333333333);
    \coordinate (15) at (-0.13333333333333333, 0.13333333333333333, -0.13333333333333333);
    \coordinate (16) at (0.0, 0.0, 0.3);
    \coordinate (17) at (0.25, -0.25, 0.375);
    \coordinate (18) at (0.2857142857142857, -0.42857142857142855, 0.42857142857142855);
    \coordinate (19) at (0.0, -0.3, 0.0);
    \coordinate (20) at (0.5, -0.5, 0.5);
    \coordinate (21) at (0.42857142857142855, -0.42857142857142855, 0.2857142857142857);
    \coordinate (22) at (0.25, -0.375, 0.25);
    \coordinate (23) at (0.0, 0.3, 0.3);
    \coordinate (24) at (0.3, 0.3, 0.0);
    \coordinate (25) at (0.42857142857142855, 0.42857142857142855, 0.42857142857142855);
    \coordinate (26) at (0.0, 0.23076923076923078, 0.0);
    \coordinate (27) at (0.3, 0.3, -0.3);
    \coordinate (28) at (-0.13333333333333333, 0.2, -0.2);
    \coordinate (29) at (0.3, 0.0, -0.3);
    \coordinate (30) at (0.0, 0.0, -0.23076923076923078);
    \coordinate (31) at (-0.13333333333333333, 0.13333333333333333, -0.2);
    \coordinate (32) at (0.0, -0.3, -0.3);
    \coordinate (33) at (0.6, 0.6, -0.6);
    \coordinate (34) at (0.6, -0.6, -0.6);
    \coordinate (35) at (0.6, -0.6, 0.6);
    \coordinate (36) at (-0.6, -0.6, 0.6);
    \coordinate (37) at (-0.6, 0.6, 0.6);
    \coordinate (38) at (-0.2, 0.2, -0.13333333333333333);
    \coordinate (39) at (-0.23076923076923078, 0.0, 0.0);
    \coordinate (40) at (-0.2, 0.13333333333333333, -0.13333333333333333);
    \coordinate (41) at (-0.3, 0.0, 0.3);
    \coordinate (42) at (-0.42857142857142855, -0.42857142857142855, 0.42857142857142855);
    \coordinate (43) at (-0.3, -0.3, 0.0);
    \coordinate (44) at (-0.3, 0.3, 0.3);
    \coordinate (45) at (-0.2, 0.2, -0.2);
    \coordinate (46) at (-0.2, 0.13333333333333333, -0.2);
    \coordinate (47) at (-0.3, -0.3, -0.3);
    \coordinate (48) at (-0.6, 0.6, -0.6);
    \coordinate (49) at (-0.6, -0.6, -0.6);


\fill[green!10!white, opacity=0.7] (48)--(45)--(46)--(47)--(49)--(34)--(33)--cycle{};

    \draw[edge] (3)--(4);
    \draw[edge] (5)--(6);
    \draw[edge] (4)--(8);
    \draw[edge] (10)--(15);
    \draw[edge] (14)--(16);
    \draw[edge] (16)--(17);
    \draw[edge] (13)--(19);
    \draw[edge] (1)--(21);
    \draw[edge] (19)--(22);
    \draw[edge] (16)--(23);
    \draw[edge] (4)--(24);
    \draw[edge] (7)--(25);
    \draw[edge] (9)--(26);
    \draw[edge] (4)--(29);
    \draw[edge] (11)--(30);
    \draw[edge] (19)--(32);
    \draw[edge] (20)--(35);
    \draw[edge] (12)--(39);
    \draw[edge] (16)--(41);
    \draw[edge] (18)--(42);
    \draw[edge] (19)--(43);

    \draw[cube] (0)--(33);
    \draw[cube] (33)--(34);
    \draw[cube] (34)--(35);
    \draw[cube] (0)--(35);
    \draw[cube] (35)--(36);
    \draw[cube] (0)--(37);
    \draw[cube] (36)--(37);
    \draw[cube] (37)--(48);
    \draw[cube] (33)--(48);
    \draw[cube] (48)--(49);
    \draw[cube] (34)--(49);
    \draw[cube] (36)--(49);

    \draw[edge0] (36)--(37);
    \draw[edge0] (39)--(40);
    \draw[edge0] (38)--(40);
    \draw[edge0] (39)--(41);
    \draw[edge0] (41)--(42);
    \draw[edge0] (36)--(42);
    \draw[edge0] (39)--(43);
    \draw[edge0] (42)--(43);
    \draw[edge0] (37)--(44);
    \draw[edge0] (38)--(44);
    \draw[edge0] (41)--(44);
    \draw[edge0] (38)--(45);
    \draw[edge0] (45)--(46);
    \draw[edge0] (40)--(46);
    \draw[edge0] (43)--(47);
    \draw[edge0] (46)--(47);
    \draw[edge0] (37)--(48);
    \draw[edge0] (45)--(48);
    \draw[edge0] (48)--(49);
    \draw[edge0] (47)--(49);
    \draw[edge0] (36)--(49);

    \draw[edge12] (24)--(25);
    \draw[edge12] (23)--(25);
    \draw[edge12] (0)--(25);
    \draw[edge12] (2)--(26);
    \draw[edge12] (23)--(26);
    \draw[edge12] (24)--(26);
    \draw[edge12] (24)--(27);
    \draw[edge12] (2)--(28);
    \draw[edge12] (27)--(28);
    \draw[edge12] (0)--(33);
    \draw[edge12] (27)--(33);
    \draw[edge12] (0)--(37);
    \draw[edge12] (2)--(38);
    \draw[edge12] (37)--(44);
    \draw[edge12] (38)--(44);
    \draw[edge12] (23)--(44);
    \draw[edge12] (38)--(45);
    \draw[edge12] (28)--(45);
    \draw[edge12] (37)--(48);
    \draw[edge12] (45)--(48);
    \draw[edge12] (33)--(48);

        \draw[cube] (0)--(33);
        \draw[cube] (33)--(34);
        \draw[cube] (34)--(35);
        \draw[cube] (0)--(35);
        \draw[cube] (35)--(36);
        \draw[cube] (0)--(37);
        \draw[cube] (36)--(37);
        \draw[cube] (37)--(48);
        \draw[cube] (33)--(48);
        \draw[cube] (48)--(49);
        \draw[cube] (34)--(49);
        \draw[cube] (36)--(49);

        \draw[cube] (2)--(15);
        \draw[cube] (2)--(28);
        \draw[cube] (15)--(31);
        \draw[cube] (28)--(31);
        \draw[cube] (2)--(38);
        \draw[cube] (38)--(40);
        \draw[cube] (15)--(40);
        \draw[cube] (38)--(45);
        \draw[cube] (28)--(45);
        \draw[cube] (45)--(46);
        \draw[cube] (31)--(46);
        \draw[cube] (40)--(46);

        \draw[cube] (3)--(6);
        \draw[cube] (3)--(7);
        \draw[cube] (6)--(17);
        \draw[cube] (7)--(17);
        \draw[cube] (17)--(18);
        \draw[cube] (18)--(20);
        \draw[cube] (7)--(20);
        \draw[cube] (3)--(21);
        \draw[cube] (20)--(21);
        \draw[cube] (18)--(22);
        \draw[cube] (21)--(22);
        \draw[cube] (6)--(22);

        \draw[cube] (5)--(8);
        \draw[cube] (8)--(9);
        \draw[cube] (9)--(10);
        \draw[cube] (8)--(11);
        \draw[cube] (10)--(11);
        \draw[cube] (10)--(12);
        \draw[cube] (11)--(13);
        \draw[cube] (12)--(13);
        \draw[cube] (5)--(13);
        \draw[cube] (9)--(14);
        \draw[cube] (12)--(14);
        \draw[cube] (5)--(14);
    
        \draw[edge8] (27)--(28);
    \draw[edge8] (1)--(29);
    \draw[edge8] (27)--(29);
    \draw[edge8] (29)--(30);
    \draw[edge8] (28)--(31);
    \draw[edge8] (30)--(31);
    \draw[edge8] (30)--(32);
    \draw[edge8] (1)--(32);
    \draw[edge8] (27)--(33);
    \draw[edge8] (33)--(34);
    \draw[edge8] (1)--(34);
    \draw[edge8] (28)--(45);
    \draw[edge8] (45)--(46);
    \draw[edge8] (31)--(46);
    \draw[edge8] (32)--(47);
    \draw[edge8] (46)--(47);
    \draw[edge8] (45)--(48);
    \draw[edge8] (33)--(48);
    \draw[ForestGreen, thick, opacity = 0.7, dashed] (48)--(49);
    \draw[edge8] (47)--(49);
    \draw[edge8] (34)--(49);

        \foreach \x in {0,...,49}{
            \node[vertex_normal] at (\x) {};
    
        }
    
        \foreach \x in {48, 20, 15, 5}{
            \node[vertex] at (\x) {};
        }
	\end{tikzpicture}  
}
\caption{The generalized associahedron of type $\dynD_4$ and facets corresponding to some negative simple roots $-\alpha_1$, $-\alpha_3$, and $-\alpha_4$.}\label{fig_asso_D4}
\end{figure}

In the remaining part of this section, we recall~\cite{FZ4_2007} which
considers the combinatorics on mutations in a more general setting. Let
$\quiver$ be a bipartite quiver and $I_+$ and $I_-$ be the bipartite
decomposition of the vertex set of $\quiver$. Consider the composition
$\qcoxeter = \mutation_- \mutation_+$ of a sequence of mutations where
\[
    \mutation_{\varepsilon} = \prod_{i \in I_{\varepsilon}} \mutation_i \qquad \text{ for } \varepsilon \in \{ +, -\}.
\]
We call $\qcoxeter$ a \emph{Coxeter mutation} as before.
We enclose this section by recalling the following result which will be used later.
\begin{lemma}[{\cite[Theorem~8.8]{FZ4_2007}}]\label{lemma:order of coxeter mutation}
Let $\seed_{t_0} = (\bfx_{t_0}, \qbasis_{t_0})$ be an initial seed. Suppose
that the exchange matrix $\qbasis_{t_0}$ is the adjacency matrix of a
bipartite quiver $\quiver$. Then the set $\{ \qcoxeter^r (\seed_{t_0}) \}_{r \in \Z_{\geq 0}}$ 
of seeds is finite if and only if 
the Cartan counterpart $C(\qbasis_{t_0}^{\text{pr}})$ is a Cartan matrix of
finite type.

Moreover, for a quiver $\quiver$ of finite type, the order the $\qcoxeter$-action is given by $(h+2)/2$ if $h$ is even, or $h+2$ otherwise.
\end{lemma}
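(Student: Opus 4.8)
The plan is to break the lemma into three pieces --- that a finite-type Cartan counterpart forces the set of seeds to be finite, the order formula in that case, and the converse that anything else forces infinitely many seeds --- handling the first two from the root-system combinatorics already recorded in Theorem~\ref{thm_FZ_finite_type} and Proposition~\ref{prop_FZ_finite_type_Coxeter_element}, and the last by a growth estimate.

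First I would record the reduction. For a bipartite quiver $\quiver$ every vertex is a source or a sink, so the mutations $\{\mutation_i\}_{i\in I_+}$ at the sources pairwise commute and their composite reverses every arrow; thus $\mutation_+(\qbasis_{t_0})=-\qbasis_{t_0}$, and applying the same reasoning with $I_-$ gives $\mutation_-(-\qbasis_{t_0})=\qbasis_{t_0}$, whence $\qcoxeter(\qbasis_{t_0})=\qbasis_{t_0}$. Consequently every seed $\qcoxeter^r(\seed_{t_0})$ carries the same exchange matrix, so these seeds differ only in their clusters; in particular $\{\qcoxeter^r(\seed_{t_0})\}_{r\ge 0}$ is finite if and only if only finitely many cluster variables occur among the ``bipartite belt'' seeds $\qcoxeter^r(\seed_{t_0})$.

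Now suppose $C(\qbasis^{\mathrm{pr}}_{t_0})$ is a Cartan matrix of finite type. By Theorem~\ref{thm_FZ_finite_type} the cluster variables are in bijection with the almost positive roots $\Roots_{\ge -1}$, and by Proposition~\ref{prop_FZ_finite_type_Coxeter_element}(1) the map $\qcoxeter$ permutes the finite set of facets $\{F_\beta\}_{\beta\in\Roots_{\ge -1}}$ of the generalized associahedron $P(\Roots)$; hence $\qcoxeter$ has finite order and only finitely many cluster variables occur, giving the ``if'' direction. For the order, when $h=2e$ is even, Proposition~\ref{prop_FZ_finite_type_Coxeter_element}(2) identifies $(r,i)\mapsto\qcoxeter^r(F_{-\alpha_i})$ with $0\le r\le e$ as a bijection onto the set of facets, so the orbit of each $F_{-\alpha_i}$ has size exactly $e+1$, these $n$ orbits partition the facets, and therefore $\qcoxeter$ has order exactly $e+1=(h+2)/2$ on the facets and hence on the belt. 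The only finite type with $h$ odd is $\dynA_n$ with $n$ even; there I would pass to the polygon model, in which $P(\Roots)$ is the associahedron of a convex $(n+3)$-gon and $\qcoxeter$ acts as the rotation of the polygon by two vertices, of order $(n+3)/\gcd(n+3,2)=n+3=h+2$ since $n+3$ is odd.

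For the converse, assume $A:=C(\qbasis^{\mathrm{pr}}_{t_0})$ is not of finite type. I would show the belt is infinite by tracking denominator vectors: writing $x_i[r]$ for the $i$-th cluster variable of $\qcoxeter^r(\seed_{t_0})$ expanded as a reduced Laurent polynomial in $\bfx_{t_0}$ and $d_i[r]\in\Z^n$ for its denominator vector, the vectors $d_i[r]$ satisfy a piecewise-linear recurrence driven by $A$ whose linearization on the dominant cone is the Coxeter transformation $c_A$ of $A$. Since $A$ is affine or indefinite, $c_A$ has infinite order --- with linear growth of orbits in the affine case (a nontrivial Jordan block) and exponential growth in the indefinite case (an eigenvalue of modulus $>1$) --- so $\{d_i[r]\}_r$ is infinite, hence infinitely many cluster variables occur on the belt and $\{\qcoxeter^r(\seed_{t_0})\}$ is infinite. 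Combining the three pieces proves the lemma; the step I expect to be the main obstacle is precisely this converse, since it needs the quantitative fact that the tropical Coxeter transformation of a non-finite-type symmetrizable Cartan matrix has unbounded orbits --- a spectral statement about such matrices rather than soft associahedron combinatorics --- and that is the technical core supplied by \cite[Theorem~8.8]{FZ4_2007}, from which the lemma is quoted.
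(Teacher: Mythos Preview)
The paper does not prove this lemma; it is quoted verbatim from \cite[Theorem~8.8]{FZ4_2007} with no argument supplied. So there is no ``paper's proof'' to compare against, and the relevant question is whether your sketch stands on its own.

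Your reduction ($\qcoxeter$ fixes the exchange matrix, so only the clusters vary along the belt) and your ``if'' direction via Theorem~\ref{thm_FZ_finite_type} and Proposition~\ref{prop_FZ_finite_type_Coxeter_element}(1) are fine. The converse you correctly identify as the genuine content of \cite[Theorem~8.8]{FZ4_2007}, and your denominator-vector/spectral outline is the right shape but is really just a pointer back to that reference.

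There is, however, a concrete error in your order computation for even $h$. From the bijection in Proposition~\ref{prop_FZ_finite_type_Coxeter_element}(2) you deduce only that $\qcoxeter^{e+1}$ sends the set $\{F_{-\alpha_i}\}$ to itself, inducing a permutation $\sigma$ of $[n]$; it does \emph{not} follow that each facet orbit has size $e+1$ or that there are $n$ such orbits. Indeed for $\dynA_3$ (Table~\ref{table_A3_tau_action}) one computes $\qcoxeter^3(F_{-\alpha_1})=F_{-\alpha_3}$, so $\sigma=(1\;3)$ and the facet orbit of $F_{-\alpha_1}$ has size $6$, not $3$. The correct step is to work with the \emph{seed}: since $\qcoxeter^{e+1}$ permutes $\{F_{-\alpha_i}\}$ and $\initialseed=\bigcap_i F_{-\alpha_i}$ by Proposition~\ref{prop_FZ_finite_type_Coxeter_element}(4), $\qcoxeter^{e+1}$ fixes $\initialseed$ up to relabelling of the cluster; and the bijection shows the clusters of $\qcoxeter^r(\initialseed)$ for $0\le r\le e$ are pairwise disjoint, hence these $e+1$ seeds are distinct. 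This yields the order $(h+2)/2$ on the belt without any claim about facet orbits. With this fix (and the odd-$h$ case handled separately, as you do), the finite-type half goes through.
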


\section{\texorpdfstring{$N$}{N}-graphs and seeds}
Let us recall from \cite{CZ2020} how to construct a seed from an $N$-graph~$\ngraph$.
Each one-cycle in $\Legendrian(\ngraph)$ corresponds to a vertex of the quiver,
and a monodromy along that cycle gives a coordinate function at that vertex.
The quiver is obtained from the intersection data among one-cycles.
Moreover, there is an operation in $N$-graph, called \emph{Legendrian mutation}, which is a counterpart of the mutation in the cluster structure.
The Legendrian mutation is crucial in constructing and distinguishing $N$-graphs.
In turn, these will give seeds many Lagrangian fillings of Legendrian links.

\subsection{One-cycles in Legendrian weaves}\label{sec:1-cycles in Legendrian weaves}
Let $\ngraph\subset \disk^2$ be a free $N$-graph and $\Legendrian(\ngraph)$ be the induced Legendrian weave.
We express one-cycles of $\Legendrian(\ngraph)$ in terms of subgraphs of $\ngraph$.

\begin{definition}
A subgraph $\sfT\subset \ngraph$ is said to be \emph{admissible} if it satisfies the following conditions:
\begin{itemize}
\item every vertex of $\sfT$ is at most trivalent,
\item each univalent vertex in $\sfT$ is a trivalent vertex in $\ngraph$,
\item each bivalent vertex in $\sfT$ corresponding to a hexagonal point in $\ngraph$ connects two opposite edges in $\ngraph$, and 
\item each trivalent vertex in $\sfT$ corresponding to a hexagonal point in $\ngraph$ and connects three edges in the same color.
\end{itemize}

An admissible graph $\sfT\subset \ngraph$ is \emph{good} if it is a (connected) tree and only univalent vertices of $\sfT$ are trivalent vertices in $\ngraph$.
\end{definition}

For each admissible subgraph $\sfT$, we can define an oriented immersed loop $\ell(\sfT)\subset\disk^2$ is defined by paths whose local pictures look as depicted in Figure~\ref{fig:T cycle}.
Each arc $\ell_j(\sfT)\subset\ell(\sfT)$ cut by $\ngraph$ is labelled as $s_j\in\{1,\dots, N\}$, which lifts to the $s_j$-th sheet $\wavefront_{s_j}$ via $\pi_{\disk^2}:\wavefront(\ngraph)\to\disk^2$.
By concatenating the lifts, we have an oriented embedded loop $\cycle(\sfT)$ in $\Legendrian(\ngraph)$ and
a one-cycle $[\cycle]\in H_1(\Legendrian(\ngraph),\Z)$ is called a \emph{$\sfT$-cycle} if $[\cycle] = [\cycle(\sfT)]$.

\begin{figure}[ht]
\subfigure[Near a trivalent vertex of $\ngraph$
\label{figure:loop near vertex}]{\makebox[.4\textwidth]{
\begin{tikzpicture}[baseline=-.5ex]
\begin{scope}[xshift=-5cm]
\draw [dashed] (0,0) circle [radius=2];
\clip (0,0) circle [radius=2];
\draw [yellow, line cap=round, line width=5, opacity=0.5] (0,0) to (2,0);

\draw [blue, thick] (135:2) -- (0:0)--(0:2) (-135:2)--(0,0);
\draw[->] (30:2) to[out=180,in=90] node[pos=0.5, above, sloped] {$i+1$} node[pos=1, left] {$i$} (-1,0);
\draw[] (-1,0) to[out=-90,in=180] node[pos=0.5,below, sloped] {$i+1$}  (-30:2);
\draw[thick,blue,fill=blue] (0,0) circle (0.05);
\end{scope}
\begin{scope}
\draw [dashed] (0,0) circle [radius=2];
\clip (0,0) circle [radius=2];
\draw [yellow, line cap=round, line width=5, opacity=0.5] (0,0) to (2,0) (-135:2)--(0,0);

\draw [blue, thick] (135:2) -- (0:0)--(0:2) (-135:2)--(0,0);
\draw[->] (30:2) to[out=180,in=90] node[pos=0.15, below] {$i+1$}  node[pos=1, above, sloped, rotate = 180] {$i$} (-1,0);
\draw[] (-1,0) to[out=-90,in=180]  node[near end, above] {$i+1$}  (-30:2);
\draw[->] (240:2) to[out=45,in=-45] node[pos=0.2, above,sloped] {$i+1$} node[pos=1,above, sloped]  {$i$} (45:0.5);
\draw[] (45:0.5) to[out=135,in=45] node[pos=0.85, below,sloped] {$i+1$} (210:2);

\draw[thick,blue,fill=blue] (0,0) circle (0.05);
\end{scope}
\begin{scope}[xshift=5cm]
\draw [dashed] (0,0) circle [radius=2];
\clip (0,0) circle [radius=2];
\draw [yellow, line cap=round, line width=5, opacity=0.5] (0,0) to (2,0) (-135:2)--(0,0) (135:2) -- (0:0);

\draw [blue, thick] (135:2) -- (0:0)--(0:2) (-135:2)--(0,0);

\draw[->] (30:2)  to[out=180,in=90] node[pos=0.2, below,sloped] {$i+1$}  node[pos=1, above, sloped, rotate = 180] {$i$} (-1,0);
\draw[] (-1,0) to[out=-90,in=180] node[pos=0.8, above,sloped] {$i+1$}  (-30:2);
\draw[->] (250:2) to[out=45,in=-45] node[pos=0.2, above,sloped] {$i+1$} node[pos=0.9,above, sloped]  {$i$} (45:0.5);
\draw[] (45:0.5) to[out=135,in=45] node[pos=0.85, below,sloped] {$i+1$} (210:2);
\begin{scope}
\draw[->] (150:2) to[out=-45,in=-135] node[pos=0.15, above,sloped] {$i+1$}  node[sloped, pos=1, below] {$i$} (-45:0.25) ;
\draw[] (-45:0.25) to[out=45,in=-45] node[pos=0.85, below,sloped] {$i+1$} (120:2);
\end{scope}

\draw[thick,blue,fill=blue] (0,0) circle (0.05);
\end{scope}
\end{tikzpicture}}}

\subfigure[Near a hexagonal point of $\ngraph$
\label{figure:loop near hexagon II}]{\makebox[.4\textwidth]{
\begin{tikzpicture}
\begin{scope}[xshift=-5cm]
\draw [dashed] (0,0) circle [radius=2];
\clip (0,0) circle [radius=2];
\draw [yellow, line cap=round, line width=5, opacity=0.5] (-2,0) to (2,0);

\draw[red, thick] (180:2) -- (0,0) -- (60:2) (0,0)-- (-60:2);
\draw[blue, thick] (0:2) -- (0,0) -- (120:2) (0,0)-- (-120:2);
\draw[thick,black,fill=white] (0,0) circle (0.05);
\draw[->] (15:2) --  node[above, midway, pos=1]{$i+2$} 
node[above, pos=0.3] {$i+1$}
(0,{2*sin(15)});
\draw[->] ({180+15}:2) -- node[below, pos=0.3]{$i+2$} (0,{-2*sin(15)}) ;
\draw[] (0,{2*sin(15)}) -- node[above, pos=0.7] {$i+2$} ({180-15}:2);
\draw[] (0,{-2*sin(15)}) -- 
node[below, midway, pos=0]{$i+2$} 
node[below, pos=0.7] {$i+1$}
(-15:2);

\end{scope}

\begin{scope}
\draw [dashed] (0,0) circle [radius=2];
\clip (0,0) circle [radius=2];
\begin{scope}
\draw [yellow, line cap=round, line width=5, opacity=0.5] 
(0:0) -- (-30:2);
\draw [blue, thick](0:0)--(270:2);
\draw [red, thick](0,0)--(330:2);
\draw[->] (340:2) to[out=160,in=-60] (30:0.75);
\draw(270:0.75) to[out=0,in=140] (320:2);
\end{scope}
\begin{scope}[rotate=120]
\draw [yellow, line cap=round, line width=5, opacity=0.5] 
(0:0) -- (-30:2);
\draw [blue, thick](0:0)--(270:2);
\draw [red, thick](0,0)--(330:2);
\draw[->] (340:2) to[out=160,in=-60] (30:0.75);
\draw(270:0.75) to[out=0,in=140] (320:2);
\end{scope}
\begin{scope}[rotate=240]
\draw [yellow, line cap=round, line width=5, opacity=0.5] 
(0:0) -- (-30:2);
\draw [blue, thick](0:0)--(270:2);
\draw [red, thick](0,0)--(330:2);
\draw[->] (340:2) to[out=160,in=-60] (30:0.75);
\draw(270:0.75) to[out=0,in=140] (320:2);
\end{scope}

\node[rotate=0] at (300:1.4) {$i+2$};
\node[rotate=-60] at (0:1.2) {$i+2$};
\node[rotate=-60] at (60:1.2) {$i+2$};
\node[rotate=60] at (120:1.2) {$i+2$};
\node[rotate=60] at (180:1.2) {$i+2$};
\node[rotate=0] at (240:1.4) {$i+2$};

\draw[thick,black, fill=white] (0:0) circle (1.5pt);
\end{scope}
\begin{scope}[xshift=5cm]
\draw [dashed] (0,0) circle [radius=2];
\clip (0,0) circle [radius=2];
\begin{scope}
\draw [yellow, line cap=round, line width=5, opacity=0.5] 
(0:0) -- (-30:2);
\draw [red, thick](0:0)--(270:2);
\draw [blue, thick](0,0)--(330:2);
\draw (320:2) to[out=160,in=-60] (30:0.75);
\draw[->] (340:2) to[out=140,in=0] (270:0.75);
\end{scope}
\begin{scope}[rotate=120]
\draw [yellow, line cap=round, line width=5, opacity=0.5] 
(0:0) -- (-30:2);
\draw [red, thick](0:0)--(270:2);
\draw [blue, thick](0,0)--(330:2);
\draw (320:2) to[out=160,in=-60] (30:0.75);
\draw[->] (340:2) to[out=140,in=0] (270:0.75);
\end{scope}
\begin{scope}[rotate=240]
\draw [yellow, line cap=round, line width=5, opacity=0.5] 
(0:0) -- (-30:2);
\draw [red, thick](0:0)--(270:2);
\draw [blue, thick](0,0)--(330:2);
\draw (320:2) to[out=160,in=-60] (30:0.75);
\draw[->] (340:2) to[out=140,in=0] (270:0.75);
\end{scope}

\node[rotate=-60] at (0:1) {$i$};
\node[rotate=-60] at (60:1) {$i$};
\node[rotate=60] at (120:1) {$i$};
\node[rotate=60] at (180:1) {$i$};
\node[rotate=0] at (250:1.1) {$i$};
\node[rotate=0] at (290:1.1) {$i$};

\node[rotate=-30] at (350:1.6) {$i+1$};
\node[rotate=-30] at (310:1.6) {$i+1$};
\node[rotate=30] at (230:1.6) {$i+1$};
\node[rotate=30] at (190:1.6) {$i+1$};
\node[rotate=0] at (65:1.6) {$i+1$};
\node[rotate=0] at (115:1.6) {$i+1$};

\draw[thick,black, fill=white] (0:0) circle (1.5pt);
\end{scope}
\end{tikzpicture}}}
\caption{Local configurations on cycles and corresponding arcs of $\ngraph\subset\disk^2$
}
\label{fig:T cycle}
\end{figure}

\begin{example}[(Long) $\sfI$-cycles]
For an edge $e$ of $\ngraph$ connecting two trivalent vertices, let $\sfI(e)$ be the subgraph of $\ngraph$ consisting of a single edge $e$.
Then $\sfI(e)$ is a good subgraph of $\ngraph$ and the cycle $[\cycle(\sfI(e))]$ depicted in  Figure~\ref{figure:I-cycle} is called an \emph{$\sfI$-cycle}.

In general, a linear chain of edges $(e_1,e_2,\dots, e_n)$ satisfying
\begin{itemize}
\item $e_i$ connects a trivalent vertex and a hexagonal point for $i=1,n$;
\item $e_i$ and $e_{i+1}$ meet at a hexagonal point in the opposite way, see Figure~\ref{figure:long I-cycle}, for $i=2,\dots, n-1$
\end{itemize}
forms a good subgraph $\sfI(e_1,\dots, e_n)$, and the cycle $[\cycle(\sfI(e_1,\dots, e_n))]$ is called a \emph{long $\sfI$-cycle}. See Figure~\ref{figure:long I-cycle}.
\end{example}

\begin{example}[$\sfY$-cycles]
Let $e_1,e_2,e_3$ be monochromatic edges joining a hexagonal point $h$ and trivalent vertices $v_i$ for $i=1,2,3$.
Then the subgraph $\sfY(e_1,e_2,e_3)$ consisting of three edges $e_1, e_2$ and $e_3$ is a good subgraph of $\ngraph$ and it defines a cycle $[\cycle(\sfY(e_1,e_2,e_3))]$ called an \emph{upper} or \emph{lower} \emph{$\sfY$-cycle} according to the relative position of sheets that edges represent.
See Figures~\ref{figure:Y-cycle_1} and \ref{figure:Y-cycle_2}.
\end{example}

\begin{figure}[ht]
\subfigure[An $\sfI$-cycle $\cycle(\sfI(e))$\label{figure:I-cycle}]{\makebox[.4\textwidth]{
\begin{tikzpicture}
\draw [dashed] (0,0) circle [radius=1.5];

\draw [yellow, line cap=round, line width=5, opacity=0.5] (-1/2,0) to (1/2,0);
\draw [blue, thick] ({-3*sqrt(3)/4},3/4)--(-1/2,0);
\draw [blue, thick] ({-3*sqrt(3)/4},-3/4)--(-1/2,0);
\draw [blue, thick] ({3*sqrt(3)/4},3/4)--(1/2,0);
\draw [blue, thick] ({3*sqrt(3)/4},-3/4)--(1/2,0);
\draw [blue, thick] (-1/2,0)--(1/2,0) node[above, midway] {$e$};

\draw[thick,blue,fill=blue] (-1/2,0) circle (0.05);
\draw[thick,blue,fill=blue] (1/2,0) circle (0.05);

\draw[->] (1,0) node[right] {$i$} to[out=90,in=0] (0,0.5) node[above] {$i+1$} to[out=180,in=90] (-1,0) node[left] {$i$} to[out=-90,in=180] (0,-0.5)node[below] {$i+1$} to[out=0,in=-90] (1,0);

\end{tikzpicture}
}}
\subfigure[A long $\sfI$-cycle $\cycle(\sfI(e_1,e_2))$\label{figure:long I-cycle}]{\makebox[.4\textwidth]{
\begin{tikzpicture}

\draw[dashed] \boundellipse{0,0}{3}{1.5};

\draw [yellow, line cap=round, line width=5, opacity=0.5] (-1.5,0) to (1.5,0);

\draw[red, thick] (0,0)--(1.35,1.35);
\draw[red, thick] (0,0)--(1.35,-1.35);
\draw[red, thick] (0,0)--(-1.5,0) node[above, midway] {$e_1$};
\draw[red, thick] (-1.5,0)--(-1.5-0.9,0.9);
\draw[red, thick] (-1.5,0)--(-1.5-0.9,-0.9);

\draw[blue, thick] (0,0)--(-1.35,1.35);
\draw[blue, thick] (0,0)--(-1.35,-1.35);
\draw[blue, thick] (0,0)--(1.5,0) node[above, midway] {$e_2$};
\draw[blue, thick] (1.5,0)--(1.5+0.9,0.9);
\draw[blue, thick] (0,0)--(1.5,0)--(1.5+0.9,-0.9);

\draw[thick,red,fill=red] (-1.5,0) circle (0.05);
\draw[thick,blue,fill=blue] (1.5,0) circle (0.05);
\draw[thick,black,fill=white] (0,0) circle (0.05);

\draw[->] (2,0) node[right]{$i$} to[out=90,in=0] (0,0.5) node[above]{$i+2$} to[out=180,in=90] (-2,0) node[left]{$i+1$} to[out=-90,in=180] (0,-0.5) node[below]{$i+2$} to[out=0,in=-90] (2,0);
\node[above] at (1.5,0.5) {$i+1$};
\node[below] at (1.5,-0.5) {$i+1$};
\node[above] at (-1.5,0.5) {$i+2$};
\node[below] at (-1.5,-0.5) {$i+2$};

\end{tikzpicture}}}

\subfigure[An upper $\sfY$-cycle $\cycle(\sfY(e_1,e_2,e_3))$\label{figure:Y-cycle_1}]{\makebox[.4\textwidth]{
\begin{tikzpicture}
\draw [dashed] (0,0) circle [radius=2];

\begin{scope}
\draw [yellow, line cap=round, line width=5, opacity=0.5] 
(0:0) -- (-30:1);
\draw [blue, thick](0:0)--(270:2);
\draw [red, thick](0,0)--(330:1) (310:2)--(330:1)--(350:2);
\draw[->] (270:0.5) to[out=0,in=-120] (330:1.5);
\draw(330:1.5) to[out=60,in=-60] (30:0.5);
\node[rotate=60] at (330:1.75) {$\scriptstyle i+1$};
\node[rotate=-30] at (290:1) {$\scriptstyle i+2$};
\node[rotate=-30] at (0:1) {$\scriptstyle i+2$};
\end{scope}
\begin{scope}[rotate=120]
\draw [yellow, line cap=round, line width=5, opacity=0.5] 
(0:0) -- (-30:1);
\draw [blue, thick](0:0)--(270:2);
\draw [red, thick](0,0)--(330:1) (310:2)--(330:1)--(350:2);
\draw[->] (270:0.5) to[out=0,in=-120] (330:1.5);
\draw(330:1.5) to[out=60,in=-60] (30:0.5);
\end{scope}
\begin{scope}[rotate=240]
\draw [yellow, line cap=round, line width=5, opacity=0.5] 
(0:0) -- (-30:1);
\draw [blue, thick](0:0)--(270:2);
\draw [red, thick](0,0)--(330:1) (310:2)--(330:1)--(350:2);
\draw[->] (270:0.5) to[out=0,in=-120] (330:1.5);
\draw(330:1.5) to[out=60,in=-60] (30:0.5);
\end{scope}
\draw[thick, red, fill] 
(90:1) circle (1.5pt)
(210:1) circle (1.5pt)
(330:1) circle (1.5pt);

\node at (90:1.75) {$\scriptstyle i+1$};
\node[rotate=-90] at (60:1) {$\scriptstyle i+2$};
\node[rotate=90] at (120:1) {$\scriptstyle i+2$};
\node[rotate=-60] at (210:1.75) {$\scriptstyle i+1$};
\node[rotate=30] at (180:1) {$\scriptstyle i+2$};
\node[rotate=30] at (250:1) {$\scriptstyle i+2$};

\draw[thick,black, fill=white] (0:0) circle (1.5pt);
\end{tikzpicture}}}
\subfigure[A lower $\sfY$-cycle $\cycle(\sfY(e_1,e_2,e_3))$\label{figure:Y-cycle_2}]{\makebox[.4\textwidth]{
\begin{tikzpicture}
\draw [dashed] (0,0) circle [radius=2];

\begin{scope}
\draw [yellow, line cap=round, line width=5, opacity=0.5] 
(0:0) -- (-30:1);
\draw [red, thick](0:0)--(270:2);
\draw [blue, thick](0,0)--(330:1) (310:2)--(330:1)--(350:2);
\draw (270:0.3) to[out=0,in=60] (330:1.5);
\draw[->] (30:0.3) to[out=-60,in=-120] (330:1.5)  ;
\end{scope}
\begin{scope}[rotate=120]
\draw [yellow, line cap=round, line width=5, opacity=0.5] 
(0:0) -- (-30:1);
\draw [red, thick](0:0)--(270:2);
\draw [blue, thick](0,0)--(330:1) (310:2)--(330:1)--(350:2);
\draw (270:0.3) to[out=0,in=60] (330:1.5);
\draw[->] (30:0.3) to[out=-60,in=-120] (330:1.5)  ;
\end{scope}
\begin{scope}[rotate=240]
\draw [yellow, line cap=round, line width=5, opacity=0.5] 
(0:0) -- (-30:1);
\draw [red, thick](0:0)--(270:2);
\draw [blue, thick](0,0)--(330:1) (310:2)--(330:1)--(350:2);
\draw (270:0.3) to[out=0,in=60] (330:1.5);
\draw[->] (30:0.3) to[out=-60,in=-120] (330:1.5)  ;
\end{scope}
\draw[thick, blue, fill] 
(90:1) circle (1.5pt)
(210:1) circle (1.5pt)
(330:1) circle (1.5pt);

\node[rotate=60] at (330:1.75) {\small$i$};
\node at (90:1.75) {\small$i$};
\node[rotate=-60] at (210:1.75) {\small$i$};

\node[right] at (90:1) {\scriptsize$i+1$};
\node[left] at (90:1) {\scriptsize$i+1$};
\node[above right]  at (330:1) {\scriptsize$i+1$};
\node[below left]  at (330:1) {\scriptsize$i+1$};
\node[above left]  at (210:1) {\scriptsize$i+1$};
\node[below right]  at (210:1) {\scriptsize$i+1$};
\node[above]  at (30:0.3) {\scriptsize$i$};
\node[right]  at (30:0.3) {\scriptsize$i$};
\node[above]  at (150:0.3) {\scriptsize$i$};
\node[left]  at (150:0.3) {\scriptsize$i$};
\node[below left]  at (270:0.2) {\scriptsize$i$};
\node[below right]  at (270:0.2) {\scriptsize$i$};

\draw[thick,black, fill=white] (0:0) circle (1.5pt);
\end{tikzpicture}}}
\caption{(Long) $\sfI$- and $\sfY$-cycles}
\label{fig:I and Y cycle}
\end{figure}

One of the benifit of cycles from admissible subgraphs is that one can keep track how cycles are changed under the $N$-graph moves described in Figure~\ref{fig:move1-6}, especially under Move~\Move{I} and Move~\Move{II}.
Note that Move~\Move{III} can be decomposed into a sequence of Move~\Move{I} and Move~\Move{II}.
Some of such changes are given in Figure~\ref{fig:cycles under moves}.
Then it is easy to check that any $\sfT$-cycle coming from a good subgraph $\sfT$ can be transformed to an $\sfI$-cycle. 

\begin{figure}[ht]
\[
\begin{tikzcd}[row sep=0pc]
\begin{tikzpicture}[baseline=-.5ex]
\draw [dashed] (0,0) circle [radius=1];
\clip (0,0) circle (1);
\draw [yellow, line cap=round, line width=5, opacity=0.5] (-1,0) to (1,0);
\draw [blue, thick] ({-sqrt(3)/2},1/2)--(-1/2,0);
\draw [blue, thick] ({-sqrt(3)/2},-1/2)--(-1/2,0);
\draw [blue, thick] ({sqrt(3)/2},1/2)--(1/2,0);
\draw [blue, thick] ({sqrt(3)/2},-1/2)--(1/2,0);
\draw [blue, thick] (-1/2,0)--(1/2,0);
\draw [red, thick] (-1,0)--(-1/2,0) to[out=60,in=180] (0,1/2) to[out=0,in=120] (1/2,0)--(1,0);
\draw [red, thick] (-1/2,0) to[out=-60,in=180] (0, -1/2) to[out=0, in=-120] (1/2,0); 
\draw[thick,black,fill=white] (-1/2,0) circle (0.05);
\draw[thick,black,fill=white] (1/2,0) circle (0.05);
\end{tikzpicture}
\arrow[leftrightarrow,r,"\Move{I}"]&
\begin{tikzpicture}[baseline=-.5ex]
\draw [dashed] (3,0) circle [radius=1];
\clip (3,0) circle (1);
\draw [yellow, line cap=round, line width=5, opacity=0.5] (2,0) to (4,0);
\draw [blue, thick] ({3-sqrt(3)/2},1/2)--({3+sqrt(3)/2},1/2);
\draw [blue, thick] ({3-sqrt(3)/2},-1/2)--({3+sqrt(3)/2},-1/2);
\draw [red, thick] (2,0)--(4,0);
\end{tikzpicture}
&
\begin{tikzpicture}[baseline=-.5ex]
\draw [dashed] (0,0) circle [radius=1];
\clip (0,0) circle (1);
\draw [yellow, line cap=round, line width=5, opacity=0.5] ({-sqrt(3)/2},1/2)--(-1/2,0) -- (1/2,0) --({sqrt(3)/2},1/2);
\begin{scope}[yscale=-1]
\draw [yellow, line cap=round, line width=5, opacity=0.5] ({-sqrt(3)/2},1/2)--(-1/2,0) -- (1/2,0) --({sqrt(3)/2},1/2);
\end{scope}
\draw [blue, thick] ({-sqrt(3)/2},1/2)--(-1/2,0);
\draw [blue, thick] ({-sqrt(3)/2},-1/2)--(-1/2,0);
\draw [blue, thick] ({sqrt(3)/2},1/2)--(1/2,0);
\draw [blue, thick] ({sqrt(3)/2},-1/2)--(1/2,0);
\draw [blue, thick] (-1/2,0)--(1/2,0);
\draw [red, thick] (-1,0)--(-1/2,0) to[out=60,in=180] (0,1/2) to[out=0,in=120] (1/2,0)--(1,0);
\draw [red, thick] (-1/2,0) to[out=-60,in=180] (0, -1/2) to[out=0, in=-120] (1/2,0); 
\draw[thick,black,fill=white] (-1/2,0) circle (0.05);
\draw[thick,black,fill=white] (1/2,0) circle (0.05);
\end{tikzpicture}
\arrow[leftrightarrow,r,"\Move{I}"]&
\begin{tikzpicture}[baseline=-.5ex]
\draw [dashed] (3,0) circle [radius=1];
\clip (3,0) circle (1);
\draw [yellow, line cap=round, line width=5, opacity=0.5] (2,1/2) to (4,1/2);
\draw [yellow, line cap=round, line width=5, opacity=0.5] (2,-1/2) to (4,-1/2);
\draw [blue, thick] ({3-sqrt(3)/2},1/2)--({3+sqrt(3)/2},1/2);
\draw [blue, thick] ({3-sqrt(3)/2},-1/2)--({3+sqrt(3)/2},-1/2);
\draw [red, thick] (2,0)--(4,0);
\end{tikzpicture}
\\
\begin{tikzpicture}[baseline=-.5ex]
\draw [dashed] (0,0) circle [radius=1];
\clip (0,0) circle (1);
\draw [yellow, line cap=round, line width=5, opacity=0.5] ({-sqrt(3)/2},1/2)--(-1/2,0);
\draw [yellow, line cap=round, line width=5, opacity=0.5] (-1/2,0) to[out=-60,in=180] (0, -1/2) to[out=0, in=-120] (1/2,0); 
\draw [yellow, line cap=round, line width=5, opacity=0.5] ({sqrt(3)/2},1/2)--(1/2,0);
\draw [blue, thick] ({-sqrt(3)/2},1/2)--(-1/2,0);
\draw [blue, thick] ({-sqrt(3)/2},-1/2)--(-1/2,0);
\draw [blue, thick] ({sqrt(3)/2},1/2)--(1/2,0);
\draw [blue, thick] ({sqrt(3)/2},-1/2)--(1/2,0);
\draw [blue, thick] (-1/2,0)--(1/2,0);
\draw [red, thick] (-1,0)--(-1/2,0) to[out=60,in=180] (0,1/2) to[out=0,in=120] (1/2,0)--(1,0);
\draw [red, thick] (-1/2,0) to[out=-60,in=180] (0, -1/2) to[out=0, in=-120] (1/2,0); 
\draw[thick,black,fill=white] (-1/2,0) circle (0.05);
\draw[thick,black,fill=white] (1/2,0) circle (0.05);
\end{tikzpicture}
\arrow[leftrightarrow,r,"\Move{I}"]&
\begin{tikzpicture}[baseline=-.5ex]
\draw [dashed] (3,0) circle [radius=1];
\clip (3,0) circle (1);
\draw [yellow, line cap=round, line width=5, opacity=0.5] ({3-sqrt(3)/2},1/2)--({3+sqrt(3)/2},1/2);
\draw [blue, thick] ({3-sqrt(3)/2},1/2)--({3+sqrt(3)/2},1/2);
\draw [blue, thick] ({3-sqrt(3)/2},-1/2)--({3+sqrt(3)/2},-1/2);
\draw [red, thick] (2,0)--(4,0);
\end{tikzpicture}
&
\begin{tikzpicture}[baseline=-.5ex]
\draw [dashed] (0,0) circle [radius=1];
\clip (0,0) circle (1);
\draw [yellow, line cap=round, line width=5, opacity=0.5] (-1/2,0) to (1,0);
\draw [blue, thick] ({-sqrt(3)/2},1/2)--(-1/2,0);
\draw [blue, thick] ({-sqrt(3)/2},-1/2)--(-1/2,0);
\draw [blue, thick] ({sqrt(3)/2},1/2)--(1/2,0);
\draw [blue, thick] ({sqrt(3)/2},-1/2)--(1/2,0);
\draw [blue, thick] (-1/2,0)--(1/2,0);
\draw [red, thick] (-1/2,{sqrt(3)/2}) -- (1/2,0)--(1,0);
\draw [red, thick] (-1/2,{-sqrt(3)/2}) -- (1/2,0);
\draw[thick,blue,fill=blue] (-1/2,0) circle (0.05);
\draw[thick,black,fill=white] (1/2,0) circle (0.05);
\end{tikzpicture}
\arrow[leftrightarrow,r,"\Move{II}"]&
\begin{tikzpicture}[baseline=-.5ex]
\draw [dashed] (3,0) circle [radius=1];
\clip (3,0) circle (1);
\draw [yellow, line cap=round, line width=5, opacity=0.5] (3.5,0) to (4,0);
\draw [blue, thick] ({3-sqrt(3)/2},1/2)--({3+sqrt(3)/2},1/2);
\draw [blue, thick] ({3-sqrt(3)/2},-1/2)--({3+sqrt(3)/2},-1/2);
\draw [blue, thick] (3,1/2)--(3,-1/2);
\draw [red, thick] (5/2,{sqrt(3)/2})--(3,1/2) to[out=-150,in=150] (3,-1/2)--(5/2,{-sqrt(3)/2});
\draw [red, thick] (3,1/2)--(7/2,0) -- (4,0);
\draw [red, thick] (3,-1/2)--(7/2,0);

\draw[thick,black,fill=white] (3,1/2) circle (0.05);
\draw[thick,black,fill=white] (3,-1/2) circle (0.05);
\draw[thick,red,fill=red] (7/2,0) circle (0.05);
\end{tikzpicture}
\\
\begin{tikzpicture}[baseline=-.5ex]
\draw [dashed] (0,0) circle [radius=1];
\clip (0,0) circle (1);
\draw [yellow, line cap=round, line width=5, opacity=0.5] (-1/2,{sqrt(3)/2}) -- (1/2,0)--(1,0);
\draw [yellow, line cap=round, line width=5, opacity=0.5] (-1/2,{-sqrt(3)/2}) -- (1/2,0);
\draw [blue, thick] ({-sqrt(3)/2},1/2)--(-1/2,0);
\draw [blue, thick] ({-sqrt(3)/2},-1/2)--(-1/2,0);
\draw [blue, thick] ({sqrt(3)/2},1/2)--(1/2,0);
\draw [blue, thick] ({sqrt(3)/2},-1/2)--(1/2,0);
\draw [blue, thick] (-1/2,0)--(1/2,0);

\draw [red, thick] (-1/2,{sqrt(3)/2}) -- (1/2,0)--(1,0);
\draw [red, thick] (-1/2,{-sqrt(3)/2}) -- (1/2,0);

\draw[thick,blue,fill=blue] (-1/2,0) circle (0.05);
\draw[thick,black,fill=white] (1/2,0) circle (0.05);
\end{tikzpicture}
\arrow[leftrightarrow,r,"\Move{II}"]&
\begin{tikzpicture}[baseline=-.5ex]
\draw [dashed] (3,0) circle [radius=1];
\clip (3,0) circle (1);
\draw [yellow, line cap=round, line width=5, opacity=0.5] (5/2,{sqrt(3)/2})--(3,1/2)--(3,1/2)--(3,-1/2) -- (3,-1/2)--(5/2,{-sqrt(3)/2});
\draw [yellow, line cap=round, line width=5, opacity=0.5] (3.5,0)--(4,0);
\draw [blue, thick] ({3-sqrt(3)/2},1/2)--({3+sqrt(3)/2},1/2);
\draw [blue, thick] ({3-sqrt(3)/2},-1/2)--({3+sqrt(3)/2},-1/2);
\draw [blue, thick] (3,1/2)--(3,-1/2);

\draw [red, thick] (5/2,{sqrt(3)/2})--(3,1/2) to[out=-150,in=150] (3,-1/2)--(5/2,{-sqrt(3)/2});
\draw [red, thick] (3,1/2)--(7/2,0) -- (4,0);
\draw [red, thick] (3,-1/2)--(7/2,0);

\draw[thick,black,fill=white] (3,1/2) circle (0.05);
\draw[thick,black,fill=white] (3,-1/2) circle (0.05);
\draw[thick,red,fill=red] (7/2,0) circle (0.05);
\end{tikzpicture}
&
\begin{tikzpicture}[baseline=-.5ex]
\draw [dashed] (0,0) circle [radius=1];
\clip (0,0) circle (1);
\draw [yellow, line cap=round, line width=5, opacity=0.5] (-1/2,{sqrt(3)/2}) -- (1/2,0)--({sqrt(3)/2},-1/2);
\draw [blue, thick] ({-sqrt(3)/2},1/2)--(-1/2,0);
\draw [blue, thick] ({-sqrt(3)/2},-1/2)--(-1/2,0);
\draw [blue, thick] ({sqrt(3)/2},1/2)--(1/2,0);
\draw [blue, thick] ({sqrt(3)/2},-1/2)--(1/2,0);
\draw [blue, thick] (-1/2,0)--(1/2,0);

\draw [red, thick] (-1/2,{sqrt(3)/2}) -- (1/2,0)--(1,0);
\draw [red, thick] (-1/2,{-sqrt(3)/2}) -- (1/2,0);

\draw[thick,blue,fill=blue] (-1/2,0) circle (0.05);
\draw[thick,black,fill=white] (1/2,0) circle (0.05);
\end{tikzpicture}
\arrow[leftrightarrow,r,"\Move{II}"]&
\begin{tikzpicture}[baseline=-.5ex]
\draw [dashed] (3,0) circle [radius=1];
\clip (3,0) circle (1);
\draw [yellow, line cap=round, line width=5, opacity=0.5] (5/2,{sqrt(3)/2})--(3,1/2) to[out=-150,in=150] (3,-1/2)--({3+sqrt(3)/2},-1/2);
\draw [yellow, line cap=round, line width=5, opacity=0.5] (3.5,0)--(3,1/2);
\draw [blue, thick] ({3-sqrt(3)/2},1/2)--({3+sqrt(3)/2},1/2);
\draw [blue, thick] ({3-sqrt(3)/2},-1/2)--({3+sqrt(3)/2},-1/2);
\draw [blue, thick] (3,1/2)--(3,-1/2);

\draw [red, thick] (5/2,{sqrt(3)/2})--(3,1/2) to[out=-150,in=150] (3,-1/2)--(5/2,{-sqrt(3)/2});
\draw [red, thick] (3,1/2)--(7/2,0) -- (4,0);
\draw [red, thick] (3,-1/2)--(7/2,0);

\draw[thick,black,fill=white] (3,1/2) circle (0.05);
\draw[thick,black,fill=white] (3,-1/2) circle (0.05);
\draw[thick,red,fill=red] (7/2,0) circle (0.05);
\end{tikzpicture}
\\
\begin{tikzpicture}[baseline=-.5ex]
\draw [dashed] (0,0) circle [radius=1];
\clip (0,0) circle (1);
\draw [yellow, line cap=round, line width=5, opacity=0.5] ({-sqrt(3)/2},1/2)--(-1/2,0);
\draw [blue, thick] ({-sqrt(3)/2},1/2)--(-1/2,0);
\draw [blue, thick] ({-sqrt(3)/2},-1/2)--(-1/2,0);
\draw [blue, thick] ({sqrt(3)/2},1/2)--(1/2,0);
\draw [blue, thick] ({sqrt(3)/2},-1/2)--(1/2,0);
\draw [blue, thick] (-1/2,0)--(1/2,0);

\draw [red, thick] (-1/2,{sqrt(3)/2}) -- (1/2,0)--(1,0);
\draw [red, thick] (-1/2,{-sqrt(3)/2}) -- (1/2,0);

\draw[thick,blue,fill=blue] (-1/2,0) circle (0.05);
\draw[thick,black,fill=white] (1/2,0) circle (0.05);
\end{tikzpicture}
\arrow[leftrightarrow,r,"\Move{II}"]&
\begin{tikzpicture}[baseline=-.5ex]
\draw [dashed] (3,0) circle [radius=1];
\clip (3,0) circle (1);
\draw [yellow, line cap=round, line width=5, opacity=0.5] ({3-sqrt(3)/2},1/2)--(3,1/2)--(3.5,0);
\draw [blue, thick] ({3-sqrt(3)/2},1/2)--({3+sqrt(3)/2},1/2);
\draw [blue, thick] ({3-sqrt(3)/2},-1/2)--({3+sqrt(3)/2},-1/2);
\draw [blue, thick] (3,1/2)--(3,-1/2);

\draw [red, thick] (5/2,{sqrt(3)/2})--(3,1/2) to[out=-150,in=150] (3,-1/2)--(5/2,{-sqrt(3)/2});
\draw [red, thick] (3,1/2)--(7/2,0) -- (4,0);
\draw [red, thick] (3,-1/2)--(7/2,0);

\draw[thick,black,fill=white] (3,1/2) circle (0.05);
\draw[thick,black,fill=white] (3,-1/2) circle (0.05);
\draw[thick,red,fill=red] (7/2,0) circle (0.05);
\end{tikzpicture}
&
\begin{tikzpicture}[baseline=-.5ex]
\draw [dashed] (0,0) circle [radius=1];
\clip (0,0) circle (1);
\draw [yellow, line cap=round, line width=5, opacity=0.5] ({sqrt(3)/2},1/2) -- (1/2,0)--({sqrt(3)/2},-1/2);
\draw [yellow, line cap=round, line width=5, opacity=0.5] (-1/2,0) -- (1/2,0);
\draw [blue, thick] ({-sqrt(3)/2},1/2)--(-1/2,0);
\draw [blue, thick] ({-sqrt(3)/2},-1/2)--(-1/2,0);
\draw [blue, thick] ({sqrt(3)/2},1/2)--(1/2,0);
\draw [blue, thick] ({sqrt(3)/2},-1/2)--(1/2,0);
\draw [blue, thick] (-1/2,0)--(1/2,0);

\draw [red, thick] (-1/2,{sqrt(3)/2}) -- (1/2,0)--(1,0);
\draw [red, thick] (-1/2,{-sqrt(3)/2}) -- (1/2,0);

\draw[thick,blue,fill=blue] (-1/2,0) circle (0.05);
\draw[thick,black,fill=white] (1/2,0) circle (0.05);
\end{tikzpicture}
\arrow[leftrightarrow,r,"\Move{II}"]&
\begin{tikzpicture}[baseline=-.5ex]
\draw [dashed] (3,0) circle [radius=1];
\clip (3,0) circle (1);
\draw [yellow, line cap=round, line width=5, opacity=0.5] ({3+sqrt(3)/2},1/2)--(3,1/2) to[out=-150,in=150] (3,-1/2)--({3+sqrt(3)/2},-1/2);
\draw [blue, thick] ({3-sqrt(3)/2},1/2)--({3+sqrt(3)/2},1/2);
\draw [blue, thick] ({3-sqrt(3)/2},-1/2)--({3+sqrt(3)/2},-1/2);
\draw [blue, thick] (3,1/2)--(3,-1/2);

\draw [red, thick] (5/2,{sqrt(3)/2})--(3,1/2) to[out=-150,in=150] (3,-1/2)--(5/2,{-sqrt(3)/2});
\draw [red, thick] (3,1/2)--(7/2,0) -- (4,0);
\draw [red, thick] (3,-1/2)--(7/2,0);

\draw[thick,black,fill=white] (3,1/2) circle (0.05);
\draw[thick,black,fill=white] (3,-1/2) circle (0.05);
\draw[thick,red,fill=red] (7/2,0) circle (0.05);
\end{tikzpicture}
\end{tikzcd}
\]
\caption{Cycles under Move~\Move{I} and \Move{II}.}
\label{fig:cycles under moves}
\end{figure}
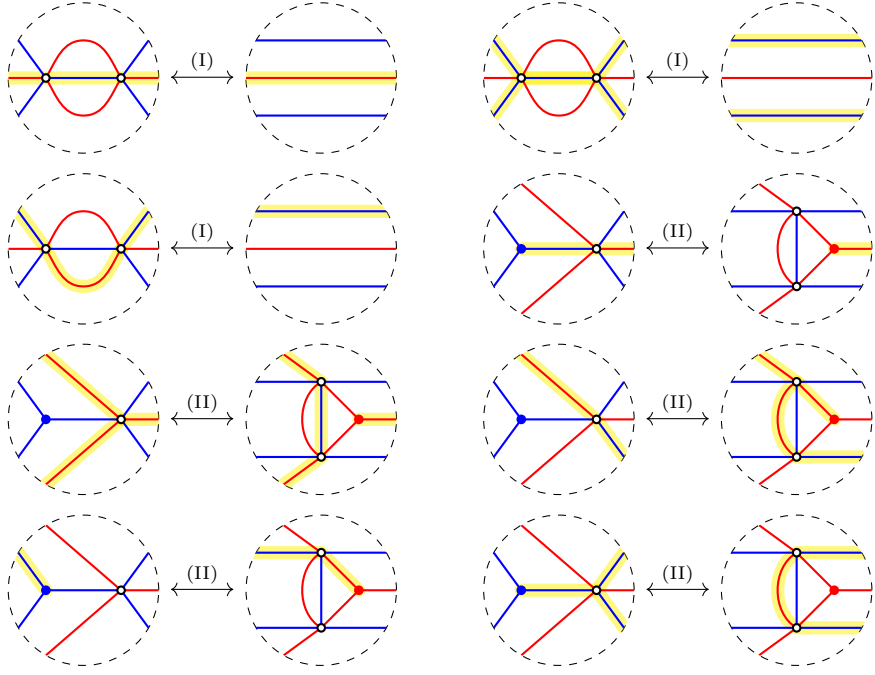

\begin{remark}
It is important to note that not every cycle can be represented by a subgraph. For example, the cycle on the left of the following picture can not be expressed by a subtree but it can be after Move~\Move{I}.
\[
\begin{tikzcd}
\cycle=\begin{tikzpicture}[baseline=-.5ex,scale=0.667]
\draw [dashed] (0,0) circle [radius=1.5];

\draw [blue, thick] ({-1.5*sqrt(2)/2},{1.5*sqrt(2)/2})--({sqrt(3)/2},1/2);
\draw [blue, thick] ({-1.5*sqrt(2)/2},{-1.5*sqrt(2)/2})--({sqrt(3)/2},-1/2);

\draw [blue, thick] ({sqrt(3)/2},1/2)--({sqrt(2)},1/2);
\draw [blue, thick] ({sqrt(3)/2},-1/2)--({sqrt(2)},-1/2);
\draw [blue, thick] ({sqrt(3)/2},1/2)--({sqrt(3)/2},{sqrt(6)/2});
\draw [blue, thick] ({sqrt(3)/2},-1/2)--({sqrt(3)/2},{-sqrt(6)/2});

\draw [red,thick] ({-1.5*2*sqrt(2)/3},{1.5*1/3}) to (-1,0);
\draw [red,thick] ({-1.5*2*sqrt(2)/3},{-1.5*1/3}) to (-1,0);
\draw [red, thick] (-1,0)--(1.5,0);

\draw[thick,red,fill=red] (-1,0) circle (0.05);
\draw[thick,blue,fill=blue] ({sqrt(3)/2},1/2) circle (0.05);
\draw[thick,blue,fill=blue] ({sqrt(3)/2},-1/2) circle (0.05);

\draw[->] (-1.2,0) arc (180:270:2 and 0.8) arc (-90:0:0.3) -- ++(0,0.5);
\draw (-1.2,0) arc (180:90:2 and 0.8) arc (90:0:0.3) -- ++(0,-0.5);
\end{tikzpicture}
\arrow[leftrightarrow, r, "\Move{I}"]&
\begin{tikzpicture}[baseline=-.5ex,scale=0.667]
\begin{scope}
\draw [dashed] (0,0) circle [radius=1.5];
\draw [yellow, line cap=round, line width=5, opacity=0.5] (-1,0) to (0.5,0);
\draw [yellow, line cap=round, line width=5, opacity=0.5] ({sqrt(3)/2},1/2)--(1/2,0);
\draw [yellow, line cap=round, line width=5, opacity=0.5] ({sqrt(3)/2},-1/2)--(1/2,0);

\draw [blue, thick] ({-1.5*sqrt(2)/2},{1.5*sqrt(2)/2})--(-1/2,0);
\draw [blue, thick] ({-1.5*sqrt(2)/2},{-1.5*sqrt(2)/2})--(-1/2,0);
\draw [blue, thick] ({sqrt(3)/2},1/2)--(1/2,0);
\draw [blue, thick] ({sqrt(3)/2},-1/2)--(1/2,0);
\draw [blue, thick] ({sqrt(3)/2},1/2)--({sqrt(2)},1/2);
\draw [blue, thick] ({sqrt(3)/2},-1/2)--({sqrt(2)},-1/2);
\draw [blue, thick] ({sqrt(3)/2},1/2)--({sqrt(3)/2},{sqrt(6)/2});
\draw [blue, thick] ({sqrt(3)/2},-1/2)--({sqrt(3)/2},{-sqrt(6)/2});
\draw [blue, thick] (-1/2,0)--(1/2,0);

\draw [red,thick] ({-1.5*2*sqrt(2)/3},{1.5*1/3}) to (-1,0);
\draw [red,thick] ({-1.5*2*sqrt(2)/3},{-1.5*1/3}) to (-1,0);
\draw [red, thick] (-1,0)--(-1/2,0) to[out=60,in=180] (0,1/2) to[out=0,in=120] (1/2,0)--(1.5,0);
\draw [red, thick] (-1/2,0) to[out=-60,in=180] (0, -1/2) to[out=0, in=-120] (1/2,0); 

\draw[thick,black,fill=white] (-1/2,0) circle (0.05);
\draw[thick,black,fill=white] (1/2,0) circle (0.05);
\draw[thick,red,fill=red] (-1,0) circle (0.05);
\draw[thick,blue,fill=blue] ({sqrt(3)/2},1/2) circle (0.05);
\draw[thick,blue,fill=blue] ({sqrt(3)/2},-1/2) circle (0.05);
\end{scope}
\end{tikzpicture}=\cycle(\sfT)
\end{tikzcd}
\]

On the other hand, there might be a one-cycle having two different subgraph presentations as follows:
\[
\begin{tikzcd}
\begin{tikzpicture}[baseline=-.5ex]
\draw [dashed] (0,0) circle [radius=1];
\clip (0,0) circle (1);
\draw [yellow, line cap=round, line width=5, opacity=0.5] (-1,0) to (1,0);
\draw [blue, thick] ({-sqrt(3)/2},1/2)--(-1/2,0);
\draw [blue, thick] ({-sqrt(3)/2},-1/2)--(-1/2,0);
\draw [blue, thick] ({sqrt(3)/2},1/2)--(1/2,0);
\draw [blue, thick] ({sqrt(3)/2},-1/2)--(1/2,0);
\draw [blue, thick] (-1/2,0)--(1/2,0);
\draw [red, thick] (-1,0)--(-1/2,0) to[out=60,in=180] (0,1/2) to[out=0,in=120] (1/2,0)--(1,0);
\draw [red, thick] (-1/2,0) to[out=-60,in=180] (0, -1/2) to[out=0, in=-120] (1/2,0); 
\draw[thick,black,fill=white] (-1/2,0) circle (0.05);
\draw[thick,black,fill=white] (1/2,0) circle (0.05);
\end{tikzpicture}
\arrow[equal, r, "\sim"]&
\begin{tikzpicture}[baseline=-.5ex]
\draw [dashed] (3,0) circle [radius=1];
\clip (3,0) circle (1);
\draw [yellow, line cap=round, line width=5, opacity=0.5] 
(2,0) -- (2.5,0) to[out=60,in=180] (3,1/2) to[out=0,in=120] (3.5,0)-- (4,0)
(2.5,0) to[out=-60,in=-180] (3,-1/2) to[out=0,in=-120] (3.5,0);
\begin{scope}[xshift=3cm]
\draw [blue, thick] ({-sqrt(3)/2},1/2)--(-1/2,0);
\draw [blue, thick] ({-sqrt(3)/2},-1/2)--(-1/2,0);
\draw [blue, thick] ({sqrt(3)/2},1/2)--(1/2,0);
\draw [blue, thick] ({sqrt(3)/2},-1/2)--(1/2,0);
\draw [blue, thick] (-1/2,0)--(1/2,0);

\draw [red, thick] (-1,0)--(-1/2,0) to[out=60,in=180] (0,1/2) to[out=0,in=120] (1/2,0)--(1,0);
\draw [red, thick] (-1/2,0) to[out=-60,in=180] (0, -1/2) to[out=0, in=-120] (1/2,0); 

\draw[thick,black,fill=white] (-1/2,0) circle (0.05);
\draw[thick,black,fill=white] (1/2,0) circle (0.05);
\end{scope}
\end{tikzpicture}&
\begin{tikzpicture}[baseline=-.5ex]
\draw [dashed] (0,0) circle [radius=1];
\clip (0,0) circle (1);
\draw [yellow, line cap=round, line width=5, opacity=0.5] (-1/2,{sqrt(3)/2}) -- (0,1/2)--(0,-1/2) -- (-1/2,{-sqrt(3)/2});
\draw [blue, thick] ({-sqrt(3)/2},1/2)--({sqrt(3)/2},1/2);
\draw [blue, thick] ({-sqrt(3)/2},-1/2)--({sqrt(3)/2},-1/2);
\draw [blue, thick] (0,1/2)--(0,-1/2);
\draw [red, thick] (-0.5,{sqrt(3)/2})--(0,1/2) to[out=-150,in=150] (0,-1/2)--(-0.5,{-sqrt(3)/2});
\draw [red, thick] (0,1/2)--(0.5,0) -- (1,0);
\draw [red, thick] (0,-1/2)--(0.5,0);
\draw[thick,black,fill=white] (0,1/2) circle (0.05);
\draw[thick,black,fill=white] (0,-1/2) circle (0.05);
\draw[thick,red,fill=red] (0.5,0) circle (0.05);
\end{tikzpicture}
\arrow[equal, r, "\sim"]&
\begin{tikzpicture}[baseline=-.5ex]
\draw [dashed] (3,0) circle [radius=1];
\clip (3,0) circle (1);
\draw [yellow, line cap=round, line width=5, opacity=0.5] (5/2,{sqrt(3)/2})--(3,1/2) to[out=-150,in=150] (3,-0.5) (3,1/2) -- (3.5,0) -- (3,-1/2) (3,-0.5) -- (5/2,{-sqrt(3)/2});
\draw [blue, thick] ({3-sqrt(3)/2},1/2)--({3+sqrt(3)/2},1/2);
\draw [blue, thick] ({3-sqrt(3)/2},-1/2)--({3+sqrt(3)/2},-1/2);
\draw [blue, thick] (3,1/2)--(3,-1/2);
\draw [red, thick] (5/2,{sqrt(3)/2})--(3,1/2) to[out=-150,in=150] (3,-1/2)--(5/2,{-sqrt(3)/2});
\draw [red, thick] (3,1/2)--(7/2,0) -- (4,0);
\draw [red, thick] (3,-1/2)--(7/2,0);
\draw[thick,black,fill=white] (3,1/2) circle (0.05);
\draw[thick,black,fill=white] (3,-1/2) circle (0.05);
\draw[thick,red,fill=red] (7/2,0) circle (0.05);
\end{tikzpicture}
\end{tikzcd}
\]
Therefore, there is a bit subtle issue for picking up nice cycles in a consistent way.
\end{remark}

\begin{definition}\label{def:good cycle}
Let $\ngraph\subset \disk^2$ be an $N$-graph, and $\Legendrian(\ngraph)$ be an induced Legendrian surface in $J^1\disk^2$.
A cycle $[\cycle]\in H_1(\Legendrian(\ngraph))$ is \emph{good} if $[\cycle]=[\cycle(\sfT)]$ for some good $\sfT\subset\ngraph$.

A tuple of linearly independent good cycles $\nbasis=\{[\gamma_i]\}_{i\in I}$ in $H_1(\Legendrian(\ngraph))$ is \emph{good} if for any pair of cycles $[\gamma_i]$ and $[\gamma_j]$, $\ngraph$ can be transformed into $\ngraph'$ via $N$-graph moves so that two cycles $[\cycle_i]$ and $[\cycle_j]$ become $\sfI$-cycles in $H_1(\Legendrian(\ngraph'))$.
\end{definition}

\begin{remark}
Suppose $[\cycle_i]$ and $[\cycle_j]$ be a pair of cycles in a good tuple $\nbasis$ of $H_1(\Legendrian(\ngraph))$.
If $\ngraph$ is free, then by Example~\ref{ex:free N-graph}, there is no bigon in $\ngraph$ up to $N$-graph moves. So, under this assumption, two corresponding $\sfI$-cycles in Definition~\ref{def:good cycle} intersect at most one trivalent vertex.
\end{remark}

\begin{definition}\label{def:equiv on N-graph and N-basis}
Let $(\ngraph, \nbasis)$ and $(\ngraph', \nbasis')$ be pairs of an $N$-graph and good tuples of one-cycles.
We say that $(\ngraph, \nbasis)$ and $(\ngraph', \nbasis')$ are \emph{equivalent}
if there is a sequence of moves between $\ngraph$ and $\ngraph'$ inducing moves as depicted in Figure~\ref{fig:cycles under moves} between representatives of cycles in $\nbasis$ and $\nbasis'$.
We denote the equivalent class of $(\ngraph, \nbasis)$ by $[\ngraph, \nbasis]$.
\end{definition}
\begin{remark}
For two equivalent pairs $(\ngraph, \nbasis)$ and $(\ngraph',\nbasis')$, all moves between $N$-graphs $\ngraph$ and $\ngraph'$ can be realized by isotopies betwen Legendrian weaves $\Legendrian(\ngraph)$ and $\Legendrian(\ngraph')$ by Theorem~1.1 in \cite{CZ2020}, and then the induced isomorphism $H_1(\Legendrian(\ngraph))\cong H_1(\Legendrian(\ngraph'))$ identifies $\nbasis$ with $\nbasis'$.
\end{remark}

\subsection{\texorpdfstring{$N$}{N}-graphs and flag moduli space}
We recall from \cite{CZ2020} a central algebraic invariant $\mathcal{M}(\ngraph)$ of the Legendrian weave $\Legendrian(\ngraph)$.
The main idea is to consider moduli spaces of constructible sheaves associated to $\Legendrian(\ngraph)$.
To introduce a legible model for such constructible sheaves, let us consider a full flag, i.e. a nested sequence of subspaces in $\C^N$;
\[
\cF^\bullet \in \{(\cF^i)_{i=0}^N \mid \dim \cF^i=i,\  \cF^j\subset \cF^{j+1}, 1\leq j\leq N-1,\  \cF^N=\C^N \}.
\]
\begin{definition}\cite{CZ2020}\label{def:flag moduli space}
Let $\ngraph\subset \disk^2$ be an $N$-graph. Let $\{F_i\}_{i\in I}$ be a set of closures of connected components of $\disk^2\setminus \ngraph$, call each closure a \emph{face}. 
The \emph{framed flag moduli space} $\widetilde \cM(\ngraph)$ is a collection of \emph{flags} $\cF_{\Legendrian(\ngraph)}=\{\cF^\bullet(F_i)\}_{i\in I}$ in $\C^N$ satisfying the following: 

Let $F_1,F_2$ be a pair of faces sharing an edge in $\ngraph_i$. Then the corresponding flags $\cF^\bullet(F_1),\cF^\bullet(F_2)$ satisfy
\begin{align}\label{equation:flag conditions}
\begin{cases}
\cF^j(F_1)=\cF^j(F_2), \qquad 0\leq j \leq N, \quad j\neq i;\\
\cF^i(F_1)\neq \cF^i(F_2).
\end{cases}
\end{align}

Let us consider the general linear group $\operatorname{GL}_N$ action on $\cM(\ngraph)$ by acting on all flags at once. The \emph{flag moduli space} of the $N$-graph $\ngraph$ is defined by the quotient space (a stack, in general)
\[
\cM(\ngraph):=\widetilde{\cM}(\ngraph)/\operatorname{GL}_N.
\] 
\end{definition}

Let $\Sh(\disk^2 \times \R)$ be the category of \emph{constructible sheaves} on $\disk^2\times \R$. Under the identification $J^1\disk^2\cong T^{\infty,-}(\disk^2\times \R)$, an $N$-graph $\ngraph\subset \disk^2$ gives a Legendrian 
\[
\Legendrian(\ngraph)\subset J^1 \disk^2 
\cong T^{\infty,-}(\disk^2\times \R) 
\subset T^\infty(\disk^2\times \R).
\]
This can be used to define a Legendrian isotopy invariant $\Sh_{\Legendrian(\ngraph)}^1(\disk^2 \times \R)_{0}$ of $\Sh(\disk^2 \times \R)$ consisting of constructible sheaves 
\begin{itemize}
\item whose singular support at infinity lies in $\Legendrian(\ngraph)
\subset T^\infty(\disk^2\times \R)$,
\item whose microlocal rank is one, and
\item which are zero near $\disk^2\times \{-\infty\}$.
\end{itemize} 
See \cite{GKS2012,STZ2017} for the detail.

\begin{theorem}[{\cite[Theorem~5.3]{CZ2020}}]
The flag moduli space $\cM(\ngraph)$ is isomorphic to $\Sh_{\Legendrian(\ngraph)}^1(\disk^2\times\R)_0$. Hence $\cM(\ngraph)$ is a Legendrian isotopy invariant of $\Legendrian(\ngraph)$.
\end{theorem}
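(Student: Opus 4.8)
The plan is to follow the strategy of \cite{CZ2020}: identify $\cM(\ngraph)$ with a moduli of constructible sheaves and then deduce Legendrian isotopy invariance from the Guillermou--Kashiwara--Schapira sheaf quantization theorem \cite{GKS2012}. The real content is the isomorphism $\cM(\ngraph)\cong \Sh^1_{\Legendrian(\ngraph)}(\disk^2\times\R)_0$; once this is established, the last sentence is immediate, since by \cite{GKS2012} a compactly supported Legendrian isotopy of $\Legendrian(\ngraph)$ lifts to an equivalence of the categories $\Sh^1_{\Legendrian(\ngraph)}(\disk^2\times\R)_0$ preserving microlocal rank and the vanishing condition near $\disk^2\times\{-\infty\}$, so $\cM(\ngraph)$ is unchanged.

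First I would set up the stratification. The wavefront $\wavefront(\ngraph)\subset \disk^2\times\R$ decomposes as $\wavefront(\ngraph)=\bigcup_{i=1}^N \wavefront_i$, and together with a large negative level it determines a Whitney stratification of $\disk^2\times\R$ whose open top stratum is the union of the regions $F_i\times(\text{interval})$ cut out by the sheets, the $F_i$ being the faces of $\ngraph$. A sheaf $\mathcal{F}\in\Sh^1_{\Legendrian(\ngraph)}(\disk^2\times\R)_0$ is constructible for this stratification, vanishes near $\disk^2\times\{-\infty\}$, and has microlocal rank one along $\Legendrian(\ngraph)\cong T^{\infty,-}(\disk^2\times\R)$. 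Over a face $F_i$ the stalk is then constant between consecutive sheets and increases by a one-dimensional space each time a sheet $\wavefront_j$ is crossed from below, stabilizing as $z\to+\infty$ to a fixed $N$-dimensional space. Recording the resulting (decreasing, by codimension) flag $\cF^\bullet(F_i)$ of these stalks at each face is the dictionary between sheaves and flags.

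Next I would carry out the local analysis over the four chart types of Figure~\ref{fig:local_chart_3-graphs}. Over a Type~1 chart there is no condition. Over a Type~2 chart, the $\dynA_1^2$-germ along sheets $i,i+1$, two faces $F_1,F_2$ meet along an edge of $\ngraph_i$, and the microlocal rank one condition at this cusp edge forces exactly $\cF^j(F_1)=\cF^j(F_2)$ for $j\neq i$ and $\cF^i(F_1)\neq\cF^i(F_2)$, which is precisely \eqref{equation:flag conditions}. Over a Type~3 chart ($\dynD_4^-$-germ, monochromatic trivalent vertex) and a Type~4 chart ($\dynA_1^3$-germ, bichromatic hexagonal point) one computes the corresponding local sheaf category explicitly and checks that it is equivalent to the space of flags on the surrounding faces subject only to \eqref{equation:flag conditions} on each incident edge, with \emph{no extra relations}: the trivalent vertex imposes that the three neighboring faces pairwise differ in the same step $i$, and the hexagonal point imposes the genericity of three consecutive flag steps. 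These are finite linear-algebra verifications and are where most of the care is needed.

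Finally I would glue. Choosing an $\ngraph$-compatible cover $\{U_i\}$, Steps above identify $\Sh^1_{\Legendrian(\ngraph)}(U_i\times\R)_0$ with local flag data, and the identifications agree on overlaps (Type~1 charts); by the descent property of constructible sheaves these assemble to an equivalence between $\Sh^1_{\Legendrian(\ngraph)}(\disk^2\times\R)_0$ and the global framed flag data, i.e.\ $\widetilde{\cM}(\ngraph)$ together with the stabilized frame at $+\infty$. Quotienting by the $\operatorname{GL}_N$ that changes this global frame yields $\cM(\ngraph)=\widetilde{\cM}(\ngraph)/\operatorname{GL}_N$, giving the asserted isomorphism of (stacky) moduli. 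I expect the main obstacle to be this local-to-global gluing — specifically, verifying at the $\dynD_4^-$ and $\dynA_1^3$ germs that the local sheaf moduli has no hidden parameters or relations beyond those coming from the edges — together with keeping the orientation conventions (bottom-to-top sheet labeling, $T^{\infty,-}$, vanishing near $-\infty$) consistent so that the flag is genuinely monotone and the microlocal rank one condition is read off correctly at every codimension one and two stratum of $\wavefront(\ngraph)$.
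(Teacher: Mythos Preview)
The paper does not give its own proof of this statement: it is stated purely as a citation of \cite[Theorem~5.3]{CZ2020} and is followed only by a remark that the result in \cite{CZ2020} is stated for a general connected surface, not just $\disk^2$. So there is no in-paper argument to compare your proposal against.

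That said, your outline is a faithful sketch of the Casals--Zaslow approach: local computation of the sheaf category over each of the four chart types (trivial, $\dynA_1^2$, $\dynD_4^-$, $\dynA_1^3$), translation of the microlocal rank one condition into the flag conditions \eqref{equation:flag conditions}, gluing over a $\ngraph$-compatible cover, and finally invoking \cite{GKS2012} for Legendrian isotopy invariance. Your identification of the $\dynD_4^-$ and $\dynA_1^3$ local checks as the nontrivial steps is correct. For the purposes of this paper, however, all of this is treated as a black box imported from \cite{CZ2020}.
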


\begin{remark}
Indeed, the actual theorem is about a connected surface, not only for $\disk^2$.
\end{remark}

Let $\legendrian=\legendrian_\beta$ be a Legendrian in $J^1\sphere^1$, which gives us an $(N-1)$-tuple of points $X=(X_1,\dots, X_{N-1})$  in $\sphere^1$ which given by the alphabet $\sigma_1,\dots,\sigma_{N-1}$ of the braid word $\beta$.
Let $\{f_j\}_{j\in J}$ be the set of closures of connected components of $\sphere^1\setminus X$.
The flags $\flags=\{\cF^\bullet(f_j)\}_{j\in J}$ in $\C^N$ satisfying exactly the same conditions in \eqref{equation:flag conditions} will be called simply by \emph{flags on $\legendrian$}.
It is well known that the moduli space $\cM(X)$ of such flags $\flags$ up to $\operatorname{GL}_N$ is isomorphic to $\Sh_\lambda^1(\sphere^1\times \R)_0$ which is a Legendrian isotopy invariant, see \cite[Theorem 1.1]{STZ2017}.

\begin{definition}\label{def:good N-graph}
Let $\ngraph\subset \disk^2$ be an $N$-graph, and let $\flags$ be flags 
adapted to $\legendrian\subset J^1\boundary\disk^2$ given by $\boundary\ngraph$.
An $N$-graph $\ngraph$ is \emph{good}, if the flags $\flags$ uniquely determine flags $\cF_{\Legendrian(\ngraph)}$ in Definition~\ref{def:flag moduli space}.
\end{definition}
Note that $\ngraph(a,b,c)$ in the introduction is good in an obvious way. 
If an $N$-graph $\ngraph\subset \disk^2$ is good and $[\ngraph]=[\ngraph']$, then $\ngraph'$ is also good.

\subsection{\texorpdfstring{$N$}{N}-graphs and seeds}\label{sec:N-graphs and seeds}

Let $\ngraph\subset \disk^2$ be an $N$-graph, and $\nbasis=\{[\gamma_i]\}_{i\in [n]}\subset H_1(\Legendrian(\ngraph))$ be a good tuple of cycles.
For two cycles $[\cycle_i]$ and $[\cycle_j]$, let $i([\cycle_i], [\cycle_j])$ be the algebraic intersection number in $H_1(\Legendrian(\ngraph))$ which can be computed explicitly as follows:
without loss of generality, we may assume that both $[\cycle_i]$ and $[\cycle_j]$ are $\sfI$-cycles represented by $\cycle(\sfI(e))$ and $\cycle(\sfI(e'))$ for some edges $e$ and $e'$ in $\ngraph$, respectively.
Suppose that $e$ and $e'$ intersect at the vertex in $\ngraph$.
Then two representatives of $\cycle_i$ and $\cycle_j$ look locally as depicted in Figure~\ref{fig:I-cycle with orientation and intersections} and their intersection is defined to be $\pm1$ by using the clockwise rotation convention.

\begin{figure}[ht]
\subfigure[Positively intersecting $\sfI$-cycles]{
$
\def\arraycolsep{1pc}
\begin{array}{cccc}
\begin{tikzpicture}[baseline=-.5ex]
\draw [dashed] (0,0) circle [radius=1];
\clip (0,0) circle (1);
\draw [blue, thick] (0,0)--({cos(30)},{sin(30)}) node[above, midway, rotate=30] {\color{black}$e'$};
\draw [blue, thick] (0,0)--({cos(-90)},{sin(-90)}) node[left, midway] {\color{black}$e$};
\draw [blue, thick] (0,0)--({cos(150)},{sin(150)});
\draw[thick,blue,fill=blue] (0,0) circle (0.05);
\end{tikzpicture}
&
\begin{tikzpicture}[baseline=-.5ex]
\draw [dashed] (0,0) circle [radius=1];
\clip (0,0) circle (1);
\draw [green, line cap=round, line width=5, opacity=0.5] (0,0) to (0,-1);
\draw [yellow, line cap=round, line width=5, opacity=0.5] (0,0) to ({cos(30)},{sin(30)});
\draw [blue, thick] (0,0)--({cos(30)},{sin(30)});
\draw [blue, thick] (0,0)--({cos(-90)},{sin(-90)});
\draw [blue, thick] (0,0)--({cos(150)},{sin(150)});
\draw[thick,blue,fill=blue] (0,0) circle (0.05);
\draw [green!50!black,->] (0.3,-1) -- node[midway,right] {\color{black}$\cycle_i$} (0.3, 0) arc (0:180:0.3 and 0.5) -- (-0.3,-0.9);
\begin{scope}[rotate=120]
\draw [orange!90!yellow,->] (0.2,-1) -- node[midway,above] {\color{black}$\cycle_j$} (0.2, 0) arc (0:180:0.2 and 0.5) -- (-0.2,-0.9);
\end{scope}
\draw [fill] (-12:0.3) circle (1pt);
\end{tikzpicture}
&
\begin{tikzpicture}[baseline=-.5ex]
\begin{scope}
\draw [green!50!black,->] (90:-0.5) -- (90:0.7) node[left] {\color{black}$\cycle_i$};
\draw [orange!90!yellow,->] (30:-0.5) -- (30:0.7) node[below right] {\color{black}$\cycle_j$};
\draw [fill] (0,0) circle (1pt);
\draw [->] (90:0.3) arc (90:30:0.3);
\draw (0,0) node[below right] {$(+)$};
\end{scope}
\end{tikzpicture}
&
\begin{tikzpicture}[baseline=-.5ex]
\tikzstyle{state}=[draw, circle, inner sep = 0.07cm]
\tikzset{every node/.style={scale=0.7}}    
\node[state, label=above:{$1$}] (1) at (3,0) {};
\node[state, label=above:{$2$}] (2) [right = of 1] {};
\node[ynode] at (2) {};
\node[gnode] at (1) {};
\draw[->] (1)--(2);
\end{tikzpicture}
\end{array}
$
}
\subfigure[Negatively intersecting $\sfI$-cycles]{
$
\def\arraycolsep{1pc}
\begin{array}{cccc}
\begin{tikzpicture}[baseline=-.5ex]
\draw [dashed] (0,0) circle [radius=1];
\clip (0,0) circle (1);
\draw [blue, thick] (0,0)--({cos(30)},{sin(30)});
\draw [blue, thick] (0,0)--({cos(-90)},{sin(-90)}) node[left, midway] {\color{black}$e$};
\draw [blue, thick] (0,0)--({cos(150)},{sin(150)}) node[above, midway, rotate=-30] {\color{black}$e'$};
\draw[thick,blue,fill=blue] (0,0) circle (0.05);
\end{tikzpicture}
&
\begin{tikzpicture}[baseline=-.5ex]
\draw [dashed] (0,0) circle [radius=1];
\clip (0,0) circle (1);
\draw [green, line cap=round, line width=5, opacity=0.5] (0,0) to (0,-1);
\draw [yellow, line cap=round, line width=5, opacity=0.5] (0,0) to ({cos(30)},{sin(30)});
\draw [blue, thick] (0,0)--({cos(30)},{sin(30)});
\draw [blue, thick] (0,0)--({cos(-90)},{sin(-90)});
\draw [blue, thick] (0,0)--({cos(150)},{sin(150)});
\draw[thick,blue,fill=blue] (0,0) circle (0.05);
\draw [green!50!black,->] (0.3,-1) -- node[midway,right] {\color{black}$\cycle_i$} (0.3, 0) arc (0:180:0.3 and 0.5) -- (-0.3,-0.9);
\begin{scope}[rotate=-120]
\draw [orange!90!yellow,->] (0.2,-1) -- node[midway,below] {\color{black}$\cycle_j$} (0.2, 0) arc (0:180:0.2 and 0.5) -- (-0.2,-0.9);
\end{scope}
\draw [fill] (-168:0.3) circle (1pt);
\end{tikzpicture}
&
\begin{tikzpicture}[baseline=-.5ex]
\begin{scope}
\draw [green!50!black,->] (-90:-0.5) -- (-90:0.7) node[left] {\color{black}$\cycle_i$};
\draw [orange!90!yellow,->] (-30:-0.5) -- (-30:0.7) node[below right] {\color{black}$\cycle_j$};
\draw [fill] (0,0) circle (1pt);
\draw [->] (-90:0.3) arc (-90:-30:0.3);
\draw (0,0) node[above right] {$(-)$};
\end{scope}
\end{tikzpicture}
&
\begin{tikzpicture}[baseline=-.5ex]
\tikzstyle{state}=[draw, circle, inner sep = 0.07cm]
\tikzset{every node/.style={scale=0.7}}    
\node[state, label=above:{$1$}] (1) at (3,0) {};
\node[state, label=above:{$2$}] (2) [right = of 1] {};
\node[ynode] at (2) {};
\node[gnode] at (1) {};
\draw[->] (2)--(1);
\end{tikzpicture}
\end{array}
$
}
\caption{$\sfI$-cycles with intersections.}
\label{fig:I-cycle with orientation and intersections}
\end{figure}
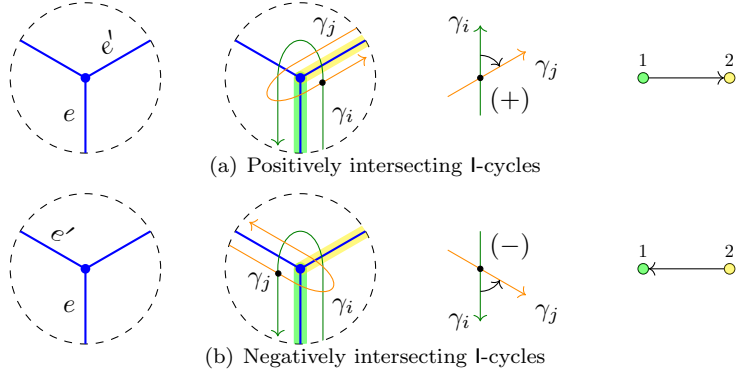

\begin{definition}
For each a pair $(\ngraph, \nbasis)$ of an $N$-graph and a good tuple of cycles, we define a quiver $\quiver=\quiver(\ngraph,\nbasis)$ as follows:
\begin{enumerate}
\item the set of vertices is $[n]$ where $\nbasis=\{[\cycle_i]\mid i\in[n]\}\subset H_1(\Legendrian(\ngraph))$, and
\item the $(i,j)$-entry $b_{i,j}$ for $\qbasis(\quiver)=(b_{i,j})$ is the algebraic intersection number between $[\cycle_i]$ and $[\cycle_j]$
\[
b_{i,j} = i([\cycle_i], [\cycle_j]).
\]
\end{enumerate}
\end{definition}

In order to assign a cluster variable to each one-cycle.
Let us review  the microlocal monodromy functor from \cite{STZ2017}
\[
\mmon:\Sh_\Legendrian^\bullet \to\Loc^\bullet(\Legendrian).
\]
In our case, this functor sends microlocal rank-one sheaves $\cF_{\Legendrian(\ngraph)} \in \Sh_{\Legendrian(\ngraph)}^1(\disk^2\times \R)_0$, or equivalently, flags $\{\cF^\bullet(F_i)\}_{i \in I}\in\cM(\ngraph)$ to rank-one local systems $\mmon(\cF_{\Legendrian(\ngraph)})$ on the Legendrian surface $\Legendrian(\ngraph)$. 
Then (cluster) variables $\bfx$ for the triple $(\ngraph, \nbasis, \cF_{\Legendrian(\ngraph)})$ are defined by
\[
\bfx=\left(
\mmon(\cF_{\Legendrian(\ngraph)})([\cycle_1]),
\dots,
\mmon(\cF_{\Legendrian(\ngraph)})([\cycle_n])\right).
\]
Let us denote the above assignment by 
\[
\Psi(\ngraph, \nbasis, \cF_{\Legendrian(\ngraph)})=(\bfx(\Legendrian(\ngraph), \nbasis, \cF_{\Legendrian(\ngraph)}),\quiver(\Legendrian(\ngraph),\nbasis)).
\]
By the Legendrian isotopy invariance of $\Sh_{\Legendrian(\ngraph)}^1(\disk^2\times \R)_0$ in \cite{GKS2012}, and the functorial property of the microlocal monodromy functor $\mmon$ \cite{STZ2017}, the assignment $\Psi$ is well-defined up to isotopy of $\Legendrian(\ngraph)$. That is, if two triples $(\Legendrian(\ngraph),\nbasis, \cF_{\Legendrian(\ngraph)})$ and $(\Legendrian(\ngraph'),\nbasis', \cF_{\Legendrian(\ngraph')})$ are Legendrian isotopic, then they give us the same seed via $\Psi$.

Especially when an $N$-graph $\ngraph$ is good, see Definition~\ref{def:good N-graph}, $\cF_{\Legendrian(\ngraph)}$ is determined by the flags $\flags\in \Sh_{\legendrian}^1(\boundary\disk^2\times\R)$ at the boundary, where the Legendrian link $\legendrian$ is given by $\boundary\ngraph$.
So we have
\begin{theorem}\cite[\S7.2.1]{CZ2020}\label{thm:N-graph to seed}
Let $\ngraph\subset \disk^2$ be a good $N$-graph with a good tuple $\nbasis$ of cycles in $H_1(\Legendrian(\ngraph))$, and with flags $\flags$ on $\legendrian\subset J^1\sphere^1$ at the boundary. Then the assignment $\Psi$ to a seed in a cluster structure
\[
\Psi(\ngraph,\nbasis,\flags)= (\bfx(\Legendrian(\ngraph),\nbasis,\flags),\quiver(\Legendrian(\ngraph),\nbasis))
\]
is well-defined up to Legendrian isotopy.
\end{theorem}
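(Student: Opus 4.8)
The plan is to bootstrap from the Legendrian-isotopy invariance of the intermediate assignment $\Psi(\ngraph,\nbasis,\cF_{\Legendrian(\ngraph)})$ (which was already recorded above as a consequence of \cite{GKS2012} and \cite{STZ2017}) to the boundary-flag version $\Psi(\ngraph,\nbasis,\flags)$, and then to check that the latter depends only on the equivalence class $[\ngraph,\nbasis]$ of Definition~\ref{def:equiv on N-graph and N-basis}. The first preliminary step is to observe that every component of $\Psi$ factors through homology: a rank-one local system on $\Legendrian(\ngraph)$ has abelian monodromy, so $\mmon(\cF_{\Legendrian(\ngraph)})$ is a homomorphism $H_1(\Legendrian(\ngraph))\to\C^\times$ and $x_i=\mmon(\cF_{\Legendrian(\ngraph)})([\cycle_i])$ depends only on the class $[\cycle_i]$, not on a chosen good-subgraph representative; likewise $b_{i,j}=i([\cycle_i],[\cycle_j])$ is an algebraic intersection number in $H_1(\Legendrian(\ngraph))$, hence intrinsic, the local $\pm1$ rule of Figure~\ref{fig:I-cycle with orientation and intersections} being merely a recipe for computing this pairing once the pair has been brought to $\sfI$-cycle form (possible because $\nbasis$ is good). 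Thus $\Psi(\ngraph,\nbasis,\cF_{\Legendrian(\ngraph)})$ is determined by the triple $(\Legendrian(\ngraph),\{[\cycle_i]\}_i,\cF_{\Legendrian(\ngraph)})$.

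Next I would assemble the invariance statement. A Legendrian isotopy $\Legendrian(\ngraph)\rightsquigarrow\Legendrian(\ngraph')$ supported in the interior induces, by~\cite{GKS2012}, an equivalence $\Sh_{\Legendrian(\ngraph)}^1(\disk^2\times\R)_0\simeq\Sh_{\Legendrian(\ngraph')}^1(\disk^2\times\R)_0$ which is the identity near $\boundary\disk^2$, hence an isomorphism $\cM(\ngraph)\cong\cM(\ngraph')$ restricting to the identity on boundary flags, and it induces an isomorphism $\phi\colon H_1(\Legendrian(\ngraph))\xrightarrow{\sim}H_1(\Legendrian(\ngraph'))$ preserving the intersection pairing. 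By the functoriality of the microlocal monodromy functor~\cite{STZ2017}, $\mmon$ intertwines these data: if $\cF'_{\Legendrian(\ngraph')}$ corresponds to $\cF_{\Legendrian(\ngraph)}$ under the equivalence, then $\mmon(\cF'_{\Legendrian(\ngraph')})\circ\phi=\mmon(\cF_{\Legendrian(\ngraph)})$. Now if $(\ngraph,\nbasis)$ and $(\ngraph',\nbasis')$ are equivalent in the sense of Definition~\ref{def:equiv on N-graph and N-basis}, the connecting $N$-graph moves track the chosen representatives as in Figure~\ref{fig:cycles under moves}, so by the Remark following that definition $\phi([\cycle_i])=[\cycle_i']$ for every $i$. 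Combining the three facts, $x_i'=\mmon(\cF'_{\Legendrian(\ngraph')})([\cycle_i'])=\mmon(\cF_{\Legendrian(\ngraph)})([\cycle_i])=x_i$ and $b_{i,j}'=i([\cycle_i'],[\cycle_j'])=i([\cycle_i],[\cycle_j])=b_{i,j}$; hence the two seeds coincide.

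To finish, I would pass to boundary flags. Since $N$-graph moves are supported in $\mathring\disk^2$, they fix $\boundary\ngraph$ and the flags $\flags$ on $\legendrian$; goodness of $N$-graphs is preserved under moves, and the equivalence above restricts to the identity near the boundary, so the unique flag extension $\cF_{\Legendrian(\ngraph)}$ of $\flags$ provided by Definition~\ref{def:good N-graph} corresponds under $\phi$ precisely to the unique extension $\cF_{\Legendrian(\ngraph')}$ of the same $\flags$. Applying the previous paragraph with these sheaves gives $\Psi(\ngraph,\nbasis,\flags)=\Psi(\ngraph',\nbasis',\flags)$ whenever $[\ngraph,\nbasis]=[\ngraph',\nbasis']$, which is the asserted well-definedness up to Legendrian isotopy.

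The main obstacle is the step claiming that the homology isomorphism $\phi$ attached to a sequence of moves carries the good tuple $\nbasis$ onto $\nbasis'$. The categorical inputs (the $\Sh$-invariance of \cite{GKS2012}, the functoriality of $\mmon$ of \cite{STZ2017}) are quoted, and the homological bookkeeping of intersection numbers is formal; the real content is verifying that the local cycle transformations of Figure~\ref{fig:cycles under moves} under Moves~\Move{I} and~\Move{II} (with Move~\Move{III} reduced to these, and the swallowtail moves of Figure~\ref{fig:move7-12} treated analogously) are mutually consistent — independent of the chosen $\sfI$-cycle presentations, independent of the order of moves, and compatible with the subtlety, flagged in the Remark after Figure~\ref{fig:cycles under moves}, that some one-cycles are not representable by subgraphs while others admit several such presentations. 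This is exactly the situation that Definition~\ref{def:equiv on N-graph and N-basis} is engineered to encapsulate, so in this proof it is invoked rather than reproved; a fully detailed argument would check the figure case by case.
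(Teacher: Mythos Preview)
Your proposal is correct and follows essentially the same approach as the paper: the paper's argument is the two paragraphs immediately preceding the theorem statement, which invoke the Legendrian-isotopy invariance of $\Sh_{\Legendrian(\ngraph)}^1(\disk^2\times\R)_0$ from \cite{GKS2012} and the functoriality of $\mmon$ from \cite{STZ2017}, and then use goodness of $\ngraph$ to replace $\cF_{\Legendrian(\ngraph)}$ by the boundary flags $\flags$. Your write-up is a more careful unpacking of exactly these steps (factoring through homology, tracking cycles under moves via Definition~\ref{def:equiv on N-graph and N-basis}, preservation of goodness), but the structure and the cited inputs are identical.
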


As a corollary, the seed $\Psi(\ngraph, \nbasis, \flags)$ can be used to distinguish a pair of Legendrian surfaces and hence, by Lemma~\ref{lem:legendrian and lagrangian}, a pair of Lagrangian fillings.

\begin{corollary}\label{corollary:distinct seeds imples distinct fillings}
As in the above setup,
if two triples $(\ngraph, \nbasis, \flags)$, $(\ngraph', \nbasis', \flags)$ with the same boundary condition define different seeds, then two induced Lagrangian fillings $\pi\circ\iota(\Legendrian(\ngraph))$, $\pi\circ\iota(\Legendrian(\ngraph'))$ bounding $\iota(\legendrian)$ are not exact Lagrangian isotopic to each other.
\end{corollary}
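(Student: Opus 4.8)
The plan is to prove the contrapositive: if the two Lagrangian fillings are exact Lagrangian isotopic relative to the boundary, then the two triples produce the same seed. This is a formal concatenation of Lemma~\ref{lem:legendrian and lagrangian} with the Legendrian-isotopy invariance of $\Psi$ recorded in Theorem~\ref{thm:N-graph to seed}.

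First I would pass from Lagrangian data to Legendrian data. Assume $\pi\circ\iota(\Legendrian(\ngraph))$ and $\pi\circ\iota(\Legendrian(\ngraph'))$ are exact Lagrangian isotopic relative to the boundary inside $(\disk^4,\omega_{\mathrm{st}})$. Since the $N$-graphs in the setup are free, $\Legendrian(\ngraph)$ and $\Legendrian(\ngraph')$ are Legendrian surfaces in $J^1\disk^2$ without interior Reeb chords bounding the common link $\legendrian$, so Lemma~\ref{lem:legendrian and lagrangian} applies and yields a Legendrian isotopy $\Legendrian(\ngraph)\simeq\Legendrian(\ngraph')$ relative to the boundary.

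Next I would invoke invariance. A Legendrian isotopy relative to the boundary induces, through the sheaf quantization of~\cite{GKS2012}, an equivalence $\Sh^1_{\Legendrian(\ngraph)}(\disk^2\times\R)_0\simeq\Sh^1_{\Legendrian(\ngraph')}(\disk^2\times\R)_0$ under which the microlocal monodromy functor $\mmon$ is functorial; as $\ngraph$ and $\ngraph'$ are good (Definition~\ref{def:good N-graph}) and carry the common boundary flags $\flags$, the distinguished objects $\cF_{\Legendrian(\ngraph)}$ and $\cF_{\Legendrian(\ngraph')}$ correspond. The same isotopy gives an isomorphism $H_1(\Legendrian(\ngraph))\cong H_1(\Legendrian(\ngraph'))$ preserving the algebraic intersection pairing, hence matching the quivers, while functoriality of $\mmon$ matches the cluster variables; thus $\Psi(\ngraph,\nbasis,\flags)=\Psi(\ngraph',\nbasis',\flags)$, contradicting the hypothesis. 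Equivalently, Theorem~\ref{thm:N-graph to seed} already packages this: $\Psi$ is well defined up to Legendrian isotopy, and by Lemma~\ref{lem:legendrian and lagrangian} isotopic fillings force isotopic weaves.

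The point requiring care is the identification of the good tuples: I must check that the homology isomorphism induced by the Legendrian isotopy carries $\nbasis$ to $\nbasis'$, so that the two computed seeds are literally the same seed rather than two seeds related by some change of $\Z$-basis of $H_1$. This is the content of the equivalence $[\ngraph,\nbasis]=[\ngraph',\nbasis']$ of Definition~\ref{def:equiv on N-graph and N-basis} and the remark following it; granting it, the corollary is the short composition above. I would also confirm that the hypotheses of Lemma~\ref{lem:legendrian and lagrangian} (freeness of the $N$-graphs, and that the exact Lagrangian isotopy is taken relative to the boundary) are part of the ambient setup, as is implicit in ``As in the above setup''.
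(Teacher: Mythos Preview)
Your proof is correct and follows the paper's intended argument. The paper gives no explicit proof; it simply states the corollary after the sentence ``the seed $\Psi(\ngraph,\nbasis,\flags)$ can be used to distinguish a pair of Legendrian surfaces and hence, by Lemma~\ref{lem:legendrian and lagrangian}, a pair of Lagrangian fillings,'' which is exactly the composition of Lemma~\ref{lem:legendrian and lagrangian} with Theorem~\ref{thm:N-graph to seed} that you spell out.

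Your final paragraph identifying the subtle point about matching the tuples $\nbasis$ and $\nbasis'$ under the induced homology isomorphism is a genuine and worthwhile addition: the paper's statement that $\Psi$ is ``well-defined up to Legendrian isotopy'' does not make explicit how the labeled tuple of cycles is tracked through an arbitrary Legendrian isotopy (as opposed to one coming from a sequence of $N$-graph moves), and you are right to flag this as the step that is implicitly assumed rather than proved.
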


The monodromy $\mmon(\cF_{\Legendrian(\ngraph)})$ along a loop $[\gamma]\in H_1(\Legendrian(\ngraph))$ can be obtained by restricting the constructible sheaf $\cF_{\Legendrian(\ngraph)}$ to a tubular neighborhood of $\gamma$. 
Let us investigate how the monodromy can be computed explicitly in terms of flags $\{\cF^\bullet(F_i)\}_{i \in I}$.

Let us consider an $\sfI$-cycle $[\cycle]$ represented by a loop $\cycle(e)$ for some monochromatic edge $e$ as in Figure~\ref{figure:I-cycle with flags}.
Let us denote four flags corresponding to each region by $F_1,F_2,F_3,F_4$, respectively.
Suppose that $e \subset \ngraph_i$, then by the construction of flag moduli space $\cM(\ngraph)$, a two-dimensional vector space $V:=\cF^{i+1}(F_*)/\cF^{i-1}(F_*)$ is independent of $*=1,2,3,4$. Moreover, $\cF^{i}(F_*)/\cF^{i-1}(F_*)$ defines a one-dimensional subspace $v_*\subset V$ for $*=1,2,3,4$, satisfying
\[
v_1\neq v_2 \neq v_3 \neq v_4 \neq v_1.
\]
Then $\mmon(\cF_{\Legendrian(\ngraph)})$ along the one-cycle $[\gamma(e)]$ is defined by the cross ratio \[
\mmon(\cF_{\Legendrian(\ngraph)})([\cycle])\coloneqq\langle v_1,v_2,v_3,v_4 \rangle=\frac{v_1 \wedge v_2}{v_2 \wedge v_3}\cdot\frac{v_3 \wedge v_4}{v_4\wedge v_1}.
\]

Suppose that local flags $\{F_j\}_{j\in J}$ near the upper $\sfY$-cycle $[\cycle_U]$ look like in Figure~\ref{figure:Y-cycle with flags I}. 
Let $\ngraph_i$ and $\ngraph_{i+1}$ be the $N$-subgraphs in red and blue, respectively.
Then the $3$-dimensional vector space $V=\cF^{i+2}(F_*)/\cF^{i-1}(F_*)$ is independent of $*\in J$. Now regard $a,b,c$ and $A,B,C$ are subspaces of $V$ of dimension one and two, respectively. Then the microlocal monodromy along the $\sfY$-cycle $[\cycle_U]$  becomes
\[
\mmon(\cF_{\Legendrian(\ngraph)})([\cycle_U])\coloneqq\frac{B(a)C(b)A(c)}{B(c)C(a)A(b)}.
\]
Here $B(a)$ can be seen as a paring between the vector $a$ and the covector $B$.

Now consider the lower $\sfY$-cycle $[\cycle_L]$ whose local flags given as in Figure~\ref{figure:Y-cycle with flags II}. We already have seen that the orientation convention of the loop in Figure~\ref{fig:I and Y cycle} for the upper and lower $\sfY$-cycle is different. Then microlocal monodromy along $[\cycle_L]$ follows the opposite orientation and becomes 
\[
\mmon(\cF_{\Legendrian(\ngraph)})([\cycle_L])\coloneqq\frac{C(a)B(c)A(b)}{C(b)B(a)A(c)}.
\]
Here, $B(a)$ is a pairing between the vector $B$ and covector $a$ which is the same as the above.

\begin{figure}[ht]
\begin{tikzcd}
\subfigure[$\sfI$-cycle with flags.\label{figure:I-cycle with flags}]{
\begin{tikzpicture}[baseline=.5ex]
\begin{scope}
\draw [dashed] (0,0) circle [radius=1.5];

\draw [green, line cap=round, line width=5, opacity=0.5] (-1/2,0) to (1/2,0);

\draw [blue, thick] ({-3*sqrt(3)/4},3/4)--(-1/2,0);
\draw [blue, thick] ({-3*sqrt(3)/4},-3/4)--(-1/2,0);
\draw [blue, thick] ({3*sqrt(3)/4},3/4)--(1/2,0);
\draw [blue, thick] ({3*sqrt(3)/4},-3/4)--(1/2,0);
\draw [blue, thick] (-1/2,0)--(1/2,0) node[above, midway] {$\cycle$};

\draw[thick,blue,fill=blue] (-1/2,0) circle (0.05);
\draw[thick,blue,fill=blue] (1/2,0) circle (0.05);

\node at (0,1) {$v_1$};
\node at (-1,0) {$v_2$};
\node at (0,-1) {$v_3$};
\node at (1,0) {$v_4$};

\end{scope}
\end{tikzpicture}}
&
\subfigure[Upper $\sfY$-cycle with flags.\label{figure:Y-cycle with flags I}]{
\begin{tikzpicture}[baseline=.5ex]

\begin{scope}[scale=0.9]
\draw [dashed] (0,0) circle [radius=1.5];

\draw [yellow, line cap=round, line width=5, opacity=0.5] (0,0) to ({cos(0-30)},{sin(0-30)});
\draw [yellow, line cap=round, line width=5, opacity=0.5] (0,0) to ({cos(120-30)},{sin(120-30)});
\draw [yellow, line cap=round, line width=5, opacity=0.5] (0,0) to ({cos(240-30)},{sin(240-30)});

\draw [blue, thick] ({1.5*cos(180-30)},{1.5*sin(180-30)})--(0,0)--({1.5*cos(60-30)},{1.5*sin(60-30)});
\draw [blue, thick] (0,0)--({1.5*cos(60+30)},{-1.5*sin(60+30)});

\draw [red, thick] ({1.5*cos(20-30)},{1.5*sin(20-30)})--({cos(0-30)},{sin(0-30)})--(0,0);
\draw [red, thick] ({1.5*cos(20+30)},{-1.5*sin(20+30)})--({cos(0-30)},{sin(0-30)});

\draw [red, thick] ({1.5*cos(100-30)},{1.5*sin(100-30)})--({cos(120-30)},{sin(120-30)})--(0,0);
\draw [red, thick] ({1.5*cos(140-30)},{1.5*sin(140-30)})--({cos(120-30)},{sin(120-30)});

\draw [red, thick] ({1.5*cos(220-30)},{1.5*sin(220-30)})--({cos(240-30)},{sin(240-30)})--(0,0);
\draw [red, thick] ({1.5*cos(260-30)},{1.5*sin(260-30)})--({cos(240-30)},{sin(240-30)});

\draw[thick,red,fill=red] ({cos(0-30)},{sin(0-30)}) circle (0.05);
\draw[thick,red,fill=red] ({cos(120-30)},{sin(120-30)}) circle (0.05);
\draw[thick,red,fill=red] ({cos(240-30)},{sin(240-30)}) circle (0.05);
\draw[thick,black,fill=white] (0,0) circle (0.05);
\end{scope}
\begin{scope}[yshift=-0.1cm]
\node at (0,1.7) {$(b,B)$};
\node[rotate=60] at ({1.7*cos(-30)},{1.7*sin(-30)}) {$(a,A)$};
\node[rotate=-60] at ({1.7*cos(-150)},{1.7*sin(-150)}) {$(c,C)$};

\node[rotate=100] at ({1.7*cos(10)},{1.7*sin(10)}) {$(a,ab)$};
\node[rotate=-100] at ({1.7*cos(170)},{1.7*sin(170)}) {$(c,bc)$};

\node[rotate=20] at ({1.7*cos(-70)},{1.7*sin(-70)}) {$(a,ac)$};
\node[rotate=-20] at ({1.7*cos(-110)},{1.7*sin(-110)}) {$(c,ac)$};

\node[rotate=-40] at ({1.7*cos(50)},{1.7*sin(50)}) {$(b,ab)$};
\node[rotate=40] at ({1.7*cos(130)},{1.7*sin(130)}) {$(b,bc)$};

\draw[red] (0,0) node[xshift=0.4cm] {$\cycle_U$};
\end{scope}
\end{tikzpicture}}
&
\subfigure[Lower $\sfY$-cycle with flags.\label{figure:Y-cycle with flags II}]{
\begin{tikzpicture}[baseline=.5ex]
\begin{scope}[scale=0.9]
\draw [dashed] (0,0) circle [radius=1.5];

\draw [yellow, line cap=round, line width=5, opacity=0.5] (0,0) to ({cos(0-30)},{sin(0-30)});
\draw [yellow, line cap=round, line width=5, opacity=0.5] (0,0) to ({cos(120-30)},{sin(120-30)});
\draw [yellow, line cap=round, line width=5, opacity=0.5] (0,0) to ({cos(240-30)},{sin(240-30)});

\draw [red, thick] ({1.5*cos(180-30)},{1.5*sin(180-30)})--(0,0)--({1.5*cos(60-30)},{1.5*sin(60-30)});
\draw [red, thick] (0,0)--({1.5*cos(60+30)},{-1.5*sin(60+30)});

\draw [blue, thick] ({1.5*cos(20-30)},{1.5*sin(20-30)})--({cos(0-30)},{sin(0-30)})--(0,0);
\draw [blue, thick] ({1.5*cos(20+30)},{-1.5*sin(20+30)})--({cos(0-30)},{sin(0-30)});

\draw [blue, thick] ({1.5*cos(100-30)},{1.5*sin(100-30)})--({cos(120-30)},{sin(120-30)})--(0,0);
\draw [blue, thick] ({1.5*cos(140-30)},{1.5*sin(140-30)})--({cos(120-30)},{sin(120-30)});

\draw [blue, thick] ({1.5*cos(220-30)},{1.5*sin(220-30)})--({cos(240-30)},{sin(240-30)})--(0,0);
\draw [blue, thick] ({1.5*cos(260-30)},{1.5*sin(260-30)})--({cos(240-30)},{sin(240-30)});

\draw[thick,blue,fill=blue] ({cos(0-30)},{sin(0-30)}) circle (0.05);
\draw[thick,blue,fill=blue] ({cos(120-30)},{sin(120-30)}) circle (0.05);
\draw[thick,blue,fill=blue] ({cos(240-30)},{sin(240-30)}) circle (0.05);
\draw[thick,black,fill=white] (0,0) circle (0.05);
\end{scope}
\begin{scope}[yshift=-0.1cm]
\node at (0,1.7) {$(b,B)$};
\node[rotate=60] at ({1.7*cos(-30)},{1.7*sin(-30)}) {$(a,A)$};
\node[rotate=-60] at ({1.7*cos(-150)},{1.7*sin(-150)}) {$(c,C)$};

\node[rotate=100] at ({1.7*cos(10)},{1.7*sin(10)}) {\small$(AB,A)$};
\node[rotate=-100] at ({1.7*cos(170)},{1.7*sin(170)}) {\small$(BC,C)$};

\node[rotate=20] at ({1.7*cos(-70)},{1.7*sin(-70)}) {\small$(AC,A)$};
\node[rotate=-20] at ({1.7*cos(-110)},{1.7*sin(-110)}) {\small$(AC,C)$};

\node[rotate=-40] at ({1.7*cos(50)},{1.7*sin(50)}) {\small$(AB,B)$};
\node[rotate=40] at ({1.7*cos(130)},{1.7*sin(130)}) {\small$(BC,B)$};

\draw[blue] (0,0) node[xshift=0.4cm] {$\cycle_L$};
\end{scope}
\end{tikzpicture}}
\end{tikzcd}
\caption{$\sfI$- and $\sfY$-cycles with flags.}
\label{fig:I and Y cycle with flags}
\end{figure}
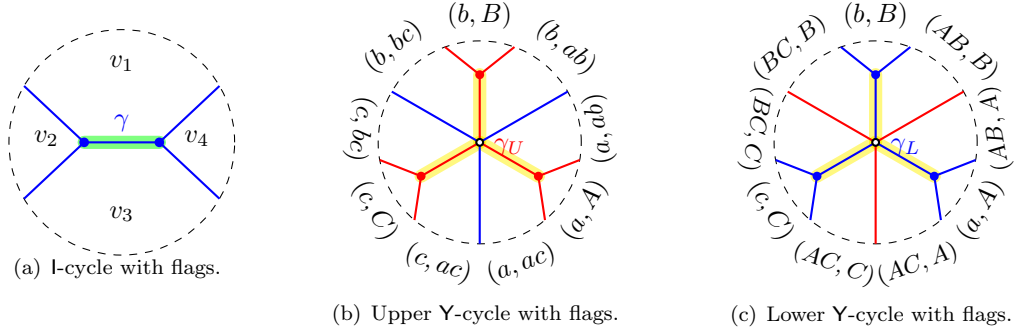

\subsection{Legendrian mutations in \texorpdfstring{$N$}{N}-graphs}

Let us define an operation called (\emph{Legendrian}) \emph{mutation} on $N$-graphs $\ngraph$ which corresponds to a geometric operation on the induced Legendrian surface $\Legendrian(\ngraph)$ that producing a smoothly isotopic but not necessarily Legendrian isotopic to $\Legendrian(\ngraph)$, see \cite[Definition~4.19]{CZ2020}.
Note that operation has an intimate relation with the wall-crossing phenomenon \cite{Aur2007}, Lagrangian surgery \cite{Pol1991}, and quiver (or cluster) mutations \cite{FZ1_2002}.

\begin{definition}\cite{CZ2020}\label{def:legendrian mutation}
Let $\ngraph$ be a (local) $N$-graph and $e\in \ngraph_i\subset \ngraph$ be an edge between two trivalent vertices corresponding to an $\sfI$-cycle $[\cycle]=[\cycle(e)]$. The mutation $\mutation_\cycle(\ngraph)$ of $\ngraph$ along $\cycle$ is obtained by applying the local change depicted in the left of Figure~\ref{fig:Legendrian mutation on N-graphs}.
\end{definition}

\begin{figure}[ht]
\begin{tikzcd}
\subfigure[A mutation along $\sfI$-cycle.\label{figure:I-mutation}]{
\begin{tikzpicture}[baseline=-.5ex,scale=1.2]
\begin{scope}
\draw [dashed] (0,0) circle [radius=1];
\draw [->,yshift=.5ex] (1.25,0) -- (1.75,0) node[midway, above] {$\mutation_\cycle$};
\draw [<-,yshift=-.5ex] (1.25,0) -- (1.75,0) node[midway, below] {$\mutation_{\cycle'}$};

\draw [green, line cap=round, line width=5, opacity=0.5] (-1/2,0) to (1/2,0);

\draw [blue, thick] ({-1*sqrt(3)/2},1*1/2)--(-1/2,0);
\draw [blue, thick] ({-1*sqrt(3)/2},-1*1/2)--(-1/2,0);
\draw [blue, thick] ({1*sqrt(3)/2},1*1/2)--(1/2,0);
\draw [blue, thick] ({1*sqrt(3)/2},-1*1/2)--(1/2,0);
\draw [blue, thick] (-1/2,0)-- node[midway,above] {$\cycle$}(1/2,0);

\draw[thick,blue,fill=blue] (-1/2,0) circle (0.05);
\draw[thick,blue,fill=blue] (1/2,0) circle (0.05);

\end{scope}

\begin{scope}[xshift=3cm]

\draw [dashed] (0,0) circle [radius=1];

\draw [green, line cap=round, line width=5, opacity=0.5] (0,-1/2) to (0,1/2);

\draw [blue, thick] (-1*1/2,{1*sqrt(3)/2}) to (0,1/2) to (0,-1/2)  node[midway,right] {$\cycle'$} to (-1*1/2,-{1*sqrt(3)/2});
\draw [blue, thick] (1*1/2,{1*sqrt(3)/2})--(0,1/2);
\draw [blue, thick] (1*1/2,-{1*sqrt(3)/2})--(0,-1/2);

\draw[thick,blue,fill=blue] (0,1/2) circle (0.05);
\draw[thick,blue,fill=blue] (0,-1/2) circle (0.05);
\end{scope}
\end{tikzpicture}
}
&
\subfigure[A mutations along $\sfY$-cycle.\label{figure:Y-mutation}]{
\begin{tikzpicture}[baseline=-.5ex,scale=1.2]
\begin{scope}

\draw [->,yshift=.5ex] (1.25,0) -- (1.75,0) node[midway, above] {$\mutation_\cycle$};
\draw [<-,yshift=-.5ex] (1.25,0) -- (1.75,0) node[midway, below] {$\mutation_{\cycle'}$};

\draw [dashed] (0,0) circle [radius=1];

\draw [yellow, line cap=round, line width=5, opacity=0.5] (0,1/2) to (0,0);
\draw [yellow, line cap=round, line width=5, opacity=0.5] ({sqrt(3)/4},-1/4) to (0,0);
\draw [yellow, line cap=round, line width=5, opacity=0.5] (-{sqrt(3)/4},-1/4) to (0,0);

\draw [blue, thick] (-1*1/2,{1*sqrt(3)/2}) to (0,1/2) to (0,0) node[right] {$\cycle$} to ({-sqrt(3)/4},-1/4) to (-1,0);
\draw [blue, thick] (1/2,{1*sqrt(3)/2}) to (0,1/2) to (0,0) to ({sqrt(3)/4},-1/4) to (1,0);
\draw [blue, thick] ({-sqrt(3)/4},-1/4) to ({-1/2},{-sqrt(3)/2});
\draw [blue, thick] ({sqrt(3)/4},-1/4) to ({1/2},{-sqrt(3)/2});

\draw [red, thick] (0,0) to ({sqrt(3)/2},1/2);
\draw [red, thick] (0,0) to ({-sqrt(3)/2},1/2);
\draw [red, thick] (0,0) to (0,-1);

\draw[thick,blue,fill=blue] (0,1/2) circle (0.05);
\draw[thick,blue,fill=blue] ({-sqrt(3)/4},-1/4) circle (0.05);
\draw[thick,blue,fill=blue] ({sqrt(3)/4},-1/4) circle (0.05);
\draw[thick,black,fill=white] (0,0) circle (0.05);
\end{scope}

\begin{scope}[xshift=3cm]

\draw [dashed] (0,0) circle [radius=1];

\draw [yellow, line cap=round, line width=5, opacity=0.5] (0,1/2) to (0,0);
\draw [yellow, line cap=round, line width=5, opacity=0.5] ({sqrt(3)/4},-1/4) to (0,0);
\draw [yellow, line cap=round, line width=5, opacity=0.5] (-{sqrt(3)/4},-1/4) to (0,0);

\draw [blue, thick] (-1/2,{sqrt(3)/2}) to ({-sqrt(3)/4},1/4) to (-1,0);
\draw [blue, thick] (1/2,{sqrt(3)/2}) to ({sqrt(3)/4},1/4) to (1,0);
\draw [blue, thick] ({-1/2},{-sqrt(3)/2}) to (0,-1/2) to ({1/2},{-sqrt(3)/2});
\draw [blue, thick] ({sqrt(3)/4},1/4) to (0,0);
\draw [blue, thick] ({-sqrt(3)/4},1/4) to (0,0);
\draw [blue, thick] (0,-1/2) to (0,0);

\draw [red, thick] ({-sqrt(3)/4},1/4) to ({-sqrt(3)/4},-1/4) to (0,-1/2) to ({sqrt(3)/4},-1/4) to ({sqrt(3)/4},1/4) to (0,1/2) to ({-sqrt(3)/4},1/4);
\draw [red, thick] ({sqrt(3)/4},1/4) to ({sqrt(3)/2},1/2);
\draw [red, thick] ({-sqrt(3)/4},1/4) to ({-sqrt(3)/2},1/2);
\draw [red, thick] (0,-1/2) to (0,-1);
\draw [red, thick] ({-sqrt(3)/4},-1/4) to (0,0);
\draw [red, thick] ({sqrt(3)/4},-1/4) to (0,0);
\draw [red, thick] (0,1/2) to (0,0) node[right] {$\cycle'$};

\draw[thick,red,fill=red] (0,1/2) circle (0.05);
\draw[thick,red,fill=red] ({-sqrt(3)/4},-1/4) circle (0.05);
\draw[thick,red,fill=red] ({sqrt(3)/4},-1/4) circle (0.05);
\draw[thick,black,fill=white] (0,0) circle (0.05);
\draw[thick,black,fill=white] ({-sqrt(3)/4},1/4) circle (0.05);
\draw[thick,black,fill=white] ({sqrt(3)/4},1/4) circle (0.05);
\draw[thick,black,fill=white] (0,-1/2) circle (0.05);
\end{scope}

\end{tikzpicture}
}
\end{tikzcd}
\caption{Legendrian mutations at $\sfI$- and $\sfY$-cycles.}
\label{fig:Legendrian mutation on N-graphs}
\end{figure}
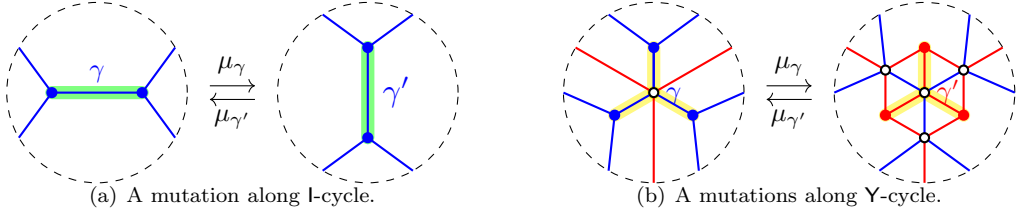

For the $\sfY$-cycle, the Legendrian mutation becomes as in the right of Figure~\ref{fig:Legendrian mutation on N-graphs}.
Note that the mutation at $\sfY$-cycle can be decomposed into a sequence of Move~\Move{I} and Move~\Move{II} together with a mutation at $\sfI$-cycle.

Let us remind our main purpose of finding exact embedded Lagrangian fillings for a Legendrian links.
The following lemma guarantees that Legendrian mutation preserves the embedding property of Lagrangian fillings.
\begin{proposition}\cite[Lemma~7.4]{CZ2020}
Let $\ngraph\subset \disk^2$ be a free $N$-graph. Then mutation $\mutation(\ngraph)$ at any $\sfI$- or $\sfY$-cycle is again free $N$-graph. 
\end{proposition}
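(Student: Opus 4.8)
The plan is to reduce the statement to the case of an $\sfI$-cycle and then to analyze the mutation locally through the difference functions $h_{jk}$ that detect interior Reeb chords of the weave $\Legendrian(\ngraph)$. First I would dispose of the $\sfY$-cycle case: as observed right after Definition~\ref{def:legendrian mutation}, a mutation along a $\sfY$-cycle is a composition of Moves~\Move{I} and~\Move{II} together with a single mutation along an $\sfI$-cycle. Each of Moves~\Move{I} and~\Move{II} is supported in a disk in which only two (resp. three) consecutive sheets of the wavefront interact, and a direct inspection of the corresponding local models shows that it neither creates nor destroys an interior face of the relevant $2$- or $3$-graph in the sense of Example~\ref{ex:free N-graph}, hence does not create a critical point of any affected $h_{jk}$ on the graph complement; so freeness is preserved under these moves. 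Therefore it suffices to prove that if $\ngraph$ is free and $[\cycle]=[\cycle(e)]$ is an $\sfI$-cycle with $e\in\ngraph_i$ between two trivalent vertices, then $\mutation_\cycle(\ngraph)$ is free.

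For the $\sfI$-cycle case, using freeness of $\ngraph$ I would fix an $\ngraph$-compatible cover together with difference functions $h_{jk}=f_k-f_j$ that are smooth, nonnegative, satisfy $h_{j\,j+1}^{-1}(0)=\ngraph_j$, and have no critical point on $\mathring\disk^2\setminus\ngraph$; after shrinking the cover I may assume that on a small disk $D$ containing $e$ and its two trivalent endpoints all of these functions are in the explicit normal form dictated by the $\dynA_1^2$- and $\dynD_4^-$-germs. The mutation $\mutation_\cycle$ is supported in $D$: the $N$-graph, and hence every $f_j$ and every $h_{jk}$, is unchanged on $\disk^2\setminus D$, and on $D$ only $\ngraph_i$ is modified, so that only the sheets $\wavefront_i,\wavefront_{i+1}$ are reorganized. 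The local picture of $\mutation_\cycle(\ngraph)$ on $D$ is obtained from that of $\ngraph$ by a planar rotation carrying the horizontal edge $e$ to the vertical edge $\cycle'$ and permuting the four faces around the $\sfI$-cycle accordingly (Figure~\ref{figure:I-mutation}).

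Transporting the normal-form functions through this rotation yields functions on $\mutation_\cycle(\ngraph)|_D$ that are still smooth, nonnegative, and vanish exactly on the graph; since a diffeomorphism of $D$ takes a critical-point-free function to a critical-point-free one, no interior critical point is created inside $D$. After a collar adjustment near $\boundary D$ these local functions glue to the unchanged functions on $\disk^2\setminus D$ in a gradient-like fashion, and the glued functions then witness that $\mutation_\cycle(\ngraph)$ is free. The delicate point — and where essentially all the work lies — concerns Reeb chords between non-consecutive sheets: a chord from $\wavefront_j$ to $\wavefront_k$ with $|j-k|\geq 2$ is a critical point of the composite $h_{jk}=\sum_{j\le\ell<k}h_{\ell\,\ell+1}$, and altering $\ngraph_i$ inside $D$ changes the face decomposition $\{F_{i;\bullet}\}$ of $\mathring\disk^2\setminus\ngraph_i$, so the gradient-flow bookkeeping of these composites must be redone. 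As in the construction of $h_{1\,2}+h_{2\,3}$ for $\ngraph(a,b,c)$, I would handle this by taking $\|\nabla h_{i\,i+1}\|$ sufficiently small compared with the neighbouring $\|\nabla h_{\ell\,\ell+1}\|$ away from $\ngraph_i$, and by arranging the outward gradient-like matching on $\boundary D$ simultaneously for all composites passing through $D$; verifying that this can be done for the rotated local model is exactly the content of the mutation computation encoded in Figure~\ref{fig:Legendrian mutation on N-graphs}, and is the main obstacle of the proof.
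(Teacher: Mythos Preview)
The paper does not give its own proof of this proposition; it is stated with a citation to \cite[Lemma~7.4]{CZ2020} and no argument is supplied here. So there is nothing in the present paper to compare your proposal against.

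Judged on its own, your plan is reasonable in outline but contains an imprecision that would need to be addressed in a genuine proof. The reduction of the $\sfY$-cycle case to the $\sfI$-cycle case via Moves~\Move{I} and~\Move{II} is correct and matches the remark following Definition~\ref{def:legendrian mutation}. For the $\sfI$-cycle case, however, your description of the mutation as ``a planar rotation carrying the horizontal edge $e$ to the vertical edge $\cycle'$'' is not accurate: the mutation in Figure~\ref{figure:I-mutation} is a Whitehead move that \emph{fixes} the four outgoing half-edges on $\boundary D$ and changes which pairs of them meet at a trivalent vertex; a rotation of $D$ would move those boundary points as well. Consequently you cannot simply transport the normal-form difference functions by a diffeomorphism of $D$ and then glue, because the functions on $\boundary D$ would no longer match those on $\disk^2\setminus D$. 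What actually has to be checked is that the \emph{new} local model --- two $\dynD_4^-$-germs sharing a vertical edge, with the same boundary data as before --- admits difference functions $h'_{jk}$ without interior critical points that agree with the old $h_{jk}$ near $\boundary D$. You essentially acknowledge this when you call the gradient-flow bookkeeping for composites $h_{jk}$ with $|j-k|\ge 2$ ``the main obstacle,'' and indeed that verification (together with the two-sheet case) is the substance of the lemma in \cite{CZ2020}; your proposal outlines the right framework but does not carry out that step.
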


\begin{proposition}
Let $\ngraph\subset \disk^2$ be a good $N$-graph.
Then mutation $\mutation_\gamma(\ngraph)$ at $\sfI$-cycle $\gamma$ is again good $N$-graph.
\end{proposition}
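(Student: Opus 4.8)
The plan is to exploit that a mutation at an $\sfI$-cycle is a purely \emph{local} modification of $\ngraph$. By Definition~\ref{def:legendrian mutation} and Figure~\ref{figure:I-mutation}, $\mutation_\cycle$ changes $\ngraph$ only inside a disk $D$ containing the edge $e$ and its two trivalent endpoints; in particular $\partial\ngraph = \partial(\mutation_\cycle(\ngraph))$, so the Legendrian link $\legendrian$ on $\partial\disk^2$, the induced configuration of points on $\sphere^1$, and the flags $\flags$ on $\legendrian$ are all unchanged. Writing $E = \overline{\disk^2 \setminus D}$, we have $\ngraph \cap E = \mutation_\cycle(\ngraph)\cap E$, and the only difference between the two $N$-graphs is the local picture in $D$.

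First I would pin down the flag data inside $D$. The four legs leaving $D$ all lie in the same subgraph $\ngraph_i$, so restricting any flag assignment to $\partial D$ produces a quadruple of lines $(u_1,u_2,u_3,u_4)$, in cyclic order, in the plane $\cF^{i+1}/\cF^{i-1}$ (which is independent of the face), one line on each of the four arcs of $\partial D$ cut out by the legs. The key observation is that for \emph{both} $\ngraph\cap D$ and $\mutation_\cycle(\ngraph)\cap D$ each interior face meets $\partial D$ in exactly one of these arcs; hence a flag assignment on $D$ is valid exactly when the flag on each interior face coincides with the line on the arc it meets, and those four lines satisfy, besides the inequalities along the four legs, one further inequality coming from the unique interior edge — namely $u_1\ne u_3$ for $\ngraph\cap D$ (the edge $e$) and the \emph{other} diagonal pairing $u_2\ne u_4$ for $\mutation_\cycle(\ngraph)\cap D$ (the new edge $\cycle'$, which separates the opposite pair of faces). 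In particular a valid flag assignment on $D$ carries no information beyond its restriction to $\partial D$.

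Combining this with Definition~\ref{def:flag moduli space}, a valid flag assignment for $\mutation_\cycle(\ngraph)$ is precisely a valid flag assignment on $E$ whose restriction to $\partial D$ satisfies $u_2\ne u_4$, and similarly for $\ngraph$ with $u_1\ne u_3$. Since $\ngraph$ is good (Definition~\ref{def:good N-graph}), the boundary flags $\flags$ on $\partial\disk^2$ determine a unique valid assignment for $\ngraph$, hence a unique valid assignment on $E$ with boundary $\flags$ whose $\partial D$-restriction satisfies $u_1\ne u_3$. To finish, I must show that $\flags$ in fact determines a \emph{unique} valid assignment on $E$ whose $\partial D$-restriction satisfies $u_2\ne u_4$ — equivalently, that the flags on the four faces of $E$ abutting $\partial D$ are already forced by $\flags$, so that the two a priori distinct completion problems over $D$ share the same unique $E$-part. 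This is the step I expect to be the main obstacle. I would address it by tracking how a valid flag assignment on $E$ propagates inward from $\partial\disk^2$: because $\ngraph$, and hence $E$, is free, by Example~\ref{ex:free N-graph} it has no interior bigons up to $N$-graph moves, which should force this inward propagation to reach the faces surrounding $D$ unambiguously; combined with the functoriality of the microlocal-monodromy construction under the Legendrian isotopies realizing the $N$-graph moves (as in \cite{CZ2020}), this would show that the $E$-part of the assignment depends only on $\flags$ and not on how $D$ is filled in. Granting that, the interior flags of $\mutation_\cycle(\ngraph)$ are uniquely determined by $\flags$, i.e., $\mutation_\cycle(\ngraph)$ is good.
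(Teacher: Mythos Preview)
Your argument unpacks in detail what the paper dispatches in a single sentence (``straightforward from the notion of the good $N$-graph in Definition~\ref{def:good N-graph} and of the Legendrian mutation depicted in Figure~\ref{figure:I-mutation}''). The $E\cup D$ decomposition and the analysis of the four lines $u_1,\dots,u_4$ on $\partial D$, with the lone internal constraint switching from $u_1\ne u_3$ to $u_2\ne u_4$, are correct and make explicit exactly what that figure encodes.

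The gap is at your ``main obstacle.'' To pin down the $E$-flags you invoke freeness, which is not among the hypotheses of the proposition, and follow it with a heuristic inward-propagation sketch that is not a proof. Neither is needed, and appealing to freeness is illegitimate here. The observation that actually closes the argument is one you already made: every face of $\ngraph\cap D$ and of $\mutation_\cycle(\ngraph)\cap D$ meets $\partial D$ in exactly one arc and hence extends to a unique face of $E$. Thus $\ngraph$ and $\mutation_\cycle(\ngraph)$ have \emph{the same set of faces}, so a flag assignment for one is literally the same datum as for the other, with the same restriction map to the boundary. The internal edge $e$ (respectively $\cycle'$) contributes only an open inequality on this common datum; it plays no role in determining any face-flag from $\flags$. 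Consequently Definition~\ref{def:good N-graph}---that $\flags$ determines the face-flags---reads identically for $\ngraph$ and for $\mutation_\cycle(\ngraph)$, and goodness transfers directly. That is the whole content of the paper's one-line proof.
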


\begin{proof}
The proof is straightforward from the notion of the good $N$-graph in Definition~\ref{def:good N-graph} and of the Legendrian mutation depicted in Figure~\ref{figure:I-mutation}.
Note that the Legendrian mutation $\mutation_\gamma(\ngraph)$ at $\sfY$-cycle $\gamma$ is also good, since $\mutation_\gamma(\ngraph)$ is a composition of Moves \Move{I} and \Move{II}, and a mutation at $\sfI$-cycle.
\end{proof}

An important observation is the Legendrian mutation on $(\ngraph,\nbasis)$ induces a cluster mutation on the induced seed $(\bfx(\Legendrian(\ngraph),\nbasis,\flags),\quiver(\Legendrian(\ngraph),\nbasis))$.

\begin{proposition}[{\cite[\S7.2]{CZ2020}}]\label{proposition:equivariance of mutations}
Let $\ngraph\subset \disk^2$ be a good $N$-graph and $\nbasis$ be a good tuple of cycles in $H_1(\Legendrian(\ngraph))$.
Let $\mutation_{\cycle_i}(\ngraph,\nbasis)$ be a Legendrian mutation  of $(\ngraph,\nbasis)$ along a one-cycle $\cycle_i$  then the following holds:
for flags $\flags$ on $\legendrian$,
\[
\Psi(\mutation_{\cycle_i}(\ngraph,\nbasis),\flags)=\mutation_{i}(\Psi(\ngraph,\nbasis,\flags)).
\]
Here, $\mutation_{i}$ is the cluster $\mathcal{X}$-mutation
at the vertex $i$ \textup{(}cf. Remark~\ref{rmk_x_cluster_mutation}\textup{)}.
\end{proposition}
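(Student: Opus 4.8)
The plan is to reduce to a single $\sfI$-cycle mutation and then to compute, directly from the flag model of \S\ref{sec:N-graphs and seeds}, how the intersection numbers of the good tuple and the microlocal monodromies change. Since the Legendrian mutation at a $\sfY$-cycle is, by the remark after Definition~\ref{def:legendrian mutation}, a composition of Moves~\Move{I} and~\Move{II} together with a mutation at an $\sfI$-cycle, and since Moves~\Move{I} and~\Move{II} are Legendrian isotopies for which $\Psi$ is invariant by Theorem~\ref{thm:N-graph to seed} (their effect on cycles in a good tuple being recorded in Figure~\ref{fig:cycles under moves}), it is enough to treat $\cycle_i=\cycle(e)$ for an edge $e\in\ngraph_i$ between two trivalent vertices and $\mutation_{\cycle_i}$ the local move of Figure~\ref{figure:I-mutation}. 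Write $\Psi(\ngraph,\nbasis,\flags)=(\bfx,\quiver)$, $\bfx=(x_1,\dots,x_n)$, $\qbasis(\quiver)=(b_{k,j})$, and let $y_k=\prod_j x_j^{b_{k,j}}$ be the associated cluster $\mathcal{X}$-variables.

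First I would record how the good tuple transforms. A cycle whose $\sfI$-representative avoids a neighbourhood of $e$ is untouched, while a cycle $\cycle_j$ meeting $e$ at one of its trivalent endpoints (equivalently $b_{j,i}\neq 0$) must be dragged across the mutated square, and I would read its new representative $\cycle_j'$ off the local picture, keeping track of the clockwise-rotation convention of Figure~\ref{fig:I-cycle with orientation and intersections}. Counting the signed crossings created or cancelled inside the square then gives the quiver part: the arrows at vertex $i$ reverse, so $b_{i,j}'=-b_{i,j}$, directly from the flip of $e$, whereas for $j,l\neq i$ adjacent to $i$ the single new (or cancelled) crossing near the square contributes exactly $\tfrac12\bigl(|b_{j,i}|b_{i,l}+b_{j,i}|b_{i,l}|\bigr)$ after a short case analysis on the cyclic order of the colours at the vertices involved. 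This is the matrix mutation rule, so $\quiver(\Legendrian(\mutation_{\cycle_i}\ngraph),\mutation_{\cycle_i}\nbasis)=\mutation_i(\quiver)$.

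Next I would compute the monodromies. Around $e$ the four adjacent faces give lines $v_1,v_2,v_3,v_4$ in the two-dimensional space $V=\cF^{i+1}(F_\ast)/\cF^{i-1}(F_\ast)$, consecutive ones distinct, and $y_i=\mmon(\cF_{\Legendrian(\ngraph)})([\cycle_i])=\langle v_1,v_2,v_3,v_4\rangle$. Because $\ngraph$ is good (Definition~\ref{def:good N-graph}), the mutated $N$-graph determines its flags from the same boundary data, and a direct computation in the local model of Figure~\ref{figure:I-mutation} shows that the monodromy along the rotated edge is the reciprocal cross ratio, $\langle v_2,v_3,v_4,v_1\rangle=\langle v_1,v_2,v_3,v_4\rangle^{-1}=y_i^{-1}$, i.e.\ the $\mathcal{X}$-mutation value at $i$. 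For a neighbouring cycle $\cycle_j$ with $b_{j,i}\neq0$, the new representative $\cycle_j'$ runs through the mutated square, so its monodromy equals that of $\cycle_j$ times a product of cross ratios carried by the extra arcs; evaluating this product in $V$ with the orientation conventions of \S\ref{sec:N-graphs and seeds} yields exactly $y_i^{\max(b_{j,i},0)}(1+y_i)^{-b_{j,i}}$. As cluster variables at the other unfrozen vertices are unchanged, this is precisely the cluster $\mathcal{X}$-mutation $\mutation_i$ of Remark~\ref{rmk_x_cluster_mutation}. Together with the previous paragraph this gives $\Psi(\mutation_{\cycle_i}(\ngraph,\nbasis),\flags)=\mutation_i(\Psi(\ngraph,\nbasis,\flags))$, and the reduction above upgrades it to arbitrary $\sfY$-cycles.

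The step I expect to be the main obstacle is this last identification: matching the extra cross-ratio factor acquired by a neighbouring cycle as it crosses the mutated region with the precise monomial $y_i^{\max(b_{j,i},0)}(1+y_i)^{-b_{j,i}}$, including the $\max$ and the sign of the exponent. This requires a globally consistent choice of bases of the various quotients $\cF^{j+1}/\cF^{j-1}$ near the trivalent vertices and of orientations of the loops, so that the relevant wedge-product sums collapse to $1+y_i$ at the correct spot; the split according to the sign of $b_{j,i}$, that is, on which side of $e$ the detour is forced, is the delicate bookkeeping. The remaining ingredients---the reduction to $\sfI$-cycles, the signed crossing count, and the identity $\langle v_2,v_3,v_4,v_1\rangle=\langle v_1,v_2,v_3,v_4\rangle^{-1}$---are routine once all conventions are pinned down.
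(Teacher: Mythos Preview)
The paper does not give its own proof of this proposition: it is stated with the attribution \cite[\S7.2]{CZ2020} and no argument follows. So there is nothing in the present paper to compare your sketch against beyond that citation.

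That said, your outline is exactly the shape of the argument one finds in the cited source: reduce to an $\sfI$-cycle using that $\sfY$-mutation is Moves~\Move{I},~\Move{II} plus an $\sfI$-mutation and that $\Psi$ is invariant under those moves; check the quiver change by signed intersection counting near the flipped edge; and compute the monodromy change from the flag model. Your identity $\langle v_2,v_3,v_4,v_1\rangle=\langle v_1,v_2,v_3,v_4\rangle^{-1}$ is correct for the cross ratio as defined here, and your identification of the delicate point---matching the extra cross-ratio factor on a neighbouring cycle with $y_i^{\max(b_{j,i},0)}(1+y_i)^{-b_{j,i}}$ and getting the sign of $b_{j,i}$ to govern which side the detour occurs on---is indeed where the real work lies. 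In a fully written proof you would also want to say explicitly why the mutated tuple $\mutation_{\cycle_i}(\nbasis)$ is again \emph{good} in the sense of Definition~\ref{def:good cycle}, so that $\Psi$ is defined on the output; this is implicit in your local description of the new representatives but is worth stating, since the paper itself flags this as a subtle point (see the discussion around Figure~\ref{fig:geometric and algebraic intersection}).
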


\section{Lagrangian fillings for of Legendrians type \texorpdfstring{$\dynADE$}{ADE}}

\subsection{Tripods}
Let $\legendrian\subset J^1\sphere^1$ be a Legendrian knot or link which bounds a Legendrian surface~$\Legendrian(\ngraph)$ in $J^1\disk^2$ for some free $N$-graph $\ngraph$.
We fix a good tuple $\nbasis$ of cycles in the sense of Definition~\ref{def:good cycle}, and fix flags $\flags$ on $\legendrian$.
Then by Theorem~\ref{thm:N-graph to seed}, we obtain a seed $\Psi(\ngraph,\nbasis,\flags)$ which is a pair of a set of cluster variables~$\bfx(\Legendrian(\ngraph),\nbasis,\flags)$ 
and a quiver $\quiver(\Legendrian(\ngraph),\nbasis)$. 

We say that the pair $(\ngraph,\nbasis)$ is \emph{of finite type} or \emph{of
infinite type} if so is the cluster algebra defined by
$\quiver(\Legendrian(\ngraph),\nbasis)$.
In particular, it is said to be of \emph{type~$\dynADE$} if the quiver
$\quiver(\Legendrian(\ngraph),\nbasis)$ is  of type
$\dynA_n, \dynD_n, \dynE_6, \dynE_7$ or $\dynE_8$ (see Definition~\ref{def_quiver_of_type_X}).
Braid words $\beta$ of Legendrians, $N$-graphs and good tuples of cycles $(\ngraph,\nbasis)$ of type $\dynADE$ are depicted in Table~\ref{table:ADE type}.

\begin{table}[ht]
\begin{tabular}{c|c|c}
\toprule
$\beta$ & $(\ngraph,\nbasis)$ & $\quiver$\\
\midrule
$\sigma_1^{n+3}$ &
\begin{tikzpicture}[baseline=-.5ex,scale=0.8]
\useasboundingbox(-3.5,-3.5)rectangle(3.5,3.5);
\draw[thick] (0,0) circle (3);
\draw[green, line width=5, opacity=0.5] (-1.5,0.5) -- (-0.5, -0.5) (1, 0) -- (1.5, -0.5);
\draw[yellow, line width=5, opacity=0.5] (-2.5,-0.5) -- (-1.5, 0.5) (-0.5, -0.5) -- (0, 0) (1.5, -0.5) -- (2.5, 0.5);
\draw[blue, thick, fill] (0:3) -- (2.5,0.5) circle (2pt) -- (45:3) (2.5,0.5) -- (1.5,-0.5) node[midway, above left] {$\gamma_n$} circle (2pt) -- (-45:3) (1.5,-0.5) -- (1,0) node[above] {$\gamma_{n-1}$} (0.5,0) node {$\cdots$} (0,0) node[above] {$\gamma_3$} -- (-0.5, -0.5) circle (2pt) -- (-90:3) (-0.5, -0.5) -- (-1.5, 0.5) node[midway, above right] {$\gamma_2$} circle (2pt) -- (135:3) (-1.5, 0.5) -- (-2.5, -0.5) node[midway, above left] {$\gamma_1$} circle (2pt) -- (-135:3);
\draw[blue, thick] (-2.5,-0.5) -- (-180:3);
\end{tikzpicture}
&
\begin{tikzpicture}[baseline=.5ex]
\useasboundingbox (-2.5,-2) rectangle (2.5,2);
\draw[fill, thick] (-2,0) node (A1) {} circle (1pt) (-1,0) node (A2) {} circle (1pt) (0,0) node (A3) {} circle (1pt) (1,0) node (A4) {} circle (1pt) (2,0) node (An) {} circle (1pt);
\draw[->] (A1) node[above] {$\scriptstyle 1$} -- (A2) node[above] {$\scriptstyle 2$};
\draw[->] (A3) node[above] {$\scriptstyle 3$} -- (A2);
\draw[->] (A3) -- (A4) node[above] {$\scriptstyle 4$};
\draw[dotted] (A4) -- (An) node[above] {$\scriptstyle n$};
\end{tikzpicture}\\
\hline
$\sigma_2\sigma_1^3\sigma_2\sigma_1^3\sigma_2\sigma_1^{n-1}$ &
\begin{tikzpicture}[baseline=-.5ex,scale=0.8]
\useasboundingbox(-3.5,-3.5)rectangle(3.5,3.5);
\draw[thick] (0,0) circle (3cm);
\draw[green, line cap=round, line width=5, opacity=0.5] (60:1) -- (50:2) (180:1) -- (170:2) (300:1) -- (290:1.5) (310:1.75) -- (290:2);
\draw[yellow, line cap=round, line width=5, opacity=0.5] (0,0) -- (60:1) (0,0) -- (180:1) (0,0) -- (300:1) (290:1.5) -- (310:1.75);
\draw[red, thick] (0,0) -- (0:3) (0,0) -- (120:3) (0,0) -- (240:3);
\draw[blue, thick, fill] (0,0) -- (60:1) node[midway, below right] {$\gamma_{n-2}$} circle (2pt) -- (90:3) (60:1) -- (50:2) node[midway, below right] {$\gamma_n$} circle (2pt) -- (30:3) (50:2) -- (60:3);
\draw[blue, thick, fill] (0,0) -- (180:1) circle (2pt) -- (210:3) (180:1) -- (170:2) node[midway, above right] {$\gamma_{n-1}$} circle (2pt) -- (150:3) (170:2) -- (180:3);
\draw[blue, thick, fill] (0,0) -- (300:1) circle (2pt) -- (340:3) (300:1) -- (290:1.5) node[midway, left] {$\gamma_{n-3}$} circle (2pt) -- (260:3) (290:1.5) -- (310:1.75) circle (2pt) -- (320:3) (310:1.75) -- (290:2) circle (2pt) -- (280:3);
\draw[blue, thick, dashed] (290:2) -- (300:3);
\draw[thick, fill=white] (0,0) circle (2pt);
\curlybrace[]{250}{350}{3.2};
\draw (300:3.5) node[rotate=30] {$n-1$};
\end{tikzpicture}
&
\begin{tikzpicture}[baseline=.5ex]
\useasboundingbox (-2.5,-2) rectangle (2.5,2);
\draw[fill, thick] (-2,0) node (D1) {} circle (1pt) (-1,0) node (D2) {} circle (1pt) (0,0) node (D3) {} circle (1pt) (1,0) node (D4) {} circle (1pt) (1,0) ++ (30:1) node (D5) {} circle (1pt) (1,0) ++ (-30:1) node (Dn) {} circle (1pt);
\draw[->] (D4) node[above] {$\scriptstyle n-2$} -- (D5) node[right] {$\scriptstyle n-1$};
\draw[->] (D4) -- (Dn) node[right] {$\scriptstyle n$};
\draw[->] (D4) -- (D3) node[above] {$\scriptstyle n-3$};
\draw[->] (D2) node[above] {$\scriptstyle n-4$} -- (D3);
\draw[dotted] (D1) node[above] {$\scriptstyle 1$} -- (D2);
\end{tikzpicture}\\
\hline
$\sigma_2\sigma_1^4\sigma_2\sigma_1^3\sigma_2\sigma_1^{n-2}$ &
\begin{tikzpicture}[baseline=-.5ex,scale=0.8]
\useasboundingbox(-3.5,-3.5)rectangle(3.5,3.5);
\draw[thick] (0,0) circle (3cm);
\draw[green, line cap=round, line width=5, opacity=0.5] (60:1) -- (50:2) (180:1) -- (170:1.5) (300:1) -- (290:1.5) (310:1.75) -- (290:2);
\draw[yellow, line cap=round, line width=5, opacity=0.5] (0,0) -- (60:1) (0,0) -- (180:1) (0,0) -- (300:1) (170:1.5) -- (190:2) (290:1.5) -- (310:1.75);
\draw[red, thick] (0,0) -- (0:3) (0,0) -- (120:3) (0,0) -- (240:3);
\draw[blue, thick, fill] (0,0) -- (60:1) node[midway, below right] {$\gamma_4$} circle (2pt) -- (90:3) (60:1) -- (50:2) node[midway, below right] {$\gamma_2$} circle (2pt) -- (30:3) (50:2) -- (60:3);
\draw[blue, thick, fill] (0,0) -- (180:1) circle (2pt) -- (216:3) (180:1) -- (170:1.5) node[midway, above right] {$\gamma_3$} circle (2pt) -- (144:3) (170:1.5) -- (190:2) node[midway, above left] {$\gamma_1$} circle (2pt) -- (192:3) (190:2) -- (168:3);
\draw[blue, thick, fill] (0,0) -- (300:1) circle (2pt) -- (340:3) (300:1) -- (290:1.5) node[midway, left] {$\gamma_5$} circle (2pt) -- (260:3) (290:1.5) -- (310:1.75) circle (2pt) -- (320:3) (310:1.75) -- (290:2) circle (2pt) -- (280:3);
\draw[blue, thick, dashed] (290:2) -- (300:3);
\draw[thick, fill=white] (0,0) circle (2pt);
\curlybrace[]{250}{350}{3.2};
\draw (300:3.5) node[rotate=30] {$n-2$};
\end{tikzpicture}
&
\begin{tikzpicture}[baseline=.5ex]
\useasboundingbox (-2.5,-2) rectangle (2.5,2);
\draw[fill, thick] (-2,0) node (E1) {} circle (1pt) (-1,0) node (E3) {} circle (1pt) (0,0) node (E4) {} circle (1pt) (0,1) node (E2) {} circle (1pt) (1,0) node (E5) {} circle (1pt) (2,0) node (En) {} circle (1pt);
\draw[->] (E4) node[above right] {$\scriptstyle 4$} -- (E3) node[above] {$\scriptstyle 3$};
\draw[->] (E4) -- (E2) node[right] {$\scriptstyle 2$};
\draw[->] (E4) -- (E5) node[above] {$\scriptstyle 5$};
\draw[dotted] (En) node[above] {$\scriptstyle n$} -- (E5);
\draw[->] (E1) node[above] {$\scriptstyle 1$} -- (E3);
\end{tikzpicture}\\
\bottomrule
\end{tabular}
\caption{$N$-graphs and their quivers of type~$\dynADE$}
\label{table:ADE type}
\end{table}

One can generalize these quivers of type~$\dynADE$  as follows:
\begin{definition}[Tripod quiver]
For $a,b,c\ge 1$, the \emph{tripod} $\quiver(a,b,c)$ of type $(a,b,c)$ is a bipartite quiver such that
\begin{enumerate}
\item the set of vertices is $[n]$ for $n=a+b+c-2$, 
\item the underlying graph is a boundary wedge sum of three quivers $\dynA_{a}, \dynA_{b}$, and $\dynA_{c}$, and 
\item the vertex where $\dynA_a, \dynA_b$, and $\dynA_c$ are glued together is called the \emph{central vertex}, labelled as $1$ and colored as $+$.
\end{enumerate}

We define an $N$-graph $\ngraph(a,b,c)$ on $\disk^2$  as the concatenation of three $N$-graphs $\ngraph(A_a), \ngraph(A_b)$ and $\ngraph(A_c)$ by making one $\sfY$-cycle $\cycle_1$ and define a good tuple $\nbasis(a,b,c)$ of cycles as the union of cycles in three $N$-graphs.
The $N$-graph obtained by switching colors from $\ngraph(a,b,c)$ and the induced set of chosen cycles will be denoted by $\bar\ngraph(a,b,c)$ and $\bar\nbasis(a,b,c)$, respectively. 

The pictorial definitions of $\quiver(a,b,c), \ngraph(a,b,c)$ and $\nbasis(a,b,c)$ are depicted in Figure~\ref{figure:tripod}.
\end{definition}

It is obvious that if $a,b$ or $c$ is one, then it is the same as $\dynA_n$ for $n=a+b+c-2$ up to relabelling.
Similarly, the cases that $a,b$ or $c$ is two include quivers of type $\dynD_n$ and $\dynE_n$.

\begin{figure}[ht]
\subfigure[The tripod $\quiver(a,b,c)$\label{figure:tripod quiver}]{
\begin{tikzpicture}[baseline=-.5ex,scale=0.7]
\draw[fill] (0,0) circle (1pt) node (O) {};
\draw[fill] (60:1) circle (1pt) node (A1) {};
\draw[fill] (60:2) circle (1pt) node (A2) {};
\draw[fill] (60:3) circle (1pt) node (A3) {};
\draw[fill] (60:4) circle (1pt) node (An) {};
\draw[fill] (180:1) circle (1pt) node (B1) {};
\draw[fill] (180:2) circle (1pt) node (B2) {};
\draw[fill] (180:3) circle (1pt) node (B3) {};
\draw[fill] (180:4) circle (1pt) node (Bn) {};
\draw[fill] (-60:1) circle (1pt) node (C1) {};
\draw[fill] (-60:2) circle (1pt) node (C2) {};
\draw[fill] (-60:3) circle (1pt) node (C3) {};
\draw[fill] (-60:4) circle (1pt) node (Cn) {};
\draw[->] (O) node[above left] {$\scriptstyle 1$}--(A1) node[above left] {$\scriptstyle 2$};
\draw[->] (A2) node[above left] {$\scriptstyle 3$}--(A1);
\draw[->] (A2)--(A3) node[above left] {$\scriptstyle 4$};
\draw[dotted] (A3) -- (An) node[above left] {$\scriptstyle a$};
\draw[->] (O)--(B1) node[above] {$\scriptstyle a+1$};
\draw[->] (B2) node[above] {$\scriptstyle a+2$}--(B1);
\draw[->] (B2)--(B3) node[above] {$\scriptstyle a+3$};
\draw[dotted] (B3)--(Bn) node[above] {$\scriptstyle a+b-1$};
\draw[->] (O)--(C1) node[above right] {$\scriptstyle a+b$};
\draw[->] (C2) node[above right] {$\scriptstyle a+b+1$}--(C1);
\draw[->] (C2)--(C3) node[above right] {$\scriptstyle a+b+2$};
\draw[dotted] (C3)--(Cn) node[above right] {$\scriptstyle a+b+c-2$};
\end{tikzpicture}
}

\subfigure[$(\ngraph(a,b,c), \nbasis(a,b,c))$]{
\begin{tikzpicture}[baseline=-.5ex,scale=0.6]
\draw[thick] (0,0) circle (3cm);
\draw[green, line cap=round, line width=5, opacity=0.5] (60:1) -- (50:1.5) (70:1.75) -- (50:2) (180:1) -- (170:1.5) (190:1.75) -- (170:2) (300:1) -- (290:1.5) (310:1.75) -- (290:2);
\draw[yellow, line cap=round, line width=5, opacity=0.5] (0,0) -- (60:1) (0,0) -- (180:1) (0,0) -- (300:1) (50:1.5) -- (70:1.75) (170:1.5) -- (190:1.75) (290:1.5) -- (310:1.75);
\draw[red, thick] (0,0) -- (0:3) (0,0) -- (120:3) (0,0) -- (240:3);
\draw[blue, thick, fill] (0,0) -- (60:1) circle (2pt) -- (100:3) (60:1) -- (50:1.5) circle (2pt) -- (20:3) (50:1.5) -- (70:1.75) circle (2pt) -- (80:3) (70:1.75) -- (50:2) circle (2pt) -- (40:3);
\draw[blue, thick, dashed] (50:2) -- (60:3);
\draw[blue, thick, fill] (0,0) -- (180:1) circle (2pt) -- (220:3) (180:1) -- (170:1.5) circle (2pt) -- (140:3) (170:1.5) -- (190:1.75) circle (2pt) -- (200:3) (190:1.75) -- (170:2) circle (2pt) -- (160:3);
\draw[blue, thick, dashed] (170:2) -- (180:3);
\draw[blue, thick, fill] (0,0) -- (300:1) circle (2pt) -- (340:3) (300:1) -- (290:1.5) circle (2pt) -- (260:3) (290:1.5) -- (310:1.75) circle (2pt) -- (320:3) (310:1.75) -- (290:2) circle (2pt) -- (280:3);
\draw[blue, thick, dashed] (290:2) -- (300:3);
\draw[thick, fill=white] (0,0) circle (2pt);
\curlybrace[]{10}{110}{3.2};
\draw (60:3.5) node[rotate=-30] {$a+1$};
\curlybrace[]{130}{230}{3.2};
\draw (180:3.5) node[rotate=90] {$b+1$};
\curlybrace[]{250}{350}{3.2};
\draw (300:3.5) node[rotate=30] {$c+1$};
\end{tikzpicture}
}
\subfigure[$(\bar\ngraph(a,b,c), \bar\nbasis(a,b,c))$]{
\begin{tikzpicture}[baseline=-.5ex,scale=0.6]
\draw[thick] (0,0) circle (3cm);
\draw[green, line cap=round, line width=5, opacity=0.5] (60:1) -- (50:1.5) (70:1.75) -- (50:2) (180:1) -- (170:1.5) (190:1.75) -- (170:2) (300:1) -- (290:1.5) (310:1.75) -- (290:2);
\draw[yellow, line cap=round, line width=5, opacity=0.5] (0,0) -- (60:1) (0,0) -- (180:1) (0,0) -- (300:1) (50:1.5) -- (70:1.75) (170:1.5) -- (190:1.75) (290:1.5) -- (310:1.75);
\draw[blue, thick] (0,0) -- (0:3) (0,0) -- (120:3) (0,0) -- (240:3);
\draw[red, thick, fill] (0,0) -- (60:1) circle (2pt) -- (100:3) (60:1) -- (50:1.5) circle (2pt) -- (20:3) (50:1.5) -- (70:1.75) circle (2pt) -- (80:3) (70:1.75) -- (50:2) circle (2pt) -- (40:3);
\draw[red, thick, dashed] (50:2) -- (60:3);
\draw[red, thick, fill] (0,0) -- (180:1) circle (2pt) -- (220:3) (180:1) -- (170:1.5) circle (2pt) -- (140:3) (170:1.5) -- (190:1.75) circle (2pt) -- (200:3) (190:1.75) -- (170:2) circle (2pt) -- (160:3);
\draw[red, thick, dashed] (170:2) -- (180:3);
\draw[red, thick, fill] (0,0) -- (300:1) circle (2pt) -- (340:3) (300:1) -- (290:1.5) circle (2pt) -- (260:3) (290:1.5) -- (310:1.75) circle (2pt) -- (320:3) (310:1.75) -- (290:2) circle (2pt) -- (280:3);
\draw[red, thick, dashed] (290:2) -- (300:3);
\draw[thick, fill=white] (0,0) circle (2pt);
\curlybrace[]{10}{110}{3.2};
\draw (60:3.5) node[rotate=-30] {$a+1$};
\curlybrace[]{130}{230}{3.2};
\draw (180:3.5) node[rotate=90] {$b+1$};
\curlybrace[]{250}{350}{3.2};
\draw (300:3.5) node[rotate=30] {$c+1$};
\end{tikzpicture}
}
\caption{The tripod $N$-graph $\ngraph(a,b,c)$ and the chosen set $\nbasis(a,b,c)$ of cycles}
\label{figure:tripod}
\end{figure}

Notice that the boundary of the Legendrian weave $\Legendrian(\ngraph(a,b,c))$ denoted by $\legendrian(a,b,c)$ is expressed as the braid word
\begin{align*}
\legendrian(a,b,c) &= \cl(\beta(a,b,c)),&
\beta(a,b,c)&=\sigma_2\sigma_1^{a+1}\sigma_2\sigma_1^{b+1}\sigma_2\sigma_1^{c+1}.
\end{align*}
Then this braid is equivalent to the following:
\begin{align*}
\beta(a,b,c)&=\sigma_2\sigma_1^{a+1}\sigma_2\sigma_1^{b+1}\sigma_2\sigma_1^{c+1}
=\sigma_2\sigma_1(\sigma_2\sigma_1)\sigma_2^a\sigma_1^{b-1}\sigma_2^c(\sigma_1\sigma_2)\sigma_1
=\Delta\sigma_1\sigma_2^a\sigma_1^{b-1}\sigma_2^c\Delta.
\end{align*}
Hence $\legendrian(a,b,c)\subset J^1\sphere^1$ corresponds to the rainbow closure of the braid $\beta_0(a,b,c)=\sigma_1\sigma_2^a\sigma_1^{b-1}\sigma_2^c$.
\begin{align*}
&\begin{tikzpicture}[yshift=0.75cm,baseline=-.5ex,xscale=0.8]
\draw[thick] (-3,-1)--(-1.5,-1) (-0.5,-1) -- (1.5, -1) (2.5,-1) -- (3,-1);
\draw[thick, rounded corners] (-3,-1.5) -- (-2.5,-1.5) -- (-2,-2) -- (0,-2) (-0.5,-1.5) -- (0, -1.5) (1, -1.5) -- (1.5,-1.5) (2.5, -1.5) -- (3,-1.5);
\draw[thick, rounded corners] (-3,-2) -- (-2.5,-2) -- (-2,-1.5) -- (-1.5,-1.5) (1,-2) -- (3,-2);
\draw[thick] (-1.5,-0.9) rectangle node {$a$} (-0.5, -1.6) (0,-2.1) rectangle node {$b-1$} (1,-1.4) (1.5,-1.6) rectangle node {$c$} (2.5,-0.9);
\draw[thick] (-3,-1) to[out=180,in=0] (-3.5,-0.75) to[out=0,in=180] (-3,-0.5) -- (3,-0.5) to[out=0,in=180] (3.5,-0.75) to[out=180,in=0] (3,-1);
\draw[thick] (-3,-1.5) to[out=180,in=0] (-4,-0.75) to[out=0,in=180] (-3,0) -- (3,0) to[out=0,in=180] (4,-0.75) to[out=180,in=0] (3,-1.5);
\draw[thick] (-3,-2) to[out=180,in=0] (-4.5,-0.75) to[out=0,in=180] (-3,0.5) -- (3,0.5) to[out=0,in=180] (4.5,-0.75) to[out=180,in=0] (3,-2);
\draw[thick, dashed] (-3,-2.2) rectangle (3,-0.8) (0,-2.2) node[below] {$\beta_0(a,b,c)$};
\end{tikzpicture}&
\begin{tikzpicture}[yshift=0.25cm,baseline=-.5ex,xscale=0.8]
\draw[thick] (-1.5,-0.5) -- (-1,-0.5) (-1.5,0) -- (-1,0) (0.5,-0.5) -- (0,-0.5) (0.5,0) -- (0,0) (-1,-0.6) rectangle node {$r$} (0,0.1);
\end{tikzpicture}&=
\begin{tikzpicture}[yshift=0.75cm,baseline=-.5ex,xscale=0.8]
\draw[thick, rounded corners] (-1.5,-0.5) -- (-1,-0.5) -- (-0.5, -1) -- (-0.3, -1) (-1.5,-1) -- (-1,-1) -- (-0.5, -0.5) -- (-0.3, -0.5) (1.5,-0.5) -- (1,-0.5) -- (0.5, -1) -- (0.3, -1) (1.5,-1) -- (1,-1) -- (0.5, -0.5) -- (0.3, -0.5);
\draw[dotted] (-0.3,-0.5) -- (0.3,-0.5) (-0.3,-1) -- node[below] {$\underbrace{\hphantom{\hspace{2cm}}}_r$} (0.3,-1);
\end{tikzpicture}
\end{align*}

\begin{remark}
One can easily check the quiver $\quiver^{\mathsf{brick}}(a,b,c)$ from the \emph{brick diagram} of $\beta_0(a,b,c)$ described in \cite{GSW2020b} looks as follows:
\[
\quiver^{\mathsf{brick}}(a,b,c)=\begin{tikzpicture}[baseline=-.5ex,scale=1.2]
\draw[gray] (-5,0.5) -- (5,0.5) (-5,0) -- (5,0) (-5,-0.5) -- (5,-0.5);
\draw[gray] (-4.5,0) -- (-4.5,-0.5) node[below] {$\sigma_2$} (-4,0) -- (-4, 0.5) node[above] {$\sigma_1$} (-3.5, 0) -- (-3.5, 0.5) node[above] {$\sigma_1$} (-3, 0) -- (-3, 0.5) node[above] {$\sigma_1$} (-2.5,0.5) node[above] {$\cdots$} (-2,0) -- (-2,0.5) node[above] {$\sigma_1$} (-1.5,0) -- (-1.5,0.5) node[above] {$\sigma_1$};
\draw[gray] (-1,0) -- (-1, -0.5) node[below] {$\sigma_2$} (-0.5,0) -- (-0.5, -0.5) node[below] {$\sigma_2$} (0, 0) -- (0, -0.5) node[below] {$\sigma_2$} (0.5,-0.5) node[below] {$\cdots$} (1,0) -- (1,-0.5) node[below] {$\sigma_2$} (1.5,0) -- (1.5, -0.5) node[below] {$\sigma_2$};
\draw[gray] (2,0) -- (2,0.5) node[above] {$\sigma_1$} (2.5,0) -- (2.5,0.5) node[above] {$\sigma_1$} (3,0) -- (3,0.5) node[above] {$\sigma_1$} (3.5,0.5) node[above] {$\cdots$} (4,0) -- (4,0.5) node[above] {$\sigma_1$} (4.5,0) -- (4.5,0.5) node[above] {$\sigma_1$};
\draw (-2.75,0.5) node[yshift=4ex] {$\overbrace{\hphantom{\hspace{3cm}}}^a$};
\draw (0.25,-0.5) node[yshift=-4ex] {$\underbrace{\hphantom{\hspace{3cm}}}_{b-1}$};
\draw (3.25,0.5) node[yshift=4ex] {$\overbrace{\hphantom{\hspace{3cm}}}^c$};
\draw[thick, fill] (-3.75, 0.25) circle (1pt) node (A1) {} (-3.25, 0.25) circle (1pt) node (A2) {} (-1.75, 0.25) circle (1pt) node (A3) {} (0.25, 0.25) circle (1pt) node (Aa) {} (-2.5,0.25) node (Adots) {$\cdots$} (2.25, 0.25) circle (1pt) node (B1) {} (2.75,0.25) circle (1pt) node (B2) {} (4.25, 0.25) circle (1pt) node (Bb) {} (3.5,0.25) node (Bdots) {$\cdots$};
\draw[thick, fill] (-2.75, -0.25) circle (1pt) node (C1) {} (-0.75, -0.25) circle (1pt) node (C2) {} (-0.25, -0.25) circle (1pt) node (C3) {} (1.25, -0.25) circle (1pt) node (Cc) {} (0.5, -0.25) node (Cdots) {$\cdots$};
\draw[thick,->] (A1) -- (A2);
\draw[thick,->] (A2) -- (Adots);
\draw[thick,->] (Adots) -- (A3);
\draw[thick,->] (A3) -- (Aa);
\draw[thick,->] (Aa) -- (B1);
\draw[thick,->] (B1) -- (B2);
\draw[thick,->] (B2) -- (Bdots);
\draw[thick,->] (Bdots) -- (Bb);
\draw[thick,->] (C1) -- (C2);
\draw[thick,->] (C2) -- (C3);
\draw[thick,->] (C3) -- (Cdots);
\draw[thick,->] (Cdots) -- (Cc);
\draw[thick,->] (Aa) -- (C1);
\end{tikzpicture}
\]
Then this quiver $\quiver^{\mathsf{brick}}(a,b,c)$ is obviously mutation equivalent to the bipartite quiver $\quiver(a,b,c)$.
\end{remark}

It is not hard to check that $\beta(1,b,c)$ is a stabilization of $\beta(\dynA_n)=\sigma_1^{n+3}$ for $n=b+c-1$ since
\begin{align*}
\beta(\dynA_n)&=\sigma_1^{n+3}=\sigma_1^{b+1}\sigma_1^{c+1}\stackrel{\Move{S}}{\longrightarrow}\sigma_2\sigma_1^2\sigma_2\sigma_1^{b+1}\sigma_2\sigma_1^{c+1}=\beta(1,b,c).
\end{align*}

\begin{lemma}\label{lemma:stabilized An}
The $N$-graph $\ngraph(1,b,c)$ is a stabilization of $\ngraph(\dynA_n)$ for $n=b+c-1$.
\end{lemma}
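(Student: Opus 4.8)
The plan is to promote, to the level of $N$-graphs on $\disk^2$, the braid-word identity $\beta(\dynA_n)=\sigma_1^{b+1}\sigma_1^{c+1}\stackrel{\Move{S}}{\longrightarrow}\sigma_2\sigma_1^2\sigma_2\sigma_1^{b+1}\sigma_2\sigma_1^{c+1}=\beta(1,b,c)$ established just above, where $n=b+c-1$. Recall that $\ngraph(\dynA_n)$ is the $2$-graph in the first row of Table~\ref{table:ADE type}: a planar caterpillar of $n+1$ blue trivalent vertices joined consecutively by the $n$ interior edges carrying the $\sfI$-cycles $\gamma_1,\dots,\gamma_n$, with $n+3$ blue endpoints on $\sphere^1$. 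Writing $n+3=(b+1)+(c+1)$, split the boundary endpoints into the first $b+1$ and the last $c+1$; this exhibits $\ngraph(\dynA_n)$ as the concatenation, along a single blue edge, of the $N$-graph realizing $\sigma_1^{b+1}$ and the one realizing $\sigma_1^{c+1}$, that is, of $\ngraph(\dynA_b)$ and $\ngraph(\dynA_c)$ glued at their seam.

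First I would apply the stabilization move $\Move{S}$ of Remark~\ref{remark:stabilization} (Figure~\ref{fig:stab. on disk}) along a short boundary arc placed between these two groups of endpoints. For $N=2$ there are no intermediate colors, so $\Move{S}$ introduces the new outermost color $(2,3)$, which we draw red, as a single red graph having one red trivalent vertex, while simultaneously inserting near that arc a blue "cup'' and a blue--red hexagonal point. Reading around $\sphere^1$, the boundary of the resulting $3$-graph $\Move{S}(\ngraph(\dynA_n))$ is then precisely the word $\beta(1,b,c)$, in agreement with the braid computation above: the three edges of the new red trivalent vertex supply the three $\sigma_2$-separators and the blue cup supplies the extra $\sigma_1^2$.

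Next I would normalize $\Move{S}(\ngraph(\dynA_n))$ by a sequence of Moves~\Move{I} and \Move{II} (recall Move~\Move{III} decomposes into these): push the blue cup through the adjacent blue trivalent vertex, use Move~\Move{I} to slide the new hexagonal point into the center of the disk and create the central $\sfY$-configuration, and straighten the red edges into the standard radial position. One then recognizes the tripod picture of $\ngraph(1,b,c)$ from Figure~\ref{figure:tripod}: the three blue caterpillars are the $\dynA_1$-, $\dynA_b$- and $\dynA_c$-legs, the three radial red edges are the $\sigma_2$-separators, and the central hexagonal point closes up the $\sfY$-cycle $\gamma_1$. Comparing the two $N$-graphs edge by edge yields $\ngraph(1,b,c)\in[\Move{S}(\ngraph(\dynA_n))]$, which is the assertion. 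As a byproduct, useful in the sequel, the cycles $\gamma_1,\dots,\gamma_n$ are untouched by $\Move{S}$ (which alters only a boundary collar disjoint from all of them) and, transported through the above moves via the rules of Figure~\ref{fig:cycles under moves}, they become the good tuple $\nbasis(1,b,c)$, the old seam edge now carrying the $\sfY$-cycle $\gamma_1$.

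The main obstacle is the honest bookkeeping in the last two paragraphs: verifying that the explicit local picture produced by $\Move{S}$, after the prescribed Moves~\Move{I} and \Move{II}, reproduces exactly the tripod $\ngraph(1,b,c)$, and not merely some $N$-graph in the same equivalence class up to a relabelling of the $\gamma_i$. This is a finite pictorial check, but it requires care in confirming at each step that the hypotheses of the move being applied actually hold (for instance Move~\Move{II} needs a monochromatic trivalent vertex meeting an edge of the consecutive color), and in matching the induced ordering of the cycles with the labelling fixed in Figure~\ref{figure:tripod}.
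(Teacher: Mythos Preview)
Your overall strategy---stabilize $\ngraph(\dynA_n)$ and then simplify to the tripod---is exactly the paper's, and you correctly isolate the pictorial bookkeeping as the crux. But the specific plan you sketch does not go through as stated. First, the output of $\Move{S}$ on a $2$-graph is not quite what you describe: it introduces a red trivalent vertex and a separate blue cup, but \emph{no} hexagonal point (look again at Figure~\ref{fig:stab. on disk} with $N=2$); the hexagonal points only appear later. Second, and more seriously, the boundary word of $\Move{S}(\ngraph(\dynA_n))$ is not literally $\beta(1,b,c)$: cyclically it reads $\sigma_1^{b+1}\sigma_2^2\sigma_1^2\sigma_2\sigma_1^{c+1}$, which has a block $\sigma_2^2$, whereas $\beta(1,b,c)=\sigma_2\sigma_1^2\sigma_2\sigma_1^{b+1}\sigma_2\sigma_1^{c+1}$ does not. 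Since Moves~\Move{I} and \Move{II} fix the boundary, they alone cannot bridge this.

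The paper handles this by interleaving two ingredients you are missing. It glues on elementary \emph{annular} $N$-graphs built from Legendrian Reidemeister~\Move{RIII} moves (see Figure~\ref{fig:elementary annulus N-graph}) to rearrange the boundary word, and it introduces a ``generalized push-through'' move~$\Move{II^*}$---a specific composite of several Move~\Move{II}'s drawn explicitly in the proof---to drag the entire blue $\dynA$-zigzag through the red edges in one stroke. The sequence is: $\Move{S}$, an annular \Move{RIII}-padding, $\Move{II^*}$, a second annular \Move{RIII}-padding, a second $\Move{II^*}$, and a final Move~\Move{II} at the center creating the hexagonal point of the $\sfY$-cycle. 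Your step ``push the blue cup through the adjacent blue trivalent vertex'' has no counterpart among the allowed moves; what actually happens is that the red trivalent vertex is dragged \emph{through} the blue caterpillar via $\Move{II^*}$, converting blue vertices to red ones (and back) along the way. So your outline is on the right track but underestimates the machinery: the annular pieces and the $\Move{II^*}$ move are where the content lives.
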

\begin{proof}
According to Remark~\ref{remark:stabilization} and Figure~\ref{fig:stab. on disk}, a stabilization of $\ngraph(\dynA_n)$ is given as the second picture in Figure~\ref{figure:proof of stabilization}.
Then by adding an annular $N$-graph corresponding to a sequence of $\Move{RIII}$, we obtain the third, which produces the fourth by applying the following generalized push-through move.
\[
\begin{tikzcd}
\begin{tikzpicture}[baseline=-.5ex,scale=0.6]
\draw[dashed] \boundellipse{0.75,0}{3}{2};
\draw[blue, thick](-1.2,1.5) -- (0,1.5) (-2.15,0.5) -- (0,0.5) (-2.15,-0.5) -- (0,-0.5) (-1.2,-1.5) -- (0,-1.5);
\draw[blue, thick] (0.5,2) --  (0,1.5) to[out=-60,in=60] (0,0.5) to[out=-60,in=100] (0.1,0.2) (0.1,-0.2) to[out=-100,in=60] (0,-0.5) to[out=-60,in=60] (0,-1.5) -- (0.5,-2);
\draw[blue, thick, densely dotted] (0.1, 0.2) -- (0.1, -0.2);
\draw[red, thick] (-0.35,1.85) --  (0,1.5) to[out=-120,in=120] (0,0.5) to[out=-120,in=80] (-0.1,0.2) (-0.1,-0.2) to[out=-80,in=120] (0,-0.5) to[out=-120,in=120] (0,-1.5) -- (-0.35,-1.85);
\draw[red, thick, densely dotted] (-0.1, 0.2) -- (-0.1, -0.2);
\draw[red, thick, fill] (3.75,0) -- (2.5,0) circle (2pt) -- (0,-1.5) (2.5,0) -- (2, 0.5) circle (2pt) -- (0,1.5) (2,0.5) -- (1.5,0) circle (2pt) -- (0,-0.5) (1.5,0) -- (1, 0.5) circle (2pt) -- (0,0.5);
\draw[red, thick, dashed] (1,0.5) -- (0.5,0);
\draw[fill=white, thick] (0,1.5) circle (2pt) (0,0.5) circle (2pt) (0,-0.5) circle (2pt) (0,-1.5) circle (2pt);
\end{tikzpicture}
\arrow[leftrightarrow, r,"\Move{II^*}"]&
\begin{tikzpicture}[baseline=-.5ex,scale=0.6]
\draw[dashed] \boundellipse{1.7,0}{2.5}{2};
\draw[blue, thick, fill] (3,0) -- (2.5,0) circle (2pt) -- (0,-1.5) (2.5,0) -- (2, 0.5) circle (2pt) -- (0,1.5) (2,0.5) -- (1.5,0) circle (2pt) -- (-0.75,-0.5) (1.5,0) -- (1, 0.5) circle (2pt) -- (-0.75,0.5);
\draw[blue, thick, dashed] (1,0.5) -- (0.5,0);
\draw[blue, thick] (3.4,1.5) -- (3,0) -- (3.4,-1.5);
\draw[red, thick] (2.5,1.9) -- (3,0) -- (2.5,-1.9) (3,0) -- (4.2,0);
\draw[fill=white, thick] (3,0) circle (2pt);
\end{tikzpicture}
\end{tikzcd}
\]

Now we add an annular $N$-graph consisting of $\Move{RIII}$'s as above to obtain the fifth $N$-graph in Figure~\ref{figure:proof of stabilization}, which is the same as $\ngraph(1,b,c)$ as desired up to Move~\Move{II} at the center.
\end{proof}

\begin{figure}[ht]
\[
\begin{tikzcd}
\ngraph(A_n)=\begin{tikzpicture}[baseline=-.5ex,scale=0.6]
\draw[thick] (0,0) circle (3cm);
\foreach \i in {120,240} {
\draw[blue, thick, fill] (0,0) -- (60+\i:1) circle (2pt) -- (100+\i:3) (60+\i:1) -- (50+\i:1.5) circle (2pt) -- (20+\i:3) (50+\i:1.5) -- (70+\i:1.75) circle (2pt) -- (80+\i:3) (70+\i:1.75) -- (50+\i:2) circle (2pt) -- (40+\i:3);
\draw[blue, thick, dashed] (50+\i:2) -- (60+\i:3);
}
\end{tikzpicture}
\arrow[r,"\Move{S}"]&
\begin{tikzpicture}[baseline=-.5ex,scale=0.6]
\draw[thick] (0,0) circle (3cm);
\foreach \i in {120,270} {
\draw[blue, thick, fill] (0,0) -- (60+\i:1) circle (2pt) -- (100+\i:3) (60+\i:1) -- (50+\i:1.5) circle (2pt) -- (20+\i:3) (50+\i:1.5) -- (70+\i:1.75) circle (2pt) -- (80+\i:3) (70+\i:1.75) -- (50+\i:2) circle (2pt) -- (40+\i:3);
\draw[blue, thick, dashed] (50+\i:2) -- (60+\i:3);
}
\draw[red, thick, fill] (240:3) -- (260:2) circle (2pt) -- (250:3) (260:2) -- (280:3);
\draw[blue, thick, rounded corners] (260:3) -- (265:2.5) -- (270:3);
\end{tikzpicture}\arrow[ld,"\text{Legendrian isotopy}"{sloped}]\\
\begin{tikzpicture}[baseline=-.5ex,scale=0.5]
\draw[thick](0,0) circle (5cm);
\draw[dashed] (0,0) circle (3cm) ;
\foreach \i in {120,270} {
\draw[blue, thick, fill] (60+\i:1) circle (2pt) (50+\i:1.5) circle (2pt) (70+\i:1.75) circle (2pt) (50+\i:2) circle (2pt);
\draw[blue, thick] (0,0) -- (60+\i:1) -- (100+\i:3) -- (100+\i:4) (60+\i:1) -- (50+\i:1.5) -- (20+\i:3) -- (20+\i:4) (50+\i:1.5) -- (70+\i:1.75) -- (80+\i:3) -- (80+\i:4) (70+\i:1.75) -- (50+\i:2) -- (40+\i:3) -- (40+\i:4);
\draw[blue, thick, dashed] (50+\i:2) -- (60+\i:3) -- (60+\i:4);
}
\draw[red, thick, fill] (260:2) circle (2pt);
\draw[red, thick] (240:5) -- (240:3) -- (260:2) (260:2) -- (250:3) -- (250:5) (260:2) -- (280:3) -- (290:4);
\draw[blue, thick, rounded corners] (270:5) -- (260:3) -- (265:2.5) -- (270:3) -- (290:4);
\draw[blue, thick] (140:4) -- (140:5) (160:4) -- (160:5) (200:4) -- (200:5) (220:4) -- (220:5);
\draw[blue, thick, dashed] (180:4) -- (180:5);
\draw[blue, thick] (290:4) to[out=360,in=240] (310:4) (330:4) to[out=40,in=280] (350:4) (350:4) to[out=420,in=-60] (10:4) (10:4) -- (30:5);
\draw[red, thick] (290:4) to[out=410,in=190] (310:4) (330:4) to[out=450,in=230] (350:4) (350:4) to[out=470,in=250] (370:4) (290:4) -- (290:5) (310:4) -- (310:5) (350:4) -- (350:5) (370:4) -- (370:5) (10:4) -- (50:5);
\draw[blue, thick, dashed] (310:4) to[out=380,in=260] (330:4);
\draw[red, thick, dashed] (310:4) to[out=430,in=210] (330:4) (330:4) -- (330:5);
\draw[thick, fill=white] (290:4) circle (2pt) (310:4) circle (2pt) (330:4) circle (2pt) (350:4) circle (2pt) (370:4) circle (2pt);
\end{tikzpicture}\arrow[r,"\Move{II^*}"]&
\begin{tikzpicture}[baseline=-.5ex,scale=0.6]
\draw[thick] (0,0) circle (3cm);
\foreach \i in {120} {
\draw[blue, thick, fill] (60+\i:1) circle (2pt) (50+\i:1.5) circle (2pt) (70+\i:1.75) circle (2pt) (50+\i:2) circle (2pt);
\draw[blue, thick] (0,0) -- (60+\i:1) -- (100+\i:3) (60+\i:1) -- (50+\i:1.5) -- (20+\i:3) (50+\i:1.5) -- (70+\i:1.75) -- (80+\i:3) (70+\i:1.75) -- (50+\i:2) -- (40+\i:3);
\draw[blue, thick, dashed] (50+\i:2) -- (60+\i:3);
}
\foreach \i in {250} {
\draw[red, thick, fill] (60+\i:1) circle (2pt) (50+\i:1.5) circle (2pt) (70+\i:1.75) circle (2pt) (50+\i:2) circle (2pt);
\draw[red, thick] (0,0) -- (60+\i:1) -- (100+\i:3) (60+\i:1) -- (50+\i:1.5) -- (20+\i:3) (50+\i:1.5) -- (70+\i:1.75) -- (80+\i:3) (70+\i:1.75) -- (50+\i:2) -- (40+\i:3);
\draw[red, thick, dashed] (50+\i:2) -- (60+\i:3);
}
\draw[red, thick, fill] (240:2) circle (2pt);
\draw[red, thick] (240:3) -- (240:2) (240:2) -- (0,0) -- (120:3) (240:2) -- (250:3);
\draw[blue, thick] (260:3) -- (0,0) -- (80:3);
\draw[thick, fill=white] (0,0) circle (2pt);
\end{tikzpicture}\arrow[ld,"\text{Legendrian isotopy}"{sloped}]\\
\begin{tikzpicture}[baseline=-.5ex,scale=0.5]
\draw[thick](0,0) circle (5cm);
\draw[dashed] (0,0) circle (3cm); 
\foreach \i in {120} {
\draw[blue, thick, fill] (60+\i:1) circle (2pt) (50+\i:1.5) circle (2pt) (70+\i:1.75) circle (2pt) (50+\i:2) circle (2pt);
\draw[blue, thick] (0,0) -- (60+\i:1) -- (100+\i:3) -- (100+\i:4) (60+\i:1) -- (50+\i:1.5) -- (20+\i:3) -- (20+\i:4) (50+\i:1.5) -- (70+\i:1.75) -- (80+\i:3) -- (80+\i:4) (70+\i:1.75) -- (50+\i:2) -- (40+\i:3) -- (40+\i:4);
\draw[blue, thick, dashed] (50+\i:2) -- (60+\i:3) -- (60+\i:4);
}
\foreach \i in {250} {
\draw[red, thick, fill] (60+\i:1) circle (2pt) (50+\i:1.5) circle (2pt) (70+\i:1.75) circle (2pt) (50+\i:2) circle (2pt);
\draw[red, thick] (0,0) -- (60+\i:1) -- (100+\i:3) -- (100+\i:4) (60+\i:1) -- (50+\i:1.5) -- (20+\i:3) -- (20+\i:4) (50+\i:1.5) -- (70+\i:1.75) -- (80+\i:3) -- (80+\i:4) (70+\i:1.75) -- (50+\i:2) -- (40+\i:3) -- (40+\i:4);
\draw[red, thick, dashed] (50+\i:2) -- (60+\i:3) -- (60+\i:4);
}
\draw[red, thick, fill] (240:2) circle (2pt);
\draw[red, thick] (240:5) -- (240:3) -- (240:2) (240:2) -- (0,0) -- (120:5) (240:2) -- (250:3) -- (270:4);
\draw[blue, thick] (270:4) -- (260:3) -- (0,0) -- (80:5);
\draw[thick, fill=white] (0,0) circle (2pt);
\draw[blue, thick] (140:4) -- (140:5) (160:4) -- (160:5) (200:4) -- (200:5) (220:4) -- (220:5);
\draw[blue, thick, dashed] (180:4) -- (180:5);
\begin{scope}[rotate=-20]
\draw[red, thick] (290:4) to[out=360,in=240] (310:4) (330:4) to[out=40,in=280] (350:4) (350:4) to[out=420,in=-60] (10:4) (10:4) -- (30:5);
\draw[blue, thick] (290:4) to[out=410,in=190] (310:4) (330:4) to[out=450,in=230] (350:4) (350:4) to[out=470,in=250] (370:4) (290:4) -- (290:5) (310:4) -- (310:5) (350:4) -- (350:5) (370:4) -- (370:5) (10:4) -- (50:5);
\draw[red, thick, dashed] (310:4) to[out=380,in=260] (330:4);
\draw[blue, thick, dashed] (310:4) to[out=430,in=210] (330:4) (330:4) -- (330:5);
\draw[thick, fill=white] (290:4) circle (2pt) (310:4) circle (2pt) (330:4) circle (2pt) (350:4) circle (2pt) (370:4) circle (2pt);
\end{scope}
\end{tikzpicture}\arrow[r,"\Move{II^*}"]&
\begin{tikzpicture}[baseline=-.5ex,scale=0.6]
\draw[thick] (0,0) circle (3cm);
\draw[blue, thick, fill] (180:0.5) -- (180:1) circle (2pt) -- (220:3) (180:1) -- (170:1.5) circle (2pt) -- (140:3) (170:1.5) -- (190:1.75) circle (2pt) -- (200:3) (190:1.75) -- (170:2) circle (2pt) -- (160:3);
\draw[blue, thick, dashed] (170:2) -- (180:3);
\draw[blue, thick, fill] (300:0.5) -- (300:1) circle (2pt) -- (340:3) (300:1) -- (290:1.5) circle (2pt) -- (260:3) (290:1.5) -- (310:1.75) circle (2pt) -- (320:3) (310:1.75) -- (290:2) circle (2pt) -- (280:3);
\draw[blue, thick, dashed] (290:2) -- (300:3);
\draw[blue, thick] (80:3) -- (180:0.5) -- (300:0.5) -- (40:3);
\draw[red, thick, fill] (240:3) -- (240:1) circle (2pt) -- (180:0.5) (240:1) -- (300:0.5);
\draw[red, thick] (120:3) -- (180:0.5) to[out=30,in=90] (300:0.5) -- (0:3);
\draw[fill=white, thick] (180:0.5) circle (2pt) (300:0.5) circle (2pt);
\end{tikzpicture}\arrow[ld, "\Move{II}"']\\
\begin{tikzpicture}[baseline=-.5ex,scale=0.6]
\draw[thick] (0,0) circle (3cm);
\draw[red, thick] (0,0) -- (0:3) (0,0) -- (120:3) (0,0) -- (240:3);
\draw[blue, thick, fill] (0,0) -- (60:1) circle (2pt) -- (40:3) (60:1) -- (80:3);
\draw[blue, thick, fill] (0,0) -- (180:1) circle (2pt) -- (220:3) (180:1) -- (170:1.5) circle (2pt) -- (140:3) (170:1.5) -- (190:1.75) circle (2pt) -- (200:3) (190:1.75) -- (170:2) circle (2pt) -- (160:3);
\draw[blue, thick, dashed] (170:2) -- (180:3);
\draw[blue, thick, fill] (0,0) -- (300:1) circle (2pt) -- (340:3) (300:1) -- (290:1.5) circle (2pt) -- (260:3) (290:1.5) -- (310:1.75) circle (2pt) -- (320:3) (310:1.75) -- (290:2) circle (2pt) -- (280:3);
\draw[blue, thick, dashed] (290:2) -- (300:3);
\draw[thick, fill=white] (0,0) circle (2pt);
\end{tikzpicture}
=\ngraph(1,b,c)
\end{tikzcd}
\]
\caption{The $N$-graph $\ngraph(1,b,c)$ is a stabilization of $\ngraph(\dynA_{n})$ for $n=b+c-1$}
\label{figure:proof of stabilization}
\end{figure}

\begin{definition}
Let $(\ngraph, \nbasis)$ and $(\ngraph', \nbasis')$ be pairs of $N$-graphs and good tuples of cycles.
We say that $(\ngraph, \nbasis)$ is \emph{Legendrian mutation equivalent} to $(\ngraph', \nbasis')$ if there exists a sequence $\mutation$ of Legendrian mutations which sends $(\ngraph, \nbasis)$ to $(\ngraph', \nbasis')$ up to equivalence. That is,
\[
[\mutation(\ngraph,\nbasis)] = [(\ngraph', \nbasis')].
\]

In particular, $(\ngraph, \nbasis)$ is said to be \emph{of type $\dynX$} or \emph{of type $(a,b,c)$} if it is Legendrian mutation equivalent to $(\ngraph(\dynX), \nbasis(\dynX))$ or $(\ngraph(a,b,c),\nbasis(a,b,c))$, respectively.
\end{definition}

\subsection{Coxeter mutation for tripods}

For a bipartite quiver $\quiver$, we have two sets of vertices $I_+$ and
$I_-$ so that all edges are oriented from $I_+$ to $I_-$.
By definition, for a tripod $\quiver(a,b,c)$,  the central vertex $1$ is lying in $I_+$. 
Let $\mutation_+$ and $\mutation_-$ be sequences of mutations defined by 
compositions of mutations corresponding to each and every vertex in $I_+$ and 
$I_-$, respectively.
A Coxeter mutation $\qcoxeter$ is the composition
\[
\qcoxeter=\mutation_- \mutation_+ = \prod_{i\in I_-} \mutation_i\cdot\prod_{i\in I_+} \mutation_i.
\]
\begin{remark}
For any sequence $\mutation$ of mutations, we will use the right-to-left convention. Namely, the rightmost mutation will be applied first on the quiver $\quiver$.
\end{remark}

Similarly, we define the Legendrian Coxeter mutation, which will be denoted by $\ncoxeter$, on a bipartite $N$-graph $\ngraph$ as follows:
\begin{definition}[Legendrian Coxeter mutation]
For a bipartite $N$-graph $\ngraph$ with decomposed sets of cycles $\nbasis=\nbasis_+\cup\nbasis_-$, we define the \emph{Legendrian Coxeter mutation} $\ncoxeter$ as the composition of Legendrian mutations
\[
\ncoxeter = \prod_{\cycle\in \nbasis_-}\mutation_\cycle \cdot \prod_{\cycle\in\nbasis_+}\mutation_\cycle.
\]
\end{definition}

\begin{lemma}\label{lemma:Legendriam Coxeter mutation of type An}
The effect of the Legendrian Coxeter mutation on $(\ngraph(\dynA_n),\nbasis(\dynA_n))$ is the clockwise $\frac{2\pi}{n+3}$-rotation.
\end{lemma}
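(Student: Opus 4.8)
The plan is to verify the statement by an explicit manipulation of weave pictures, following the local model for Legendrian mutation at an $\sfI$-cycle (Figure~\ref{figure:I-mutation}) and the local models for how cycles behave under the moves \Move{I} and \Move{II} (Figure~\ref{fig:cycles under moves}). First I would fix the rotationally-adapted model of $(\ngraph(\dynA_n),\nbasis(\dynA_n))$ displayed in the first row of Table~\ref{table:ADE type}: it is a monochromatic ($2$-graph, all {\color{blue}blue}) trivalent graph inside $\disk^2$ whose boundary is $\legendrian(\dynA_n)=\cl(\sigma_1^{n+3})$, carrying a chain of $\sfI$-cycles $\gamma_1,\dots,\gamma_n$ on consecutive interior edges $e_1,\dots,e_n$, with the {\color{orange!90!yellow}yellow}- and {\color{green!60!black}green}-shaded edges giving the bipartition $\nbasis=\nbasis_+\sqcup\nbasis_-$, colors alternating along the chain. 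Write $\rho\colon\disk^2\to\disk^2$ for the clockwise $\tfrac{2\pi}{n+3}$-rotation. The goal is to establish the equivalence $[\ncoxeter(\ngraph(\dynA_n),\nbasis(\dynA_n))]=[(\rho(\ngraph(\dynA_n)),\rho(\nbasis(\dynA_n)))]$, while keeping track of the relabeling of cycles this induces (a cyclic shift of indices together with a color swap $\nbasis_+\leftrightarrow\nbasis_-$, consistent with the fact that $\qcoxeter$ reverses all arrows of a bipartite quiver, cf.\ Lemma~\ref{lemma:order of coxeter mutation}).

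The computation itself has two passes. Since edges of the same bipartition class are pairwise non-adjacent (consecutive edges $e_i,e_{i+1}$ carry opposite colors), the sub-products $\mutation_+=\prod_{\cycle\in\nbasis_+}\mutation_\cycle$ and $\mutation_-=\prod_{\cycle\in\nbasis_-}\mutation_\cycle$ are each a simultaneous application of the elementary edge-flip of Figure~\ref{figure:I-mutation} to a disjoint set of edges, hence unambiguous. I would perform $\mutation_+$ first, bring the result back to weave-standard form by a planar isotopy together with the moves of Figure~\ref{fig:move1-6} near the two places where the chain meets $\boundary\disk^2$, and record the intermediate pair $(\ngraph',\nbasis')$, using Figure~\ref{fig:cycles under moves} to see that each cycle of $\nbasis_-$ is still represented by a (possibly long) $\sfI$-cycle; then repeat with $\mutation_-$ on $(\ngraph',\nbasis')$. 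The claim to check is that the composite of these $n$ flips and the accompanying isotopy is precisely the rigid rotation $\rho$: the ``interior'' of the chain gets translated one edge-step along the braid (a single uniform local picture, independent of $n$), and the two ``cap'' regions where the chain joins the $n+3$ boundary crossings transform exactly as $\rho$ dictates, which one verifies against the explicit local pictures there. It is convenient to organize the bulk of this by induction on $n$: after $\mutation_+$ the outermost cycle $\gamma_1$ (resp.\ $\gamma_n$) can be absorbed into a cap, reducing the interior to the $\dynA_{n-1}$ chain, with the base cases $n=1,2$ (the $(2,4)$- and $(2,5)$-torus links) done by hand.

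The main obstacle is bookkeeping rather than any single difficult idea. One must check that the many local flips and Reidemeister-type moves assemble into a \emph{globally rigid} rotation of the disk — not merely some Legendrian isotopy returning $\Legendrian(\ngraph(\dynA_n))$ to a smoothly isotopic surface — and, simultaneously, that the good tuple $\nbasis(\dynA_n)$ is transported to $\rho(\nbasis(\dynA_n))$ with the correct cyclic relabeling and color swap, so that the identity holds at the level of equivalence classes of \emph{pairs} $[\ngraph,\nbasis]$, and hence, via Theorem~\ref{thm:N-graph to seed} and Proposition~\ref{proposition:equivariance of mutations}, matches the Coxeter mutation $\qcoxeter$ on the associated seed. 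This $\dynA_n$ case is also the prototype for the Legendrian Coxeter mutation of the general tripod $(\ngraph(a,b,c),\nbasis(a,b,c))$ treated afterwards, so it is worth drawing the intermediate pictures carefully.
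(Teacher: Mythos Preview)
Your approach is essentially the same as the paper's: apply $\mutation_+$ first as a simultaneous flip at the odd-indexed (pairwise non-adjacent) $\sfI$-cycles, then $\mutation_-$, and verify pictorially that the composite is the clockwise $\tfrac{2\pi}{n+3}$-rotation. The paper's proof is much terser than yours---it simply displays the three pictures in Figure~\ref{figure:Legendrian Coxeter mutation on An} and observes that the outcome is the rotation---but the key intermediate observation the paper records and you do not is that after $\mutation_+$ the pair $(\ngraph(\dynA_n),\nbasis(\dynA_n))$ becomes its own vertical reflection (the $y\mapsto -y$ flip of the disk); once you see this, the second pass $\mutation_-$ is the same computation as $\mutation_+$ read in the reflected picture, and the composition of the two reflections is the claimed rotation, so no induction on $n$ is needed. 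One small correction: since $\ngraph(\dynA_n)$ is a $2$-graph (monochromatic), Moves~\Move{I} and~\Move{II} are bichromatic and play no role here---only planar isotopy and the elementary $\sfI$-flip of Figure~\ref{figure:I-mutation} are used.
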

\begin{proof}
We may assume that the Coxeter element $\ncoxeter$ can be represented by the sequence
\[
\ncoxeter=\mutation_-\mutation_+=(\mutation_{\cycle_2}\mutation_{\cycle_4}\mutation_{\cycle_6}\cdots)(\mutation_{\cycle_1}\mutation_{\cycle_3}\mutation_{\cycle_5}\dots).
\]
Then the action of $\ncoxeter$ on $\ngraph(\dynA_n)$ is as depicted in Figure~\ref{figure:Legendrian Coxeter mutation on An}, which is nothing but the clockwise $\frac{2\pi}{n+3}$-rotation of the original $N$-graph $(\ngraph(\dynA_n),\nbasis(\dynA_n))$ as claimed.
\end{proof}

\begin{figure}[ht]
\[
\begin{tikzcd}
\begin{tikzpicture}[baseline=-.5ex,scale=0.6]
\draw[thick] (0,0) circle (3);
\draw[green, line width=5, opacity=0.5] (-1.5,0.5) -- (-0.5, -0.5) (1, 0) -- (1.5, -0.5);
\draw[yellow, line width=5, opacity=0.5] (-2.5,-0.5) -- (-1.5, 0.5) (-0.5, -0.5) -- (0, 0) (1.5, -0.5) -- (2.5, 0.5);
\draw[blue, thick, fill] (0:3) -- (2.5,0.5) circle (2pt) -- (45:3) (2.5,0.5) -- (1.5,-0.5) circle (2pt) -- (-45:3) (1.5,-0.5) -- (1,0) (0.5,0) node {$\cdots$} (0,0) -- (-0.5, -0.5) circle (2pt) -- (-90:3) (-0.5, -0.5) -- (-1.5, 0.5) circle (2pt) -- (135:3) (-1.5, 0.5) -- (-2.5, -0.5) circle (2pt) -- (-135:3);
\draw[blue, thick] (-2.5,-0.5) -- (-180:3);
\draw (0,-3) node[below] {$(\ngraph(\dynA_n),\nbasis(\dynA_n))$};
\end{tikzpicture}
\arrow[r,|->,"\mutation_+"] &
\begin{tikzpicture}[baseline=-.5ex, scale=0.6]
\begin{scope}[yscale=-1]
\draw[thick] (0,0) circle (3);
\draw[green, line width=5, opacity=0.5] (-1.5,0.5) -- (-0.5, -0.5) (1, 0) -- (1.5, -0.5);
\draw[yellow, line width=5, opacity=0.5] (-2.5,-0.5) -- (-1.5, 0.5) (-0.5, -0.5) -- (0, 0) (1.5, -0.5) -- (2.5, 0.5);
\draw[blue, thick, fill] (0:3) -- (2.5,0.5) circle (2pt) -- (45:3) (2.5,0.5) -- (1.5,-0.5) circle (2pt) -- (-45:3) (1.5,-0.5) -- (1,0) (0.5,0) node {$\cdots$} (0,0) -- (-0.5, -0.5) circle (2pt) -- (-90:3) (-0.5, -0.5) -- (-1.5, 0.5) circle (2pt) -- (135:3) (-1.5, 0.5) -- (-2.5, -0.5) circle (2pt) -- (-135:3);
\draw[blue, thick] (-2.5,-0.5) -- (-180:3);
\end{scope}
\draw (0,-3) node[below] {$\mutation_+(\ngraph(\dynA_n),\nbasis(\dynA_n))$};
\end{tikzpicture}
\arrow[r,|->,"\mutation_-"] &
\begin{tikzpicture}[baseline=-.5ex, scale=0.6]
\begin{scope}[rotate=-45]
\draw[thick] (0,0) circle (3);
\draw[green, line width=5, opacity=0.5] (-1.5,0.5) -- (-0.5, -0.5) (1, 0) -- (1.5, -0.5);
\draw[yellow, line width=5, opacity=0.5] (-2.5,-0.5) -- (-1.5, 0.5) (-0.5, -0.5) -- (0, 0) (1.5, -0.5) -- (2.5, 0.5);
\draw[blue, thick, fill] (0:3) -- (2.5,0.5) circle (2pt) -- (45:3) (2.5,0.5) -- (1.5,-0.5) circle (2pt) -- (-45:3) (1.5,-0.5) -- (1,0) (0.5,0) node {\rotatebox{-45}{$\cdots$}} (0,0) -- (-0.5, -0.5) circle (2pt) -- (-90:3) (-0.5, -0.5) -- (-1.5, 0.5) circle (2pt) -- (135:3) (-1.5, 0.5) -- (-2.5, -0.5) circle (2pt) -- (-135:3);
\draw[blue, thick] (-2.5,-0.5) -- (-180:3);
\end{scope}
\draw (0,-3) node[below] {$\ncoxeter(\ngraph(\dynA_n),\nbasis(\dynA_n))$};
\end{tikzpicture}
\end{tikzcd}
\]
\caption{Legendrian Coxeter mutation $\ncoxeter$ on $(\ngraph(\dynA_n), \nbasis(\dynA_n))$}
\label{figure:Legendrian Coxeter mutation on An}
\end{figure}

\begin{remark}
The order of the Coxeter mutation is either $(n+3)/2$ if $n$ is odd or $n+3$ otherwise.
Since the Coxeter number $h=n+1$ for $\dynA_n$, this verifies Lemma~\ref{lemma:order of coxeter mutation}.
\end{remark}

Let $(\ngraph, \nbasis,\flags)$ be a triple of a good $N$-graph, a good tuple of cycles and flags $\flags$ on $\legendrian$.
Suppose that the quiver $\quiver(\ngraph,\nbasis)$ is bipartite and $\ncoxeter(\ngraph,\nbasis)$ is well-defined.
Then by Proposition~\ref{proposition:equivariance of mutations}, we have
\[
\Psi(\ncoxeter(\ngraph,\nbasis),\flags) = \qcoxeter(\Psi(\ngraph,\nbasis,\flags)).
\]

In particular, for quivers of type $\dynA_n$ or tripods we have the following corollary.
\begin{corollary}\label{corollary:Coxeter mutations}
For each $n\ge 1$ and $a,b,c\ge 1$, the Legendrian Coxeter mutation $\ncoxeter$ on $(\ngraph(\dynA_n),\nbasis(\dynA_n))$ or $(\ngraph(a,b,c),\nbasis(a,b,c))$ corresponds to the Coxeter mutation $\qcoxeter$ on $\quiver(\dynA_n)$ or $\quiver(a,b,c)$, respectively.
\end{corollary}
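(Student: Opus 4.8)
The plan is to deduce Corollary~\ref{corollary:Coxeter mutations} from the equivariance identity $\Psi(\ncoxeter(\ngraph,\nbasis),\flags)=\qcoxeter(\Psi(\ngraph,\nbasis,\flags))$ once two preliminary facts are in place: that the quiver attached to $(\ngraph(\dynA_n),\nbasis(\dynA_n))$ (resp. $(\ngraph(a,b,c),\nbasis(a,b,c))$) by $\Psi$ is exactly $\quiver(\dynA_n)$ (resp. $\quiver(a,b,c)$), and that the sequence of Legendrian mutations defining $\ncoxeter$ is actually performable on these $N$-graphs. For the first, I would read off the algebraic intersection numbers of the chosen cycles directly from the local models in Figure~\ref{fig:I-cycle with orientation and intersections}: the yellow and green edges of $\nbasis(\dynA_n)$ form a chain of consecutive $\sfI$-cycles meeting alternately positively and negatively, which is precisely the bipartite orientation of the path $\dynA_n$ recorded in Table~\ref{table:ADE type}; for the tripod, the three such chains together with the central $\sfY$-cycle $\cycle_1$ meeting each branch once reproduce $\quiver(a,b,c)$ as drawn in Figure~\ref{figure:tripod}. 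In both cases the coloring of $\nbasis$ into $\nbasis_+$ (yellow) and $\nbasis_-$ (green) is the bipartition $I_+\sqcup I_-$ of the vertex set, with the central cycle $\cycle_1\in\nbasis_+$.

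The second fact is where the real work lies. Since the quiver is bipartite, the cycles within $\nbasis_+$ pairwise have algebraic intersection $0$, and because $\ngraph(a,b,c)$ is free (hence has no bigons up to moves) their representatives can be taken disjoint; thus the Legendrian mutations $\{\mutation_\cycle\}_{\cycle\in\nbasis_+}$ commute and $\mutation_+=\prod_{\cycle\in\nbasis_+}\mutation_\cycle$ is well defined independently of the order, and likewise for $\mutation_-$. Realizability of the individual steps follows by reduction to the $\dynA$ case: for $(\ngraph(\dynA_n),\nbasis(\dynA_n))$ it is exactly Lemma~\ref{lemma:Legendriam Coxeter mutation of type An}, whose proof traces the intermediate $N$-graphs (Figure~\ref{figure:Legendrian Coxeter mutation on An}) and identifies $\ncoxeter$ with the clockwise $\tfrac{2\pi}{n+3}$-rotation. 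For the tripod, $\ngraph(a,b,c)$ is the concatenation of $\ngraph(\dynA_a),\ngraph(\dynA_b),\ngraph(\dynA_c)$ along the single $\sfY$-cycle $\cycle_1$; away from $\cycle_1$ the sequence $\ncoxeter$ restricts on each branch to the $\dynA$ Legendrian Coxeter mutation, while the step at $\cycle_1$ is the $\sfY$-mutation of Figure~\ref{figure:Y-mutation}, which (as noted there) decomposes into Moves~\Move{I}, \Move{II} and an $\sfI$-mutation. Assembling these, every Legendrian mutation occurring in $\ncoxeter$ is performable, with cumulative effect the color-swap-plus-annulus picture of Figure~\ref{figure:intro_Legendrian Coxeter mutation}.

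With both facts established, the corollary follows by applying Proposition~\ref{proposition:equivariance of mutations} once for each Legendrian mutation in the word $\ncoxeter=\prod_{\cycle\in\nbasis_-}\mutation_\cycle\cdot\prod_{\cycle\in\nbasis_+}\mutation_\cycle$, in order, using that $\Psi$ is well defined up to Legendrian isotopy (Theorem~\ref{thm:N-graph to seed}). This yields $\Psi(\ncoxeter(\ngraph,\nbasis),\flags)=\qcoxeter(\Psi(\ngraph,\nbasis,\flags))$ with $\qcoxeter=\mutation_-\mutation_+$ the cluster $\mathcal{X}$-Coxeter mutation; since the $\mathcal{X}$-mutation and the ordinary mutation agree on quivers (Remark~\ref{rmk_x_cluster_mutation}), the induced operation on the underlying quiver $\quiver(\ngraph,\nbasis)=\quiver(\dynA_n)$ or $\quiver(a,b,c)$ is precisely the Coxeter mutation $\qcoxeter$, as claimed. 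I expect the main obstacle to be the realizability bookkeeping in the second paragraph: ensuring that at every stage of the sequence the cycle to be mutated is still represented by a good $\sfI$- or $\sfY$-subgraph, so that Definition~\ref{def:legendrian mutation} applies. This is the only genuinely geometric point, rather than a formal one, and it is what forces the branch-by-branch analysis together with the appeal to Lemma~\ref{lemma:Legendriam Coxeter mutation of type An}.
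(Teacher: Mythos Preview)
Your proposal is correct and follows essentially the same approach as the paper: the corollary is deduced from Proposition~\ref{proposition:equivariance of mutations} applied to each factor of $\ncoxeter$, once one knows the quiver is the bipartite $\quiver(\dynA_n)$ or $\quiver(a,b,c)$ and that $\ncoxeter$ is well defined on these $N$-graphs. The paper records the identity $\Psi(\ncoxeter(\ngraph,\nbasis),\flags)=\qcoxeter(\Psi(\ngraph,\nbasis,\flags))$ just before the corollary and treats the well-definedness via Lemma~\ref{lemma:Legendriam Coxeter mutation of type An} for $\dynA_n$ and the subsequent tripod analysis (Figures~\ref{figure:center mutation}--\ref{figure:coxeter mutation}, leading to Proposition~\ref{proposition:effect of Legendrian Coxeter mutation}); your write-up simply folds these surrounding verifications into the argument itself.
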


By the mutation convention mentioned above, for each tripod $\ngraph(a,b,c)$, we always take a mutation at the central $\sfY$-cycle $\cycle_1$ first.
After the Legendrian mutation on $(\ngraph(a,b,c),\nbasis(a,b,c))$ at $\cycle_1$, we have the $N$-graph on the left in Figure~\ref{figure:center mutation}.
Then there are three shaded regions that we can apply the generalized push-through moves so that we obtain the $N$-graph on the right in Figure~\ref{figure:center mutation}.
\begin{figure}[ht]
\subfigure[\label{figure:center mutation}After the mutation at the central vertex]{
\begin{tikzcd}[ampersand replacement=\&]
\begin{tikzpicture}[baseline=-.5ex,scale=0.8]
\begin{scope}
\fill[opacity=0.1](85:3) to[out=-90,in=150] (60:1.3) arc (60:-60:0.3) arc (120:240:0.7) to[out=-30,in=150] (-25:3) arc (-25:85:3);
\end{scope}
\begin{scope}[rotate=120]
\fill[opacity=0.1](85:3) to[out=-90,in=150] (60:1.3) arc (60:-60:0.3) arc (120:240:0.7) to[out=-30,in=150] (-25:3) arc (-25:85:3);
\end{scope}
\begin{scope}[rotate=240]
\fill[opacity=0.1](85:3) to[out=-90,in=150] (60:1.3) arc (60:-60:0.3) arc (120:240:0.7) to[out=-30,in=150] (-25:3) arc (-25:85:3);
\end{scope}
\draw[thick] (0,0) circle (3cm);
\draw[green, line cap=round, line width=5, opacity=0.5] (50:1.5) to[out=-60,in=60] (0:1) -- (-60:1) (70:1.75) -- (50:2) (170:1.5) to[out=60,in=180] (120:1) -- (60:1) (190:1.75) -- (170:2) (290:1.5) to[out=180,in=300] (240:1) -- (180:1) (310:1.75) -- (290:2);
\draw[yellow, line cap=round, line width=5, opacity=0.5] (0,0) -- (60:1) (0,0) -- (180:1) (0,0) -- (300:1) (50:1.5) -- (70:1.75) (170:1.5) -- (190:1.75) (290:1.5) -- (310:1.75);
\draw[blue, thick] (0,0) -- (0:1) (0,0) -- (120:1) (0,0) -- (240:1);
\draw[red, thick, fill] (0,0) -- (60:1) circle (2pt) (0,0) -- (180:1) circle (2pt) (0,0) -- (300:1) circle (2pt);
\draw[red, thick] (0:1) -- (0:3) (120:1) -- (120:3) (240:1) -- (240:3);
\draw[red, thick] (60:1) -- (120:1) -- (180:1) -- (240:1) -- (300:1) -- (0:1) -- cycle;
\draw[blue, thick] (100:3) to[out=-80,in=60] (120:1) (-20:3) to[out=-200,in=-60] (0:1) (220:3) to[out=40,in=180] (240:1);
\draw[blue, thick] (50:1.5) to[out=-60,in=60] (0:1) (170:1.5) to[out=60,in=180] (120:1) (290:1.5) to[out=180,in=300] (240:1);
\draw[blue, thick, fill] (50:1.5) circle (2pt) -- (20:3) (50:1.5) -- (70:1.75) circle (2pt) -- (80:3) (70:1.75) -- (50:2) circle (2pt) -- (40:3);
\draw[blue, thick, dashed] (50:2) -- (60:3);
\draw[blue, thick, fill] (170:1.5) circle (2pt) -- (140:3) (170:1.5) -- (190:1.75) circle (2pt) -- (200:3) (190:1.75) -- (170:2) circle (2pt) -- (160:3);
\draw[blue, thick, dashed] (170:2) -- (180:3);
\draw[blue, thick, fill] (290:1.5) circle (2pt) -- (260:3) (290:1.5) -- (310:1.75) circle (2pt) -- (320:3) (310:1.75) -- (290:2) circle (2pt) -- (280:3);
\draw[blue, thick, dashed] (290:2) -- (300:3);
\draw[thick,fill=white] (0:1) circle (2pt) (120:1) circle (2pt) (240:1) circle (2pt);
\draw[thick, fill=white] (0,0) circle (2pt);
\end{tikzpicture}\arrow[r,"\Move{II^*}"]\&
\begin{tikzpicture}[baseline=-.5ex,scale=0.6]
\draw[thick] (0,0) circle (5cm);
\draw[dashed]  (0,0) circle (3cm);
\fill[opacity=0.1, even odd rule] (0,0) circle (3) (0,0) circle (5);
\foreach \i in {1,2,3} {
\begin{scope}[rotate=\i*120]
\begin{scope}[shift=(60:0.5)]
\fill[opacity=0.1, rounded corners] (0,0) -- (0:2) arc (0:120:2) -- cycle;
\end{scope}
\draw[green, line cap=round, line width=5, opacity=0.5] (60:1) -- (70:1.5) (90:1.75) -- (70:2);
\draw[yellow, line cap=round, line width=5, opacity=0.5] (0,0) -- (60:1) (70:1.5) -- (90:1.75);
\draw[blue, thick, rounded corners] (0,0) -- (0:3.4) to[out=-75,in=80] (-40:4);
\draw[red, thick, fill] (0,0) -- (60:1) circle (2pt) (60:1) -- (70:1.5) circle (2pt) -- (90:1.75) circle (2pt) -- (70:2) circle (2pt);
\draw[red, thick, dashed, rounded corners] (70:2) -- (60:2.8) -- (60:3.3) to[out=0,in=220] (40:4) (40:4) to[out=120,in=-20] (60:4);
\draw[red, thick, rounded corners] (70:1.5) -- (40:2.8) -- (40:3.3) to[out=-20,in=200] (20:4) (70:2) -- (80:2.8) -- (80:3.3) to[out=20,in=240] (60:4) (90:1.75) -- (100:2.8) -- (100:3.3) to[out=40,in=260] (80:4);
\draw[red, thick, rounded corners] (60:1) -- (20:3) -- (20:3.5) to[out=-70,in=50] (-40:4) (20:4) to[out=-50,in=120] (0:4.5) -- (0:5);
\draw[red, thick] (20:4) to[out=100,in=-40] (40:4) (60:4) to[out=140,in=0] (80:4);
\draw[blue, thick] (20:5) -- (20:4) to[out=140,in=-80] (40:4) (60:5) -- (60:4) to[out=180,in=-40] (80:4) -- (80:5);
\draw[blue, thick, rounded corners] (20:4) to[out=-70,in=100] (-20:4.5) -- (-20:5);
\draw[blue, thick, dashed] (40:4) to[out=160,in=-60] (60:4) (40:4) -- (40:5);
\draw[fill=white, thick] (20:4) circle (2pt) (40:4) circle (2pt) (60:4) circle (2pt) (80:4) circle (2pt) (-40:4) circle (2pt);
\end{scope}
\draw[fill=white, thick] (0,0) circle (2pt);
}
\end{tikzpicture}
\end{tikzcd}}
\subfigure[\label{figure:coxeter mutation}After Legendrian Coxeter mutation]{$
\begin{tikzpicture}[baseline=-.5ex,scale=0.6]
\draw[thick] (0,0) circle (5cm);
\draw[dashed]  (0,0) circle (3cm);
\fill[opacity=0.1, even odd rule] (0,0) circle (3) (0,0) circle (5);
\foreach \i in {1,2,3} {
\begin{scope}[rotate=\i*120]
\draw[green, line cap=round, line width=5, opacity=0.5] (60:1) -- (50:1.5) (70:1.75) -- (50:2);
\draw[yellow, line cap=round, line width=5, opacity=0.5] (0,0) -- (60:1) (50:1.5) -- (70:1.75);
\draw[blue, thick, rounded corners] (0,0) -- (0:3.4) to[out=-75,in=80] (-40:4);
\draw[red, thick, fill] (0,0) -- (60:1) circle (2pt) (60:1) -- (50:1.5) circle (2pt) -- (70:1.75) circle (2pt) -- (50:2) circle (2pt);
\draw[red, thick, dashed, rounded corners] (50:2) -- (60:2.8) -- (60:3.3) to[out=0,in=220] (40:4) (40:4) to[out=120,in=-20] (60:4);
\draw[red, thick, rounded corners] (50:2) -- (40:2.8) -- (40:3.3) to[out=-20,in=200] (20:4) (70:1.75) -- (80:2.8) -- (80:3.3) to[out=20,in=240] (60:4) (60:1) -- (100:2.8) -- (100:3.3) to[out=40,in=260] (80:4);
\draw[red, thick, rounded corners] (50:1.5) -- (20:3) -- (20:3.5) to[out=-70,in=50] (-40:4) (20:4) to[out=-50,in=120] (0:4.5) -- (0:5);
\draw[red, thick] (20:4) to[out=100,in=-40] (40:4) (60:4) to[out=140,in=0] (80:4);
\draw[blue, thick] (20:5) -- (20:4) to[out=140,in=-80] (40:4) (60:5) -- (60:4) to[out=180,in=-40] (80:4) -- (80:5);
\draw[blue, thick, rounded corners] (20:4) to[out=-70,in=100] (-20:4.5) -- (-20:5);
\draw[blue, thick, dashed] (40:4) to[out=160,in=-60] (60:4) (40:4) -- (40:5);
\draw[fill=white, thick] (20:4) circle (2pt) (40:4) circle (2pt) (60:4) circle (2pt) (80:4) circle (2pt) (-40:4) circle (2pt);
\end{scope}
\draw[fill=white, thick] (0,0) circle (2pt);
}
\end{tikzpicture}
$}
\caption{Legendrian Coxeter mutation for $(\ngraph(a,b,c),\nbasis(a,b,c))$} 
\end{figure}
Notice that in each triangular shaded region, the $N$-subgraph looks like the $N$-graph of type $\dynA_{a-1}, \dynA_{b-1}$ or $\dynA_{c-1}$.
Moreover, the mutations corresponding to the rest sequence is just a composition 
of Coxeter mutations of type $\dynA_{a-1},\dynA_{b-1}$ and $\dynA_{c-1}$, 
which are essentially the same as the clock wise rotations.
Therefore, the result of the Coxeter mutation will be given as depicted in 
Figure~\ref{figure:coxeter mutation}.

Then one can observe that this is very similar to the original $N$-graph $\ngraph(a,b,c)$.
Indeed, the inside is identical to $\ngraph(a,b,c)$ but the colors are switched, which is $\bar \ngraph(a,b,c)$ by definition.
The complement of $\bar \ngraph(a,b,c)$ in $\qcoxeter(\ngraph(a,b,c),\nbasis(a,b,c))$ is an annular $N$-graph.

\begin{definition}[Coxeter padding]
For each triple $a,b,c$, the annular $N$-graph depicted in Figure~\ref{figure:coxeter padding} is denoted by $\coxeterpadding(a,b,c)$ and called the \emph{Coxeter padding} of type $(a,b,c)$.
We also denote the Coxeter padding with color switched by $\bar \coxeterpadding(a,b,c)$.
\end{definition}

\begin{figure}[ht]
\subfigure[$\coxeterpadding(a,b,c)$]{\makebox[0.48\textwidth]{
$
\begin{tikzpicture}[baseline=-.5ex,scale=0.6]
\draw[thick] (0,0) circle (5) (0,0) circle (3);
\foreach \i in {1,2,3} {
\begin{scope}[rotate=\i*120]
\draw[blue, thick, rounded corners] (0:3) -- (0:3.4) to[out=-75,in=80] (-40:4);
\draw[red, thick, dashed, rounded corners] (60:3) -- (60:3.3) to[out=0,in=220] (40:4) (40:4) to[out=120,in=-20] (60:4);
\draw[red, thick, rounded corners] (40:3) -- (40:3.3) to[out=-20,in=200] (20:4) (80:3) -- (80:3.3) to[out=20,in=240] (60:4) (100:3) -- (100:3.3) to[out=40,in=260] (80:4);
\draw[red, thick, rounded corners] (20:3) -- (20:3.5) to[out=-70,in=50] (-40:4) (20:4) to[out=-50,in=120] (0:4.5) -- (0:5);
\draw[red, thick] (20:4) to[out=100,in=-40] (40:4) (60:4) to[out=140,in=0] (80:4);
\draw[blue, thick] (20:5) -- (20:4) to[out=140,in=-80] (40:4) (60:5) -- (60:4) to[out=180,in=-40] (80:4) -- (80:5);
\draw[blue, thick, rounded corners] (20:4) to[out=-70,in=100] (-20:4.5) -- (-20:5);
\draw[blue, thick, dashed] (40:4) -- (40:5) (40:4) to[out=160,in=-60] (60:4);
\draw[fill=white, thick] (20:4) circle (2pt) (40:4) circle (2pt) (60:4) circle (2pt) (80:4) circle (2pt) (-40:4) circle (2pt);
\end{scope}
}
\end{tikzpicture}$
}}
\subfigure[$\bar\coxeterpadding(a,b,c)$]{\makebox[0.48\textwidth]{
$
\begin{tikzpicture}[baseline=-.5ex,scale=0.6]
\draw[thick] (0,0) circle (5) (0,0) circle (3);
\foreach \i in {1,2,3} {
\begin{scope}[rotate=\i*120]
\draw[red, thick, rounded corners] (0:3) -- (0:3.4) to[out=-75,in=80] (-40:4);
\draw[blue, thick, dashed, rounded corners] (60:3) -- (60:3.3) to[out=0,in=220] (40:4) (40:4) to[out=120,in=-20] (60:4);
\draw[blue, thick, rounded corners] (40:3) -- (40:3.3) to[out=-20,in=200] (20:4) (80:3) -- (80:3.3) to[out=20,in=240] (60:4) (100:3) -- (100:3.3) to[out=40,in=260] (80:4);
\draw[blue, thick, rounded corners] (20:3) -- (20:3.5) to[out=-70,in=50] (-40:4) (20:4) to[out=-50,in=120] (0:4.5) -- (0:5);
\draw[blue, thick] (20:4) to[out=100,in=-40] (40:4) (60:4) to[out=140,in=0] (80:4);
\draw[red, thick] (20:5) -- (20:4) to[out=140,in=-80] (40:4) (60:5) -- (60:4) to[out=180,in=-40] (80:4) -- (80:5);
\draw[red, thick, rounded corners] (20:4) to[out=-70,in=100] (-20:4.5) -- (-20:5);
\draw[red, thick, dashed] (40:4) -- (40:5) (40:4) to[out=160,in=-60] (60:4);
\draw[fill=white, thick] (20:4) circle (2pt) (40:4) circle (2pt) (60:4) circle (2pt) (80:4) circle (2pt) (-40:4) circle (2pt);
\end{scope}
}
\end{tikzpicture}$
}}
\caption{Coxeter paddings $\coxeterpadding(a,b,c)$ and $\bar\coxeterpadding(a,b,c)$}
\label{figure:coxeter padding}
\end{figure}

Notice that two Coxeter paddings $\coxeterpadding(a,b,c)$ and $\bar \coxeterpadding(a,b,c)$ can be glued without any ambiguity
and so we can also pile up Coxeter paddings $\coxeterpadding(a,b,c)$ and $\bar \coxeterpadding(a,b,c)$ alternatively as many times as we want.

We also define the concatenation of the Coxeter padding $\bar\coxeterpadding(a,b,c)$ on the pair $(\ngraph(a,b,c),\nbasis(a,b,c))$ as the pair $(\ngraph', \nbasis')$ such that
\begin{enumerate}
\item the $N$-graph $\ngraph'$ is obtained by gluing $\bar\coxeterpadding(a,b,c)$ on $\ngraph(a,b,c)$, and 
\item the tuple $\nbasis'$ of cycles is the set of $\sfI$- and $\sfY$-cycles identified with $\nbasis(a,b,c)$ in a canonical way.
\end{enumerate}

\begin{proposition}\label{proposition:effect of Legendrian Coxeter mutation}
The Legendrian Coxeter mutation on $(\ngraph(a,b,c), \nbasis(a,b,c))$ or $(\bar\ngraph(a,b,c), \bar\nbasis(a,b,c))$ is given as the concatenation
\begin{align*}
\ncoxeter(\ngraph(a,b,c), \nbasis(a,b,c)) &= \coxeterpadding(a,b,c)(\bar \ngraph(a,b,c), \bar\nbasis(a,b,c)),\\
\ncoxeter(\bar \ngraph(a,b,c), \nbasis(a,b,c)) &= \bar \coxeterpadding(a,b,c) (\ngraph(a,b,c), \nbasis(a,b,c)).
\end{align*}
\end{proposition}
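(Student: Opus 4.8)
The plan is to unwind the definition $\ncoxeter=\mutation_-\mutation_+$ on $(\ngraph(a,b,c),\nbasis(a,b,c))$ step by step and to reduce everything to the already-understood $\dynA_n$ case of Lemma~\ref{lemma:Legendriam Coxeter mutation of type An}. Recall that for the tripod quiver $\quiver(a,b,c)$ the central vertex $1$ lies in $I_+$, so in $\mutation_+$ one performs the Legendrian mutation at the central $\sfY$-cycle $\cycle_1$ first, followed by the mutations at the remaining cycles of $\nbasis_+$; since these remaining cycles sit on pairwise non-adjacent vertices of $\quiver(a,b,c)$, the corresponding $\sfI$-mutations are supported on disjoint regions and commute, so their order is irrelevant, and the same is true within $\mutation_-$. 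First I would apply $\mutation_{\cycle_1}$ using the $\sfY$-cycle rule of Figure~\ref{figure:Y-mutation}, and then simplify the resulting $N$-graph by the generalized push-through moves on the three shaded regions exactly as in the passage preceding Figure~\ref{figure:center mutation}; this produces the $N$-graph on the right of Figure~\ref{figure:center mutation}, in which the central hexagonal configuration has had its colors reversed and a first layer of an annular $N$-graph has been created near $\boundary\disk^2$.

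Next I would carry out the $\sfI$-cycle mutations of the remaining cycles of $\nbasis_+$ and then all the mutations of $\nbasis_-$. The key point is that, after the central mutation, the portion of the $N$-graph inside each of the three triangular shaded regions of Figure~\ref{figure:center mutation} is a deformation of the $N$-graph of type $\dynA_{a-1}$, $\dynA_{b-1}$, respectively $\dynA_{c-1}$, with its two endpoint cusps pushed into the annular region, and the sub-sequence of $\ncoxeter$ acting on a given arm is precisely the Legendrian Coxeter mutation of that $\dynA$-type. By Lemma~\ref{lemma:Legendriam Coxeter mutation of type An} each such sub-sequence is the clockwise rotation of the corresponding arm: it pushes one more layer of edges out into the annulus and rotates the remaining $\dynA$-graph back to its standard position, but now drawn in the opposite color. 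Assembling the three arms together with the recolored center, the part contained in the interior disk is, by construction, $\bar\ngraph(a,b,c)$, while the annular complement is the $N$-graph defined to be $\coxeterpadding(a,b,c)$ in Figure~\ref{figure:coxeter padding}. This yields the first identity at the level of $N$-graphs; the second one follows by running the same argument with $\bar\ngraph(a,b,c)$ in place of $\ngraph(a,b,c)$, the only effect of the global color swap being to replace $\coxeterpadding(a,b,c)$ by $\bar\coxeterpadding(a,b,c)$.

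It remains to verify the statement at the level of the chosen tuples of cycles, i.e.\ that the canonical identification of the $N$-graphs makes $(\ngraph(a,b,c),\nbasis(a,b,c))$ and $\coxeterpadding(a,b,c)(\bar\ngraph(a,b,c),\bar\nbasis(a,b,c))$ equivalent in the sense of Definition~\ref{def:equiv on N-graph and N-basis}. For this I would track, move by move, the images of the representatives of the cycles in $\nbasis(a,b,c)$ under the $\sfY$- and $\sfI$-mutations and the intervening Moves~\Move{I} and~\Move{II}, using the local pictures of Figure~\ref{fig:cycles under moves}: the $\sfY$-cycle $\cycle_1$ becomes the recolored central $\sfY$-cycle, and each arm cycle follows its $\dynA_n$ counterpart along the rotation of Figure~\ref{figure:Legendrian Coxeter mutation on An}. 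As an independent check, Corollary~\ref{corollary:Coxeter mutations} (via Proposition~\ref{proposition:equivariance of mutations}) guarantees that, whatever $N$-graph and cycle tuple we land on, the induced seed is $\qcoxeter(\Psi(\ngraph(a,b,c),\nbasis(a,b,c),\flags))$, which pins down the quiver and the cluster variables and hence removes any residual ambiguity in the cycle tuple up to equivalence.

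The main obstacle I anticipate is purely combinatorial bookkeeping: making the reduction ``each arm sees a $\dynA$-type Coxeter mutation'' precise requires carefully isolating the sequence of Moves~\Move{I}, \Move{II} and generalized push-through moves inside each triangular region, checking that these regions stay pairwise disjoint throughout the process so that the arms genuinely do not interfere after the central mutation, and confirming that freeness and goodness of the $N$-graph — hence the absence of interior Reeb chords and of sign errors in the intersection form — are preserved at every intermediate stage. Once the single-arm computation is set up and matched with Lemma~\ref{lemma:Legendriam Coxeter mutation of type An}, the remaining verifications are routine.
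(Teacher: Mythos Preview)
Your proposal is correct and follows exactly the paper's approach: mutate at the central $\sfY$-cycle, apply the generalized push-through moves to isolate the three triangular regions, and then invoke Lemma~\ref{lemma:Legendriam Coxeter mutation of type An} on each arm so that what remains inside is $\bar\ngraph(a,b,c)$ and the annulus is $\coxeterpadding(a,b,c)$. The paper's own proof is a one-line reference to precisely this computation, which is carried out in the discussion and figures immediately preceding the proposition; your write-up simply makes that discussion more explicit and adds careful bookkeeping of the cycle tuple.
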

\begin{proof}
This follows directly from the above observation.
\end{proof}

It is important that this proposition holds only when we take the Legendrian Coxeter mutation on the very standard $N$-graph with the tuple of cycles $(\ngraph(a,b,c), \nbasis(a,b,c))$.
Otherwise, the Legendrian Coxeter mutation will not be expressed as simple as above.

\begin{theorem}\label{theorem:infinite fillings}
For $a,b,c\ge 1$ with $\frac 1a+\frac1b+\frac1c\le 1$,
The Legendrian knot or link $\legendrian(a,b,c)$ in $J^1\sphere^1$ admits infinitely many distinct exact embedded Lagrangian fillings.
\end{theorem}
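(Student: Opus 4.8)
The plan is to realize the infinitely many fillings as the Legendrian weaves of the iterates of the Legendrian Coxeter mutation applied to the standard tripod, and to distinguish them through the seed invariant $\Psi$ of Theorem~\ref{thm:N-graph to seed}.

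First I would fix the free, good pair $(\ngraph(a,b,c),\nbasis(a,b,c))$ together with flags $\flags$ on its boundary $\legendrian(a,b,c)$, and set $\Psi_0=\Psi(\ngraph(a,b,c),\nbasis(a,b,c),\flags)$. Its quiver is the tripod $\quiver(a,b,c)$, which is bipartite and whose underlying graph is the three‑armed tree with arms of lengths $a-1,b-1,c-1$. By the classification of finite‑type Cartan matrices (equivalently of simply‑laced Dynkin diagrams), this graph is of finite type in the sense of Definition~\ref{def_quiver_of_type_X} exactly when $\frac1a+\frac1b+\frac1c>1$; hence under the hypothesis $\frac1a+\frac1b+\frac1c\le 1$ the quiver $\quiver(a,b,c)$ is of infinite type.

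Next I would iterate the Legendrian Coxeter mutation $\ncoxeter$. The crucial point — and the step I expect to be the main obstacle, since in general one has no control over whether a prescribed Legendrian mutation is geometrically realizable — is that $\ncoxeter^r(\ngraph(a,b,c),\nbasis(a,b,c))$ is well defined for every $r\ge 0$. This is exactly Proposition~\ref{proposition:effect of Legendrian Coxeter mutation}: a single application of $\ncoxeter$ reproduces the standard picture with its two colors interchanged, glued to the Coxeter padding $\coxeterpadding(a,b,c)$ of Figure~\ref{figure:coxeter padding}, so $\ncoxeter$ may be applied again to the standard part; inductively (the padding annulus does not interfere with the mutations), $\ncoxeter^r(\ngraph(a,b,c),\nbasis(a,b,c))$ is obtained by stacking $\coxeterpadding(a,b,c)$ and $\bar\coxeterpadding(a,b,c)$ alternately onto $(\ngraph(a,b,c),\nbasis(a,b,c))$ or $(\bar\ngraph(a,b,c),\bar\nbasis(a,b,c))$. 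Since $\ncoxeter$ is a composition of Legendrian mutations at $\sfI$- and $\sfY$-cycles together with the moves of Figure~\ref{fig:move1-6}, and such mutations send free $N$-graphs to free $N$-graphs by \cite[Lemma~7.4]{CZ2020}, each $\ngraph_r\coloneqq\ncoxeter^r(\ngraph(a,b,c))$ is free and good, with boundary still $\legendrian(a,b,c)$. Freeness means $\Legendrian(\ngraph_r)$ carries no interior Reeb chord, so its Lagrangian projection $L_r\coloneqq\pi\circ\iota(\Legendrian(\ngraph_r))\subset(\disk^4,\omega_{\rm st})$ is an exact embedded Lagrangian filling of $\iota(\legendrian(a,b,c))$.

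Finally I would separate the $L_r$. Iterating the equivariance relation of Proposition~\ref{proposition:equivariance of mutations} (see Corollary~\ref{corollary:Coxeter mutations}) gives $\Psi(\ncoxeter^r(\ngraph(a,b,c),\nbasis(a,b,c)),\flags)=\qcoxeter^{\,r}(\Psi_0)$, where $\qcoxeter$ is the Coxeter mutation of $\quiver(a,b,c)$. Since $\quiver(a,b,c)$ is a bipartite quiver of infinite type, Lemma~\ref{lemma:order of coxeter mutation} shows that the orbit $\{\qcoxeter^{\,r}(\Psi_0)\}_{r\ge 0}$ is infinite; in particular it contains infinitely many pairwise distinct seeds. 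For any two indices $r\neq r'$ realizing distinct seeds, Corollary~\ref{corollary:distinct seeds imples distinct fillings}, applicable because $\ngraph_r$ and $\ngraph_{r'}$ carry the same boundary data $\legendrian(a,b,c)$, shows that $L_r$ and $L_{r'}$ are not exact Lagrangian isotopic (and Lemma~\ref{lem:legendrian and lagrangian} upgrades this to distinctness of the Legendrian fillings if wanted). Hence $\legendrian(a,b,c)$ admits infinitely many distinct exact embedded Lagrangian fillings. The one bookkeeping point to verify is that $\Psi$ records the $\mathcal X$-cluster ($Y$-seed) data of Proposition~\ref{proposition:equivariance of mutations}, so Lemma~\ref{lemma:order of coxeter mutation} must be read at that level — which is consistent with its source \cite[Theorem~8.8]{FZ4_2007}, concerning precisely the Coxeter orbit for a bipartite quiver.
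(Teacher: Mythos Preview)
Your proposal is correct and follows essentially the same route as the paper's own proof: iterate the Legendrian Coxeter mutation using Proposition~\ref{proposition:effect of Legendrian Coxeter mutation} to guarantee well-definedness, invoke Corollary~\ref{corollary:Coxeter mutations} to identify the resulting seeds with $\qcoxeter^r(\Psi_0)$, apply Lemma~\ref{lemma:order of coxeter mutation} to conclude these seeds are pairwise distinct in the infinite-type case, and separate the fillings via Corollary~\ref{corollary:distinct seeds imples distinct fillings}. Your additional remarks on freeness preservation and on the $\mathcal{X}$-cluster bookkeeping are sound elaborations that the paper leaves implicit.
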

\begin{proof}
By Proposition~\ref{proposition:effect of Legendrian Coxeter mutation}, the effect of the Legendrian Coxeter mutation on $(\ngraph(a,b,c), \nbasis(a,b,c))$ is just to attach the Coxeter padding on $(\bar\ngraph(a,b,c),\bar\nbasis(a,b,c))$.
In particular, as mentioned earlier, for each $r\ge 0$, the iterated Legendrian Coxeter mutation
\[
\ncoxeter^r(\ngraph(a,b,c), \nbasis(a,b,c))
\]
is well-defined.
Each of these $N$-graphs define a Legendrian weave $\Legendrian(\ncoxeter^r(\ngraph(a,b,c), \nbasis(a,b,c)))$, whose Lagrangian projection is a Lagrangian filling 
\[
L_r(a,b,c)\coloneqq(\pi\circ\iota)(\Legendrian(\ncoxeter^r(\ngraph(a,b,c), \nbasis(a,b,c)))
\]
as desired. Therefore it suffices to prove that Lagrangians $L_r(a,b,c)$ for $r\ge 0$ are pairwise distinct up to exact Lagrangian isotopy, when $\frac1a+\frac1b+\frac1c\le 1$.

Now suppose that $\frac1a+\frac1b+\frac1c\le1$, or equivalently, $\quiver(a,b,c)$ is of infinite type. 
Then the order of the Coxeter mutation is infinite by Lemma~\ref{lemma:order of coxeter mutation} and so is the order of the Legendrian Coxeter mutation by Corollary~\ref{corollary:Coxeter mutations}.
In particular, for fixed flags $\flags$ on $\legendrian$, the set 
\[
\left\{\Psi(\ncoxeter^r(\ngraph(a,b,c), \nbasis(a,b,c)),\flags)\mid r\ge 0\right\}
\]
is the set of infinitely many pairwise distinct seeds in the cluster pattern for $\quiver(a,b,c)$.
Hence by Corollary~\ref{corollary:distinct seeds imples distinct fillings}, we have pairwise distinct Lagrangian fillings $L_r(a,b,c)$.
\end{proof}

\subsection{\texorpdfstring{$N$}{N}-graphs of type \texorpdfstring{$\dynADE$}{ADE}}

In this section, we will prove one of the main theorem.
\begin{theorem}\label{theorem:ADE type}
Let $\legendrian$ be a Legendrian knot or link which is either $\legendrian(\dynA_n)$ or $\legendrian(a,b,c)$ of type $\dynADE$.
Then it admits exact embedded Lagrangian fillings as many as seeds in its seed pattern of the same type.
\end{theorem}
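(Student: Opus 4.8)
The plan is to reduce the statement to three ingredients already assembled in the paper: (i) the Legendrian Coxeter mutation $\ncoxeter$ is realized on the standard tripod $N$-graph by attaching a Coxeter padding, so all of its iterates $\ncoxeter^r(\ngraph(a,b,c),\nbasis(a,b,c))$ are well-defined $N$-graphs (Proposition~\ref{proposition:effect of Legendrian Coxeter mutation}); (ii) the combinatorics of the exchange graph $E(\Roots)$ of a finite type root system is governed by the Coxeter mutation, namely every facet of the generalized associahedron $P(\Roots)$ is of the form $\qcoxeter^r(F_{-\alpha_i})$, and inside such a facet all seeds are reachable by mutations avoiding the index $i$, whose face poset is that of the lower-rank associahedron $P(\Roots([n]\setminus\{i\}))$ (Proposition~\ref{prop_FZ_finite_type_Coxeter_element} and Lemma~\ref{lemma:normal form}); (iii) the assignment $\Psi$ from $(\ngraph,\nbasis,\flags)$ to a seed intertwines Legendrian mutation with cluster $\mathcal{X}$-mutation (Proposition~\ref{proposition:equivariance of mutations}), so distinct seeds force non-isotopic Lagrangian fillings (Corollary~\ref{corollary:distinct seeds imples distinct fillings}), together with Lemma~\ref{lem:legendrian and lagrangian}.

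First I would set up the induction on the rank $n=a+b+c-2$. The base case is $\dynA_1$ (a single $\sfI$-cycle, two seeds, the Catalan-type count), which is immediate. For the inductive step, fix a seed $\seed$ of the cluster pattern of type $\Roots=\Roots(a,b,c)$; I want to realize it by an $N$-graph Legendrian mutation equivalent to $(\ngraph(a,b,c),\nbasis(a,b,c))$. Using Proposition~\ref{prop_FZ_finite_type_Coxeter_element}(2) (and its odd-Coxeter-number analogue via Lemma~\ref{lemma:order of coxeter mutation}), write $\seed\in\qcoxeter^r(F_{-\alpha_i})$ for suitable $r$ and simple root $\alpha_i$. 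By Corollary~\ref{corollary:Coxeter mutations}, $\ncoxeter^r(\ngraph(a,b,c),\nbasis(a,b,c))$ realizes $\qcoxeter^r(\initialseed)$; by Proposition~\ref{proposition:effect of Legendrian Coxeter mutation} this $N$-graph is the concatenation of $r$ alternating Coxeter paddings on $\bar\ngraph$ or $\ngraph$. Now I would observe that removing the cycle $\gamma_i$ (the $\sfI$-cycle corresponding to a leaf or near-central vertex indexed $i$) from $\nbasis(a,b,c)$ leaves an $N$-subgraph whose cycle data is exactly that of a lower-rank tripod or type-$\dynA$ $N$-graph: the facet $\qcoxeter^r(F_{-\alpha_i})$ corresponds geometrically to a sub-$N$-graph of type $\Roots([n]\setminus\{i\})$ sitting inside the padded $N$-graph, away from the cycle $\gamma_i$. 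By Lemma~\ref{lemma:normal form} there is a sequence $\mutation_{j_1},\dots,\mutation_{j_\ell}$ with $j_k\neq i$ taking $\qcoxeter^r(\initialseed)$ to $\seed$ while staying inside that facet; by the inductive hypothesis applied to the lower-rank $N$-graph, each of these Legendrian mutations $\mutation_{\gamma_{j_k}}$ is geometrically realizable (they are $\sfI$-mutations, or $\sfY$-mutations decomposed into Moves~\Move{I},~\Move{II} and an $\sfI$-mutation), and they never disturb the cycle $\gamma_i$, hence never leave the "facet region". Composing, we realize $\seed$ by an honest $N$-graph, and by Proposition~\ref{proposition:equivariance of mutations} the induced seed is exactly $\seed$.

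Having realized every seed by some $N$-graph, I would invoke Corollary~\ref{corollary:distinct seeds imples distinct fillings}: two $N$-graphs realizing distinct seeds give exact embedded Lagrangian fillings of $\legendrian=\legendrian(a,b,c)$ (or $\legendrian(\dynA_n)$, using Lemma~\ref{lemma:stabilized An} to identify $\ngraph(1,b,c)$ with a stabilization of $\ngraph(\dynA_n)$) that are not exact Lagrangian isotopic, and the fillings are embedded because Legendrian mutations preserve freeness (Proposition~\cite[Lemma~7.4]{CZ2020}) and the push-through moves used in the Coxeter padding are Move~\Move{I},~\Move{II} applications. Since the number of seeds in the cluster pattern of type $\dynADE$ is finite and equals the counts in Table~\ref{table_seeds_and_cluster_variables}, this produces that many distinct Lagrangian fillings. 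I expect the main obstacle to be the geometric verification that the "facet region" is genuinely an embedded lower-rank $N$-graph on which the inductive hypothesis applies — i.e. that the Legendrian mutations indexed by $[n]\setminus\{i\}$ really can be carried out without ever needing a mutation at a cycle that is not represented by a good subgraph at that stage. This is where the bipartiteness of $\quiver(a,b,c)$, the explicit form of the Coxeter padding, and the analysis of cycles under Moves~\Move{I},~\Move{II} in Figure~\ref{fig:cycles under moves} all have to be combined carefully; the cluster-theoretic bookkeeping (Lemmas~\ref{lemma:normal form},~\ref{lemma:order of coxeter mutation}) is then routine.
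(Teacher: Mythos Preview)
Your proposal is correct and follows essentially the same strategy as the paper: the paper packages the inductive realizability argument as a separate Proposition~\ref{proposition:realizability} (whose proof is the induction on rank you describe, reducing via Lemma~\ref{lemma:normal form} to $\mutation'$ applied to the unpadded $(\ngraph_{t_0},\nbasis_{t_0})$ after commuting the Coxeter paddings past $\mutation'$, and then cutting along $\gamma_i$ into lower-rank sub-$N$-graphs) and then derives the theorem via Corollary~\ref{corollary:distinct seeds imples distinct fillings}. The only place where the paper is more explicit is the case analysis of how removing $\gamma_i$ separates the tripod $N$-graph (central vertex versus bivalent vertex versus leaf, yielding up to three pieces each of the form $\ngraph_{(3)}(\dynA_s)$ or $\ngraph(a',b',c')$ with strictly smaller rank), which is precisely the geometric obstacle you flag at the end.
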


Indeed, this theorem follows from the generalized questions.
\begin{question}
For given $N$-graph $\ngraph$ with a chosen set $\nbasis$ of cycles, can we take a Legendrian mutation as many times as we want? Or equivalently, after applying a mutation $\mutation_k$ on $(\ngraph, \nbasis)$, is the tuple $\mutation_k(\nbasis)$ still good in $\mutation_k(\ngraph)$?
\end{question}

This question has been raised previously in \cite[Remark~7.13]{CZ2020}.
One of the main reason making the question nontrivial is that 
the potential difference of geometric and algebraic intersections between two cycles.
More concretely, two cycles $\cycle_1$ and $\cycle_2$ as shown in Figure~\ref{fig:geometric and algebraic intersection}, can never be isotoped off to each other but their signed intersections following the rule in Figure~\ref{fig:I-cycle with orientation and intersections} vanishes. 
Hence in the corresponding quiver to the first local $N$-graph, there are no arrows between the corresponding vertices $1$ and $2$.
However, after a sequence of Move $\Move{II}$, we can deform $\cycle_2$ into $\cycle(e)$ for an edge $e$ as depicted in the third picture of Figure~\ref{fig:geometric and algebraic intersection}.
The mutation $\mutation_{\cycle(e)}$ transforms $\cycle_1$ to $\cycle_1'$, which is \emph{not} good and so it is not clear how to define a mutation $\mutation_{\cycle_1'}$.

\begin{figure}[ht]
\begin{tikzcd}
\begin{tikzpicture}
\begin{scope}

\draw[dashed] \boundellipse{0,0}{1.5}{1};

\draw [green, line cap=round, line width=5, opacity=0.5] (-1,0) to (-0.5,0);
\draw [green, line cap=round, line width=5, opacity=0.5] (-0.5,0) to (0,0.5);
\draw [green, line cap=round, line width=5, opacity=0.5] (-0.5,0) to (0,-0.5);

\draw [yellow, line cap=round, line width=5, opacity=0.5] (1,0) to (0.5,0);
\draw [yellow, line cap=round, line width=5, opacity=0.5] (0.5,0) to (0,0.5);
\draw [yellow, line cap=round, line width=5, opacity=0.5] (0.5,0) to (0,-0.5);

\draw[blue, thick] (-1.15,0.65) to (-0.5,0) to (0.5,0) to (1.15,0.65);
\draw[blue, thick] (-1.15,-0.65) to (-0.5,0) to (0.5,0) to (1.15,-0.65);
\draw[red, thick] (0,1) to (0,0.5) to (0.5,0) to (1,0) to (1.4,0.4);
\draw[red, thick] (0,1) to (0,0.5) to (-0.5,0) to (-1,0) to (-1.4,0.4);
\draw[red, thick] (0,-1) to (0,-0.5) to (0.5,0) to (1,0) to (1.4,-0.4);
\draw[red, thick] (0,-1) to (0,-0.5) to (-0.5,0) to (-1,0) to (-1.4,-0.4);
\draw[thick,red,fill=red] (0,1/2) circle (0.05);
\draw[thick,red,fill=red] (0,-1/2) circle (0.05);
\draw[thick,red,fill=red] (1,0) circle (0.05);
\draw[thick,red,fill=red] (-1,0) circle (0.05);
\draw[thick,black,fill=white] (0.5,0) circle (0.05);
\draw[thick,black,fill=white] (-0.5,0) circle (0.05);

\node at (-0.5,0) [above]{\small$\gamma_1$};
\node at (0.5,0) [above]{\small$\gamma_2$};
\end{scope}
\end{tikzpicture}
\arrow[r, mapsto, "\Move{II}"]&
\begin{tikzpicture}

\draw [green, line cap=round, line width=5, opacity=0.5] 
(-1,0) to (-0.5,0)
(-0.5,0) to[out=90, in=180] (0,0.5)
(-0.5,0) to[out=-90, in=180] (0,-0.5);

\draw [yellow, line cap=round, line width=5, opacity=0.5] 
(0,0.5) -- (0.5,0.5) to[out=0,in=0] (0.5,-0.5) -- (0,-0.5);

\draw[dashed] \boundellipse{0,0}{1.75}{1.25};
\draw[thick, red] 
(-1.55,0.55) -- (-1,0) -- (-0.5,0) to[out=90,in=180] (0,0.5) -- (0,1.25)
(0,0.5) -- (0.5,0.5) -- (1,1)
(0.5,0.5) -- (0.5,-0.5) -- (1,-1)
(0.5,-0.5) -- (0,-0.5) -- (0,-1.25)
(0,-0.5) to[out=180,in=-90] (-0.5,0)
(-1,0) -- (-1.55,-0.55);
\draw[thick, red, fill] (-1,0) circle (1.5pt)
(0,0.5) circle (1.5pt)
(0,-0.5) circle (1.5pt);
\draw[thick, blue] (-1.32,0.82) -- (-0.5,0) -- (0,0) -- (0.5,0.5) -- (0.5,1.18)
(0.5,0.5) to[out=0,in=0] (0.5,-0.5) -- (0.5,-1.18)
(0.5,-0.5) -- (0,0)
(-0.5,0) -- (-1.32,-0.82);
\draw[thick, blue, fill] (0,0) circle (1.5pt);
\draw[thick,black,fill=white] (0.5,0.5) circle (1.5pt)
(0.5,-0.5) circle (1.5pt)
(-0.5,0) circle (1.5pt);
\end{tikzpicture}
\arrow[dl, mapsto, "\Move{II}\circ\Move{II}"]&\\
\begin{tikzpicture}

\draw [green, line cap=round, line width=5, opacity=0.5] 
(-1,0) to (-0.5,0)
(-0.5,0) to[out=90, in=180] (0.5,0.5) -- (1,0.5)
(-0.5,0) to[out=-90, in=180] (0.5,-0.5) -- (1,-0.5);

\draw [yellow, line cap=round, line width=5, opacity=0.5] 
(1,0.5) -- (1,-0.5);

\draw[dashed] \boundellipse{0.25,0}{2}{1.5};
\draw[thick,red] (-1.6,0.6) -- (-1,0)-- (-0.5,0) to[out=90,in=180] (0.5,0.5) -- (0.5,1) -- (0,1.5)
(0.9,1.4) -- (0.5,1) -- (0.5,-1) -- (0.9,-1.4)
(0,-1.5) -- (0.5,-1)
(0.5,-0.5) to[out=180,in=-90] (-0.5,0)
(-1,0) -- (-1.6,-0.6);
\draw[thick,blue] (-1.38,0.88) -- (-0.5,0) -- (0,0) -- (0.5,0.5) to[out=180,in=180] (0.5,1) -- (0.5,1.5)
(0.5,1) to[out=0,in=90] (1,0.5) -- (1,-0.5) to[out=-90,in=0] (0.5,-1) -- (0.5,-1.5)
(0.5,-1) to[out=180,in=180] (0.5,-0.5) to (0,0)
(-0.5,0) -- (-1.38,-0.88)
(0.5,0.5) -- (1,0.5)
(0.5,-0.5) -- (1,-0.5);

\draw[thick, red, fill] (-1,0) circle (1.5pt);

\draw[thick, blue, fill] (0,0) circle (1.5pt)
(1,0.5) circle (1.5pt) (1,-0.5) circle (1.5pt);
\draw[thick,black,fill=white] (0.5,0.5) circle (1.5pt)
(0.5,-0.5) circle (1.5pt)
(-0.5,0) circle (1.5pt)
(0.5,1) circle (1.5pt)
(0.5,-1) circle (1.5pt);
\node at (1,0) [right]{$\cycle_2$};
\end{tikzpicture}
\arrow[r, mapsto, "\mutation_{\cycle_2}"]&
\begin{tikzpicture}
\draw[dashed] \boundellipse{0.25,0}{2}{1.5};
\draw [green, line cap=round, line width=5, opacity=0.5]
(-1,0) -- (-0.5,0) to[out=45,in=180] (0.5,0.5) to[out=0,in=90] (1,0) to[out=-90,in=0] (0.5,-0.5) to[out=180,in=-45](-0.5,0);
\draw [yellow, line cap=round, line width=5, opacity=0.5] (1,0) -- (1.5,0) ;
\draw[thick,red] 
(-1.6,0.6) -- (-1,0) -- (-0.5,0) to[out=45, in=180] (0.5,0.5) to[out=45,in=-45] (0.5,1) -- (0,1.5)
(-1.6,-0.6) -- (-1,0)
(-0.5,0) to[out=-45, in=180] (0.5,-0.5) to[out=-45,in=45] (0.5,-1) -- (0,-1.5)
(0.5,-1) -- (0.9, -1.4)
(0.5,0.5) -- (0.5,-0.5)
(0.5,1) -- (0.9,1.4);
\draw[thick,blue]
(-1.38,0.88) -- (-0.5,0) -- (0,0) -- (0.5,0.5) to[out=135, in=-135] (0.5,1) -- (0.5,1.5)
(0,0) -- (0.5,-0.5) to[out=-135,in=135] (0.5,-1) -- (0.5,-1.5)
(-1.38,-0.88) -- (-0.5,0)
(0.5,0.5) to[out=0,in=90] (1,0) to[out=-90,in=0] (0.5,-0.5)
(1,0) -- (1.5,0)
(0.5,1) to[out=0,in=90] (1.5,0) to[out=-90,in=0] (0.5,-1);
\draw[thick,blue, fill] 
(0,0)  circle (1.5pt) 
(1,0) circle (1.5pt)
(1.5,0) circle (1.5pt);
\draw[thick, red, fill] 
(-1,0)  circle (1.5pt); 
\draw[thick,black, fill=white]
(-0.5,0) circle (1.5pt)
(0.5,0.5) circle (1.5pt)
(0.5,1) circle (1.5pt)
(0.5,-0.5) circle (1.5pt)
(0.5,-1) circle (1.5pt); 

\node at (-0.5,0) [above]{\small$\gamma_1'$};
\node at (1.25,0) [above]{\small$\gamma_2'$};
\end{tikzpicture}
\end{tikzcd}
\caption{Non-disjoint cycles $\gamma_1$ and $\gamma_2$ with signed intersection number zero, and a mutation $\mutation_e$.}
\label{fig:geometric and algebraic intersection}
\end{figure}
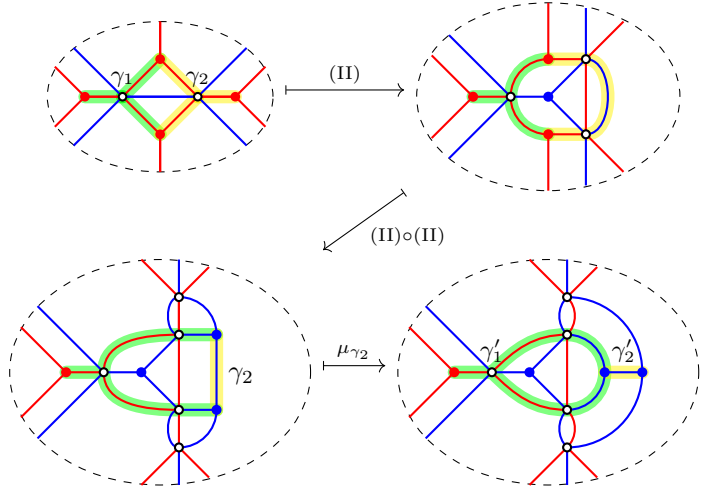

Instead of attacking this question directly, we will prove the following:

\begin{proposition}\label{proposition:realizability}
Let $\legendrian$ be as above and $\flags$ be flags on $\legendrian$.
Suppose that $\seed$ is a seed in the seed pattern of the same type with the initial seed
\[
\seed_{t_0} = \begin{cases}
\Psi(\ngraph(\dynA_n),\nbasis(\dynA_n),\flags) & \legendrian=\legendrian(\dynA_n);\\
\Psi(\ngraph(a,b,c),\nbasis(a,b,c),\flags) & \legendrian=\legendrian(a,b,c).
\end{cases}
\]
Then $\legendrian$ admits either an $N$-graph $(\ngraph, \nbasis)$ on $\disk^2$ such that 
$\ngraph$ is either a $2$-graph if $\legendrian=\legendrian(\dynA_n)$ or a $3$-graph if $\legendrian=\legendrian(a,b,c)$, and 
\[
\seed=\Psi(\ngraph, \nbasis, \flags).
\]
\end{proposition}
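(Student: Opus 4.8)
The plan is to prove the statement by induction on the rank $n$ of the root system $\Roots$, using the combinatorics of the generalized associahedron $P(\Roots)$ recorded in Proposition~\ref{prop_FZ_finite_type_Coxeter_element} together with the Legendrian Coxeter mutation and its explicit effect (Proposition~\ref{proposition:effect of Legendrian Coxeter mutation}). First I would reduce the $\dynA_n$ case to the tripod case: by Lemma~\ref{lemma:stabilized An} the $N$-graph $\ngraph(\dynA_n)$ is a stabilization of $\ngraph(1,b,c)$ for any $b+c-1=n$, and since stabilization is Move~\Move{S} it does not affect the induced seed; hence it suffices to realize every seed via a $3$-graph starting from $\Psi(\ngraph(a,b,c),\nbasis(a,b,c),\flags)$. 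The base cases are the small-rank tripods (equivalently, ranks $1$ and $2$), where all seeds of type $\dynA_1$, $\dynA_2$, $\dynA_1\times\dynA_1$ are realized by direct inspection of the finitely many $N$-graphs.

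The inductive step splits into two moves, mirroring Proposition~\ref{prop_FZ_finite_type_Coxeter_element} and Lemma~\ref{lemma:normal form}. Given a seed $\seed$ in the cluster pattern of type $\Roots(a,b,c)$, since the Coxeter number $h$ for the tripod is even, there is an $r\in\{0,1,\dots,e\}$ and a negative simple root $-\alpha_i$ with $\seed\in\mutation_{\quiver}^r(F_{-\alpha_i})$. By Lemma~\ref{lemma:normal form} there is a sequence $\mutation_{j_1},\dots,\mutation_{j_\ell}$ with $j_k\in[n]\setminus\{i\}$ running from $\qcoxeter^r(\initialseed)$ to $\seed$ entirely inside the facet $\mutation_{\quiver}^r(F_{-\alpha_i})$, whose face poset (Proposition~\ref{prop_FZ_finite_type_Coxeter_element}(3)) equals that of the lower-rank generalized associahedron $P(\Roots([n]\setminus\{i\}))$. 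The first move is: apply the Legendrian Coxeter mutation $\ncoxeter$ to $(\ngraph(a,b,c),\nbasis(a,b,c))$ exactly $r$ times; by Proposition~\ref{proposition:effect of Legendrian Coxeter mutation} this is the concatenation of $r$ alternating Coxeter paddings $\coxeterpadding(a,b,c),\bar\coxeterpadding(a,b,c),\dots$ onto $(\ngraph(a,b,c),\nbasis(a,b,c))$ or $(\bar\ngraph(a,b,c),\bar\nbasis(a,b,c))$, and by Corollary~\ref{corollary:Coxeter mutations} it realizes $\qcoxeter^r(\initialseed)$; crucially the inner part is again a standard tripod $N$-graph up to color switch, so the good tuple of cycles survives, and the Coxeter padding annulus does not interfere with any cycle supported in the inner disk. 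The second move is: inside this inner tripod, notice that removing the $i$-th simple-root direction leaves a smaller tripod $N$-graph $\ngraph(a',b',c')$ (one of the three arms shortened, or split, according to which vertex $i$ is) whose cluster type is $\Roots([n]\setminus\{i\})$; by the induction hypothesis every seed of this lower-rank pattern — in particular the one corresponding to $\seed$ under the face-poset identification — is realized by an $N$-graph obtained from $(\ngraph(a',b',c'),\nbasis(a',b',c'))$ by Legendrian mutations, and these mutations are carried out inside the inner disk while the outer Coxeter paddings are kept fixed. Tracking the seed through $\Psi$ via Proposition~\ref{proposition:equivariance of mutations} and the well-definedness Theorem~\ref{thm:N-graph to seed}, the resulting $N$-graph $(\ngraph,\nbasis)$ satisfies $\Psi(\ngraph,\nbasis,\flags)=\seed$.

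The main obstacle is exactly the subtlety flagged in Figure~\ref{fig:geometric and algebraic intersection} and the paragraph preceding the statement: after a Legendrian mutation the new tuple of cycles need not remain \emph{good}, i.e. the geometric and algebraic intersection numbers may diverge and a cycle may fail to be representable by a subtree even up to $N$-graph moves. The way I would sidestep this is to never mutate an arbitrary seed directly, but only follow the two structured moves above: the $r$-fold Coxeter mutation is handled wholesale by Proposition~\ref{proposition:effect of Legendrian Coxeter mutation}, which guarantees the inner $N$-graph is literally a standard tripod (hence automatically equipped with a good tuple), and the remaining mutations $\mutation_{j_1},\dots,\mutation_{j_\ell}$ take place inside a genuinely lower-rank tripod where goodness is supplied by the induction hypothesis rather than re-proved. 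In other words, the induction is set up so that at every stage the ambient $N$-graph is (a Coxeter-padded copy of) a standard tripod, for which the good tuple of cycles is part of the data; one must still check that padding annuli and the shortened-arm identification are compatible with the cycle bookkeeping, and that the face-poset isomorphism of Proposition~\ref{prop_FZ_finite_type_Coxeter_element}(3) matches the combinatorial identification of seeds used by the induction — this compatibility verification is where the real work lies, but it is bookkeeping rather than a new geometric input.
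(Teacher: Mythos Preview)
Your overall architecture is exactly the paper's: write $\seed=\mutation'(\qcoxeter^r(\initialseed))$ via Proposition~\ref{prop_FZ_finite_type_Coxeter_element} and Lemma~\ref{lemma:normal form}, realize $\qcoxeter^r$ by stacking Coxeter paddings (Proposition~\ref{proposition:effect of Legendrian Coxeter mutation}), observe that padding does not touch the inner cycles so it commutes with the remaining Legendrian mutations $\mutation'$, and then prove well-definedness of $\mutation'(\ngraph_{t_0},\nbasis_{t_0})$ by induction on the rank, splitting the $N$-graph along the untouched cycle~$\gamma_i$ into strictly smaller pieces.

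The one step that does not go through as you wrote it is the reduction of the $\dynA_n$ case to the tripod case. Lemma~\ref{lemma:stabilized An} says $\ngraph(1,b,c)$ is a stabilization of $\ngraph(\dynA_n)$, not conversely; stabilization changes the Legendrian boundary (from a $2$-braid to a $3$-braid) and hence the flags $\flags$, so realizing seeds by $3$-graphs on $\legendrian(1,b,c)$ does not produce the $2$-graphs on $\legendrian(\dynA_n)$ the proposition asks for. More seriously, the induction itself forces you back to the $\dynA$ case: when you delete $\gamma_i$ from $\ngraph(a,b,c)$ the resulting pieces are in general not tripod $N$-graphs but the sector subgraphs $\ngraph_{(3)}(\dynA_s)$ (no hexagonal point, no $\sfY$-cycle), and the Coxeter-padding machinery does not apply to them. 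The paper therefore runs the induction on both cases simultaneously, treating $\dynA_n$ directly via Lemma~\ref{lemma:Legendriam Coxeter mutation of type An} (Legendrian Coxeter mutation on $\ngraph(\dynA_n)$ is a $\frac{2\pi}{n+3}$-rotation, which manifestly commutes with any $\mutation'$), and observing that the $\dynA$-type pieces arising from the tripod split can safely be replaced by $\ngraph(\dynA_s)$. A minor related point: your assertion that ``the Coxeter number $h$ for the tripod is even'' fails precisely for the $\dynA_{2k}$ tripods $(1,b,c)$ with $b+c$ odd, which is another reason to keep the $\dynA_n$ case separate.
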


Under the aid of this proposition, one can prove Theorem~\ref{theorem:ADE type}.
\begin{proof}[Proof of Theorem~\ref{theorem:ADE type}]
Let $\legendrian$ be given as above.
Then by Proposition~\ref{proposition:realizability}, we have pairs of $N$-graphs and good tuples of cycles which have a one-to-one correspondence $\Psi$ with seeds in the seed pattern of $\quiver(a,b,c)$.
Hence any pair of the Lagrangian fillings coming from these $N$-graphs is never exact Lagrangian isotopic by Corollary~\ref{corollary:distinct seeds imples distinct fillings}.
This completes the proof.
\end{proof}

We will use the following observations: let $P(\Roots)$ be the generalized associahedron for the root system $\Roots$ of type $\dynADE$ (cf. Theorem~\ref{thm_FZ_finite_type} and \S\ref{sec_comb_of_exchange_graphs}).
\begin{enumerate}
\item There is one-to-one correspondence between the sets of vertices and seeds.
\item There is one-to-one correspondence between the set of facets, faces of codimension 1, and the set of almost positive roots $\Roots_{\ge -1}$.
\item For the initial seed $\initialseed$, we may assume that the facets of codimension one including $\initialseed$ correspond to negative simple roots. Namely, there are exactly $n$-facets 
\[
\facet = \{F_{-\alpha_i}\mid \alpha_i\in \SRoots\}.
\]
\item The orbits of $\facet$ under the action of the Legendrian Coxeter mutation $\qcoxeter$ exhaust all facets.
\end{enumerate}

\begin{proof}[Proof of Proposition~\ref{proposition:realizability}]
For a Legendrian link $\legendrian$ of type $\dynADE$, we fix flags $\flags$ on $\legendrian$.
Let us define the initial $2$-graph or $3$-graph with the chosen tuple of cycles $(\ngraph_{t_0}, \nbasis_{t_0})$ as 
\[
(\ngraph_{t_0}, \nbasis_{t_0})=\begin{cases}
(\ngraph(\dynA_n), \nbasis(\dynA_n)) & \legendrian=\legendrian(\dynA_n);\\
(\ngraph(a,b,c), \nbasis(a,b,c)) & \legendrian=\legendrian(a,b,c),
\end{cases}
\]
which defines the initial seed $\initialseed$ via $\Psi$
\begin{align*}
\seed_{t_0} = \Psi(\ngraph_{t_0}, \nbasis_{t_0},\flags) = (\bfx(\Legendrian(\ngraph_{t_0}), \nbasis_{t_0}, \flags), \quiver(\Legendrian(\ngraph_{t_0}), \nbasis_{t_0})).
\end{align*}

Suppose that $\seed$ is a seed in the cluster pattern. 
Then we need to to prove that there exists an $N$-graph $(\ngraph,\nbasis)$ such that $\Psi(\ngraph,\nbasis,\flags) = \seed$.

By Proposition~\ref{prop_FZ_finite_type_Coxeter_element} and Lemma~\ref{lemma:normal form}, there exist an integer $r$ and a sequence $\mutation'$ of mutations such that
\[
\seed=\mutation'(\qcoxeter^r(\initialseed)),
\]
where $\mutation'$ joins $\qcoxeter^r(\initialseed)$ and $\seed$ inside a facet.

If $\legendrian=\legendrian(\dynA_n)$, then $\ncoxeter$ acts on $(\ngraph_{t_0}, \nbasis_{t_0})$ as the $\left(\frac{2\pi}{n+2}\right)$-rotation, which obviously commutes with Legendrian mutation $\mutation'$.
Hence it suffices to show the well-definedness of $\mutation'(\ngraph_{t_0},\nbasis_{t_0})$.

Otherwise, as seen earlier, the action of the Legendrian Coxeter mutation $\ncoxeter^r$ on $(\ngraph_{t_0},\nbasis_{t_0})$ is obtained by the concatenation of sequences of $\coxeterpadding=\coxeterpadding(a,b,c)$ and $\bar \coxeterpadding=\bar\coxeterpadding(a,b,c)$ to either $(\ngraph_{t_0},\nbasis_{t_0})$ or $(\bar\ngraph_{t_0}, \bar\nbasis_{t_0})$.
\[
\ncoxeter^r(\ngraph_{t_0},\nbasis_{t_0}) = \begin{cases}
\coxeterpadding\bar\coxeterpadding\cdots \bar\coxeterpadding (\ngraph_{t_0},\nbasis_{t_0})& r\text{ is even},\\
\coxeterpadding\bar\coxeterpadding\cdots \coxeterpadding (\bar\ngraph_{t_0},\bar\nbasis_{t_0})& r\text{ is odd}.
\end{cases}
\]

Let us regard the sequence $\mutation'$ of mutations as the sequence of Legendrian mutations.
Since the concatenation of $\coxeterpadding$ or $\bar\coxeterpadding$ do not touch any chosen cycle in $\nbasis_{t_0}$, two operations---the concatenation of $\coxeterpadding$ or $\bar\coxeterpadding$, and the mutation $\mutation'$--- commute.
Therefore
\[
\mutation'(\ncoxeter^r(\ngraph_{t_0},\nbasis_{t_0}))=\begin{cases}
\coxeterpadding\bar\coxeterpadding\cdots \bar\coxeterpadding (\mutation'(\ngraph_{t_0},\nbasis_{t_0}))& r\text{ is even},\\
\coxeterpadding\bar\coxeterpadding\cdots \coxeterpadding (\mutation'(\bar\ngraph_{t_0},\bar\nbasis_{t_0}))& r\text{ is odd}, 
\end{cases}
\]
and the proposition follows if $\mutation'(\ngraph_{t_0},\nbasis_{t_0})$ and $\mutation'(\bar\ngraph_{t_0},\bar\nbasis_{t_0})$ are well-defined.
Since $\ngraph_{t_0}$ and $\bar\ngraph_{t_0}$ are essentially the same, it suffices to show the well-definedness of $\mutation'(\ngraph_{t_0},\nbasis_{t_0})$ as before.

Now we will prove the well-definedness of $\mutation'(\ngraph_{t_0}, \nbasis_{t_0})$ in both cases by using induction on $n$.
Suppose that $\mutation'$ is a sequence of mutations in a facet $F_{\beta}$ for some $\beta\in\Roots_{\ge-1}$, and $\qcoxeter^r(F_{-\alpha_i})=F_{\beta}$ for some $\alpha_i\in \SRoots$.
Then the facet $F_{\beta}$ is combinatorially equivalent to the lower dimensional
generalized associahedron
\[
F_{\beta}\cong P(\Roots([n] \setminus \{i\}))
\cong P(\Roots_1)\times\cdots \times P(\Roots_m).
\]
Here, $\Roots([n] \setminus \{i\})$ is not necessarily irreducible 
and we denote by $\Roots_1,\dots,\Roots_\ell, \ell\le 3$ the root systems 
satisfying that $\Roots([n] \setminus \{i\}) = \Roots_1 \times \cdots 
\times \Roots_\ell$. 
Moreover, in terms of quivers, if we denote the connected components of $\quiver\setminus\{i\}$ by $\quiver^{(1)},\dots,\quiver^{(\ell)}$, then we may say that $\Phi_j$ and $\quiver^{(j)}$ are of the same type.
Therefore the sequence $\mutation'$ of mutations can be decomposed into $\mutation^{(1)},\dots, \mutation^{(\ell)}$ on $\quiver^{(1)},\dots, \quiver^{(\ell)}$, respectively. 

Similarly, in $N$-graph $\ngraph_{t_0}$, the $i$-th cycle $[\cycle_i]$ separates $(\ngraph_{t_0}, \nbasis_{t_0})$ into at most three parts $\{(\ngraph_1,\nbasis_1), \dots, (\ngraph_\ell,\nbasis_\ell)\}$, as seen in Figure~\ref{figure:N graph separation}.
This means that 
\[
\mutation^{(j)}(\ngraph_j,\nbasis_j)\text{ is well-defined for all }1\le j\le\ell\Longrightarrow\mutation'(\ngraph_{t_0},\nbasis_{t_0})\text{ is well-defined}.
\]

Indeed, if $\legendrian=\legendrian(\dynA_n)$, then we have the following two cases:
\begin{enumerate}
\item if $\cycle_i$ corresponds to a bivalent vertex, then for some $1\le r,s$ with $r+s+1=n$, we have two $2$-subgraphs
\[
\{(\ngraph(\dynA_r),\nbasis(\dynA_r)), (\ngraph(\dynA_s),\nbasis(\dynA_s))\};
\]
\item if $\cycle_i$ corresponds to a leaf, then we have the $2$-subgraph
\[
\{(\ngraph(\dynA_{n-1}), \nbasis(\dynA_{n-1})\}.
\]
\end{enumerate}

Otherwise, if $\legendrian=\legendrian(a,b,c)$, then we have the following three cases:
\begin{enumerate}
\item if $\cycle_i$ corresponds to the central vertex, then we have three $3$-subgraphs
\[
\{(\ngraph_{(3)}(\dynA_{a-1}), \nbasis_{(3)}(\dynA_{a-1})), (\ngraph_{(3)}(\dynA_{b-1}), \nbasis_{(3)}(\dynA_{b-1})),(\ngraph_{(3)}(\dynA_{c-1}), \nbasis_{(3)}(\dynA_{c-1}))\},
\]
\item  if $\cycle_i$ corresponds to a bivalent vertex, then for some $1\le r,s$ with $r+s+1=a$, up to permuting indices $a,b,c$, we have two $3$-subgraphs
\[
\{(\ngraph_{(3)}(\dynA_s),\nbasis_{(3)}(\dynA_s)),(\ngraph(r,b,c),\nbasis(r,b,c))\},
\]
\item otherwise,  if $\cycle_i$ corresponds to a leaf, then up to permuting indices $a,b,c$, we have the $3$-subgraph
\[
\{(\ngraph(a-1,b,c), \nbasis(a-1,b,c))\}.
\]
\end{enumerate}
Some of separations are depicted in Figure~\ref{figure:N graph separation}.
Here, $\ngraph_{(3)}(\dynA_s)$ is the $3$-graph which looks like the $2$-graph $\ngraph(\dynA_s)$.
Indeed, there are no edges in red and so the well-definedness of each mutation on $\ngraph_{(3)}(\dynA_s)$ is the same as $\ngraph(\dynA_s)$.
Therefore we can safely replace $\ngraph_{(3)}(\dynA_s)$ with $\ngraph(\dynA_s)$ for the proof.

However, for each $1\le j\le \ell$, the $N$-subgraph $\ngraph_j$ is either $\ngraph(a',b',c')$ with $n'=a'+b'+c'-2$ or $\ngraph(\dynA_{n'})$, where $n'<n$.
Therefore the proposition follows from the induction on $n$ once we establish the initial step, which is when $n=1$, that is, either
\[
(\ngraph(1,1,1),\nbasis(1,1,1))\quad\text{ or }\quad (\ngraph(\dynA_1),\nbasis(\dynA_1)).
\]
Since there are no obstructions for mutations on these $N$-graphs, we are done for the initial condition for the induction.
\end{proof}

\begin{figure}[ht]
\begin{align*}
\begin{tikzpicture}[baseline=-.5ex,scale=0.7]
\draw[thick] (0,0) circle (3cm);
\draw[green, line cap=round, line width=5, opacity=0.5] (60:1) -- (50:1.5) (70:1.75) -- (50:2) (180:1) -- (170:1.5) (190:1.75) -- (170:2) (300:1) -- (290:1.5) (310:1.75) -- (290:2);
\draw[yellow, line cap=round, line width=5, opacity=0.5] (50:1.5) -- (70:1.75) (170:1.5) -- (190:1.75) (290:1.5) -- (310:1.75);
\begin{scope}
\clip(0,0) circle (3);
\draw[line width=7, dashed] (0,0) -- (60:1) (0,0) -- (180:1) (0,0) -- (300:1);
\draw[yellow!50, line cap=round, line width=5] (0,0) -- (60:1) (0,0) -- (180:1) (0,0) -- (300:1);
\foreach \i in {0,120,240} {
\begin{scope}[rotate=\i]
\draw[red, thick] (0,0) -- (0:3);
\draw[blue, thick, fill] (0,0) -- (60:1) circle (2pt) -- (100:3) (60:1) -- (50:1.5) circle (2pt) -- (20:3) (50:1.5) -- (70:1.75) circle (2pt) -- (80:3) (70:1.75) -- (50:2) circle (2pt) -- (40:3);
\draw[blue, thick, dashed] (50:2) -- (60:3);
\end{scope}
}
\end{scope}
\draw[thick, fill=white] (0,0) circle (2pt);
\end{tikzpicture}&\to
\begin{tikzpicture}[baseline=-.5ex,scale=0.7]
\foreach \i in {0,120,240} {
\begin{scope}[rotate=\i,shift=(60:0.5)]
\draw[green, line cap=round, line width=5, opacity=0.5] (60:1) -- (50:1.5) (70:1.75) -- (50:2) ;
\draw[yellow, line cap=round, line width=5, opacity=0.5] (50:1.5) -- (70:1.75);
\draw[thick](0:3) -- (0,0) -- (120:3) arc (120:0:3);
\draw[blue, thick, fill] (0,0) -- (60:1) circle (2pt) -- (100:3) (60:1) -- (50:1.5) circle (2pt) -- (20:3) (50:1.5) -- (70:1.75) circle (2pt) -- (80:3) (70:1.75) -- (50:2) circle (2pt) -- (40:3);
\draw[blue, thick, dashed] (50:2) -- (60:3);
\end{scope}
}
\draw (60:4) node[rotate=-30] {$(\ngraph_{(3)}(\dynA_{a-1}),\nbasis_{(3)}(\dynA_{a-1}))$};
\draw (180:4) node[rotate=90] {$(\ngraph_{(3)}(\dynA_{b-1}),\nbasis_{(3)}(\dynA_{b-1}))$};
\draw (300:4) node[rotate=30] {$(\ngraph_{(3)}(\dynA_{c-1}),\nbasis_{(3)}(\dynA_{c-1}))$};
\end{tikzpicture}\\
\begin{tikzpicture}[baseline=-.5ex,scale=0.7]
\draw[thick] (0,0) circle (3cm);
\draw[green, line cap=round, line width=5, opacity=0.5] (60:1) -- (50:1.5) (70:1.75) -- (50:2) (180:1) -- (170:1.5) (190:1.75) -- (170:2) (300:1) -- (290:1.5) (310:1.75) -- (290:2);
\draw[yellow, line cap=round, line width=5, opacity=0.5] (0,0) -- (60:1) (0,0) -- (180:1) (0,0) -- (300:1) (170:1.5) -- (190:1.75) (290:1.5) -- (310:1.75);
\draw[line width=7, dashed] (70:1.75) -- (50:1.5);
\draw[yellow!50, line cap=round, line width=5] (70:1.75) -- (50:1.5);
\draw[red, thick] (0,0) -- (0:3) (0,0) -- (120:3) (0,0) -- (240:3);
\draw[blue, thick, fill] (0,0) -- (60:1) circle (2pt) -- (100:3) (60:1) -- (50:1.5) circle (2pt) -- (20:3) (50:1.5) -- (70:1.75) circle (2pt) -- (80:3) (70:1.75) -- (50:2) circle (2pt) -- (40:3);
\draw[blue, thick, dashed] (50:2) -- (60:3);
\draw[blue, thick, fill] (0,0) -- (180:1) circle (2pt) -- (220:3) (180:1) -- (170:1.5) circle (2pt) -- (140:3) (170:1.5) -- (190:1.75) circle (2pt) -- (200:3) (190:1.75) -- (170:2) circle (2pt) -- (160:3);
\draw[blue, thick, dashed] (170:2) -- (180:3);
\draw[blue, thick, fill] (0,0) -- (300:1) circle (2pt) -- (340:3) (300:1) -- (290:1.5) circle (2pt) -- (260:3) (290:1.5) -- (310:1.75) circle (2pt) -- (320:3) (310:1.75) -- (290:2) circle (2pt) -- (280:3);
\draw[blue, thick, dashed] (290:2) -- (300:3);
\draw[thick, fill=white] (0,0) circle (2pt);
\end{tikzpicture}&\to
\begin{tikzpicture}[baseline=-.5ex,scale=0.7]
\draw[thick] (0,0) circle (3cm);
\draw[green, line cap=round, line width=5, opacity=0.5] (60:1) -- (50:1.5) (180:1) -- (170:1.5) (190:1.75) -- (170:2) (300:1) -- (290:1.5) (310:1.75) -- (290:2);
\draw[yellow, line cap=round, line width=5, opacity=0.5] (0,0) -- (60:1) (0,0) -- (180:1) (0,0) -- (300:1) (170:1.5) -- (190:1.75) (290:1.5) -- (310:1.75);
\draw[red, thick] (0,0) -- (0:3) (0,0) -- (120:3) (0,0) -- (240:3);
\draw[blue, thick, fill] (0,0) -- (60:1) circle (2pt) -- (90:3) (60:1) -- (50:1.5) circle (2pt) -- (30:3) (50:1.5) -- (60:3);
\draw[blue, thick, fill] (0,0) -- (180:1) circle (2pt) -- (220:3) (180:1) -- (170:1.5) circle (2pt) -- (140:3) (170:1.5) -- (190:1.75) circle (2pt) -- (200:3) (190:1.75) -- (170:2) circle (2pt) -- (160:3);
\draw[blue, thick, dashed] (170:2) -- (180:3);
\draw[blue, thick, fill] (0,0) -- (300:1) circle (2pt) -- (340:3) (300:1) -- (290:1.5) circle (2pt) -- (260:3) (290:1.5) -- (310:1.75) circle (2pt) -- (320:3) (310:1.75) -- (290:2) circle (2pt) -- (280:3);
\draw[blue, thick, dashed] (290:2) -- (300:3);
\draw[thick, fill=white] (0,0) circle (2pt);
\draw (0,-3.5) node {$(\ngraph(r,b,c),\nbasis(r,b,c))$};
\begin{scope}[xshift=5cm,yshift=-2cm]
\draw[green, line cap=round, line width=5, opacity=0.5] (70:1.5) -- (50:2);
\draw[thick](0:3) -- (0,0) -- (120:3) arc (120:0:3);
\draw[blue, thick, fill] (0,0) -- (70:1.5) circle (2pt) -- (90:3) (70:1.5) -- (50:2) circle (2pt) -- (30:3);
\draw[blue, thick, dashed] (50:2) -- (60:3);
\draw (0,-0.5) node {$(\ngraph_{(3)}(\dynA_s),\nbasis_{(3)}(\dynA_s))$};
\end{scope}
\end{tikzpicture}
\end{align*}
\caption{Separations of $(\ngraph(a,b,c),\nbasis(a,b,c))$ at $\cycle_i$}
\label{figure:N graph separation}
\end{figure}

\begin{remark}
In the above proof, it is not claimed that two mutations $\mutation'$ and $\ncoxeter$ commute.
Indeed, if we first mutate $(\ngraph_{t_0},\nbasis_{t_0})$ via $\mutation'$, then the result may not look like either $(\ngraph_{t_0},\nbasis_{t_0})$ or $(\bar\ngraph_{t_0},\bar\nbasis_{t_0})$ and hence $\ncoxeter$ will not work as expected.
\end{remark}

\section{Lagrangian fillings admitting cluster structures of type \texorpdfstring{$\dynBCFG$}{BCFG}}

In this section, we will construct cluster structures of type $\dynBCFG$ on certain $N$-graphs by using the folding of $N$-graphs.
Throughout this section, let us assume that a triple~$(\dynX,\dynY, G)$ is one of 
\begin{align*}
&(\dynA_{2n-1}, \dynB_n,\Z/2\Z),&
&(\dynD_{n+1}, \dynC_n,\Z/2\Z),&
&(\dynE_{6}, \dynF_4,\Z/2\Z),&
&(\dynD_{4}, \dynG_2,\Z/3\Z)
\end{align*}
and that the group $G$ is generated by $\tau$.

\begin{remark}
For $\quiver(\dynD_{n+1})=\quiver(n-1,2,2), \quiver(\dynE_6)=\quiver(2,3,3)$ and $\quiver(\dynD_4)=\quiver(2,2,2)$, we will use the labelling convention of tripods depicted in Figure~\ref{figure:tripod quiver}, which is different from the usual convention of Dynkin diagrams shown in Table~\ref{Dynkin}.
\end{remark}

\subsection{Rotations and \texorpdfstring{$N$}{N}-graphs of type \texorpdfstring{$\dynA_{2n-1}$}{A(2n-1)} and \texorpdfstring{$\dynD_{4}$}{D(4)}}
Let $(\ngraph, \nbasis)$ be a pair of a $2$- or $3$-graph and a good tuple of cycles of type $\dynX=\dynA_{2n-1}$ or $\dynD_4$, respectively.
We define a new pair $(\tau(\ngraph),\tau(\nbasis))$ such that
$\tau(\ngraph)$ and $\tau(\nbasis)$ are obtained by the $(2\pi/N)$-rotation on $(\ngraph, \nbasis)$.

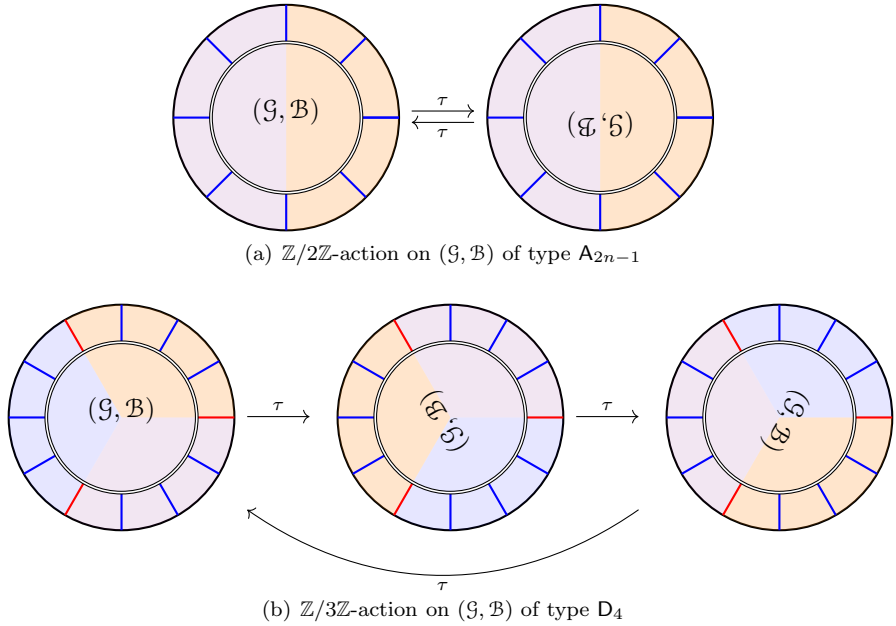
\begin{figure}[ht]
\subfigure[$\Z/2\Z$-action on $(\ngraph,\nbasis)$ of type $\dynA_{2n-1}$]{
\begin{tikzcd}[ampersand replacement=\&]
\begin{tikzpicture}[baseline=-.5ex, scale=0.5]
\draw[thick] (0,0) circle (3);
\fill[orange, opacity=0.2] (-90:3) arc(-90:90:3) -- cycle;
\fill[violet, opacity=0.1] (90:3) arc(90:270:3) -- cycle;
\foreach \i in {0,...,8} {
\draw[blue,thick] ({\i*45}:3) -- ({\i*45}:2);
}
\draw[double] (0,0) node {$(\ngraph,\nbasis)$} circle (2);
\end{tikzpicture}
\arrow[r,"\tau",yshift=.5ex]\&
\begin{tikzpicture}[baseline=-.5ex, scale=0.5]
\draw[thick] (0,0) circle (3);
\fill[orange, opacity=0.2] (90:3) arc(90:-90:3) -- cycle;
\fill[violet, opacity=0.1] (-90:3) arc(-90:-270:3) -- cycle;
\foreach \i in {0,...,8} {
\draw[blue,thick] ({\i*45}:3) -- ({\i*45}:2);
}
\draw[double] (0,0) node[rotate=180] {$(\ngraph,\nbasis)$} circle (2);
\end{tikzpicture}
\arrow[l, "\tau", yshift=-.5ex]
\end{tikzcd}
}

\subfigure[$\Z/3\Z$-action on $(\ngraph,\nbasis)$ of type $\dynD_4$]{
\begin{tikzcd}[ampersand replacement=\&]
\begin{tikzpicture}[baseline=-.5ex, scale=0.5]
\draw[thick] (0,0) circle (3);
\fill[orange, opacity=0.2] (0,0) -- (0:3) arc(0:120:3) -- cycle;
\fill[violet, opacity=0.1] (0,0) -- (-120:3) arc(-120:0:3) -- cycle;
\fill[blue, opacity=0.1] (0,0) -- (120:3) arc (120:240:3) -- cycle;
\foreach \i in {0, 120, 240} {
\begin{scope}[rotate=\i]
\draw[blue, thick] (30:3) -- (30:2) (60:3) -- (60:2) (90:3) -- (90:2);
\draw[red, thick] (0:3) -- (0:2);
\end{scope}
}
\draw[double] (0,0) node {$(\ngraph,\nbasis)$} circle (2);
\end{tikzpicture}
\arrow[r,"\tau"]\&
\begin{tikzpicture}[baseline=-.5ex, scale=0.5]
\draw[thick] (0,0) circle (3);
\fill[orange, opacity=0.2] (0,0) -- (120:3) arc(120:240:3) -- cycle;
\fill[violet, opacity=0.1] (0,0) -- (0:3) arc(0:120:3) -- cycle;
\fill[blue, opacity=0.1] (0,0) -- (-120:3) arc (-120:0:3) -- cycle;
\foreach \i in {0, 120, 240} {
	\begin{scope}[rotate=\i]
	\draw[blue, thick] (30:3) -- (30:2) (60:3) -- (60:2) (90:3) -- (90:2);
	\draw[red, thick] (0:3) -- (0:2);
	\end{scope}
}
\draw[double] (0,0) node[rotate=120] {$(\ngraph,\nbasis)$} circle (2);
\end{tikzpicture}
\arrow[r,"\tau"]\&
\begin{tikzpicture}[baseline=-.5ex, scale=0.5]
\draw[thick] (0,0) circle (3);
\fill[orange, opacity=0.2] (0,0) -- (-120:3) arc(-120:0:3) -- cycle;
\fill[violet, opacity=0.1] (0,0) -- (120:3) arc(120:240:3) -- cycle;
\fill[blue, opacity=0.1] (0,0) -- (0:3) arc (0:120:3) -- cycle;
\foreach \i in {0, 120, 240} {
\begin{scope}[rotate=\i]
\draw[blue, thick] (30:3) -- (30:2) (60:3) -- (60:2) (90:3) -- (90:2);
\draw[red, thick] (0:3) -- (0:2);
\end{scope}
}
\draw[double] (0,0) node[rotate=-120] {$(\ngraph,\nbasis)$} circle (2);
\end{tikzpicture}
\arrow[ll,"\tau", bend left=35]
\end{tikzcd}
}
\caption{Rotation actions on $N$-graphs of type $\dynA_{2n-1}$ and $\dynD_4$}
\label{figure:rotation action}
\end{figure}

We say that $(\ngraph, \nbasis)$ is \emph{$G$-admissible} if 
\begin{enumerate}
\item the $N$-graph $\ngraph$ has the $(2\pi/N)$-rotation symmetry so that $\tau(\ngraph)=\ngraph$,
\item the tuples of cycles $\nbasis$ and $\tau(\nbasis)$ are identical up to relabelling as follows: if $\dynX=\dynA_{2n-1}$,
\begin{align*}
\cycle_i&\leftrightarrow \cycle_{2n-i},
\end{align*}
and if $\dynX=\dynD_4$,
\begin{align*}
\cycle_2&\to \cycle_3,&
\cycle_3&\to \cycle_4,&
\cycle_4&\to \cycle_2.
\end{align*}
\end{enumerate}
In particular, $\tau$ preserves $\cycle_n$ if $\dynX=\dynA_{2n-1}$ and $\cycle_1$ if $\dynX=\dynD_4$.
Figure~\ref{figure:G-admissible N graphs for AD} shows examples and non-examples of $G$-admissible $N$-graphs.

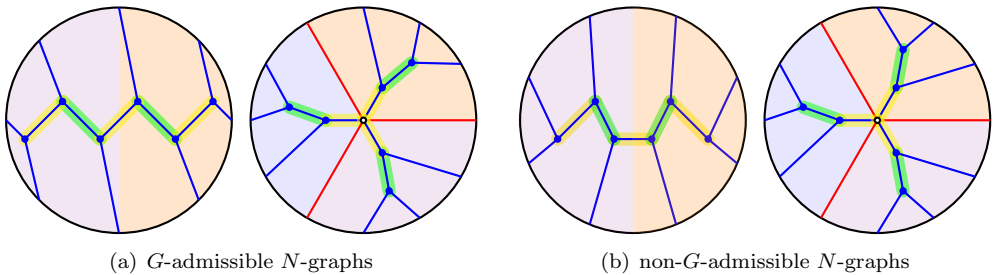
\begin{figure}[ht]
\subfigure[$G$-admissible $N$-graphs]{\makebox[0.45\textwidth]{
\begin{tikzpicture}[baseline=-.5ex,scale=0.5]
\fill[orange, opacity=0.2] (-90:3) arc(-90:90:3) -- cycle;
\fill[violet, opacity=0.1] (90:3) arc(90:270:3) -- cycle;
\draw[thick] (0,0) circle (3);
\draw[green,line cap=round, line width=5, opacity=0.5] (-1.5,0.5) -- (-0.5, -0.5) 
(0.5, 0.5) -- (1.5, -0.5);
\draw[yellow,line cap=round, line width=5, opacity=0.5] (-2.5,-0.5) -- (-1.5, 0.5) (-0.5, -0.5) -- (0.5, 0.5) (1.5, -0.5) -- (2.5, 0.5);
\draw[blue, thick, fill] (0:3) -- (2.5,0.5) circle (2pt) -- (45:3) (2.5,0.5) -- (1.5,-0.5) circle (2pt) -- (-45:3) (1.5,-0.5) -- (0.5,0.5) circle (2pt) -- (90:3) (0.5,0.5) -- (-0.5, -0.5) circle (2pt) -- (-90:3) (-0.5, -0.5) -- (-1.5, 0.5) circle (2pt) -- (135:3) (-1.5, 0.5) -- (-2.5, -0.5) circle (2pt) -- (-135:3);
\draw[blue, thick] (-2.5,-0.5) -- (-180:3);
\begin{scope}[xshift=6.5cm]
\draw[orange, opacity=0.2, fill] (0:3) arc(0:120:3) (120:3) -- (0,0) -- (0:3);
\draw[violet, opacity=0.1, fill] (0:3) -- (0,0) -- (-120:3) arc(-120:0:3) (0:3);
\draw[blue, opacity=0.1, fill] (120:3) arc(120:240:3) (240:3) -- (0,0) -- (120:3);
\draw[thick] (0,0) circle (3);
\foreach \i in {0,120,240} {
\begin{scope}[rotate=\i]
\draw[green, line cap=round, line width=5, opacity=0.5] (60:1) -- (50:2);
\draw[yellow, line cap=round, line width=5, opacity=0.5] (0,0)--(60:1);
\draw[red,thick](0:3) -- (0,0);
\draw[blue, thick, fill] (0,0) -- (60:1) circle (2pt) -- (90:3) (60:1) -- (50:2) circle (2pt) -- (30:3) (50:2) -- (60:3);
\end{scope}
}
\draw[thick,fill=white] (0,0) circle (2pt);
\end{scope}
\end{tikzpicture}
}}
\subfigure[non-$G$-admissible $N$-graphs]{\makebox[0.45\textwidth]{
\begin{tikzpicture}[baseline=-.5ex,scale=0.5]
\draw[thick] (0,0) circle (3);
\draw[yellow, line cap=round, line width=5, opacity=0.5] (-0.5, -0.5) -- (0.5,-0.5) (-2,-0.5) -- (-1, 0.5) (2, -0.5) -- (1, 0.5);
\draw[green, line cap=round, line width=5, opacity=0.5] (-0.5, -0.5) -- (-1,0.5) (0.5, -0.5) -- (1, 0.5);
\draw[blue, thick,fill] (22.5:3) -- (2, -0.5) circle(2pt) -- (-22.5:3) (2,-0.5) -- (1,0.5) circle (2pt) -- (67.5:3) (1,0.5) -- (0.5,-0.5) circle(2pt) -- (-67.5:3) (0.5,-0.5) -- (0,-0.5);
\begin{scope}[xscale=-1]
\draw[blue, thick,fill] (22.5:3) -- (2, -0.5) circle(2pt) -- (-22.5:3) (2,-0.5) -- (1,0.5) circle (2pt) -- (67.5:3) (1,0.5) -- (0.5,-0.5) circle(2pt) -- (-67.5:3) (0.5,-0.5) -- (0,-0.5);
\end{scope}
\fill[orange, opacity=0.2] (-90:3) arc(-90:90:3) -- cycle;
\fill[violet, opacity=0.1] (90:3) arc(90:270:3) -- cycle;

\begin{scope}[xshift=6.5cm]
\draw[orange, opacity=0.2, fill] (0:3) arc(0:120:3) (120:3) -- (0,0) -- (0:3);
\draw[violet, opacity=0.1, fill] (0:3) -- (0,0) -- (-120:3) arc(-120:0:3) (0:3);
\draw[blue, opacity=0.1, fill] (120:3) arc(120:240:3) (240:3) -- (0,0) -- (120:3);
\draw[thick] (0,0) circle (3);
\begin{scope}[yscale=-1, rotate=-120]
\draw[green, line cap=round, line width=5, opacity=0.5] (60:1) -- (50:2);
\draw[yellow, line cap=round, line width=5, opacity=0.5] (0,0)--(60:1);
\draw[blue, thick, fill] (0,0) -- (60:1) circle (2pt) -- (90:3) (60:1) -- (50:2) circle (2pt) -- (30:3) (50:2) -- (60:3);
\end{scope}
\draw[red,thick](0:3) -- (0,0);
\foreach \i in {120,240} {
\begin{scope}[rotate=\i]
\draw[green, line cap=round, line width=5, opacity=0.5] (60:1) -- (50:2);
\draw[yellow, line cap=round, line width=5, opacity=0.5] (0,0)--(60:1);
\draw[red,thick](0:3) -- (0,0);
\draw[blue, thick, fill] (0,0) -- (60:1) circle (2pt) -- (90:3) (60:1) -- (50:2) circle (2pt) -- (30:3) (50:2) -- (60:3);
\end{scope}
}
\draw[thick,fill=white] (0,0) circle (2pt);
\end{scope}
\end{tikzpicture}
}}
\caption{$G$-admissible or non-$G$-admissible $N$-graphs for Legendrians of type $\dynA_{2n-1}$ and $\dynD_4$}
\label{figure:G-admissible N graphs for AD}
\end{figure}

\begin{remark}\label{remark:coxeter padding symmetry 1}
Notice that the Coxeter padding $\coxeterpadding$ for $\ngraph(\dynA_{2n-1})$ is empty and $\coxeterpadding(2,2,2)$ has obviously the $\Z/3\Z$-rotational symmetry.
For each $G$-admissible $(\ngraph, \nbasis)$ of type $\dynA_{2n-1}$ or $\dynD_4$,
so is the following
\[
\coxeterpadding\cdots\bar\coxeterpadding\coxeterpadding(\ngraph,\nbasis)\quad\text{ or }\quad
\bar\coxeterpadding\cdots\bar\coxeterpadding\coxeterpadding(\ngraph,\nbasis).
\]
\end{remark}

\begin{lemma}\label{lemma:no weird cycles in An}
Let $\quiver$ be a quiver of type $\dynA_{2n-1}$.
Suppose that $\quiver$ is invariant under the action
\begin{align*}
\tau(i)&=2n-i
\end{align*}
for all $i\in[2n-1]$.
Then there is no oriented cycle of the form
\[
j\to i\to\tau(j)\to\tau(i)\to j
\]
for any $i,j\neq n$.
\end{lemma}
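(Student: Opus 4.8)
The plan is to deduce the statement from the purely structural fact that a quiver of type $\dynA_m$ has no oriented cycle of length at least $4$, after a short combinatorial check showing that, under the hypothesis $i,j\ne n$, the displayed word $j\to i\to\tau(j)\to\tau(i)\to j$ is forced to be a genuine oriented $4$-cycle on four distinct vertices.

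First I would record the two elementary features of $\tau$ that are used: the map $\tau\colon[2n-1]\to[2n-1]$, $i\mapsto 2n-i$, is an involution, and its unique fixed point is $n$. Now suppose toward a contradiction that such an oriented cycle exists, i.e.\ $b_{j,i}$, $b_{i,\tau(j)}$, $b_{\tau(j),\tau(i)}$, $b_{\tau(i),j}$ are all positive with $i,j\ne n$. I claim the four indices $j,i,\tau(j),\tau(i)$ are pairwise distinct. Indeed, running through the six possible coincidences among them: $i=j$, $i=\tau(j)$, and $j=\tau(i)$ each turn one of the four arrows into a loop at a vertex, which is impossible since $\quiver$, being a quiver, has no loops; $\tau(i)=\tau(j)$ reduces to $i=j$ because $\tau$ is injective; and $i=\tau(i)$ or $j=\tau(j)$ would force $i=n$ or $j=n$, contrary to hypothesis. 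Hence $j\to i\to\tau(j)\to\tau(i)\to j$ is an oriented $4$-cycle on four distinct vertices of $\quiver$.

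Next I would invoke the type-$\dynA$ structure. By Definition~\ref{def_quiver_of_type_X}, $\quiver$ is mutation equivalent to an orientation of the Dynkin diagram of type $\dynA_{2n-1}$, hence is the adjacency quiver of some triangulation of a convex $(2n+2)$-gon. In such a quiver every oriented cycle is a $3$-cycle arising from an interior triangle of the triangulation; in particular there is no oriented $4$-cycle (see e.g.\ the triangulation model for cluster algebras of type $\dynA$ in~\cite{FWZ_chapter45}, or the classification of Buan--Vatne). This contradicts the conclusion of the previous paragraph, and the lemma follows.

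The only input here that is not pure bookkeeping is this last structural fact about type-$\dynA$ quivers, and that is where I expect the real work to lie if one wants a self-contained argument rather than a citation. In that case I would realize $\quiver$ from a triangulation $\mathbf{T}$ and argue that an oriented cycle $d_1\to d_2\to\cdots\to d_\ell\to d_1$ of diagonals yields, for each arrow, an interior triangle of $\mathbf{T}$ containing the two diagonals it connects; tracing these around produces a nontrivial closed walk in the (tree) adjacency graph of the triangles of $\mathbf{T}$, which is impossible unless the walk is trivial, i.e.\ $\ell=3$ and the cycle is a single triangle read around its three sides. I would also emphasize that no finer admissibility hypothesis on $\quiver$ is needed: the argument uses only that $\tau$ is an automorphism of $\quiver$ (so $b_{\tau(k),\tau(l)}=b_{k,l}$) and that $\quiver$ is of type $\dynA_{2n-1}$.
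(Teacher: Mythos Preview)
Your proof is correct and close in spirit to the paper's, but the two differ in where the type-$\dynA$ structure is spent. You invoke the stronger fact that a cluster quiver of type~$\dynA$ has no oriented cycle of length~$\ge 4$, so once you verify that $j,i,\tau(j),\tau(i)$ are four distinct vertices you are finished. The paper instead uses only that every \emph{minimal} (chordless) oriented cycle has length~$3$: the hypothetical $4$-cycle must then carry a chord, the only candidates being $i$--$\tau(i)$ or $j$--$\tau(j)$, and a second application of $\tau$-invariance ($b_{i,\tau(i)}=b_{\tau(i),i}=-b_{i,\tau(i)}$, hence $b_{i,\tau(i)}=0$) kills these. Your route trades a slightly stronger structural input for a one-line endgame; the paper's route gets by with the minimal-cycle statement but uses the symmetry twice. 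One small caveat: your dual-tree sketch for the structural fact is not airtight as written, since a closed walk in a tree is not by itself a contradiction (walks may backtrack). A clean way to complete it is to observe that in the triangulation model every $3$-cycle of the underlying graph is oriented and any two $3$-cycles are edge-disjoint; hence a chord of an oriented $4$-cycle would have to point both ways, producing a forbidden $2$-cycle.
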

\begin{proof}
It is well known that any minimal cycle in $\quiver$ is of length 3.
Therefore, if such an oriented cycle exists, then there must be an edge $i-\tau(i)$ or $j-\tau(j)$ in $\quiver$. Hence $b_{i,\tau(i)}\neq0$ or $b_{j,\tau(j)}\neq0$ for $\qbasis=(b_{k,\ell})=\qbasis(\quiver)$.

This is impossible because $\quiver$ is $\Z/2\Z$-admissible and so
\[
b_{i,\tau(i)} = b_{\tau(i),\tau(\tau(i))} = b_{\tau(i), i} = -b_{i,\tau(i)}\quad\Longrightarrow\quad
b_{i,\tau(i)}=0.
\]
Therefore we are done.
\end{proof}

\begin{proposition}\label{proposition:admissibility for An}
Let $(\ngraph, \nbasis)$ be of type $\dynA_{2n-1}$ as above. If $(\ngraph, \nbasis)$ is $\Z/2\Z$-admissible, then so is the quiver $\quiver(\Legendrian(\ngraph), \nbasis)$.
\end{proposition}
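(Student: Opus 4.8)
The plan is to verify directly the four conditions (a)--(d) defining a $\Z/2\Z$-admissible quiver in Definition~\ref{definition:admissible quiver}, applied to $\quiver=\quiver(\Legendrian(\ngraph),\nbasis)$ with the action $\tau\colon i\mapsto 2n-i$ on $[2n-1]$. Condition (a) is immediate since every vertex is mutable. For (b), $b_{i,j}=b_{\tau(i),\tau(j)}$, the point is that $\tau$ is realized geometrically: the $\pi$-rotation of $\disk^2$ (the $2\pi/N$-rotation for $N=2$) lifts to a contactomorphism of $J^1\disk^2$, and because $(\ngraph,\nbasis)$ is $\Z/2\Z$-admissible (so $\tau(\ngraph)=\ngraph$) this carries $\Legendrian(\ngraph)$ to itself; being the time-one map of the lift of the rotation isotopy it acts by an orientation-preserving diffeomorphism of $\Legendrian(\ngraph)$, hence preserves the algebraic intersection pairing on $H_1(\Legendrian(\ngraph))$. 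Since $\Z/2\Z$-admissibility of $(\ngraph,\nbasis)$ says precisely that this map sends $[\cycle_i]$ to $[\cycle_{\tau(i)}]$, we obtain
\[
b_{\tau(i),\tau(j)}=i\bigl([\cycle_{\tau(i)}],[\cycle_{\tau(j)}]\bigr)=i\bigl([\cycle_i],[\cycle_j]\bigr)=b_{i,j}.
\]
(Alternatively one argues combinatorially from the clockwise sign rule in Figure~\ref{fig:I-cycle with orientation and intersections}: applying the $\pi$-rotation to the local computation of $b_{i,j}$ gives that of $b_{\tau(i),\tau(j)}$ and preserves the clockwise convention.) Condition (c) is then formal: applying (b) with $g=\tau$ and using skew-symmetry, $b_{i,\tau(i)}=b_{\tau(i),i}=-b_{i,\tau(i)}$, so $b_{i,\tau(i)}=0$ for $i\ne n$ (and $b_{n,n}=0$ trivially) --- the computation already appearing in the proof of Lemma~\ref{lemma:no weird cycles in An}.

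The substance is condition (d), $b_{i,j}b_{\tau(i),j}\ge 0$. If $i=n=\tau(i)$ this is $b_{n,j}^{2}\ge 0$; and if $j=n$ then (b) applied to the pair $(i,n)$ gives $b_{i,n}=b_{\tau(i),\tau(n)}=b_{\tau(i),n}$ since $\tau(n)=n$, so the product is $b_{i,n}^{2}\ge 0$. It therefore remains to treat $i,j\ne n$, and I will show that $\cycle_j$ cannot intersect both $\cycle_i$ and $\cycle_{\tau(i)}$, whence one of $b_{i,j},b_{\tau(i),j}$ vanishes. Here I would exploit that $\ngraph$ is a $2$-graph, so it has no hexagonal points and every good cycle is represented by $\cycle(e)$ for a single edge $e$ of $\ngraph$ joining two trivalent vertices; since $\tau(\ngraph)=\ngraph$ we may take $\cycle_i=\cycle(e_i)$, $\cycle_j=\cycle(e_j)$, $\cycle_{\tau(i)}=\cycle(\tau(e_i))$ simultaneously. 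Freeness of $\ngraph$ (which holds: $\ngraph(\dynA_{2n-1})$ is free and Legendrian mutation preserves freeness) rules out bigons and faces lying in $\mathring\disk^2$ --- an interior face would force a critical point of a difference function $h_{1\,2}$, i.e.\ an interior Reeb chord; and $\tau$ has no fixed trivalent vertex, a central one having its three incident edges permuted by an involution with no fixed edge.

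Granting all this, the case analysis is short: if $e_j$ meets $e_i$ at a vertex $v$ and meets $\tau(e_i)$ at a vertex $w$, then $v,w$ are the two endpoints of $e_j$ and $v\ne w$ (else $v$ is fixed by $\tau$, or $e_i$ and $\tau(e_i)$ bound a bigon, or $e_i=\tau(e_i)$ forces $i=n$); writing $e_i=\{v,v'\}$ we have $w\in\{\tau(v),\tau(v')\}$; the case $w=\tau(v)$ gives $e_j=\{v,\tau(v)\}=\tau(e_j)$, so $j=n$, contrary to assumption; and the case $w=\tau(v')$ makes $e_i,\tau(e_j),\tau(e_i),e_j$ a $4$-cycle of distinct edges in $\ngraph$ bounding a face in $\mathring\disk^2$, contradicting freeness. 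Hence for $i,j\ne n$ the product $b_{i,j}b_{\tau(i),j}$ vanishes, which completes (d) and the proposition. The step I expect to be the main obstacle is tightening this last analysis --- verifying carefully that in a $2$-graph every good cycle really is the $\sfI$-cycle of a single edge, and that the $4$-cycle configuration is genuinely excluded by freeness (either via simple-connectedness of $\disk^2\setminus\ngraph$ along the whole mutation class of $\ngraph(\dynA_{2n-1})$ so that Example~\ref{ex:free N-graph} applies, or via the direct Reeb-chord argument with the functions $h_{i\,i+1}$). The orientation-sign subtlety in (b) --- that the $\pi$-rotation preserves the chosen orientations of the $\sfI$-cycles and of $\Legendrian(\ngraph)$ --- is a secondary, purely bookkeeping point.
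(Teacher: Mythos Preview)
Your treatment of conditions (a)--(c) is the same as the paper's. For condition (d) you take a genuinely different route. The paper argues by contradiction at the quiver level: assuming $b_{i,j}b_{\tau(i),j}<0$ for some $i,j\ne n$, one combines this with (b) to produce an oriented $4$-cycle $j\to i\to\tau(j)\to\tau(i)\to j$ in $\quiver$, and then invokes Lemma~\ref{lemma:no weird cycles in An}, whose proof rests on the cluster-combinatorial fact that every minimal cycle in a quiver of type~$\dynA$ has length~$3$ (so a $4$-cycle must have a chord $i\!-\!\tau(i)$ or $j\!-\!\tau(j)$, forbidden by (c)). You instead work geometrically inside the $2$-graph: since a $2$-graph has no hexagonal points, every good cycle is $\cycle(e)$ for a single interior edge~$e$, and your case analysis shows that if $e_j$ met both $e_i$ and $\tau(e_i)$ (with $i,j\ne n$) then $e_i,\tau(e_j),\tau(e_i),e_j$ would form a simple $4$-cycle of edges in~$\mathring\disk^2$, enclosing an interior face and contradicting freeness via Example~\ref{ex:free N-graph}.

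Your argument is correct and in fact yields the slightly stronger conclusion $b_{i,j}b_{\tau(i),j}=0$ for $i,j\ne n$. The trade-off is that your approach is self-contained (no appeal to structural facts about type~$\dynA$ cluster quivers) but is tied to the $2$-graph setting; the paper's lemma-based strategy is what carries over to the $3$-graph cases (Proposition~\ref{proposition:admissibility for Dn and E6} via Lemma~\ref{lemma:no weird cycles in E6}). One point to tighten: the clause ``$\tau$ has no fixed trivalent vertex, a central one having its three incident edges permuted by an involution with no fixed edge'' is misleading as a general claim --- an involution on three edges must fix one. The correct step in your $v=w$ case is: if $\tau(v)=v$ then the three edges at $v$ are $e_i,e_j,\tau(e_i)$, the fixed one must be $e_j$, hence $j=n$; while if $\tau(v)=v'$ then $e_i=\{v,\tau(v)\}$ is $\tau$-invariant, hence $i=n$.
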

\begin{proof}
For $\quiver = \quiver(\Legendrian(\ngraph),\nbasis)$, since the generator $\tau\in \Z/2\Z$ acts on $\nbasis$ as $\tau(\cycle_i)=\cycle_{2n-i}$, we have the $\Z/2\Z$-action on the set $[2n-1]$ of vertices of $\quiver$ as $\tau(i) = 2n-i$ and therefore for each $i\in[2n-1]$,
\[
i\sim 2n-i.
\]
We will check the conditions (a), (b), (c), and (d) for admissibility according to Definition~\ref{definition:admissible quiver}.

\noindent (a) Since all vertices in $\quiver$ are mutable, the condition (a) is obviously satisfied.

\noindent (b) Let $\qbasis = (b_{i,j}) = \qbasis(\quiver)$. Then for each $i, j\in[2n-1]$, the entry $b_{i,j}$ is given by the algebraic intersection number $(\cycle_i, \cycle_j)$, which is the same as $(\cycle_{2n-i},\cycle_{2n-j})$ since $\ngraph$ has the $\pi$-rotation symmetry.
Hence 
\[
b_{i,j} = b_{\tau(i),\tau(j)}.
\]

\noindent (c) On the other hand, for each $i\in[2n-i]$, we have 
\[
b_{i,\tau(i)}=(\cycle_i, \cycle_{\tau(i)})= (\cycle_{\tau(i)}, \cycle_{\tau(\tau(i))}) = (\cycle_{\tau(i)}, \cycle_i) = -b_{i,\tau(i)},
\]
which implies that 
\[
b_{i,\tau(i)}=0.
\]

\noindent (d) Finally, we need to prove that for each $i, j$,
\[
b_{i,j}b_{\tau(i),j}\ge 0.
\]

If $j=n$, then since $\tau(n)=n$, we have
\[
b_{i,n}b_{\tau(i),n}=b_{i,n}b_{\tau(i),\tau(n)} = b_{i,n}b_{i,n}\ge 0.
\]
Similarly, if $i=n$, then 
\[
b_{n, j}b_{\tau(n),j} = b_{n,j}b_{n,j}\ge 0.
\]

Suppose that for some $i, j\neq n$,
\[
b_{i,j}b_{\tau(i),j}<0.
\]
By changing the roles of $i$ and $\tau(i)$ if necessary, we may assume that $b_{i,j}<0<b_{\tau(i),j}$.
Then we also have
\[
b_{\tau(i),\tau(j)}<0<b_{i,\tau(j)},
\]
which implies that there is an oriented cycle in $\quiver$
\[
j\to i \to \tau(j) \to \tau(i) \to j.
\]
However, this contradicts to Lemma~\ref{lemma:no weird cycles in An} and therefore $\quiver$ satisfies all conditions in Definition~\ref{definition:admissible quiver}.
\end{proof}

Similarly, we have the following proposition as well.
\begin{proposition}\label{proposition:admissibility for D4}
Let $(\ngraph, \nbasis)$ be of type $\dynD_{4}$. If $(\ngraph, \nbasis)$ is $\Z/3\Z$-admissible, then so is the quiver $\quiver(\ngraph, \nbasis)$.
\end{proposition}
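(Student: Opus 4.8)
The plan is to follow exactly the template of the proof of Proposition~\ref{proposition:admissibility for An}, but with the $\Z/3\Z$-action in place of the $\Z/2\Z$-action. Since $(\ngraph,\nbasis)$ is of type $\dynD_4=\dynkinfont{D}_4$, we use the tripod labelling $\quiver(2,2,2)$: the central cycle is $\cycle_1$ and the three outer cycles are $\cycle_2,\cycle_3,\cycle_4$, with $\tau$ acting by $\cycle_2\mapsto\cycle_3\mapsto\cycle_4\mapsto\cycle_2$ and fixing $\cycle_1$. Writing $\quiver=\quiver(\Legendrian(\ngraph),\nbasis)$, the generator $\tau$ induces the permutation of $[4]$ fixing $1$ and cycling $2\to3\to4\to2$, so the orbits are $I_1=\{1\}$ and $I_2=\{2,3,4\}$, and $i\sim i'$ means $\{i,i'\}\subseteq\{2,3,4\}$. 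I would then verify conditions (a)--(d) of Definition~\ref{definition:admissible quiver} one at a time.

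Conditions (a), (b), (c) are immediate. For (a): all vertices of $\quiver$ are mutable. For (b): the entry $b_{i,j}$ of $\qbasis(\quiver)$ is the algebraic intersection number $i([\cycle_i],[\cycle_j])$; because $(\ngraph,\nbasis)$ is $\Z/3\Z$-admissible, $\ngraph$ has the $(2\pi/3)$-rotational symmetry and this rotation carries $\cycle_i$ to $\cycle_{\tau(i)}$, hence the intersection form is $\tau$-invariant and $b_{i,j}=b_{\tau(i),\tau(j)}$ for all $i,j$, and likewise $b_{i,j}=b_{g(i),g(j)}$ for every $g\in\Z/3\Z$. For (c): for $i\sim i'$ with $i,i'\in\{2,3,4\}$ mutable, I need $b_{i,i'}=0$; this is the one place the argument differs slightly from the $\dynB$ case, so I will handle it carefully. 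Applying (b) twice, $b_{2,3}=b_{3,4}=b_{4,2}$, and applying (b) to the transposition-like element is not available, but one can still argue: the underlying $N$-graph $\ngraph(2,2,2)$ is a tripod, so the outer cycles $\cycle_2,\cycle_3,\cycle_4$ are supported in three disjoint sectors (the three legs) and any $\Z/3\Z$-admissible $(\ngraph,\nbasis)$ deforms into one where the representatives of $\cycle_i,\cycle_j$ ($i\neq j$ in $\{2,3,4\}$) meet only along $\cycle_1$, i.e.\ they do not intersect each other directly, so $b_{2,3}=b_{3,4}=b_{4,2}=0$. Alternatively, one can invoke the $\dynD_4$ classification: $\quiver$ is of type $\dynkinfont{D}_4$, hence mutation equivalent to a tree, and any $\Z/3\Z$-symmetric skew-symmetric matrix whose Cartan counterpart is the $\dynkinfont{D}_4$ Cartan matrix must have $b_{2,3}=b_{3,4}=b_{4,2}$ with these three arrows among the leaves; were this common value nonzero the leaves would form a $3$-cycle and the Cartan counterpart could not be that of $\dynkinfont{D}_4$. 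I expect the cleanest route is the geometric one using the tripod structure of $\nbasis(2,2,2)$ directly.

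For condition (d) I need $b_{i,j}b_{i',j}\geq 0$ whenever $i\sim i'$ and $j$ is mutable. If $j=1$, then $\tau(1)=1$, and pairing $i'=\tau^k(i)$ with $j=1$ gives $b_{i',1}=b_{\tau^k(i),\tau^k(1)}=b_{i,1}$ by (b), so $b_{i,1}b_{i',1}=b_{i,1}^2\geq 0$; the same computation with the roles reversed handles the case $i=1$, since then $i'=1=i$ and the product is a square. The remaining case is $i,i',j\in\{2,3,4\}$, all distinct, say $\{i,i',j\}=\{2,3,4\}$. But then by (b) we have $b_{i,j}=b_{\tau(i),\tau(j)}$, and since $\tau$ cycles $\{2,3,4\}$ the three entries $b_{2,3},b_{3,4},b_{4,2}$ are all equal, and we have shown in (c) that they vanish; hence $b_{i,j}=b_{i',j}=0$ and the product is $0\geq 0$. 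This disposes of (d) without needing any analogue of Lemma~\ref{lemma:no weird cycles in An}, because among the leaves of a $\dynkinfont{D}_4$-tripod there are simply no arrows. Thus all four conditions of Definition~\ref{definition:admissible quiver} hold and $\quiver(\ngraph,\nbasis)$ is $\Z/3\Z$-admissible.

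The main obstacle, as in the $\dynB$ case, is condition (c): one must be sure that the two outer cycles in a $\Z/3\Z$-admissible tuple genuinely have zero algebraic (and, after $N$-graph moves, geometric) intersection. I would settle this by the $N$-graph geometry: after deforming $(\ngraph,\nbasis)$ via the moves of Figure~\ref{fig:cycles under moves} so that each outer cycle is an $\sfI$-cycle confined to its own leg of the tripod, two distinct outer $\sfI$-cycles share no trivalent vertex, so their signed intersection number is $0$. Everything else is the routine bookkeeping of the intersection form's $\tau$-equivariance, exactly parallel to Proposition~\ref{proposition:admissibility for An}.
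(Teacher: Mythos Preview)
Your overall structure mirrors the paper's, and (a), (b), (d) are handled correctly and essentially the same way. The weak point is your preferred route for condition (c).

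The geometric argument---that the outer cycles can be confined to separate legs of a tripod---is not justified for an \emph{arbitrary} $\Z/3\Z$-admissible $(\ngraph,\nbasis)$ of type $\dynD_4$. Being ``of type $\dynD_4$'' only means Legendrian mutation equivalent to $(\ngraph(2,2,2),\nbasis(2,2,2))$; after a sequence of Legendrian mutations the tripod geometry need not persist, and the definition of a good tuple (Definition~\ref{def:good cycle}) only lets you realize a \emph{pair} of cycles simultaneously as $\sfI$-cycles, not separate the three outer cycles into disjoint sectors. So this route does not close without substantial further work, and your final paragraph's ``I would settle this by the $N$-graph geometry'' is precisely where the gap lies.

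Your alternative is essentially the paper's actual argument, but your phrasing needs correction: the Cartan counterpart of a quiver in the $\dynD_4$ mutation class need \emph{not} equal the $\dynD_4$ Cartan matrix (mutation can create cycles and add edges), so ``the Cartan counterpart could not be that of $\dynD_4$'' is not the right obstruction. The paper argues directly from mutation equivalence: if $b_{2,3}=b_{3,4}=b_{4,2}\neq 0$ there is a directed $3$-cycle on $\{2,3,4\}$; since also $b_{1,2}=b_{1,3}=b_{1,4}$ by (b), the underlying graph of $\quiver$ is either the complete graph $K_4$ (if $b_{1,2}\neq 0$) or disconnected (if $b_{1,2}=0$), and neither occurs in the mutation class of $\dynD_4$---contradiction. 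Sharpen your alternative to this and drop the geometric route.
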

\begin{proof}
Let $\quiver=\quiver(\ngraph,\nbasis)$. Then by definition of the $\Z/3\Z$-action, we have
\[
2\sim3\sim 4.
\]

\noindent (a) and (b) This is obvious as before.

\noindent (c) Let $\qbasis=(b_{i,j})=\qbasis(\quiver)$. Suppose that $b_{2,3}\neq0$. Then by (b),
\[
b_{2,3}=b_{3,4}=b_{4,2}\neq0
\]
and so $\quiver$ has a directed cycle either
\[
2\to3\to4\to 2\quad\text{or}\quad 2\to4\to3\to 2.
\]
Then according to $b_{1,2}$, the underlying graph of the quiver $\quiver$ is either the complete graph $K_4$ or a disconnected graph, but both are impossible. Therefore
\[
b_{2,3}=b_{3,4}=b_{4,2}=0.
\]

\noindent (d) The only entries we need to check are $b_{1,j}$'s, which are all equal by (b). Therefore
\[
b_{1,j}b_{1,j'}\ge 0.\qedhere
\]
\end{proof}

\subsection{Partial rotations and \texorpdfstring{$N$}{N}-graphs of type \texorpdfstring{$\dynD_{n+1}$}{D(n+1)} and \texorpdfstring{$\dynE_{6}$}{E(6)}}

Recall from Table~\ref{table:ADE type} that 
the Legendrians $\legendrian(\dynD_{n+1})=\legendrian(n-1,2,2)$ and $\legendrian(\dynE_6)=\legendrian(2,3,3)$ whose braid representatives are
\begin{align*}
\beta(\dynD_{n+1})&=\sigma_2\sigma_1^n\sigma_2\sigma_1^3\sigma_2\sigma_1^3,&
\beta(\dynE_6)&=\sigma_2\sigma_1^3\sigma_2\sigma_1^4\sigma_2\sigma_1^4.
\end{align*}

Let us identify $\disk^2$ with the unit disk in $\C$ and define the ray $R_\theta$ in $\disk^2$ as
\[
R_\theta=\{(r,\theta)\in\disk\subset\C\mid 0\le r\le 1\}.
\]

In each case, we assume that three $\sigma_2$'s are on the end points of three rays $R_0, R_{2\pi/3}$ and $R_{4\pi/3}$, which are points $\{1, e^{2\pi/3}, e^{4\pi/3}\}\subset S^1\subset\C$.
We also assume that two same blocks of $\sigma_1^*$ in each $\legendrian$ are contained in the angle $[2\pi/3, 4\pi/3]$ and $[4\pi/3, 2\pi]$.

Let $(\ngraph, \nbasis)$ be a $3$-graph of type $\dynX=\dynD_{n+1}$ or $\dynE_6$.
We consider the intersection $(\ngraph,\nbasis)\cap R_\theta$ between $(\ngraph, \nbasis)$ with the ray $R_\theta$, which consists of colored points or intervals possibly together with labels $\cycle_j$.

\begin{definition}[Ray symmetry]
We say that the pair $(\ngraph, \nbasis)$ is \emph{ray-symmetric} if the intersections $(\ngraph,\nbasis)\cap R_\theta$ for $\theta=0, 2\pi/3, 4\pi/3$ are the same up to rotation and avoid all trivalent vertices and hexagonal points except at the origin.
\begin{equation}
(R_0, (\ngraph,\nbasis)\cap R_0) \cong (R_{2\pi/3}, (\ngraph,\nbasis)\cap R_{2\pi/3})\cong(R_{4\pi/3}, (\ngraph,\nbasis)\cap R_{4\pi/3}).
\end{equation}
\end{definition}

\begin{figure}[ht]
\[
\begin{tikzpicture}[baseline=-.5ex,scale=0.8]
\draw[thick] (0,0) circle (3);
\foreach \i in {0,120, 240} {
\begin{scope}[rotate=\i]
\foreach \j in {1,2,4,5} {
\draw[blue, thick] (\j*20:2) -- (\j*20:3);
\draw[red, thick] (0:2) -- (0:3);
\draw[red, thick] (-20:1.5) arc (-20:20:1.5);
\draw[blue, thick] (-20:1) arc (-20:20:1);
}
\draw[blue, thick, dotted] (50:2.5) arc (50:70:2.5);
\end{scope}
}
\draw[double] (0,0) circle (2);
\foreach \i in {0,120, 240} {
\begin{scope}[shift=({\i+60}:0.2)]
\fill[white,opacity=0.5] (0,0) -- (\i:3) arc (\i:{\i+120}:3) -- cycle;
\end{scope}
}
\draw[orange, opacity=0.2, line width=5] (0,0) -- (0:3);
\draw[blue, opacity=0.1, line width=5] (0,0) -- (120:3);
\draw[violet, opacity=0.1, line width=5] (0,0) -- (240:3);
\draw[orange] (0:3.7) node {$R_0$};
\draw[blue] (120:3.7) node {$R_{2\pi/3}$};
\draw[violet] (240:3.7) node {$R_{4\pi/3}$};
\end{tikzpicture}\qquad
\begin{tikzpicture}[baseline=-.5ex,scale=0.8]
\begin{scope}[yshift=2cm]
\draw[orange, opacity=0.2, line width=5] (0,0) -- (3,0);
\draw[orange] (0,0) node[left] {$(R_0, (\ngraph,\nbasis)\cap R_0)=$};
\draw[thick] (-10:3) arc (-10:10:3);
\draw[red, thick] (0:2) -- (0:3);
\draw[double] (-10:2) arc (-10:10:2);
\draw[red, thick] (-10:1.5) arc (-10:10:1.5);
\draw[blue, thick] (-10:1) arc (-10:10:1);
\fill[white, opacity=0.5] (0,0.15) rectangle (3,0.6);
\fill[white, opacity=0.5] (0,-0.15) rectangle (3,-0.6);
\end{scope}
\begin{scope}
\draw[blue, opacity=0.1, line width=5] (0,0) -- (3,0);
\draw[blue] (0,0) node[left] {$(R_{2\pi/3}, (\ngraph,\nbasis)\cap R_{2\pi/3})=$};
\draw[thick] (-10:3) arc (-10:10:3);
\draw[red, thick] (0:2) -- (0:3);
\draw[double] (-10:2) arc (-10:10:2);
\draw[red, thick] (-10:1.5) arc (-10:10:1.5);
\draw[blue, thick] (-10:1) arc (-10:10:1);
\fill[white, opacity=0.5] (0,0.15) rectangle (3,0.6);
\fill[white, opacity=0.5] (0,-0.15) rectangle (3,-0.6);
\end{scope}
\begin{scope}[yshift=-2cm]
\draw[violet, opacity=0.1, line width=5] (0,0) -- (3,0);
\draw[violet] (0,0)  node[left] {$(R_{4\pi/3}, (\ngraph,\nbasis)\cap R_{4\pi/3})=$};
\draw[thick] (-10:3) arc (-10:10:3);
\draw[red, thick] (0:2) -- (0:3);
\draw[double] (-10:2) arc (-10:10:2);
\draw[red, thick] (-10:1.5) arc (-10:10:1.5);
\draw[blue, thick] (-10:1) arc (-10:10:1);
\fill[white, opacity=0.5] (0,0.15) rectangle (3,0.6);
\fill[white, opacity=0.5] (0,-0.15) rectangle (3,-0.6);
\end{scope}
\draw (1.5,1) node[rotate=90] {$\cong$};
\draw (1.5,-1) node[rotate=90] {$\cong$};
\end{tikzpicture}
\]
\caption{Ray-symmetricity}
\label{figure:ray symmetricity}
\end{figure}

Then we define a $\Z/2\Z$-action on a ray-symmetric $(\ngraph,\nbasis)$ as follows:
\begin{enumerate}
\item cut $\disk^2$ into three sectors $\disk^2_1, \disk^2_2$ and $\disk^2_3$ along the rays $R_\theta$ for $\theta=0,2\pi/3$ and $4\pi/3$ so that $(\ngraph,\nbasis)$ gives us three $3$-subgraphs
\begin{align*}
&\{(\ngraph_1,\nbasis_1),(\ngraph_2,\nbasis_2),(\ngraph_3,\nbasis_3)\},&
\ngraph_i&=\ngraph\cap \disk^2_i,
\end{align*}
\item change two subgraphs contained in sectors whose angles are in between $[2\pi/3,4\pi/3]$ and $[4\pi/3, 2\pi]$ by rotating certain angles.
\item The result will be denoted by $(\tau(\ngraph), \tau(\nbasis))$.
\end{enumerate}

Notice that each subgraph $\ngraph_i$ may not satisfy the condition of $N$-graphs but the final result will be an well-defined $3$-graph since $\ngraph$ is ray-symmetric.
However, if $\ngraph$ is not ray-symmetric, then the $\Z/2\Z$-action is never well-defined.
We call this action the \emph{partial rotation} and see Figure~\ref{figure:partial rotation} for the pictorial definition.

\begin{figure}[ht]
\[
\begin{tikzcd}[ampersand replacement=\&, column sep=5pc, row sep=3pc]
\begin{tikzpicture}[baseline=-.5ex, scale=0.5]
\draw[thick] (0,0) circle (3);
\fill[orange, opacity=0.2] (0,0) -- (0:3) arc(0:120:3) -- cycle;
\fill[violet, opacity=0.1] (0,0) -- (-120:3) arc(-120:0:3) -- cycle;
\fill[blue, opacity=0.1] (0,0) -- (120:3) arc (120:240:3) -- cycle;
\foreach \j in {1,2,4,5} {
\draw[blue, thick] (\j*20:2) -- (\j*20:3);
\draw[red, thick] (0:2) -- (0:3);
}
\draw[blue, thick, dotted] (50:2.5) arc (50:70:2.5);
\foreach \i in {120, 240} {
\begin{scope}[rotate=\i]
\foreach \j in {1,3} {
\draw[blue, thick] (\j*30:2) -- (\j*30:3);
\draw[red, thick] (0:2) -- (0:3);
}
\draw[blue, thick, dotted] (50:2.5) arc (50:70:2.5);
\end{scope}
}
\draw[double] (0,0) circle (2);
\draw (60:1) node[rotate=-30] {$(\ngraph_1,\nbasis_1)$};
\begin{scope}[rotate=120]
\draw (60:1) node[rotate=90] {$(\ngraph_2,\nbasis_2)$};
\end{scope}
\begin{scope}[rotate=240]
\draw (60:1) node[rotate=-150] {$(\ngraph_3,\nbasis_3)$};
\end{scope}
\end{tikzpicture}
\arrow[r,"\tau", yshift=.5ex]\arrow[d, "\text{cut}"', xshift=-.5ex]\&
\begin{tikzpicture}[baseline=-.5ex, scale=0.5]
\draw[thick] (0,0) circle (3);
\fill[orange, opacity=0.2] (0,0) -- (0:3) arc(0:120:3) -- cycle;
\fill[blue, opacity=0.1] (0,0) -- (-120:3) arc(-120:0:3) -- cycle;
\fill[violet, opacity=0.1] (0,0) -- (120:3) arc (120:240:3) -- cycle;
\foreach \j in {1,2,4,5} {
\draw[blue, thick] (\j*20:2) -- (\j*20:3);
\draw[red, thick] (0:2) -- (0:3);
}
\draw[blue, thick, dotted] (50:2.5) arc (50:70:2.5);
\foreach \i in {120, 240} {
\begin{scope}[rotate=\i]
\foreach \j in {1,3} {
\draw[blue, thick] (\j*30:2) -- (\j*30:3);
\draw[red, thick] (0:2) -- (0:3);
}
\draw[blue, thick, dotted] (50:2.5) arc (50:70:2.5);
\end{scope}
}
\draw[double] (0,0) circle (2);
\draw (60:1) node[rotate=-30] {$(\ngraph_1,\nbasis_1)$};
\begin{scope}[rotate=120]
\draw (60:1) node[rotate=90] {$(\ngraph_3,\nbasis_3)$};
\end{scope}
\begin{scope}[rotate=240]
\draw (60:1) node[rotate=-150] {$(\ngraph_2,\nbasis_2)$};
\end{scope}
\end{tikzpicture}\arrow[l, "\tau", yshift=-.5ex]\arrow[d, "\text{cut}", xshift=.5ex]\\
\begin{tikzpicture}[baseline=-.5ex, scale=0.5]
\begin{scope}[shift=(60:1)]
\fill[orange, opacity=0.2] (0,0) -- (0:3) arc(0:120:3) -- cycle;
\draw[thick] (0:3) arc (0:120:3);
\foreach \j in {1,2,4,5} {
\draw[blue, thick] (\j*20:2) -- (\j*20:3);
\draw[red, thick] (0:2) -- (0:3);
\draw[red, thick] (120:2) -- (120:3);
}
\draw[double] (0:2) arc (0:120:2);
\draw[blue, thick, dotted] (50:2.5) arc (50:70:2.5);
\draw (60:1) node[rotate=-30] {$(\ngraph_1,\nbasis_1)$};
\end{scope}
\begin{scope}[rotate=120]
\begin{scope}[shift=(60:1)]
\fill[blue, opacity=0.1] (0,0) -- (0:3) arc (0:120:3) -- cycle;
\draw[thick] (0:3) arc (0:120:3);
\foreach \j in {1,3} {
\draw[blue, thick] (\j*30:2) -- (\j*30:3);
\draw[red, thick] (0:2) -- (0:3);
\draw[red, thick] (120:2) -- (120:3);
}
\draw[blue, thick, dotted] (50:2.5) arc (50:70:2.5);
\draw[double] (0:2) arc (0:120:2);
\draw (60:1) node[rotate=90] {$(\ngraph_2,\nbasis_2)$};
\end{scope}
\end{scope}
\begin{scope}[rotate=240]
\begin{scope}[shift=(60:1)]
\fill[violet, opacity=0.1] (0,0) -- (0:3) arc (0:120:3) -- cycle;
\draw[thick] (0:3) arc (0:120:3);
\foreach \j in {1,3} {
\draw[blue, thick] (\j*30:2) -- (\j*30:3);
\draw[red, thick] (0:2) -- (0:3);
\draw[red, thick] (120:2) -- (120:3);
}
\draw[blue, thick, dotted] (50:2.5) arc (50:70:2.5);
\draw[double] (0:2) arc (0:120:2);
\draw (60:1) node[rotate=-150] {$(\ngraph_3,\nbasis_3)$};
\end{scope}
\end{scope}
\end{tikzpicture}\arrow[r, "\text{partial rot.}", yshift=.5ex]\arrow[u, "\text{glue}"', xshift=.5ex]
\&
\begin{tikzpicture}[baseline=-.5ex, scale=0.5]
\begin{scope}[shift=(60:1)]
\fill[orange, opacity=0.2] (0,0) -- (0:3) arc(0:120:3) -- cycle;
\draw[thick] (0:3) arc (0:120:3);
\foreach \j in {1,2,4,5} {
\draw[blue, thick] (\j*20:2) -- (\j*20:3);
\draw[red, thick] (0:2) -- (0:3);
\draw[red, thick] (120:2) -- (120:3);
}
\draw[double] (0:2) arc (0:120:2);
\draw[blue, thick, dotted] (50:2.5) arc (50:70:2.5);
\draw (60:1) node[rotate=-30] {$(\ngraph_1,\nbasis_1)$};
\end{scope}
\begin{scope}[rotate=240]
\begin{scope}[shift=(60:1)]
\fill[blue, opacity=0.1] (0,0) -- (0:3) arc (0:120:3) -- cycle;
\draw[thick] (0:3) arc (0:120:3);
\foreach \j in {1,3} {
\draw[blue, thick] (\j*30:2) -- (\j*30:3);
\draw[red, thick] (0:2) -- (0:3);
\draw[red, thick] (120:2) -- (120:3);
}
\draw[blue, thick, dotted] (50:2.5) arc (50:70:2.5);
\draw[double] (0:2) arc (0:120:2);
\draw (60:1) node[rotate=-150] {$(\ngraph_2,\nbasis_2)$};
\end{scope}
\end{scope}
\begin{scope}[rotate=120]
\begin{scope}[shift=(60:1)]
\fill[violet, opacity=0.1] (0,0) -- (0:3) arc (0:120:3) -- cycle;
\draw[thick] (0:3) arc (0:120:3);
\foreach \j in {1,3} {
\draw[blue, thick] (\j*30:2) -- (\j*30:3);
\draw[red, thick] (0:2) -- (0:3);
\draw[red, thick] (120:2) -- (120:3);
}
\draw[blue, thick, dotted] (50:2.5) arc (50:70:2.5);
\draw[double] (0:2) arc (0:120:2);
\draw (60:1) node[rotate=90] {$(\ngraph_3,\nbasis_3)$};
\end{scope}
\end{scope}
\end{tikzpicture}
\arrow[l, "\text{partial rot.}", yshift=-.5ex]\arrow[u, "\text{glue}", xshift=-.5ex]
\end{tikzcd}
\]
\caption{Partial rotation}
\label{figure:partial rotation}
\end{figure}
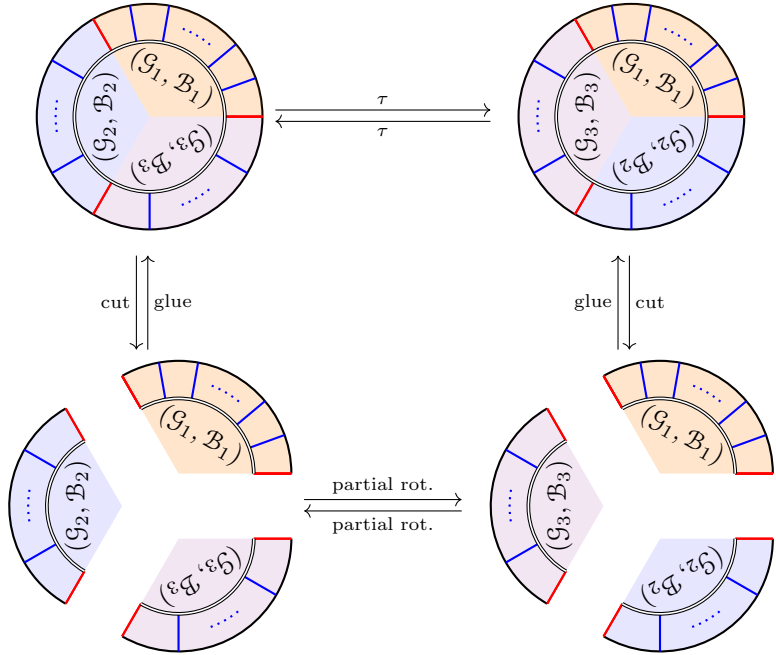

We say that $(\ngraph, \nbasis)$ is $\Z/2\Z$-admissible if it is invariant under the partial rotation up to relabeling of cycles as follows:
\begin{enumerate}
\item if $\dynX=\dynD_{n+1}$, then
\[
\cycle_n\leftrightarrow \cycle_{n+1},
\]
\item if $\dynX=\dynE_6$, then
\begin{align*}
\cycle_3&\leftrightarrow \cycle_5,&
\cycle_4&\leftrightarrow \cycle_6.
\end{align*}
\end{enumerate}

\begin{remark}\label{remark:coxeter padding symmetry 2}
Similar to Remark~\ref{remark:coxeter padding symmetry 1}, the Coxeter padding $\coxeterpadding=\coxeterpadding(n-1,2,2)$ for $\ngraph(\dynD_{n+1})$ or $\coxeterpadding(2,3,3)$ for $\ngraph(\dynE_6)$ has also the $\Z/2Z$-symmetry under the partial rotation.
Therefore for any $\Z/2\Z$-admissible $(\ngraph, \nbasis)$ under the partial rotation of type $\dynD_{n+1}$ or $\dynE_6$, so is the following
\[
\coxeterpadding\cdots\bar\coxeterpadding\coxeterpadding(\ngraph,\nbasis)\quad\text{ or }\quad
\bar\coxeterpadding\cdots\bar\coxeterpadding\coxeterpadding(\ngraph,\nbasis).
\]
\end{remark}

\begin{lemma}\label{lemma:no weird cycles in E6}
Let $\quiver$ be a quiver of type $\dynE_6$.
Suppose that $\quiver$ is invariant under the action
\begin{align*}
\tau(3)&=5,&
\tau(4)&=6.
\end{align*}
Then there is no oriented cycle, which is either
\[
3\to4\to5\to6\to3\quad\text{or}\quad
3\to6\to5\to4\to3.
\]
\end{lemma}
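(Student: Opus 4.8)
The plan is to mimic the proof of Lemma~\ref{lemma:no weird cycles in An} almost verbatim, exploiting the same two structural facts: that any minimal oriented cycle in a quiver of finite type $\dynADE$ has length $3$, and that $\Z/2\Z$-admissibility forces the ``diagonal'' entries $b_{i,\tau(i)}$ to vanish. First I would recall that $\quiver$ is mutation equivalent to a quiver of type $\dynE_6$; by Remark~\ref{rmk_acyclic_quivers} and the standard fact about finite-type quivers, every oriented cycle in $\quiver$ contains an oriented $3$-cycle. So if an oriented $4$-cycle $3\to 4\to 5\to 6\to 3$ (or its reverse) existed, chords would have to be present, and in particular at least one of the edges $3\text{--}5$ or $4\text{--}6$ would appear in $\quiver$, i.e. $b_{3,5}\neq 0$ or $b_{4,6}\neq 0$ for $\qbasis=(b_{k,\ell})=\qbasis(\quiver)$.

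Next I would derive the contradiction from admissibility. Since $\tau$ swaps $3\leftrightarrow 5$ and $4\leftrightarrow 6$, condition (b) of Definition~\ref{definition:admissible quiver} gives $b_{3,5}=b_{\tau(3),\tau(5)}=b_{5,3}=-b_{3,5}$, hence $b_{3,5}=0$, and identically $b_{4,6}=b_{\tau(4),\tau(6)}=b_{6,4}=-b_{4,6}$, hence $b_{4,6}=0$. This kills the only possible chords of the putative $4$-cycle among $\{3,4,5,6\}$, so no oriented $3$-cycle can sit inside it, and therefore the $4$-cycle cannot exist. The argument is symmetric in the two listed orientations, so both are excluded simultaneously.

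The one point that needs a little care is justifying that a $4$-cycle in a finite-type quiver must contain a $3$-cycle — or more precisely, that the four vertices $3,4,5,6$ supporting the cycle must, together with the edges among them, contain an oriented triangle. For $\dynE_6$ one can argue concretely: the full subquiver on $\{3,4,5,6\}$ is an induced subquiver of a quiver mutation equivalent to $\dynE_6$, and any such quiver has the property that its induced subquivers are of finite type (this is the hereditary property of finite mutation/representation type), so in particular the subquiver on four vertices carrying a $4$-cycle cannot be an acyclic orientation of a $4$-cycle graph $\widetilde{\dynA}_3$ (which is not of finite type); hence it has a chord, and a chord in an oriented $4$-cycle produces an oriented $3$-cycle. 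I expect this ``a $4$-cycle forces a chord, and a chord forces a triangle'' bookkeeping to be the only slightly delicate step; everything else is the same sign computation as in Lemma~\ref{lemma:no weird cycles in An}. I would then conclude the proof with the same phrasing, \emph{Therefore we are done.}
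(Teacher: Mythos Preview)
Your argument has a genuine gap: the ``standard fact'' you invoke---that in a finite-type quiver every oriented cycle contains an oriented $3$-cycle, equivalently that minimal (chordless) oriented cycles are triangles---is \emph{specific to type $\dynA$} and fails in types $\dynD$ and $\dynE$. Concretely, the oriented $4$-cycle $1\to2\to3\to4\to1$ with no other arrows is itself a quiver of finite type: one checks directly that $\mu_1\mu_2$ applied to it yields the star quiver $\dynD_4$. So this chordless $4$-cycle is mutation equivalent to $\dynD_4$, is of finite type, and contains no oriented triangle. Consequently the hereditary property (full subquivers of finite-type quivers are of finite type), even granting it, does \emph{not} force a chord: the full subquiver on $\{3,4,5,6\}$ could perfectly well be a chordless oriented $4$-cycle and still be of finite type. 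Your sentence ``cannot be an acyclic orientation of a $4$-cycle graph $\widetilde{\dynA}_3$'' is also confused---an oriented $4$-cycle is not acyclic---so the $\widetilde{\dynA}_3$ heuristic does not apply.

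Since $\tau$-invariance already kills the chords $b_{3,5}$ and $b_{4,6}$ (your computation there is fine), what is really at stake is ruling out a \emph{chordless} oriented $4$-cycle on $\{3,4,5,6\}$ inside a $\tau$-invariant quiver of type $\dynE_6$, and your argument supplies no mechanism for this. The paper proceeds very differently: using Lemma~\ref{lemma:normal form} it writes $\quiver=\mu'(\quiver(\dynE_6))$ with $\mu'$ omitting some $\mu_i$, and then does a case analysis on $i\in\{1,2,3,4\}$. The cases $i=1,3$ separate the relevant vertices; $i=2$ reduces to Lemma~\ref{lemma:no weird cycles in An}; and $i=4$ reduces to a $\Z/2\Z$-folded $\dynD_4$ situation handled by an explicit finite check. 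Some such structural or exhaustive argument appears to be necessary here.
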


\begin{proof}
We will use the essentially same argument as the proof of Lemma~\ref{lemma:no weird cycles in An}.

If $\quiver=\mutation(\quiver(\dynE_6))$, then by Proposition~\ref{prop_FZ_finite_type_Coxeter_element} and Lemma~\ref{lemma:normal form}, there exist an integer $r$ and a sequence $\mutation'$ of mutations such that
\[
\quiver=\mutation'(\qcoxeter^r(\quiver(\dynE_6))) = \mutation'(\quiver(\dynE_6)).
\]
Moreover, $\mutation'$ misses at least one mutation $\mutation_i$.

\begin{enumerate}
\item If $i=1$, then $\quiver(\dynE_6)\setminus\{1\}$ consists of three quivers 
\begin{align*}
\{2\}&\subset\quiver(\dynA_1),&
\{3,4\}&\subset\quiver(\dynA_2),&
\{5,6\}&\subset\quiver(\dynA_2).
\end{align*}
In particular, there are no direct edges between two sets of vertices $\{3,4\}$ and $\{5,6\}$. 

\item If $i=2$, then $\mutation'$ can be regarded as a sequence of mutations of type $\dynA_5$. By Lemma~\ref{lemma:no weird cycles in An}, we are done.
\item If $i=3$, then we may assume that $\mutation'$ also misses $\mutation_5$ due to the symmetry. Hence we have separated quivers
\begin{align*}
\{1,2\}&\subset\quiver(\dynA_2),&
\{4\}&\subset\quiver(\dynA_1),&
\{6\}&\subset\quiver(\dynA_1).
\end{align*}
Then after the mutation, the vertices $4$ and $6$ can be joined only with $3$ and $5$, respectively, and so we never have an edge between $3$ and $6$ or between $4$ to $5$.
\item If $i=4$, then as above, we may assume that $\mutation'$ misses $\mutation_6$ as well and we may regard $\mutation'$ as a sequence of mutations on $\quiver(\dynD_4)$
\[
\{1,2,3,5\}\subset\quiver(\dynD_4).
\]
Moreover, $\mutation'$ consists of mutations corresponding to $\Z/2\Z$-orbits, which are $\mutation_1, \mutation_2$ and $\mutation_{3,5}=(\mutation_3\mutation_5)$.
Here the group $\Z/2\Z$ folds $\dynD_4$ onto $\dynC_3$, and up to Coxeter mutations, there are only three facets
\begin{align*}
F_{-\alpha_1}^{\dynC_3}&\cong P(\Roots(\dynC_2)),&
F_{-\alpha_2}^{\dynC_3}&\cong P(\Roots(\dynA_1))\times P(\Roots(\dynA_1)),&
F_{-\alpha_3}^{\dynC_3}&\cong P(\Roots(\dynA_2)),
\end{align*}

Hence all possible quivers are obtained by one of the following ways:
\begin{align*}
&\underbrace{\mutation_1\mutation_{3,5}\mutation_1\cdots}_{k}(\quiver(\dynD_4)),& 0\le k&\le 11\\
&\underbrace{\mutation_2\mutation_{3,5}\mutation_2\cdots}_{k}(\quiver(\dynD_4)),& 0\le k&\le 3\\
&\underbrace{\mutation_1\mutation_2\mutation_1\cdots}_{k}(\quiver(\dynD_4)),& 0\le k&\le 9
\end{align*}
Note that two quivers
\[
(\mutation_1\mutation_{3,5}\mutation_1\mutation_{3,5}\mutation_1\mutation_{3,5})(\quiver(\dynD_4))
\quad\text{ and }\quad
(\mutation_1\mutation_2\mutation_1\mutation_2\mutation_1)(\quiver(\dynD_4))
\]
are obtained from $\quiver(\dynD_4))$ by permuting vertices $3\leftrightarrow 5$ and $1\leftrightarrow 2$, respectively.
Finally, one can directly check that we have no such cycles by the exhaustive search in this full list.\qedhere
\end{enumerate}
\end{proof}

\begin{proposition}\label{proposition:admissibility for Dn and E6}
Let $(\ngraph, \nbasis)$ be of type $\dynX=\dynD_{n+1}$ or $\dynE_6$.
If $(\ngraph, \nbasis)$ is $\Z/2\Z$-admissible, then so is the quiver $\quiver(\ngraph,\nbasis)$.
\end{proposition}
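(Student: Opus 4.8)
The plan is to imitate the two preceding proofs, Propositions~\ref{proposition:admissibility for An} and~\ref{proposition:admissibility for D4}, and to verify the four conditions (a)--(d) of Definition~\ref{definition:admissible quiver} for $\quiver=\quiver(\ngraph,\nbasis)$ one at a time. The $\Z/2\Z$-action on $\nbasis$ built into $\Z/2\Z$-admissibility---the relabeling $\cycle_n\leftrightarrow\cycle_{n+1}$ when $\dynX=\dynD_{n+1}$, and $\cycle_3\leftrightarrow\cycle_5$, $\cycle_4\leftrightarrow\cycle_6$ when $\dynX=\dynE_6$---induces an involution $\tau$ on the vertex set $[n+1]$ (resp.\ $[6]$) of $\quiver$, whose only non-singleton orbits are $\{n,n+1\}$ (resp.\ $\{3,5\}$ and $\{4,6\}$). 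Condition~(a) is then immediate, since every vertex of $\quiver$ is mutable.

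For condition~(b) I would invoke the partial-rotation symmetry $\tau(\ngraph,\nbasis)=(\ngraph,\nbasis)$: writing $b_{i,j}=i([\cycle_i],[\cycle_j])$ for the algebraic intersection pairing on $H_1(\Legendrian(\ngraph))$, the partial rotation sends $[\cycle_i]$ to $[\cycle_{\tau(i)}]$ and preserves this pairing, whence $b_{i,j}=b_{\tau(i),\tau(j)}$. Condition~(c) is then formal exactly as in the earlier proofs: for a non-fixed $i$, $b_{i,\tau(i)}=b_{\tau(i),\tau(\tau(i))}=b_{\tau(i),i}=-b_{i,\tau(i)}$, hence $b_{i,\tau(i)}=0$.

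Condition~(d), the inequality $b_{i,j}b_{\tau(i),j}\ge 0$ for non-fixed $i$ and mutable $j$, is the only part requiring real work. For $\dynX=\dynD_{n+1}$ it follows immediately from~(b) and~(c): the sole non-singleton orbit is $\{n,n+1\}$, and when $j\notin\{n,n+1\}$ we have $\tau(j)=j$, so $b_{n,j}=b_{n+1,j}$ by~(b) and the product is a square, while when $j\in\{n,n+1\}$ one factor vanishes. For $\dynX=\dynE_6$ the same square-trick disposes of $j\in\{1,2\}$ and~(c) disposes of $j$ lying in the same orbit as $i$; the remaining case $i\in\{3,5\}$, $j\in\{4,6\}$ I would settle by contradiction. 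Supposing, say, $b_{3,4}<0<b_{5,4}$, condition~(b) forces $b_{5,6}=b_{\tau(3),\tau(4)}=b_{3,4}<0$ and $b_{3,6}=b_{\tau(5),\tau(4)}=b_{5,4}>0$, and these four inequalities exhibit an oriented $4$-cycle on $\{3,4,5,6\}$ of shape $3\to4\to5\to6\to3$ or $3\to6\to5\to4\to3$, contradicting Lemma~\ref{lemma:no weird cycles in E6}; the orbit $\{4,6\}$ is handled symmetrically.

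Thus the hard part is precisely the $\dynE_6$ subcase of~(d), which leans on Lemma~\ref{lemma:no weird cycles in E6}---itself proved by using Proposition~\ref{prop_FZ_finite_type_Coxeter_element} and Lemma~\ref{lemma:normal form} to reduce any mutation equivalent quiver to a short normal form and then exhausting the finitely many facets. A secondary subtlety I would flag explicitly in the write-up is condition~(b): unlike the genuine $\pi$- and $\tfrac{2\pi}{3}$-rotations used for types $\dynA_{2n-1}$ and $\dynD_4$, here $\tau$ is the partial rotation, a cut-and-reglue operation rather than a diffeomorphism of $\disk^2$, so one must check that ray-symmetry makes the cutting rays interact trivially enough with the chosen cycles that $\tau$ still acts on $H_1(\Legendrian(\ngraph))$ compatibly with the relabeling of $\nbasis$ and respects the intersection form.
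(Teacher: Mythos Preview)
Your proposal is correct and follows essentially the same route as the paper: verify conditions (a)--(d) of Definition~\ref{definition:admissible quiver} directly, with (a)--(c) formal from the $\tau$-invariance of the intersection pairing, and (d) reduced for $\dynE_6$ to a forbidden $4$-cycle excluded by Lemma~\ref{lemma:no weird cycles in E6}. Your treatment is in fact slightly more careful than the paper's, which dismisses (b) as ``obvious as before'' without commenting on the point you raise---that the partial rotation is a cut-and-reglue along rays rather than a global diffeomorphism of~$\disk^2$; this is a legitimate subtlety worth spelling out, and ray-symmetry is exactly what makes the induced action on $H_1(\Legendrian(\ngraph))$ and its intersection form go through.
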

\begin{proof}
\noindent (a) and (b): This is obvious as before.

\noindent (c) Let $\qbasis=(b_{i,j})=\qbasis(\quiver)$. Then by (b),
\[
b_{i,\tau(i)}=b_{\tau(i), \tau(\tau(i))} = b_{\tau(i), i} = -b_{i, \tau(i)}\quad
\Longrightarrow\quad
b_{i,\tau(i)}=0.
\]

\noindent (d) If $\dynX=\dynD_{n+1}$, then we only need to show
\[
b_{i,n}b_{i,n+1}\ge 0
\]
for $i<n$. This is obvious since 
\[
b_{i,n+1} = b_{\tau(i), \tau(n+1)} = b_{i,n}.
\]

If $\dynX=\dynE_6$, then all we need to show are inequalities
\begin{align*}
b_{i,j}b_{i,j+2}&\ge 0,&
b_{3,4}b_{3,6}&\ge 0
\end{align*}
for $i=1,2$ and $j=3,4$.

The first inequality is obvious since
\begin{align*}
b_{i,j+2}&=b_{\tau(i),\tau(j+2)} = b_{i,j}.
\end{align*}

Suppose that $b_{3,4}b_{3,6}<0$. Then since $b_{3,4}=b_{5,6}$ and $b_{3,6}=b_{5,4}$, the $\quiver$ has a loop either
\[
3\to 4\to 5\to 6 \to 3\quad\text{or}\quad
3\to 6\to 5\to 4 \to 3.\qedhere
\]
\end{proof}

\subsubsection{$\quiver(\dynA_{2n-1})$ as a tripod $\quiver(1,n,n)$}
As observed in Lemma~\ref{lemma:stabilized An}, one can think $\quiver(1,n,n)$ and $\ngraph(1,n,n)$ for $\dynA_{2n-1}$ instead of $\quiver(\dynA_{2n-1})$ and $\ngraph(\dynA_{2n-1})$.

The major difference is now we have to use $3$-graphs and partial rotations instead of $2$-graphs and $\pi$-rotations.
Then it can be easily checked that the above two notions are identical and so the $\Z/2\Z$-admissibility for $3$-graphs of type $\dynA_{2n-1}$ is also well-defined.
Moreover, the $3$-graph analogue under the partial rotation of Proposition~\ref{proposition:admissibility for An} will be true.

\subsection{Global foldability of \texorpdfstring{$N$}{N}-graphs}
Let $(\ngraph, \nbasis)$ be of type $\dynX$. We say that $(\ngraph, \nbasis)$ is \emph{globally foldable} with respect to $G$ if $(\ngraph, \nbasis)$ is $G$-admissible and for any sequence of mutable $G$-orbits $I_1,\dots, I_\ell$, there exists a $G$-admissible $(\ngraph', \nbasis')$ such that
\[
\quiver(\Legendrian(\ngraph'), \nbasis') = (\mutation_{I_\ell}\cdots\mutation_{I_1})(\quiver(\Legendrian(\ngraph),\nbasis)).
\]

\begin{theorem}\label{theorem:global foldability of N graphs}
The $N$-graph with a good tuple of cycles $(\ngraph(\dynX), \nbasis(\dynX))$ is globally foldable with respect to $G$.
\end{theorem}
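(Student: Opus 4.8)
The plan is to prove the statement by induction on the rank $n$ of $\dynX$, running a $G$-equivariant refinement of the argument behind Proposition~\ref{proposition:realizability}. First one records that the initial pair $(\ngraph(\dynX),\nbasis(\dynX))$ is $G$-admissible: for $\dynD_{n+1}=\dynD(n-1,2,2)$, $\dynE_6=(2,3,3)$ and $\dynD_4=(2,2,2)$ this is read off directly from the rotational symmetry of the tripod in Figure~\ref{figure:tripod}, while for $\dynA_{2n-1}$ one replaces $(\ngraph(\dynA_{2n-1}),\nbasis(\dynA_{2n-1}))$ by the stabilized tripod model $(\ngraph(1,n,n),\nbasis(1,n,n))$ of Lemma~\ref{lemma:stabilized An} and uses the partial-rotation description of the $\Z/2\Z$-action from the paragraph on $\quiver(\dynA_{2n-1})$ as $\quiver(1,n,n)$. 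The base of the induction is the list of smallest ranks, where the seed pattern is finite and tiny (e.g. $\dynA_1$, $\dynA_3\rightsquigarrow\dynB_2$, $\dynD_4\rightsquigarrow\dynG_2$ with its $8$ seeds, and the reducible pieces such as $\dynA_1\times\dynA_1$ that arise in facet decompositions): there $G$-admissibility of every mutated $N$-graph is verified by hand, exactly as the initial step $(\ngraph(1,1,1),\nbasis(1,1,1))$, $(\ngraph(\dynA_1),\nbasis(\dynA_1))$ is handled in the proof of Proposition~\ref{proposition:realizability}, where no Legendrian mutability obstruction occurs.

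For the inductive step, fix a sequence of mutable $G$-orbits $I_1,\dots,I_\ell$ and put $\quiver'=(\mutation_{I_\ell}\cdots\mutation_{I_1})(\quiver(\dynX))$. By the already established global foldability of the $\dynADE$ quivers (Example~\ref{example_folding_ADE}), $\quiver'$ is $G$-admissible and its folding $(\quiver')^G$ is a seed-quiver in the cluster pattern of type $\dynY$. Applying the normal form of Lemma~\ref{lemma:normal form} together with Proposition~\ref{prop_FZ_finite_type_Coxeter_element} to the associated type-$\dynY$ seed $(\seed')^G$, there are an integer $r$ and a mutation sequence $\mutation'$ staying inside one facet $\qcoxeter^{\dynY,r}(F^{\dynY}_{-\alpha_j})$ of the $\dynY$-associahedron with $(\seed')^G=\mutation'\bigl((\qcoxeter^{\dynY})^r((\seed_{t_0})^G)\bigr)$. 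Using Remark~\ref{remark:folding and Coxeter mutation} (coincidence of Coxeter mutations and Coxeter numbers under folding) and Proposition~\ref{proposition:folded cluster pattern}, the whole computation lifts to the $\dynX$-side: $(\qcoxeter^{\dynY})^r$ lifts to the $G$-equivariant $(\qcoxeter^{\dynX})^r$, and $\mutation'$ lifts to a sequence $\mutation'^{\dynX}$ of $G$-orbit mutations none of which is at $I_j$, so that $\seed'=\mutation'^{\dynX}\bigl((\qcoxeter^{\dynX})^r(\seed_{t_0})\bigr)$.

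Now realize this geometrically and $G$-equivariantly. By Proposition~\ref{proposition:effect of Legendrian Coxeter mutation}, the $N$-graph $(\qcoxeter^{\dynX})^r(\ngraph_{t_0},\nbasis_{t_0})=\ncoxeter^r(\ngraph_{t_0},\nbasis_{t_0})$ is obtained from $(\ngraph_{t_0},\nbasis_{t_0})$ or $(\bar\ngraph_{t_0},\bar\nbasis_{t_0})$ by piling up $r$ Coxeter paddings, each of which is $G$-symmetric (Remarks~\ref{remark:coxeter padding symmetry 1} and~\ref{remark:coxeter padding symmetry 2}); hence $\ncoxeter^r(\ngraph_{t_0},\nbasis_{t_0})$ is $G$-admissible. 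As in the proof of Proposition~\ref{proposition:realizability}, the cycles $\{\cycle_i:i\in I_j\}$ — which form a single $G$-orbit — separate this $N$-graph into $N$-graphs of type $\dynA$ or into tripods of rank $<n$ that are permuted among themselves by $G$ (Figure~\ref{figure:N graph separation}); since $\mutation'^{\dynX}$ never mutates at $I_j$, it acts within these pieces, and since $\mutation'^{\dynX}$ lifts a $\dynY$-mutation sequence it induces, on each $G$-orbit of pieces, a single mutation sequence that we perform on one representative and transport around by $G$. By the inductive hypothesis each such restricted $N$-graph mutation is well-defined and yields a $G$-admissibly realized piece, and regluing gives a $G$-admissible pair $(\ngraph',\nbasis')\coloneqq\mutation'^{\dynX}\bigl(\ncoxeter^r(\ngraph_{t_0},\nbasis_{t_0})\bigr)$. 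Finally Proposition~\ref{proposition:equivariance of mutations} gives $\quiver(\Legendrian(\ngraph'),\nbasis')=\quiver'$ (in fact $\Psi(\ngraph',\nbasis',\flags)=\seed'$), and Propositions~\ref{proposition:admissibility for An}, \ref{proposition:admissibility for D4}, \ref{proposition:admissibility for Dn and E6} show the two notions of $G$-admissibility are consistent; this closes the induction.

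The hard part will be the bookkeeping in the middle paragraph: verifying that the facet in which the tail $\mutation'$ lives genuinely lifts to a $G$-invariant region of the $\dynX$-associahedron — precisely, that the $\dynY$-vertex $j$ labelling $F^{\dynY}_{-\alpha_j}$ lifts to the $G$-orbit $I_j$ and that "not mutating at $F^{\dynY}_{-\alpha_j}$" lifts to "not mutating at any $\cycle_i$, $i\in I_j$", which is exactly what forces the separating cycles to be a $G$-orbit and the pieces to be $G$-permuted. One must also treat separately the case in which $I_j$ is (the singleton of) the central vertex of a tripod: there $G$ cyclically permutes the three legs (for $\dynD_4$, $G=\Z/3\Z$) or the two equal legs (for $\dynD_{n+1}$, $\dynE_6$, $G=\Z/2\Z$), so that the pieces form one $G$-orbit of size $|G|$ and the inductive hypothesis is applied to a single leg; while the case of a non-central $G$-fixed vertex $I_j$ is handled as in the type-$\dynA_{2n-1}$ discussion.
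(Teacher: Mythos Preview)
Your proposal is correct and follows essentially the same approach as the paper: both arguments pass to the normal form $\mutation'^{\dynX}\circ(\qcoxeter^{\dynX})^r$ via Lemma~\ref{lemma:normal form} and Remark~\ref{remark:folding and Coxeter mutation}, use the $G$-symmetry of the Coxeter paddings (Remarks~\ref{remark:coxeter padding symmetry 1} and~\ref{remark:coxeter padding symmetry 2}) to reduce to showing $G$-admissibility of $\mutation'^{\dynX}(\ngraph_{t_0},\nbasis_{t_0})$, and then split along the omitted $G$-orbit $I_j$ into $G$-permuted lower-rank pieces to run an induction exactly parallel to the proof of Proposition~\ref{proposition:realizability}. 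Your final paragraph on bookkeeping (verifying that the lifted facet corresponds to a $G$-orbit of separating cycles, and distinguishing the central-vertex case from the non-central case) makes explicit what the paper leaves to the reader when it writes ``the rest of the proof follows from induction and we omit the detail.''
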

\begin{proof}
Let us define the initial quiver $\quiver_{t_0}$ by 
\[
\quiver_{t_0} = \quiver(\Legendrian(\ngraph(\dynX)), \nbasis(\dynX)).
\]

For a sequence of mutable $G$-orbits $I_1,\dots, I_\ell$, we have an integer $r$ and $\mutation'^{\dynY}$ by Proposition~\ref{prop_FZ_finite_type_Coxeter_element} and Lemma~\ref{lemma:normal form} such that in the cluster pattern of type $\dynY$, two sequences of mutations
\[
\left((\mutation_{I_\ell}^{\dynX}\cdots\mutation_{I_1}^{\dynX})(\quiver_{t_0})\right)^{G} = \mutation'^{\dynY}\left((\qcoxeter^{\dynY})^r\left(\quiver_{t_0}^{G}\right)\right)
\]
will produce the same seed.

On the other hand, as seen in Remark~\ref{remark:folding and Coxeter mutation}, the Coxeter mutation $\qcoxeter^{\dynX}$ will correspond to the Coxeter mutation $\qcoxeter^{\dynY}$ via the folding.
Moreover, $\mutation'^{\dynY}$ comes from a sequence $\mutation'^{\dynX}$ of mutations via the folding such that $\mutation'^{\dynX}$ is the composition of mutations at $G$-orbits and happens inside some facet $F_\beta\subset P(\Roots(\dynX))$.
Hence we have
\begin{align*}
\mutation'^{\dynY}\left((\qcoxeter^{\dynY})^r\left(\quiver_{t_0}^{G}\right)\right)&=
\mutation'^{\dynY}\left(( (\qcoxeter^{\dynX})^r (\quiver_{t_0}))^{G}\right)=\left(\mutation'^{\dynX}((\qcoxeter^{\dynX})^r(\quiver_{t_0}))\right)^{G}.
\end{align*}

By Proposition~\ref{proposition:realizability}, there exists a pair $(\ngraph', \nbasis')$ satisfying that
\begin{align*}
\quiver(\Legendrian(\ngraph'), \nbasis')&=\quiver(\mutation'^{\dynX}((\qcoxeter^{\dynX})^r(\quiver_{t_0})))=(\mutation_{I_\ell}^{\dynX}\cdots\mutation_{I_1}^{\dynX})(\quiver_{t_0}).
\end{align*}

Finally, we need to show that $(\ngraph',\nbasis')$ can be assumed to be $G$-admissible.
As in the proof of Proposition~\ref{proposition:realizability}, $(\ngraph', \nbasis')$ is obtained by taking a $\mutation'^{\dynX}$ on either $(\ngraph(\dynX), \nbasis(\dynX))$ or $(\bar\ngraph(\dynX), \nbasis(\dynX))$ and attaching Coxeter paddings.
As observed in Remarks~\ref{remark:coxeter padding symmetry 1} and \ref{remark:coxeter padding symmetry 2}, the Coxeter paddings themselves are already $G$-admissible and the process attaching them preserve the $G$-admissibility in each case.
Therefore we only need to show the $G$-admissibility of $\mutation'^{\dynX}(\ngraph(\dynX), \nbasis(\dynX))$.

We will use the essentially same strategy as the proof of Proposition~\ref{proposition:realizability}.
Since $\mutation'^{\dynX}$ misses some $\mutation_{\cycle_i}$, it misses all $\mutation_{\cycle_{i'}}$ for $i\sim i'$.
Then one can split $(\ngraph(\dynX), \nbasis(\dynX))$ in a $G$-admissible way. That is, the set of $N$-subgraphs
\[
\{(\ngraph_1, \nbasis_1),\dots,(\ngraph_\ell, \nbasis_\ell)\}
\]
is closed under the $G$-action. In this case, $G$ may permute $N$-subgraphs as well.
Now we split $\mutation'^{\dynX}$ into $\{\mutation'_1,\dots, \mutation'_\ell\}$ such that each $\mutation'_i$ is a sequence of mutations of $\quiver(\ngraph_i,\nbasis_i)$.
Then $\mutation'^{\dynX}(\ngraph(\dynX),\nbasis(\dynX))$ is $G$-admissible if so is $\mutation'_i(\ngraph_i, \nbasis_i)$ for each $1\le i\le \ell$.

Since each $(\ngraph_i, \nbasis_i)$ is strictly simpler than $(\ngraph(\dynX), \nbasis(\dynX))$ in terms of the number of vertices and is again of type $\dynADE$, the rest of the proof follows from induction and we omit the detail.
\end{proof}

As a direct consequence, we will prove the following theorem:
\begin{theorem}\label{theorem:BCFG type}
The following holds:
\begin{enumerate}
\item The Legendrian link $\lambda(\dynA_{2n-1})$ has $\binom{2n}{n}$ $\Z/2\Z$-admissible $N$-graphs which admits the cluster pattern of type $\dynB_n$.
\item The Legendrian link $\lambda(\dynD_{n+1})$ has $\binom{2n}{n}$ $\Z/2\Z$-admissible $N$-graphs which admits the cluster pattern of type $\dynC_n$.
\item The Legendrian link $\lambda(\dynE_{6})$ has $105$ $\Z/2\Z$-admissible $N$-graphs which admits the cluster pattern of type $\dynF_4$.
\item The Legendrian link $\lambda(\dynD_{4})$ has $8$ $\Z/3\Z$-admissible $N$-graphs which admits the cluster pattern of type $\dynG_2$.
\end{enumerate}
\end{theorem}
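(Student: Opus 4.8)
The plan is to turn Theorem~\ref{theorem:global foldability of N graphs} into a counting statement: for each of the four triples $(\dynX,\dynY,G)$ I would set up a map from equivalence classes of $G$-admissible $N$-graphs of type $\dynX$ to seeds of the cluster pattern of type $\dynY$, show that it hits every $\dynY$-seed, and then read the four numbers off Table~\ref{table_seeds_and_cluster_variables}. First I would fix the initial data: realize $\dynX$ as a tripod ($\dynA_{2n-1}=\quiver(1,n,n)$, $\dynD_{n+1}=\quiver(n-1,2,2)$, $\dynE_6=\quiver(2,3,3)$, $\dynD_4=\quiver(2,2,2)$) and observe that $(\ngraph(\dynX),\nbasis(\dynX))$ is $G$-admissible: for $\dynA_{2n-1}$ and $\dynD_4$ this is the $\tfrac{2\pi}{N}$-rotation symmetry of the picture in Figure~\ref{figure:tripod} (with $\quiver(1,n,n)$ treated as a $3$-graph and the $\pi$-rotation replaced by the partial rotation), and for $\dynD_{n+1}$ and $\dynE_6$ it is the ray-symmetry of Figure~\ref{figure:ray symmetricity}. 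By Propositions~\ref{proposition:admissibility for An}, \ref{proposition:admissibility for D4}, and \ref{proposition:admissibility for Dn and E6} the quiver $\quiver_{t_0}=\quiver(\Legendrian(\ngraph(\dynX)),\nbasis(\dynX))$ is then $G$-admissible, and since it is mutation equivalent to the alternately oriented Dynkin quiver of Example~\ref{example_folding_ADE}, its fold $\quiver_{t_0}^{G}$ is a quiver of type $\dynY$; choosing $\psi\colon\field\to\field^{G}$ so that the seed $\seed_{t_0}=\Psi(\ngraph(\dynX),\nbasis(\dynX),\flags)$ is $(G,\psi)$-admissible, Proposition~\ref{proposition:folded cluster pattern} produces the cluster pattern of type $\dynY$ with initial seed $\seed_{t_0}^{G}$.

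Next I would produce, for every seed $\seed^{\dynY}$ of type $\dynY$, a $G$-admissible $N$-graph realizing it. Since the type-$\dynY$ exchange graph is connected, $\seed^{\dynY}$ is joined to $\seed_{t_0}^{G}$ by a sequence of mutations; under the folding dictionary (Remark~\ref{remark:folding and Coxeter mutation} together with Proposition~\ref{proposition:folded cluster pattern}) each such $\dynY$-mutation is the fold of the composite $\mutation_I$ over a single mutable $G$-orbit $I$, so there are mutable $G$-orbits $I_1,\dots,I_\ell$ with $\bigl((\mutation_{I_\ell}\cdots\mutation_{I_1})(\seed_{t_0})\bigr)^{G}=\seed^{\dynY}$. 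By Theorem~\ref{theorem:global foldability of N graphs}, $(\ngraph(\dynX),\nbasis(\dynX))$ is globally foldable with respect to $G$, so there is a $G$-admissible pair $(\ngraph',\nbasis')$ with $\quiver(\Legendrian(\ngraph'),\nbasis')=(\mutation_{I_\ell}\cdots\mutation_{I_1})(\quiver_{t_0})$; its fold is $(\mutation_{I_\ell}\cdots\mutation_{I_1})(\qbasis_{t_0}^{G})$, which is the exchange matrix of $\seed^{\dynY}$, and the equivariance of the microlocal monodromy (Proposition~\ref{proposition:equivariance of mutations}) matches the cluster variables of $\Psi(\ngraph',\nbasis',\flags)^{G}$ with those of $\seed^{\dynY}$. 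Thus the assignment $(\ngraph,\nbasis)\mapsto\Psi(\ngraph,\nbasis,\flags)^{G}$ from $G$-admissible $N$-graphs to $\dynY$-seeds is surjective.

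Then I would count. Equivalent pairs in the sense of Definition~\ref{def:equiv on N-graph and N-basis} receive the same seed under $\Psi$ by Theorem~\ref{thm:N-graph to seed}, hence the same folded seed; consequently $N$-graphs realizing distinct $\dynY$-seeds are pairwise inequivalent, and the family built in the previous step consists of as many pairwise inequivalent $G$-admissible $N$-graphs as there are seeds of type $\dynY$. (If one wants the count to be exact rather than a lower bound, one adds that a $G$-admissible quiver is recovered from its fold through the relations $b_{i,j}=b_{\tau(i),\tau(j)}$ and $b_{i,\tau(i)}=0$, so $(\ngraph,\nbasis)\mapsto\Psi(\ngraph,\nbasis,\flags)^{G}$ becomes a bijection.) By Table~\ref{table_seeds_and_cluster_variables} the number of seeds is $\binom{2n}{n}$ for $\dynB_n$ and for $\dynC_n$, $105$ for $\dynF_4$, and $8$ for $\dynG_2$, which yields (1)--(4). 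Finally, every $N$-graph appearing here is free---this is inherited from $(\ngraph(\dynX),\nbasis(\dynX))$ under Legendrian mutation and under attaching Coxeter paddings, exactly as in the proof of Theorem~\ref{theorem:ADE type}---so each carries an exact embedded Lagrangian filling of $\legendrian(\dynX)$.

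The only step carrying real content is Theorem~\ref{theorem:global foldability of N graphs}, i.e.\ that the realizing $N$-graph handed back by Proposition~\ref{proposition:realizability} inside a facet can be chosen $G$-admissible; everything else is translation through the folding correspondence together with the tabulated seed numbers. That is where the ray-symmetry and the partial-rotation action must be checked to survive the $G$-equivariant splitting of $(\ngraph(\dynX),\nbasis(\dynX))$ along a cycle $\cycle_i$ whose entire $G$-orbit is omitted by the facet mutation, and where the $G$-symmetry of the Coxeter paddings $\coxeterpadding(a,b,c)$ and $\bar\coxeterpadding(a,b,c)$ (Remarks~\ref{remark:coxeter padding symmetry 1} and \ref{remark:coxeter padding symmetry 2}) is needed so that stacking paddings keeps admissibility; the induction on the rank then runs as in Proposition~\ref{proposition:realizability}.
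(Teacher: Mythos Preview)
Your proposal is correct and follows essentially the same approach as the paper: both rely on Theorem~\ref{theorem:global foldability of N graphs} for the existence of $G$-admissible $N$-graphs, Propositions~\ref{proposition:admissibility for An}, \ref{proposition:admissibility for D4}, \ref{proposition:admissibility for Dn and E6} to pass $G$-admissibility to the quiver, and Proposition~\ref{proposition:folded cluster pattern} to obtain the folded seed pattern of type~$\dynY$. You are simply more explicit than the paper about the surjectivity onto $\dynY$-seeds and about reading the counts off Table~\ref{table_seeds_and_cluster_variables}, both of which the paper leaves implicit in the phrase ``folded seeds form a seed pattern of type~$\dynY$ as desired.''
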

\begin{proof}
By Theorem~\ref{theorem:global foldability of N graphs}, it is already known that for each $\dynX$, the quiver of type $\dynX$ is globally foldable with respect to $G$.
By Propositions~\ref{proposition:admissibility for An}, \ref{proposition:admissibility for D4} and \ref{proposition:admissibility for Dn and E6}, the quiver $\quiver(\ngraph(\dynX),\nbasis(\dynX))$ is also globally foldable with respect to $G$.

Let $\seed_{t_0}=\Psi(\ngraph(\dynX),\nbasis(\dynX),\flags)=(\bfx(\Legendrian(\ngraph(\dynX)),\nbasis(\dynX),\flags), \quiver(\Legendrian(\ngraph(\dynX)),\nbasis(\dynX)))$ be the initial seed.
Without loss of generality, we may denote cluster variables in $\bfx$ by
\[
\bfx=(x_1,\dots,x_{\operatorname{rk}(\dynX)})
\]
and we define a field homomorphism $\psi:\field=\C(x_1,\dots,x_{\operatorname{rk}(\dynX)})\to\field^G=\C(x_{I_1},\dots, x_{I_{\operatorname{rk}(\dynY)}})$ by
\[
\psi(x_i) = x_I
\]
for any $i$ in a $G$-orbit $I$.
Then by construction, the initial seed $\seed_{t_0}$ is $(G,\psi)$-admissible. See \S\ref{sec:folding}.

Finally, by Proposition~\ref{proposition:folded cluster pattern}, folded seeds form a seed pattern of type $\dynY$ as desired, and we are done.
\end{proof}

\bibliographystyle{plain}
\bibliography{references}

\end{document}